\newcommand{\mc}[1]{\mathcal{#1}}
\newcommand{\ms}[1]{\mathscr{#1}}
\newcommand{\ol}[1]{\overline{#1}}
\newcommand{\ti}[1]{\textit{#1}}
\newcommand{\tx}[1]{\textrm{#1}}
\newcommand{\bs}[1]{\boldsymbol{#1}}
\renewcommand{\Im}{\operatorname{Im}}
\newcommand{\C}{\mathbb{C}}
\newcommand{\D}{\mathbb{D}}
\newcommand{\R}{\mathbb{R}}
\newcommand{\dd}{d}
\newcommand{\ddt}{\frac{d}{dt}}
\newcommand{\del}{\delta}
\newcommand{\dk}{\,dk}
\newcommand{\dmu}{\,d\mu}
\newcommand{\ds}{\,ds}
\newcommand{\dx}{\,dx}
\newcommand{\eps}{\epsilon}
\newcommand{\Arg}{\operatorname{arg}}
\newcommand{\bnorm}[1]{\big\lVert#1\big\rVert}
\newcommand{\con}{\ms{C}_{\bs{e}}}
\newcommand{\F}{\mc{F}}
\newcommand{\M}{\mc{M}}
\newcommand{\norm}[1]{\left\lVert#1\right\rVert}
\newcommand{\schwar}{\mc{S}}
\newcommand{\sm}{\smallsetminus}
\newcommand{\snorm}[1]{\lVert#1\rVert}
\newcommand{\wh}[1]{\widehat{#1}}
\newcommand{\wt}[1]{\widetilde{#1}}
\theoremstyle{plain}\newtheorem{thm}{Theorem}[section]
\theoremstyle{plain}\newtheorem{cor}[thm]{Corollary}
\theoremstyle{plain}\newtheorem{prop}[thm]{Proposition}
\theoremstyle{plain}\newtheorem{lem}[thm]{Lemma}
\theoremstyle{definition}\newtheorem{dfn}[thm]{Definition}
\theoremstyle{definition}\newtheorem{prob}[thm]{Problem}
\theoremstyle{remark}
\theoremstyle{definition}\newtheorem*{ack}{Acknowledgments}
\theoremstyle{definition}
\crefname{sec}{Section}{Sections}
\crefname{dfn}{Definition}{Definitions}
\crefname{lem}{Lemma}{Lemmas}
\crefname{prop}{Proposition}{Propositions}
\crefname{thm}{Theorem}{Theorems}
\crefname{cor}{Corollary}{Corollaries}
\crefname{fig}{Figure}{Figures}
\numberwithin{equation}{section}
\begin{document}

\title[Constrained minimizers of {K}d{V}]{Multisolitons are the unique constrained minimizers of the {K}d{V} conserved quantities}

\author{Thierry Laurens}
\address{Thierry Laurens \\
Department of Mathematics\\
University of California, Los Angeles, CA 90095, USA}
\email{laurenst@math.ucla.edu}

\begin{abstract}
We consider the following variational problem: minimize the $(n+1)$st polynomial conserved quantity of KdV over $H^n(\R)$ with the first $n$ conserved quantities constrained.  Maddocks and Sachs~\cite{Maddocks1993} used that $n$-solitons are \emph{local} minimizers for this problem in order to prove that $n$-solitons are orbitally stable in $H^n(\R)$.

Given $n$ constraints that are attainable by an $n$-soliton, we show that there is a unique set of $n$ amplitude parameters so that the corresponding multisolitons satisfy the constraints.  Moreover, we prove that these multisolitons are the \emph{unique global} constrained minimizers.  We then use this variational characterization to provide a new proof of the orbital stability result from~\cite{Maddocks1993} via concentration compactness.

In the case when the constraints can be attained by functions in $H^n(\R)$ but not by an $n$-soliton, we discover new behavior for minimizing sequences.
\end{abstract}

\maketitle

%\tableofcontents

%\begin{data}
%	Data sharing is not applicable to this article as no datasets were generated or analyzed during the current study.
%\end{data}

\section{Introduction}

The Korteweg--de Vries (KdV) equation
\begin{equation}
\ddt u = - u''' + 6uu'
\label{eq:kdv}
\end{equation}
(where $u' = \partial_xu$ denotes spatial differentiation) was derived over a century ago as a model for surface waves in a shallow channel of water.  Although the equation was first proposed by Boussinesq~\cite{Boussinesq1872}, it did not gain traction until Korteweg and de Vries~\cite{Korteweg1895} used the explicit solutions
\begin{equation}
q(t,x) = - 2\beta^2 \operatorname{sech}^2 [ \beta ( x - 4\beta^2 t- x_0 ) ]
\label{eq:soliton}
\end{equation}
for $\beta>0$ and $x_0\in\R$ to explain the empirical observation of solitary traveling waves.

The solutions~\eqref{eq:soliton} are now commonly referred to as \emph{solitons}, due to their particle-like behavior during interactions.  This name was coined by Kruskal and Zabusky~\cite{Zabusky1965} when they numerically observed that two colliding solitons emerge with unchanged profiles and speeds.  The interaction is nevertheless nonlinear, and this is manifested in a spatial shift of both waves in comparison to their initial trajectories.

We now know that this particle-like interaction can be well-approximated by a single explicit solution which resembles two solitons with distinct amplitudes as $t\to\pm\infty$.  In fact, together with the single soliton solutions~\eqref{eq:soliton}, these are the beginning of a family of solutions called \emph{multisolitons} which describe the interaction of an arbitrary number of distinct soliton profiles:
\begin{dfn}[Multisoliton solutions]
Fix $N\geq 1$.  Given $\beta_1,\dots,\beta_N>0$ distinct and $c_1,\dots,c_N\in\R$, the \emph{multisoliton of degree $N$} (or \emph{$N$-soliton}) with these parameters is
\begin{equation}
Q_{\bs{\beta},\bs{c}} (x) = - 2 \tfrac{d^2}{d x^2} \log\det [A(x)] ,
\label{eq:multisoliton}
\end{equation}
where $A(x)$ is the $N\times N$ matrix with entries
\begin{equation*}
A_{jk} (x) = \del_{jk} + \tfrac{1}{\beta_j+\beta_k} e^{- \beta_j(x-c_j) - \beta_k(x-c_k)} .
\end{equation*}
The unique solution to KdV~\eqref{eq:kdv} with initial data $u(0,x) = Q_{\bs{\beta},\bs{c}}(x)$ is
\begin{equation*}
u(t,x) = Q_{\bs{\beta},\bs{c}(t)}(x) , \quad\tx{where}\quad c_j(t) = c_j + 4\beta_j^2 t .
\end{equation*}
We define the \emph{multisoliton of degree zero} to be the zero function.
\end{dfn}

Formula~\eqref{eq:multisoliton} was first discovered in~\cite{Kay1956} as part of a study of potentials for one-dimensional Schr\"odinger operators with vanishing reflection coefficient.  Multisolitons have since been thoroughly examined by means of inverse scattering theory; see for example~\cites{Gardner1967,Gardner1974,Hirota1971,Tanaka1972/73,Wadati1972,Zakharov1971}.

The variational study of solitons~\eqref{eq:soliton} dates back to Boussinesq~\cite{Boussinesq1877}.  In addition to noting the conservation of the momentum and energy functionals
\begin{equation}
E_1(u) = \int_{-\infty}^\infty \tfrac{1}{2} u^2 \dx
\quad\tx{and}\quad 
E_2(u) = \int_{-\infty}^\infty \big[ \tfrac{1}{2} (u')^2 + u^3 \big] \dx ,
\label{eq:e1 and e2}
\end{equation}
he also realized that solitons are critical points for $E_2$ with $E_1$ constrained.  We now know that the functionals~\eqref{eq:e1 and e2} are the beginning of an infinite sequence of conserved quantities~\cite{Miura1968}, because KdV is a completely integrable system.  Their densities are defined recursively by~\cite{Zaharov1971}:
\begin{equation*}
\sigma_1(x) = u(x) , \qquad \sigma_{m+1}(x) = -\sigma'_m(x) - \sum_{j=1}^{m-1}\sigma_j(x)\sigma_{m-j}(x) .
\end{equation*}
For even $m$ the density $\sigma_m$ is a complete derivative, but for odd $m$ we obtain a nontrivial conserved quantity
\begin{equation}
E_n(u) = (-1)^{n} \tfrac{1}{2} \int_{-\infty}^\infty \sigma_{2n+1}(x)\dx
\label{eq:en 3}
\end{equation}
whose density is a polynomial in $u,u',\dots,u^{(n-1)}$.  The first two functionals of this sequence are~\eqref{eq:e1 and e2}, and the next one is given by
\begin{equation}
E_3(u) = \int_{-\infty}^\infty \big[ \tfrac{1}{2} (u'')^2 + 5u(u')^2 + \tfrac{5}{2} u^4 \big] \dx .
\label{eq:e3}
\end{equation}

Nearly a century after Boussinesq's work, Lax~\cite{Lax1968} studied two-soliton solutions of KdV in an effort to explain the particle-like interaction of solitons.  Although he does not explicitly state it (until his later work~\cite{Lax1975}), his ODE for the two-soliton is the Euler--Lagrange equation for a critical point of $E_3$ with $E_1$ and $E_2$ constrained.  In general, the $n$-soliton is known to be a critical point for the following variational problem:
\begin{prob}
Given an integer $n\geq 0$ and constraints $e_1,\dots,e_n$, minimize $E_{n+1}(u)$ over the set
\begin{equation*}
\con = \{ u \in H^n(\R) : E_1(u) = e_1 ,\ \dots,\ E_n(u) = e_n \} .
\end{equation*}
\end{prob}

In the case $n=0$ there are no constraints, and the minimizer of the $L^2$-norm over the space $L^2(\R)$ is simply the zero-soliton $q(x) \equiv 0$.  We include this trivial observation because it will provide a convenient base case for an induction argument.

This variational problem has been heavily studied within the context of multisoliton stability.  The first result was produced by Benjamin~\cite{Benjamin1972}, who proved that solitons are orbitally stable in $H^1(\R)$: solutions that start close to a soliton profile in $H^1(\R)$ remain close to a soliton profile for all time.  This was the introduction of a widely-applicable variational argument (cf.~\cite{Weinstein1986}) based on the fact that solitons are local constrained minimizers of $E_2$.  Benjamin's original work did contain some mathematical gaps however, but these were later resolved by Bona~\cite{Bona1975}.

Solitons are not merely local minimizers for this problem, but are \emph{global} minimizers~\cite{Albert1999} (see also \cref{thm:var char} below).  In order to employ this to prove orbital stability though, we need to know that profiles that almost minimize $E_2$ are close to a minimizing soliton.   In general, we cannot expect minimizing sequences to admit convergent subsequences, because the manifold of minimizing solitons is translation-invariant and hence non-compact.  This issue was solved by Cazenave and Lions~\cite{Cazenave1982} for a variety of NLS-like equations by a concentration compactness principle:~minimizing sequences are precompact \emph{modulo translations}.  This powerful method is now the dominant way of proving orbital stability, but it has not yet been successfully applied to this variational problem
because it requires a global understanding.

Nevertheless, Maddocks and Sachs~\cite{Maddocks1993} discovered that $n$-solitons are orbitally stable in $H^n(\R)$.  They first showed that $n$-solitons are indeed local minimizers of $E_{n+1}$ with $E_1,\dots,E_n$ constrained.  Then their argument relied on a careful study of the Hessian of $E_{n+1}$ on the manifold of minimizing $n$-solitons in directions tangent and perpendicular to the constraints.  A key ingredient in analyzing the tangent directions was the formula
\begin{equation}
E_n(Q_{\bs{\beta},\bs{c}}) = (-1)^{n+1} \tfrac{2^{2n+1}}{2n+1} \sum_{m=1}^N \beta_m^{2n+1}
\label{eq:en multisoliton}
\end{equation}
for the value of $E_n$ at an $N$-soliton, which is independent of $\bs{c}$.  This local analysis then implied the global result by employing the commuting flows of $E_1,\dots,E_n$.

However, the variational problem remains unsolved:~are multisolitons \emph{global} constrained minimizers of $E_{n+1}$?  If so, are they unique?  In particular, affirmative answers to this problem would be a significant step towards applying concentration compactness to prove the orbital stability of multisolitons.

More generally, we would like to understand all solutions to this natural problem because they are basic building blocks.  Indeed, special solutions to completely integrable models elucidate an avenue to a low-complexity understanding of the dynamics.  A critical point for this variational problem must satisfy the Euler--Lagrange equation
\begin{equation}
\nabla E_{n+1}(u) = \lambda_1 \nabla E_1(u) + \lambda_2 \nabla E_2(u) + \dots + \lambda_n \nabla E_n(u) .
\label{eq:novikov}
\end{equation}
Solutions to these equations are called \emph{algebro-geometric solutions}, and they are fundamental objects for integrable systems~\cites{Gesztesy2003,Gesztesy2008}; in fact, solitons were discovered via~\eqref{eq:novikov}.  Naturally, we would like to understand all critical points for this variational problem.

In order to state our main results, we will introduce some notation.  Given $n\geq 1$, we define the set of feasible constraints
\begin{equation*}
\F^n = \{ (e_1,\dots,e_n) \in \R^n : \con\neq\emptyset \}
\end{equation*}
which are attainable by some function in $H^n(\R)$.  We also define the set of constraints attainable by multisolitons of degree at most $N\geq 0$:
\begin{equation*}
\M^n_N = \big\{ (e_1,\dots,e_n) \in \R^n : \exists\ M\leq N\tx{ and }\bs{\beta},\bs{c}\in\R^M\tx{ with }Q_{\bs{\beta},\bs{c}}\in\con \big\} .
\end{equation*}

First we prove that when the constraints are attainable by a multisoliton of degree at most $n$, that multisoliton is the unique global minimizer:
\begin{thm}[Variational characterization]
\label{thm:var char}
Fix an integer $n\geq 1$.  Given constraints $(e_1,\dots,e_n) \in \M^n_n$, there exists a unique integer $N\leq n$ and parameters $\beta_1>\cdots>\beta_N>0$ so that the multisoliton $Q_{\bs{\beta},\bs{c}}$ lies in $\con$ for some (and hence all) $\bs{c}\in\R^N$.  Moreover, we have
\begin{equation*}
E_{n+1}(u) \geq E_{n+1}(Q_{\bs{\beta},\bs{c}}) \quad \tx{for all }u\in\con,
\end{equation*}
with equality if and only if $u=Q_{\bs{\beta},\bs{c}}$ for some $\bs{c}\in\R^N$.
\end{thm}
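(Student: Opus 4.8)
The plan is to argue by induction on $n$, exploiting the completely integrable structure to reduce the $n$th variational problem to the $(n-1)$st one after peeling off one soliton. The base case $n=0$ is the trivial observation recorded in the excerpt: the unique minimizer of $E_1$ over $L^2(\R)$ with no constraints is the zero-soliton. For the inductive step, suppose the statement holds for $n-1$. Given $(e_1,\dots,e_n)\in\M^n_n$, the existence of \emph{some} multisoliton $Q_{\bs\beta,\bs c}$ in $\con$ is part of the hypothesis; the first task is to extract the parameters and prove uniqueness. Here I would use formula~\eqref{eq:en multisoliton}: the values $E_1,\dots,E_n$ of an $N$-soliton depend only on the power sums $p_k=\sum_m\beta_m^{2k+1}$ for $k=1,\dots,n$, and since $N\le n$ these $n$ power sums determine the multiset $\{\beta_1^2>\cdots>\beta_N^2\}$ uniquely (power sums determine elementary symmetric functions, hence the monic polynomial with those roots; the degree $N$ is recovered as the number of nonzero roots). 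This simultaneously establishes existence of a unique ordered tuple $\beta_1>\cdots>\beta_N>0$ and shows the constraints are independent of $\bs c$, again by~\eqref{eq:en multisoliton}.

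For the minimization statement, the key is a spectral/trace-formula description of the conserved quantities. Writing the $E_k(u)$ in terms of the scattering data of the Schrödinger operator $-\partial_x^2+u$ — specifically, for Schwartz (or suitably decaying) $u$, each $E_k$ is a sum of a contribution from the discrete eigenvalues $-\kappa_j^2$ (producing exactly the power-sum terms $\sum_j\kappa_j^{2k+1}$, matching~\eqref{eq:en multisoliton}) and a manifestly sign-definite contribution from the reflection coefficient over the continuous spectrum. The point is that $E_{n+1}(u)$, after subtracting off a suitable nonnegative linear combination $\sum_{k=1}^n\lambda_k E_k(u)$ dictated by the $\kappa_j$'s of the target multisoliton (the Lagrange multipliers in~\eqref{eq:novikov}), becomes a quantity that is minimized exactly when the reflection coefficient vanishes and the eigenvalues sit at the prescribed locations — i.e.\ exactly at the $n$-soliton with those $\beta$'s. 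Concretely: the constrained functional $E_{n+1}(u)-\sum\lambda_kE_k(u)+\text{const}$ should equal a polynomial in the $\kappa_j$'s that is nonnegative and vanishes only when they equal the $\beta_j$'s (a power-sum/Newton-inequality computation), plus a weighted $L^2$ or $H^{s}$ norm of the reflection coefficient, which vanishes only for reflectionless potentials. Combining these two yields $E_{n+1}(u)\ge E_{n+1}(Q_{\bs\beta,\bs c})$ with equality forcing $u$ reflectionless with eigenvalues $-\beta_j^2$, hence $u=Q_{\bs\beta,\bs c}$ for some $\bs c\in\R^N$.

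The main obstacle — and where the induction earns its keep — is that the trace-formula expansions are rigorous only for sufficiently regular/decaying potentials, whereas $\con$ only asks for $u\in H^n(\R)$. So the honest argument is presumably: (i) establish the inequality on a dense class (Schwartz, or $H^n\cap$ some weighted space) via the spectral identities; (ii) upgrade to all of $\con$ by a density/lower-semicontinuity argument, using that $E_1,\dots,E_{n+1}$ are continuous on $H^n(\R)$ (they are polynomial in $u,\dots,u^{(n-1)}$ with controlled growth) and that the constraint set behaves well under approximation; and (iii) handle the equality case by showing any minimizer satisfies the Euler--Lagrange equation~\eqref{eq:novikov}, whence it is an algebro-geometric potential whose scattering data is pinned down by the constraints. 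An alternative to the pure spectral approach, better suited to the low-regularity setting, is to induct directly: given a minimizer $u$, use the fact that $E_n(u)=e_n$ is the \emph{largest} attainable value of $E_n$ compatible with $(e_1,\dots,e_{n-1})$ being minimized (or some such extremality), peel off the largest soliton $\beta_1$, and relate $u$ to a minimizer of the $(n-1)$st problem via a Bäcklund/Darboux transformation, invoking the inductive hypothesis. I expect reconciling the clean spectral picture with the bare $H^n$ regularity of $\con$ to be the technical heart of the proof.
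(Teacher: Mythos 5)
Your central mechanism, as stated, cannot work. You propose choosing multipliers $\lambda_1,\dots,\lambda_n$ so that $E_{n+1}(u)-\sum_{k=1}^n\lambda_kE_k(u)$ decomposes (via the trace formulas~\eqref{eq:en trace}) into a nonnegative continuous-spectrum term plus a function of the eigenvalues that is nonnegative and vanishes exactly when the $\kappa_j$ equal the $\beta_j$. But the eigenvalue contribution of this Lagrangian is $\sum_m g(\kappa_m)$ with $g(\kappa)=(-1)^n\tfrac{2^{2n+3}}{2n+3}\kappa^{2n+3}-\sum_{k=1}^n\lambda_k(-1)^{k+1}\tfrac{2^{2k+1}}{2k+1}\kappa^{2k+1}=\kappa^3h(\kappa^2)$, where $h$ has degree $n$. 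For the desired global inequality you need $g\ge0$ on $(0,\infty)$ with zeros precisely at $\beta_1,\dots,\beta_N$; each such zero must then have even multiplicity, forcing $2N\le n$, which is false in the main case $N=n$ (interior constraints), and for $n$ odd the leading coefficient of $g$ is negative so $g\ge0$ fails for every choice of multipliers. So no ``power-sum/Newton-inequality'' decomposition exists, and the constraints must be handled nonlinearly. The paper does exactly this: it drops only the nonnegative $\log|a|$ moment from the trace formula for $E_{n+1}$ itself, bounds the remaining eigenvalue sum below by $C$ evaluated at the \emph{reduced} constraints (the original $e_j$ minus the $\log|a|$ moments), and then uses that $C$ is decreasing in each variable (\cref{thm:min val is dec}), that $\M^n_n$ is downward closed (\cref{thm:down closed}), and the implicit-function ``wiggle'' lemma (\cref{thm:beta mom wiggle}) to exclude more than $n$ eigenvalues. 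Likewise, your equality-case plan (minimizer satisfies~\eqref{eq:novikov}, hence is algebro-geometric, hence classified) is precisely the route the paper explains is open for $n\ge3$; instead, the paper exploits minimality, not mere criticality: it shows a minimizer is Schwartz (Euler--Lagrange with strictly negative multipliers plus the stable manifold theorem), forces $|a(k;q)|\equiv1$ on $\R$ by an inverse-scattering perturbation that would otherwise lower $E_{n+1}$ (\cref{thm:a mom wiggle}), and then uses inner-outer factorization to identify $a(\cdot;q)$ as a Blaschke product, i.e.\ $q$ is a multisoliton (\cref{thm:inner outer fac}).

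A second, smaller but genuine, gap is your uniqueness argument for the parameters. The constraints give you the \emph{odd} power sums $\sum_m\beta_m^{2k+1}$ for $k=1,\dots,n$, not the first $N$ power sums $p_1,\dots,p_N$, so Newton's identities (``power sums determine the elementary symmetric functions'') do not apply; after substituting $x_m=\beta_m^2$ these are not polynomial power sums at all. That any $n$ such power sums of at most $n$ distinct positive numbers determine them is true but requires a real argument --- the paper's \cref{thm:unique params} proves it via Steinig's sign-change/total-positivity argument (a generalized Descartes-type root count). Finally, your density step (ii) is essentially the paper's, except one must also know that the approximating constraints remain in $\M^n_n$, which the paper secures by showing $\M^n_n$ is relatively open in $\F^n$ (\cref{thm:rel open}).
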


Together with an appropriate concentration compactness principle (\cref{thm:conc comp}) to analyze minimizing sequences, we also prove the orbital stability result of~\cite{Maddocks1993} via concentration compactness:
\begin{thm}[Orbital stability]
\label{thm:orb stab}
Fix an integer $n\geq 1$ and distinct positive parameters $\beta_1,\dots,\beta_n$.  Given $\eps>0$ there exists $\del>0$ so that for every initial data $u(0) \in H^n(\R)$ satisfying
\begin{equation*}
\inf_{\bs{c}\in\R^n}\, \snorm{ u(0) - Q_{\bs{\beta},\bs{c}} }_{H^n} < \del ,
\end{equation*}
the corresponding solution $u(t)$ of KdV~\eqref{eq:kdv} satisfies
\begin{equation*}
\sup_{t\in\R}\, \inf_{\bs{c}\in\R^n}\, \snorm{ u(t) - Q_{\bs{\beta},\bs{c}} }_{H^n} < \eps .
\end{equation*}
\end{thm}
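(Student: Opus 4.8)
The plan is the standard concentration-compactness route to orbital stability, now available thanks to the global variational characterization in \cref{thm:var char}. Argue by contradiction: suppose the conclusion fails for some $\eps_0>0$, so there are initial data $u_k(0)\in H^n(\R)$ with $\inf_{\bs{c}\in\R^n}\snorm{u_k(0)-Q_{\bs{\beta},\bs{c}}}_{H^n}\to 0$ and times $t_k\in\R$ with
\[
\inf_{\bs{c}\in\R^n}\snorm{u_k(t_k)-Q_{\bs{\beta},\bs{c}}}_{H^n}\ge\eps_0 .
\]
Set $v_k:=u_k(t_k)$; by global well-posedness of KdV in $H^n(\R)$ (for which $E_1,\dots,E_{n+1}$ supply the a priori bound), each $v_k\in H^n(\R)$. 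Write $e_j:=E_j(Q_{\bs{\beta},\bs{c}})$, which by~\eqref{eq:en multisoliton} is independent of $\bs{c}$.

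First I would pass the relevant quantities to the limit. Since $E_1,\dots,E_{n+1}$ are continuous on $H^n(\R)$, conserved along the flow, and translation invariant,
\[
E_j(v_k)=E_j(u_k(0))\longrightarrow e_j\quad(1\le j\le n),\qquad E_{n+1}(v_k)=E_{n+1}(u_k(0))\longrightarrow E_{n+1}(Q_{\bs{\beta},\bs{c}}).
\]
Because $\bs{\beta}$ has $n$ distinct positive entries, $(e_1,\dots,e_n)\in\M^n_n$, and the integer and parameters produced by \cref{thm:var char} are forced to be $N=n$ and (after reordering) $\bs{\beta}$ itself; in particular $E_{n+1}(Q_{\bs{\beta},\bs{c}})$ is the infimum of $E_{n+1}$ over $\con$. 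Thus $(v_k)$ is a minimizing sequence for the constrained problem, up to the fact that it meets the constraints only asymptotically.

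Next I would feed $(v_k)$ into the concentration compactness principle \cref{thm:conc comp}, whose role is precisely to show such sequences are precompact in $H^n(\R)$ modulo translations: it yields $y_k\in\R$ and a subsequence along which $v_k(\cdot-y_k)\to v_\infty$ in $H^n(\R)$. Continuity of the $E_j$ gives $v_\infty\in\con$ and $E_{n+1}(v_\infty)=E_{n+1}(Q_{\bs{\beta},\bs{c}})$, so $v_\infty$ is a global constrained minimizer, whence the equality case of \cref{thm:var char} forces $v_\infty=Q_{\bs{\beta},\bs{c}'}$ for some $\bs{c}'\in\R^n$. Since translating a multisoliton by $y$ merely shifts its $\bs{c}$-parameters by $(y,\dots,y)$, we get
\[
\inf_{\bs{c}\in\R^n}\snorm{v_k-Q_{\bs{\beta},\bs{c}}}_{H^n}\le\snorm{v_k(\cdot-y_k)-Q_{\bs{\beta},\bs{c}'}}_{H^n}\longrightarrow 0,
\]
contradicting the lower bound $\eps_0$.

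I expect the one delicate point inside this argument to be the mismatch that $v_k$ lies only asymptotically in $\con$: either \cref{thm:conc comp} must be stated to tolerate constraint errors $E_j(v_k)-e_j\to 0$, or one first projects $v_k$ onto a nearby $\wt{v}_k\in\con$ with $\snorm{\wt{v}_k-v_k}_{H^n}\to 0$ (possible since the differentials of $E_1,\dots,E_n$ at $Q_{\bs{\beta},\bs{c}}$ are independent, so $\con$ is a submanifold near $Q_{\bs{\beta},\bs{c}}$ — the same non-degeneracy underlying the local analysis of~\cite{Maddocks1993}) and checks that $E_{n+1}(\wt{v}_k)\to E_{n+1}(Q_{\bs{\beta},\bs{c}})$ persists, after which the minimizing-sequence hypothesis of \cref{thm:conc comp} applies verbatim. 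The remaining ingredients — continuity and conservation of $E_1,\dots,E_{n+1}$, global well-posedness in $H^n(\R)$, and the translation rule for multisolitons — are standard or immediate from the definitions.
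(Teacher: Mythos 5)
There is a genuine gap at the step where you invoke \cref{thm:conc comp}. That theorem is a profile decomposition for bounded sequences in $H^n(\R)$; it does \emph{not} assert that a minimizing sequence is precompact modulo translations, and indeed that assertion is false in the present setting. For example, $v_k = Q_{\bs{\beta},\bs{c}_k}$ with relative shifts $c_{1,k}-c_{2,k}\to\infty$ lies exactly in $\con$ and attains the minimum of $E_{n+1}$ for every $k$, yet no choice of shifts $y_k$ makes $v_k(\cdot-y_k)$ converge strongly in $H^n(\R)$ to a single profile: the mass splits into bumps that separate, so the profile decomposition has several profiles with diverging translation parameters. The correct conclusion (and the one the paper proves as \cref{thm:min seq}) is convergence to the \emph{manifold} $\{Q_{\bs{\beta},\bs{c}}:\bs{c}\in\R^n\}$, which is non-compact even after quotienting by a single translation because the components of $\bs{c}$ may separate. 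Consequently your extraction of a limit $v_\infty\in\con$ and the application of the equality case of \cref{thm:var char} to it do not go through as written.

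What your outline omits is precisely the content of the paper's Section 5: one must work with the full decomposition \eqref{eq:conc comp decomp}, show that each profile $\phi^j$ is itself a global constrained minimizer for its own constraint values and hence a multisoliton (\cref{thm:min seq lem 4}, which uses the ability to swap a non-optimal profile and contradict minimality), show via the relaxed problem with inequality constraints, \cref{thm:beta mom wiggle}, and \cref{thm:unique params 2} that the concatenated $\beta$ parameters of all profiles coincide with the unique minimizing set and that $J^*$ is finite (\cref{thm:min seq lem 5}), show that the remainder vanishes in $H^n$ and not merely in $W^{n-1,3}$ (\cref{thm:min seq lem 2,thm:min seq lem 3}), and finally invoke the molecular decomposition (\cref{thm:molec decomp}) to replace the sum of well-separated multisolitons by a single $Q_{\bs{\beta},\bs{c}_k}$ on the manifold. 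Your outer contradiction argument, the use of conservation and continuity of $E_1,\dots,E_{n+1}$, and the identification of $\{u_k(t_k)\}$ as a minimizing sequence do match the paper's short deduction of \cref{thm:orb stab} from \cref{thm:min seq}; also, your worry about the sequence meeting the constraints only asymptotically is handled in the paper simply by stating \cref{thm:min seq} for asymptotic constraints, so no projection onto $\con$ is needed. But the heart of the proof is the multi-profile analysis sketched above, and it is missing from the proposal.
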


It is now well-known that KdV is well-posed for initial data $u(0) \in H^n(\R)$ with $n\geq 1$~\cite{Kenig1991}.  Nevertheless, the $n=1$ case~\cite{Benjamin1972} of \cref{thm:orb stab} came nearly two decades before the corresponding well-posedness result~\cite{Kenig1991}.  This is because well-posedness plays only a small role in the proof of \cref{thm:orb stab}.  Indeed, the crux of the problem is to prove \cref{thm:orb stab} for Schwartz solutions, and the result for $H^n(\R)$ solutions then follows immediately once well-posedness in $H^n(\R)$ is known.

Many refinements have been made since the discovery of \cref{thm:orb stab}.  While we choose to work in the space $H^{n}(\R)$ because it is amenable to the functionals $E_1,\dots,E_{n+1}$, the regularity of \cref{thm:orb stab} in the scope of $H^s(\R)$ spaces has since been significantly lowered~\cites{Martel2002,Merle2003,Alejo2013,Killip2020}.  Moreover, the time-evolution of the parameter $\bs{c}$ has been shown to remain close to the usual evolution $c_j + 4\beta_j^2 t$ uniformly for $t>0$~\cite{Albert2007}.  In addition to Lyapunov stability statements like \cref{thm:orb stab}, the asymptotic stability of multisolitons has also been studied~\cites{Martel2002,Merle2003,Martel2005}.

Recently, the $n=2$ case of both \cref{thm:var char,thm:orb stab} was resolved by Albert and Nguyen~\cite{Albert2021}.  They also showed that for $n=1$ we have
\begin{equation*}
\M^1_1 = \{ e_1 : e_1 \geq 0 \} = \F^1 ,
\end{equation*}
but for $n=2$ we have
\begin{equation}
\begin{aligned}
\M^2_2
&= \big\{ (e_1,e_2) : e_1> 0,\ e_2 \in \big[ {- \tfrac{32}{5} (\tfrac{3}{8})^{\frac{5}{3}} e_1^{\frac{5}{3}}} ,\, {-2^{-\frac{2}{3}} \tfrac{32}{5} (\tfrac{3}{8})^{\frac{5}{3}} e_1^{\frac{5}{3}}} \big) \big\} \cup\{(0,0)\},\\
\F^2 
&= \big\{ (e_1,e_2) : e_1> 0,\ e_2 \geq - \tfrac{32}{5} (\tfrac{3}{8})^{\frac{5}{3}} e_1^{\frac{5}{3}} \big\}  \cup\{(0,0)\} .
\end{aligned}
\label{eq:M2 F2}
\end{equation}
These sets are depicted in \cref{fig:phase diagram}.  \Cref{thm:var char} says that for each $(e_1,e_2) \in \M^2_2$ the constrained minimizers of $E_3$ are multisolitons of degree at most $2$.  Moreover, by the $n=1$ case of \cref{thm:var char}, we know that for the constraints $e_1>0$ and $e_2 = - \tfrac{32}{5} (\tfrac{3}{8})^{\frac{5}{3}} e_1^{\frac{5}{3}}$ on the boundary of $\M^2_2$ the minimizer is a single soliton.  Likewise, in the case of $n=3$ constraints, the boundary $\M^3_3 \sm (\tx{int}\,\M^3_3)$ in $\R^3$ looks like the graph of a continuous function on $\M^2_2$, and so on.  In general, we will show that $\M^n_n$ is homeomorphic to the half-open simplex of parameters $\bs{\beta} \in \R^n$ corresponding to multisolitons of degree at most $n$ (cf.~\cref{thm:homeo}).

\begin{figure}[ht!]
\begin{center}
\includegraphics[width=0.5\linewidth]{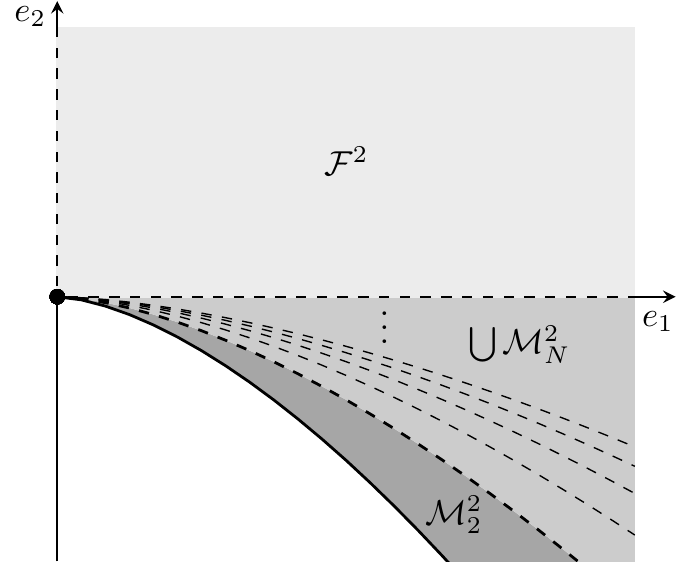}
\end{center}
\captionof{figure}{The sets~\eqref{eq:M2 F2} and~\eqref{eq:MN} of constraints.  The three shaded regions correspond to qualitatively different behavior for minimizing sequences.}
\label{fig:phase diagram}
\end{figure}

Albert and Nguyen's analysis does not easily extend to the general case however, because it makes crucial use of the fact that for $n=2$ all solutions of the Euler--Lagrange equation~\eqref{eq:novikov} are one- or two-solitons~\cite{Albert2017}.  Much is known about the ODEs~\eqref{eq:novikov}; they are completely integrable Hamiltonian systems and thus can be formally integrated.  Nevertheless, for $n\geq 3$ it is open whether all solutions to the Euler--Lagrange equation~\eqref{eq:novikov} are multisolitons of degree at most $n$.  (Specifically, it is difficult to show that if the Lagrange multipliers $\lambda_1,\dots,\lambda_n$ do not correspond to an $n$-soliton then solutions of~\eqref{eq:novikov} must be singular; see~\cite{Albert2017}*{\S6} for details.)

Another advantage of our method is that it enables us to study the variational problem and minimizing sequences even when the constraints $(e_1,\dots,e_n) \notin \M^n_n$ are not attainable by a multisoliton of degree at most $n$.  In order to present our results in this direction, we first need to recall some scattering theory.  Ever since the seminal work~\cite{Gardner1967}, scattering theory for the one-dimensional Schr\"odinger operator $-\partial^2_x + u$ with potential given by the solution $u(x) = u(t,x)$ to KdV for fixed $t$ has proved to be invaluable in our understanding of the dynamics of KdV.  Given a potential $u(x)$ in the weighted Lebesgue space 
\begin{equation*}
L^1_2(\R) = \bigg\{ f:\R\to\R \tx{ such that }\int_{-\infty}^\infty |f(x)|(1+|x|^2)\dx < \infty \bigg\} ,
\end{equation*}
the operator $-\partial^2_x + u$ on $L^2(\R)$ has purely absolutely continuous spectrum $[0,\infty)$ and finitely many simple negative eigenvalues $-\beta_1^2,\dots,-\beta_N^2$.  For such $u$ we can define the transmission and reflection coefficients at frequencies $k\in\R$, both of which are bounded by 1 in modulus.  Multisolitons are distinguished in this class by having identically vanishing reflection coefficients.  Additionally, the transmission coefficient $T(k;u)$ extends to a meromorphic function of $k$ in the upper half-plane $\C^+$ that is continuous down to $\R$ and whose only singularities are simple poles at the square roots $i\beta_1,\dots,i\beta_N$ of the eigenvalues.  Analytically, it is more convenient to work with the reciprocal $a(k;u)$ of the transmission coefficient; this function is holomorphic in $\C^+$ with simple zeros at $i\beta_1,\dots,i\beta_N$.

The Zakharov--Faddeev trace formulas~\cite{Zaharov1971} for the polynomial conserved quantities $E_n(u)$ generalize the formula~\eqref{eq:en multisoliton} for $u = Q_{\bs{\beta},\bs{c}}$.  Explicitly, this sequence begins
\begin{equation}
\begin{aligned}
E_1(u) &= \tfrac{4}{\pi} \int_{-\infty}^\infty k^2\log |a(k;u)| \dk + \tfrac{8}{3} \sum_{m=1}^N \beta_m^3 , \\
E_2(u) &= \tfrac{16}{\pi} \int_{-\infty}^\infty k^4\log |a(k;u)| \dk - \tfrac{32}{5} \sum_{m=1}^N \beta_m^5 , \\
E_3(u) &= \tfrac{64}{\pi} \int_{-\infty}^\infty k^6\log |a(k;u)| \dk + \tfrac{128}{7} \sum_{m=1}^N \beta_m^7 .
\end{aligned}
\label{eq:e1 e3 trace}
\end{equation}
These are valid for $u$ Schwartz, where we have both the energies $E_n(u)$ and the scattering data $a(k;u)$ and $\beta_1,\dots,\beta_N$ at our disposal.  The first terms on the RHS are the even moments of the nonnegative measure $\log|a(k;u)|\dk$ on $\R$.  The second terms are the odd moments of the positive numbers $\beta_1,\dots,\beta_N$, alternating in sign.  For multisolitons $q$ we have $|a(k;q)| \equiv 1$ on $\R$, and so the $\log|a(k;u)|$ moments vanish and we recover~\eqref{eq:en multisoliton}.

After obtaining the $n=2$ case of \cref{thm:var char,thm:orb stab}, Albert and Nguyen~\cite{Albert2021} made the reasonable conjecture for the remaining case $(e_1,e_2) \in \F^2\sm \M^2_2$ that minimizing sequences resemble a collection of solitons with at least two $\beta$ parameters equal.  Using our methods, we prove that this is partially true provided that the constraints are attainable by some multisoliton:
\begin{thm}
\label{thm:N gas}
Given constraints $(e_1,\dots,e_n) \in \M^n_{N} \sm \M^n_{N-1}$ for some $N \geq n+1$, the infimum of $E_{n+1}(u)$ over $u\in\con$ is finite but not attained.

Moreover, if $\{q_k\}_{k\geq 1} \subset \con$ is a minimizing sequence:
\begin{equation*}
E_1(q_k) \to e_1 , \quad\dots,\quad E_n(q_k) \to e_n , \quad E_{n+1}(q_k) \to \inf_{u\in\con} E_{n+1}(u) \quad\tx{as }k\to\infty ,
\end{equation*}
then there exist parameters $\bs{\beta}^1,\dots,\bs{\beta}^J$ of total degree $\sum_{j=1}^J \# \bs{\beta}_{n}^j = N$ taking at most $n$ distinct values so that along a subsequence we have
\begin{equation*}
\inf_{\bs{c}^1,\dots,\bs{c}^J}\, \bigg\lVert q_k - \sum_{j=1}^J Q_{\bs{\beta}^j,\bs{c}^j} \bigg\rVert_{H^n} \to 0 \quad\tx{as }k\to\infty. 
\end{equation*}
\end{thm}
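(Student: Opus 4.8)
The plan is to translate the problem into one about scattering data and to solve a relaxed version of it first. By the Zakharov--Faddeev trace formulas~\eqref{eq:e1 e3 trace} and their higher-order analogues, for Schwartz $u$ each conserved quantity $E_l(u)$ with $1\le l\le n+1$ is a linear functional of the scattering data $(\rho_u,\bs\beta_u)$: here $\rho_u$ is the nonnegative measure $\tfrac1\pi\log|a(k;u)|\,dk$ and $\bs\beta_u=(\beta_1,\dots,\beta_M)$ is the bound-state multiset of $-\partial_x^2+u$, and $E_l(u)=\tfrac{4^l}\pi\int k^{2l}\,d\rho_u+(-1)^{l+1}\tfrac{2^{2l+1}}{2l+1}\sum_m\beta_m^{2l+1}$. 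Relaxing $\rho_u$ to an arbitrary nonnegative measure on $\R$ with finite moments through order $2n+2$, and $\bs\beta_u$ to an arbitrary finite multiset in $(0,\infty)$ (coincidences allowed), yields a \emph{relaxed} minimization problem whose value is automatically a lower bound for $\inf_{\con}E_{n+1}$. I would then (i)~solve the relaxed problem for $(e_1,\dots,e_n)\in\M^n_N\sm\M^n_{N-1}$, (ii)~show that well-separated sums of genuine multisolitons realize the relaxed value in the limit, which forces equality and non-attainment, and (iii)~apply the concentration compactness principle~\cref{thm:conc comp} to show every minimizing sequence is trapped near such configurations.

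For step~(i): the bound $\inf_{\con}E_{n+1}\le E_{n+1}(Q_{\bs\beta,\bs c})$ for a genuine $N$-soliton in $\con$ (which exists because $(e)\in\M^n_N$) is immediate, and the matching lower bound is the value of the relaxed problem, which is finite since the $n$ constraints force $\int k^{2n+2}\,d\rho$ and $\sum_m\beta_m^{2n+3}$ to be bounded below by a power-moment inequality. The relaxed infimum is attained: moment control gives tightness of the candidate measures and, together with a Lagrange-multiplier sign analysis ruling out the proliferation of small eigenvalues, confines near-minimizers to a compact set of data on which $E_{n+1}$ is lower semicontinuous. At a relaxed minimizer I would prove $\rho=0$; this is where the hypothesis $(e)\in\M^n_N\subset\bigcup_{M\ge0}\M^n_M$ enters, since a nonzero reflection part contributes to the $n$ constraints with a sign pattern that competes against the alternating signs of the $\beta$-moments characteristic of the multisoliton regime, and so can be traded away while strictly decreasing $E_{n+1}$. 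Next, the minimizing $\bs\beta$ has at most $n$ distinct values: if it had $M\ge n+1$ distinct values $\gamma_1,\dots,\gamma_M$ with multiplicities $m_i$, then since $\{\beta^2,\beta^4,\dots,\beta^{2(n+1)}\}$ is a Chebyshev system on $(0,\infty)$ the $(n+1)\times M$ matrix $[\gamma_i^{2l}]$ has rank $n+1$, so one can move the $\gamma_i$ (keeping the multiplicities) preserving $E_1,\dots,E_n$ while strictly changing $E_{n+1}$, a contradiction. Finally, the total degree $\sum_i m_i$ equals $N$: a smaller degree could be ``un-merged'' (using that the parameter map to $(E_1,\dots,E_n)$ is a submersion at a configuration with $n$ distinct values) into a genuine multisoliton of degree $<N$ in $\con$, contradicting $(e)\notin\M^n_{N-1}$, while a larger degree is excluded by the power-mean lower bound for the degree combined with a comparison of $E_{n+1}$ across the admissible degrees.

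For step~(ii): since $N\ge n+1$, a multiset of $N$ parameters with at most $n$ distinct values must repeat a value, so it is not the bound-state data of any genuine multisoliton; together with $\rho=0$ this shows that no $u\in\con$ attains the relaxed value, so $\inf_{\con}E_{n+1}$ is not attained. To see that $\inf_{\con}E_{n+1}$ equals the relaxed value, partition the optimal multiset into genuine tuples $\bs\beta^1,\dots,\bs\beta^J$ (distinct entries within each) and study $\sum_{j=1}^JQ_{\bs\beta^j,\bs c^j}$ with the centres driven apart: by the exponential localization of multisoliton densities, $E_l\bigl(\sum_jQ_{\bs\beta^j,\bs c^j}\bigr)\to\sum_jE_l(Q_{\bs\beta^j,\bs c^j})$ as the separations grow, so these sums form an almost-feasible sequence along which $E_{n+1}$ tends to the relaxed value, and a small adjustment of the parameters (again by the submersion property) upgrades this to an honest minimizing sequence of the asserted form. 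For step~(iii): the leading term $\tfrac12\|u^{(n)}\|_{L^2}^2$ of $E_{n+1}$ together with the constraints bounds $\|q_k\|_{H^n}$, so~\cref{thm:conc comp} produces, after passing to a subsequence, a profile decomposition $q_k=\sum_jQ^j(\cdot-x_k^j)+r_k$ with pairwise divergent translations, with $r_k\to0$ in $H^n$ (vanishing and a nontrivial residual being excluded because in the multisoliton regime any $L^2$ mass not bound into solitons is strictly wasteful), and with $\sum_jE_l(Q^j)=e_l$ for $l\le n$ and $\sum_jE_{n+1}(Q^j)=\inf_{\con}E_{n+1}$. Each $Q^j$ is then a constrained minimizer for its own moments $\bs e^j$; since that minimization is attained, $Q^j$ realizes the relaxed value for $\bs e^j$, so $Q^j$ has $\rho=0$ by the argument of step~(i), i.e.\ $Q^j$ is a genuine multisoliton --- and necessarily of degree at most $n$ (by~\cref{thm:var char}, or else by induction on $N$ its problem would not be attained). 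Hence $\bigsqcup_j\bs\beta^j$ is a relaxed minimizer for $(e)$, so it has total degree $N$ and at most $n$ distinct values; this is the assertion.

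I expect the crux to be step~(i), and within it the claims $\rho=0$ and ``total degree $=N$''. Establishing $\rho=0$ requires a quantitative form of ``reflection is inefficient in the multisoliton regime'', most naturally via a Lagrange-multiplier computation showing that the multipliers forced by feasibility (which for a single soliton are negative, cf.\ the Euler--Lagrange equation~\eqref{eq:novikov}) make the $k^{2n+2}$-moment strictly costly; reconciling this with the sign structure of the trace formulas is delicate. Pinning the degree at exactly $N$ is subtle because it must match the intrinsic ``at most $n$ atoms'' bound coming from the $n$ moment constraints against the extrinsic integer $N$ built from the nested geometry of the sets $\M^n_M$, and this likely needs either their explicit description or an induction on $N$. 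A secondary difficulty is the additivity bookkeeping in step~(iii): verifying that $E_1,\dots,E_{n+1}$ acquire no surviving cross terms along the profile decomposition, which rests on the exponential spatial decay of the energy and scattering densities of the profiles.
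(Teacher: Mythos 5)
Your overall architecture is the same as the paper's: pass to the trace formulas~\eqref{eq:en trace}, reduce to a finite-dimensional $\beta$-moment problem in which repeated parameters are allowed, run concentration compactness (\cref{thm:conc comp}), show each profile is a multisoliton, and identify the concatenated parameters with the unique degenerate minimizer. However, the two steps you yourself flag as the crux are genuinely missing, and the mechanisms you sketch for them would not close as stated. First, the ``reflection is wasteful / $\rho=0$'' step: the paper never needs to attain a relaxed minimizer over pairs $(\rho,\bs\beta)$ at all. It proves the lower bound $E_{n+1}(u)\geq C(e_1,\dots,e_n)$ directly by discarding the nonnegative $\log|a|$ integral in the $E_{n+1}$ trace formula and then invoking two quantitative facts: the value function $C$ of the pure-$\beta$ problem is decreasing in each constraint, with $\partial C/\partial e_j<0$ established by a Vandermonde/Vieta computation that must be redone piecewise when the minimizer has repeated values or fewer than $n$ distinct values (\cref{thm:min val is dec,thm:min val is dec 2}), and the attainable constraint sets are downward closed (\cref{thm:down closed,thm:down closed 2}). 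Your ``sign pattern competes'' / ``Lagrange-multiplier sign analysis'' is precisely this monotonicity statement, which you do not prove; without it the trade-away argument is circular. Moreover, the detour through an \emph{attained} relaxed minimizer creates new problems you do not address: weak limits of the measures $\rho$ can lose mass or push mass to $k=0$, so the equality constraints degrade to inequalities in the limit, and to repair this you again need monotonicity of $C$ plus down-closedness---at which point the relaxation buys nothing over the paper's direct inequality.

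Second, non-attainment over $\con\subset H^n(\R)$ does not follow from ``a minimizer's scattering data would be a relaxed minimizer with a repeated bound state,'' because an $H^n$ minimizer has no scattering data a priori. The paper must first bootstrap regularity of a putative minimizer (Euler--Lagrange equation, negativity of the multipliers---again supplied by the $\partial C/\partial e_j$ computation---and the stable manifold theorem, as in \cref{thm:schwartz} adapted inside \cref{thm:minimizers 2}) before \cref{thm:a mom wiggle} and the Blaschke-product argument can be applied; this step is absent from your plan. Two smaller points: identifying the limit of the concatenated profile parameters with a specific multiset requires the uniqueness statement allowing repeated values (\cref{thm:unique params 2}, the Steinig-type argument), which you use implicitly but never justify; and your ``un-merging via submersion'' argument for pinning the total degree at $N$ is inapplicable exactly in the regime of interest, since the constraint map is only a submersion at configurations with $n$ distinct values, whereas the minimizer produced by \cref{thm:N gas lem 1,thm:N gas lem 2} may have fewer (indeed all values may coincide), so that argument cannot be run as stated. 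Your step (iii) bookkeeping (decoupling of $E_1,\dots,E_{n+1}$ along the profile decomposition and vanishing of the remainder) is fine in spirit and matches \cref{thm:min seq lem 4,thm:min seq lem 5,thm:min seq lem 3}, but it too ultimately leans on the monotonicity and uniqueness inputs above.
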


We must have multiple multisolitons in the conclusion of \cref{thm:N gas}, because two multisolitons with a common $\beta$ parameter necessarily become infinitely separated as $k\to\infty$.  On the other hand, we cannot guarantee $N$ single-soliton profiles as originally conjectured in~\cite{Albert2021}, because two distinct values of the minimizing parameters $\bs{\beta}$ can correspond to a multisoliton that does not resemble well-separated solitons.

\Cref{thm:N gas} still does not account for all of the remaining feasible constraints $\F^n$!  In general, we can compute the boundary of $\M^n_N$ by finding the extrema of $E_{n+1}$ for $\beta_1,\dots,\beta_N>0$ distinct.  For $n=2$, it is not difficult to show that
\begin{equation*}
\M^2_N = \big\{ (e_1,e_2) : e_1> 0,\ e_2 \in \big[ {- \tfrac{32}{5} (\tfrac{3}{8})^{\frac{5}{3}} e_1^{\frac{5}{3}}} ,\, {-N^{-\frac{2}{3}} \tfrac{32}{5} (\tfrac{3}{8})^{\frac{5}{3}} e_1^{\frac{5}{3}}} \big) \big\} \cup\{(0,0)\}
\end{equation*}
for all $N\geq 2$.  Indeed, this is closely related to the elementary inequality
\begin{equation*}
N^{-\frac{2}{3}} \bigg( \sum_{m=1}^N \beta_m^3 \bigg)^{\frac{5}{3}}
\leq \sum_{m=1}^N \beta_m^5
\leq \bigg( \sum_{m=1}^N \beta_m^3 \bigg)^{\frac{5}{3}} 
\end{equation*}
which expresses the equivalence of the $\ell^3$- and $\ell^5$-norms on $\R^N$.  The sets $\M^2_N$ are depicted in the phase diagram of \cref{fig:phase diagram}, and can also be understood in terms of the single dimensionless variable $e_2 e_1^{-5/3}$.  Note that this implies that the set $\M^n_N\sm\M^n_{N-1}$ in \cref{thm:N gas} is nonempty for all $n\geq 2$ and $N\geq 1$, since increasing $N$ always introduces new values of $e_2$.  Moreover, we see that the set
\begin{equation}
\bigcup_{N\geq 0} \M^2_N = \big\{ (e_1,e_2) : e_1> 0,\ e_2 \in \big[ {- \tfrac{32}{5} (\tfrac{3}{8})^{\frac{5}{3}} e_1^{\frac{5}{3}}} ,\, 0 \big) \big\} \cup\{(0,0)\}
\label{eq:MN}
\end{equation}
misses a large portion of the feasible constraints $\F^2$ in~\eqref{eq:M2 F2}.

We discover that in the remaining case where the constraints $(e_1,\dots,e_n)\in \F^n \sm \bigcup_{N\geq 0} \M^n_N$ are not attainable by any multisoliton, Albert--Nguyen's conjecture cannot be true and entirely different behavior is exhibited.  To illustrate this point, we characterize Schwartz minimizing sequences in the case $n=2$:
\begin{thm}
\label{thm:point mass}
Given constraints $(e_1,e_2) \in \F^2$ with $(e_1,e_2) \notin \M^2_N$ for all $N$, the infimum of $E_{3}(u)$ over $u\in\con\cap\schwar(\R)$ is finite but not attained.

Moreover, if $\{q_j\}_{j\geq 1} \subset \con\cap\schwar(\R)$ is a minimizing sequence:
\begin{equation*}
E_1(q_j) \to e_1 , \quad E_2(q_j) \to e_2 , \quad E_{3}(q_j) \to \inf_{u\in\con} E_{3}(u)
\quad\tx{as }j\to\infty ,
\end{equation*}
then $\beta_{j,m} \to 0$ as $j\to\infty$ for all $m$, and $k\mapsto \log|a(k;q_j)|$ converges in the sense of distributions to the even extension of a unique Dirac delta distribution (i.e.~$c_0(\dd\del_{k_0}(k) + \dd\del_{-k_0}(k))$ for unique constants $c_0,k_0\geq 0$).
\end{thm}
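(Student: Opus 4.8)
The engine of the proof is the Zakharov--Faddeev trace formulas~\eqref{eq:e1 e3 trace}.  For a Schwartz potential $u$ with eigenvalues $-\beta_1^2,\dots,-\beta_N^2$, write $\dd\rho_u=\log|a(k;u)|\dk$ for the associated nonnegative measure on $\R$, and put $\mu_m=\tfrac1\pi\int_{-\infty}^\infty k^{2m}\dd\rho_u$ and $s_m=\sum_j\beta_j^{2m+1}$, so that~\eqref{eq:e1 e3 trace} reads $E_1(u)=4\mu_1+\tfrac83 s_1$, $E_2(u)=16\mu_2-\tfrac{32}5 s_2$, $E_3(u)=64\mu_3+\tfrac{128}7 s_3$.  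By~\eqref{eq:M2 F2} and~\eqref{eq:MN} the hypothesis on $(e_1,e_2)$ is exactly $e_1>0$, $e_2\geq0$.  For $u\in\con\cap\schwar$ the constraints say $\mu_1=\tfrac{e_1}4-\tfrac23 s_1\geq0$ and $\mu_2=\tfrac{e_2}{16}+\tfrac25 s_2$, and in fact $\mu_1>0$ (otherwise $\dd\rho_u$ sits at the origin, so $\mu_2=0$, so $e_2=s_2=0$, so there are no eigenvalues, so $\mu_1=e_1/4>0$).  The moment inequality $\mu_2^2\leq\mu_1\mu_3$ for $\dd\rho_u$ then gives
\begin{equation*}
E_3(u)=64\mu_3+\tfrac{128}7 s_3\;\geq\;64\,\frac{\bigl(\tfrac{e_2}{16}+\tfrac25 s_2\bigr)^2}{\tfrac{e_1}4-\tfrac23 s_1}+\tfrac{128}7 s_3\;\geq\;64\,\frac{(e_2/16)^2}{e_1/4}\;=\;\frac{e_2^2}{e_1},
\end{equation*}
since the numerator is nondecreasing and the denominator nonincreasing in $s_1,s_2\geq0$.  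Hence the infimum is finite, with value at least $e_2^2/e_1$.

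For the reverse inequality take first $e_2>0$ and set $k_0=\tfrac12(e_2/e_1)^{1/2}$, $c_0=\pi e_1^2/(2e_2)$.  Using classical one-dimensional inverse scattering theory, one builds eigenvalue-free Schwartz potentials $q_j$ whose reflection coefficients $R_j$ are supported in shrinking neighborhoods of $\pm k_0$, satisfy $|R_j|<1$, and are normalized so that $\dd\rho_{q_j}=-\tfrac12\log(1-|R_j|^2)\dk$ has $\mu_1\to e_1/4$, $\mu_2\to e_2/16$, and $\dd\rho_{q_j}\rightharpoonup c_0(\dd\del_{k_0}+\dd\del_{-k_0})$.  Then $E_1(q_j)\to e_1$, $E_2(q_j)\to e_2$, and $E_3(q_j)\to e_2^2/e_1$; the case $e_2=0$ follows by letting $k_0\downarrow0$.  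With the density of $\con\cap\schwar$ in $\con$ and the continuity of $E_1,E_2,E_3$, this gives $\inf_{u\in\con}E_3(u)=e_2^2/e_1$.  The infimum is not attained: if $u\in\con\cap\schwar$ had $E_3(u)=e_2^2/e_1$, equality would hold throughout the display, so $s_3=0$ (no eigenvalues) and $\mu_2^2=\mu_1\mu_3$; the latter forces $k^2$ to be $\dd\rho_u$-a.e.\ constant, i.e.\ $\dd\rho_u$ is supported on a two-point set $\{\pm k_0\}$.  But for an eigenvalue-free Schwartz potential $k\mapsto\log|a(k;u)|$ is continuous (real-analytic off $k=0$, with at worst a logarithmic singularity there), so $\dd\rho_u$ is absolutely continuous, hence can be supported on $\{\pm k_0\}$ only if it vanishes --- forcing $\mu_1=0$, a contradiction.

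Now let $\{q_j\}\subset\con\cap\schwar$ be a minimizing sequence; write $\mu_m^j,s_m^j$ for the associated quantities.  Inserting $E_1(q_j)\to e_1$, $E_2(q_j)\to e_2$, $E_3(q_j)\to e_2^2/e_1$ into the chain of inequalities (with $E_1(q_j),E_2(q_j)$ in place of $e_1,e_2$) shows that $\tfrac{128}7 s_3^j\to0$ and that the slack in the moment manipulation tends to $0$; hence $s_3^j\to0$, so $\max_m\beta_{j,m}\to0$ (that is, $\beta_{j,m}\to0$ for every $m$), and when $e_2>0$ the monotonicity above also forces $s_1^j,s_2^j\to0$, whence $\mu_1^j\to e_1/4$, $\mu_2^j\to e_2/16$, $\mu_3^j\to e_2^2/(64e_1)$ with the extremal relation $\mu_2^2=\mu_1\mu_3$ in the limit.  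The nonnegative even measures $\sigma_j=k^2\dd\rho_{q_j}$ have bounded mass $\pi\mu_1^j$ and are tight, since $\int_{|k|>R}\dd\sigma_j\leq R^{-4}\int k^4\dd\sigma_j$ is uniformly small; so along a subsequence $\sigma_j\rightharpoonup\sigma$ weakly, with $\int\dd\sigma=\pi\mu_1$, $\int k^2\dd\sigma=\pi\mu_2$ (uniform integrability of $k^2$ from the $k^4$-bound), and $\int k^4\dd\sigma\leq\pi\mu_3$ (Fatou).  Since by Cauchy--Schwarz $(\int k^2\dd\sigma)^2\leq\int\dd\sigma\cdot\int k^4\dd\sigma\leq\pi\mu_1\cdot\pi\mu_3=(\int k^2\dd\sigma)^2$, equality holds throughout, so $\sigma$ is supported on $\{|k|=k_0\}$, $k_0=(\mu_2/\mu_1)^{1/2}=\tfrac12(e_2/e_1)^{1/2}$, and by evenness $\sigma=\tfrac{\pi e_1}8(\dd\del_{k_0}+\dd\del_{-k_0})$.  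Dividing by $k^2$ gives $\dd\rho_{q_j}\rightharpoonup c_0(\dd\del_{k_0}+\dd\del_{-k_0})$ on $\R\sm\{0\}$ with $c_0=\pi e_1^2/(2e_2)$; to upgrade to convergence in $\mc{D}'(\R)$ one excludes a Dirac mass at the origin via a uniform low-frequency bound $\log|a(k;q_j)|\leq C(1+\log\tfrac1{|k|})$ for $|k|\leq1$ (valid because $\{q_j\}$ lies in a bounded set), which makes $\int_{|k|<\eps}\log|a(k;q_j)|\dk\to0$ as $\eps\downarrow0$ uniformly in $j$.  The limit does not depend on the subsequence, so the full sequence converges and $c_0,k_0$ are unique.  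When $e_2=0$ one instead has $\mu_3^j\to0$, hence $\mu_2^j\to0$, and the same low-frequency bound together with $\int_{|k|>\eps}\log|a(k;q_j)|\dk\leq\eps^{-4}\pi\mu_2^j\to0$ yields $\dd\rho_{q_j}\rightharpoonup0$: the degenerate case $c_0=k_0=0$, in which the $\beta_{j,m}$ then necessarily form an infinitesimal gas with $s_1^j\to3e_1/8$.

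I expect the two genuine obstacles to be the following.  First, the inverse-scattering construction of the minimizing sequence: one must produce honest Schwartz potentials whose reflection coefficients are nearly unimodular on narrow frequency bands --- so that $\dd\rho_{q_j}$ approximates a symmetric pair of point masses of prescribed mass and location while keeping the first two energies on target --- which requires the surjectivity and continuity of the direct and inverse scattering maps together with a careful normalization of $R_j$.  Second, ruling out concentration of $\log|a(k;q_j)|$ at $k=0$: this behavior is invisible to $E_1,E_2,E_3$, which see only the even moments $k^2\dd\rho$, $k^4\dd\rho$, $k^6\dd\rho$ of $\dd\rho$, and so it must be killed by a separate uniform estimate on $a(\,\cdot\,;q_j)$ near the origin --- precisely the low-frequency bound invoked above.
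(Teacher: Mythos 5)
Your overall strategy coincides with the paper's: the trace formulas~\eqref{eq:en trace}, the Cauchy--Schwarz inequality among the moments of $\log|a|$, and tightness/Prokhorov plus the equality case of Cauchy--Schwarz to force a point mass in the limit.  Your lower bound is in fact leaner than the paper's route through \cref{thm:point mass lem 1,thm:point mass lem 2}: since $e_2\geq 0$ you discard the $\beta$-contribution to $E_3$ outright and use only $\mu_1\leq e_1/4$, $\mu_2\geq e_2/16$, avoiding the function $C$ and the two-variable minimization over $(\gamma_0,\gamma_1)$; and your squeeze argument giving $s_1^j,s_2^j\to 0$ when $e_2>0$ makes explicit a step the paper only asserts when it passes to the limits of the moments of $d\mu_j$.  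The inverse-scattering construction of near-minimizers that you flag as an obstacle is exactly what \cref{thm:inv scat thy} provides, used as in the proof of \cref{thm:a mom wiggle}, so that ingredient is available.

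The genuine gap is the uniform low-frequency bound $\log|a(k;q_j)|\leq C(1+\log\tfrac{1}{|k|})$ for $|k|\leq 1$, which you justify by ``$\{q_j\}$ lies in a bounded set.''  The only uniform control a minimizing sequence comes with is an $H^n$ bound via~\eqref{eq:ests for en cts}; bounds on $a(k;q)$ near $k=0$ of the type you invoke come from Jost-function/Wronskian estimates and require uniform control of $\snorm{q_j}_{L^1_1}$, which $H^2$-boundedness does not supply (and Schwartz membership is not uniform in $j$).  Worse, no such bound can hold along every minimizing sequence: using \cref{thm:inv scat thy} one may prescribe $\log|a(\,\cdot\,;q_j)|$ to be a narrow bump near $\pm k_0$ carrying the constraints together with a second bump at $|k|\sim\eta_j\downarrow 0$ of height of order $M\eta_j^{-1}$; the second bump contributes $O(\eta_j^{2})$ to every moment $\int k^{2m}\log|a|\dk$ with $m\geq 1$, so after a tiny retuning of the main bump one still has a minimizing sequence in $\con\cap\schwar(\R)$, yet it carries $\log|a|$-mass $M$ (or even mass tending to infinity) concentrating at the origin.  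So concentration at $k=0$ cannot be excluded by an a priori estimate; your instinct that something must be said there is correct, but the step as written fails, and what the moment analysis actually yields is the convergence of the weighted measures $k^2\log|a(k;q_j)|\dk$ (equivalently, convergence against test functions vanishing to second order at the origin) --- which is in effect all that the paper's own argument with $d\mu_j$ establishes before it divides by the weight.  Relatedly, your parenthetical claim in the case $e_2=0$ that necessarily $s_1^j\to 3e_1/8$ is not forced: the mass of $k^2\log|a(k;q_j)|\dk$ can itself concentrate at the origin, keeping $\mu_1^j$ bounded away from zero while $\mu_2^j,\mu_3^j\to 0$, so the limit need not be a pure gas of vanishing solitons.
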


What might such a minimizing sequence look like?  We can exhibit a family of these sequences by means of an ansatz inspired by the Wigner--von Neumann example of a Schr\"odinger potential with a positive eigenvalue.  Given parameters $c>0$ and $k\geq 0$, it is straightforward to check that the sequence
\begin{equation*}
q_n(x) = \sqrt{\tfrac{c}{n}} e^{-\frac{x^2}{2n^2}} \cos(2kx)
\end{equation*}
obeys
\begin{equation*}
E_1(q_n) \to \tfrac{\sqrt{\pi}}{4} c , \quad
E_2(q_n) \to \sqrt{\pi} ck^2 , \quad
E_3(q_n) \to 4\sqrt{\pi} ck^4
\end{equation*}
as $n\to\infty$.  In the proof of \cref{thm:point mass} we will explicitly compute the constrained infimum of $E_3$, and it is given by $e^2_2e_1^{-1}$ (cf.~\cref{thm:point mass lem 2}).  We see that the limit of $E_3(q_n)$ above is exactly equal to this quantity, and so $\{q_n\}_{n\geq 1}$ is a Schwartz minimizing sequence.  By \cref{thm:point mass}, we deduce that this sequence has vanishing $\beta$ parameters and $\log|a(k;q_n)|$ converging to the even extension of a delta distribution.

This paper is organized as follows. In \cref{sec:prelim} we recall some scattering theory and facts about the energy functionals $E_n$, with the Zakharov–-Faddeev trace formulas~\eqref{eq:en trace} lying at the center.  In \cref{sec:eng}, we then further analyze the functionals $E_1,\dots,E_{n+1}$ on the manifold of multisolitons and use this to describe the set $\M^n_n$ of constraints.

The proof of \cref{thm:var char} is then presented in \cref{sec:min}.  A key step is realizing that in order to minimize $E_{n+1}$, a minimizer $q$ must satisfy $\log|a(k;q)| \equiv 0$ on $\R$ (cf.~\eqref{eq:trans coeff 1} and~\eqref{eq:en trace}).  We know that $\log|a(k;u)|$ can be brought all the way down to zero since the constraints can be met solely by the moments of $\bs{\beta}\in\R^n$; this is why we must assume that $(e_1,\dots,e_n)\in \M_n^n$ in \cref{thm:var char}.   Next, we prove that $|a(k;q)|\equiv 1$ on $\R$ if and only if $q$ is a multisoliton.  The ``if'' statement is already known from scattering theory (cf.~\eqref{eq:blaschke}).  For the reverse implication, we use some classical complex analysis to characterize $k\mapsto a(k;q)$ on $\C^+$ and conclude that $q$ is a multisoliton (cf.~\cref{thm:inner outer fac}).   It then remains to show that on the manifold of multisolitons, there is a unique minimizing set of $\beta$ parameters.  First, we can rule out the case of $N\geq n+1$ parameters by a variational argument (cf.~\cref{thm:beta mom wiggle}).  For the remaining $N\leq n$ unknown parameters $\beta_1,\dots,\beta_N$, the formulas~\eqref{eq:en multisoliton} for the $n$ constraints provide a nonlinear system of equations, which we show has a unique solution in \cref{thm:unique params}.

In \cref{sec:orb stab} we apply a concentration compactness principle (\cref{thm:conc comp}) to minimizing sequences in order to prove \cref{thm:orb stab}.

In \cref{sec:N gas}, we prove \cref{thm:N gas} by adapting the methods of \cref{sec:min,sec:eng,sec:orb stab} to allow for non-distinct $\beta$ parameters.

Finally, in \cref{sec:point mass} we prove \cref{thm:point mass}.  The proof is again based on the trace formulas~\eqref{eq:e1 e3 trace}.  The condition $(e_1,e_2) \notin \M^2_N$ requires that the $\log|a(k;q_j)|$ moments are nonvanishing as $j\to\infty$.  Consequently, the sequence $\{q_j\}_{j\geq 1}$ is minimizing a constrained moment problem for measures, and such minimizers are finite linear combinations of point masses.  This particular moment problem for $n=2$ is easily solved using the Cauchy--Schwarz inequality, but for general $n$ it is a Stieltjes moment problem.  (We recommend~\cite{Simon1998} for an introduction to this classical analysis result.)

\begin{ack}
I was supported in part by NSF grants DMS-1856755 and DMS-1763074.  I would also like to thank my advisors, Rowan Killip and Monica Vi\c{s}an, for their guidance, and Jaume de Dios Pont for pointing out the proof method from~\cite{Steinig1971} for \cref{thm:unique params}.
\end{ack}

%%%%%%%%%%%%%%%%%%%%%%%%%%%%%%%%%%%%%%%%%%%%%%%%%%%%%%%%%%%%%%%%%

\section{Preliminaries}
\label{sec:prelim}

In this section, we recall some facts about the energy functionals $E_n$ and some results from scattering theory for future reference.  In particular, this will enable us to formulate the Zakharov--Faddeev trace formulas~\eqref{eq:en trace} that lie at the heart of our analysis.

The functionals~\eqref{eq:e1 and e2} and~\eqref{eq:e3} are the beginning of an infinite sequence of polynomial conserved quantities~\eqref{eq:en 3}.  We will only need certain properties of these functionals however, rather than their exact formula.  
\begin{prop}[\cite{Miura1968}]
\label{thm:en}
Given an integer $n\geq 0$, there exists a functional of the form
\begin{equation}
E_{n+1}(u) = \int_{-\infty}^\infty \big[ \tfrac{1}{2} \big( u^{(n)} \big)^2 + P_{n+1}(u) \big]\dx
\label{eq:en}
\end{equation}
that is conserved for Schwartz solutions $u(t)$ to the KdV equation~\eqref{eq:kdv}, where $P_{n+1}$ is a polynomial in $u,u',\dots,u^{(n-1)}$.  Moreover, each term of $P_{n+1}$ is of the form $c_{\alpha_1,\dots,\alpha_d} u^{(\alpha_1)}\cdots u^{(\alpha_d)}$ with $d\geq 3$ and obeys
\begin{equation}
\sum_{j=1}^d \alpha_j = 2n + 4 - 2d
\quad\tx{and}\quad
0 \leq \alpha_j \leq n-1 .
\label{eq:en 2}
\end{equation}
\end{prop}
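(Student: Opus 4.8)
The plan is to start from the explicit density furnished by the recursion recalled in the introduction: take $E_{n+1}(u)=(-1)^{n+1}\tfrac12\int \sigma_{2n+3}(x)\dx$, whose conservation along Schwartz solutions of~\eqref{eq:kdv} is the classical theorem of~\cite{Miura1968} (see also~\cite{Zaharov1971}); thus only the structural assertions~\eqref{eq:en}--\eqref{eq:en 2} require proof. The key preliminary observation is that $\sigma_m$ is a differential polynomial in $u$ with no constant term which is homogeneous of weight $m+1$ for the grading assigning weight $2$ to $u$ and weight $1$ to $\partial_x$. This follows by induction on the recursion: $\sigma_1=u$ has weight $2$, and if $\sigma_m$ has weight $m+1$ then $\sigma_m'$ and each product $\sigma_j\sigma_{m-j}$ have weight $m+2$, so $\sigma_{m+1}$ has weight $m+2$. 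Taking $m=2n+3$, every monomial $c_\alpha\,u^{(\alpha_1)}\cdots u^{(\alpha_d)}$ of $\sigma_{2n+3}$ then obeys $\sum_{i=1}^{d}(\alpha_i+2)=2n+4$, i.e.\ $\sum_i\alpha_i=2n+4-2d$, which is the first relation in~\eqref{eq:en 2} (and $\alpha_i\geq0$ forces $d\leq n+2$).

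Next I would decompose $\sigma_{2n+3}=\sigma^{(1)}+\sigma^{(2)}+\sigma^{(\geq3)}$ according to the number $d$ of factors in each monomial, and treat the three pieces separately. The linear part satisfies $\sigma^{(1)}_{m+1}=-(\sigma^{(1)}_m)'$ with $\sigma^{(1)}_1=u$, hence $\sigma^{(1)}_{2n+3}=(-1)^{2n+2}u^{(2n+2)}=u^{(2n+2)}$, which integrates to zero. The quadratic part obeys the closed recursion $\sigma^{(2)}_{m+1}=-(\sigma^{(2)}_m)'-(-1)^m\sum_{i=0}^{m-2}u^{(i)}u^{(m-2-i)}$, since in the main recursion a quadratic contribution can arise in $\sum_j\sigma_j\sigma_{m-j}$ only from products of linear parts and $\sigma^{(1)}_j=(-1)^{j-1}u^{(j-1)}$. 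Using that $\int u^{(i)}u^{(j)}\dx$ equals $(-1)^{(i+j)/2-i}\int (u^{((i+j)/2)})^2\dx$ when $i+j$ is even and vanishes when $i+j$ is odd, together with $\sum_{i=0}^{2\ell}(-1)^i=1$, a short computation gives $\int \sigma^{(2)}_{2n+3}\dx=(-1)^{n+1}\int (u^{(n)})^2\dx$, so this piece contributes exactly $\tfrac12\int (u^{(n)})^2\dx$ to $E_{n+1}$. Setting $\int P_{n+1}(u)\dx:=(-1)^{n+1}\tfrac12\int \sigma^{(\geq3)}_{2n+3}\dx$ then puts $E_{n+1}$ in the form~\eqref{eq:en}, with $P_{n+1}$ a differential polynomial each of whose monomials has $d\geq3$ factors and, by the weight relation, exponents summing to $2n+4-2d$.

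It remains to normalize $P_{n+1}$ modulo total derivatives so that no factor has order exceeding $n-1$ (legitimate since only the functional $\int P_{n+1}\dx$ is at stake), and this is the one genuinely technical step. Suppose a monomial $u^{(\alpha_1)}\cdots u^{(\alpha_d)}$ with $d\geq3$ has $M:=\alpha_1=\max_i\alpha_i\geq n$. Then $\sum_i\alpha_i=2n+4-2d\leq2n-2$ forces $\sum_{i\geq2}\alpha_i\leq 2n-2-M\leq n-2$, so every factor other than the first has order $\leq n-2$, and in particular the top factor is the unique one of order $\geq n$. Integrating by parts to move one derivative off $u^{(M)}$ rewrites $\int u^{(\alpha_1)}\cdots u^{(\alpha_d)}\dx$ as $-\sum_{k\geq2}\int u^{(M-1)}u^{(\alpha_k+1)}\prod_{i\neq1,k}u^{(\alpha_i)}\dx$, a sum of monomials each with $d$ factors and the same exponent sum in which, since $\alpha_k+1\leq n-1$ and $\alpha_i\leq n-2$ for $i\geq2$, every factor except the first has order $\leq n-1$; if $M-1\geq n$ the first factor is again the unique factor of order $\geq n$ and one iterates, whereas if $M=n$ all orders are now $\leq n-1$. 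Since the maximal order strictly decreases at each step, the procedure terminates, leaving $P_{n+1}$ a polynomial in $u,u',\dots,u^{(n-1)}$ with the homogeneity asserted in~\eqref{eq:en 2} (for $n=0$ there is nothing to do, as $\sum_i\alpha_i=4-2d\geq0$ admits no $d\geq3$). I expect the bookkeeping in this last reduction — in particular verifying that the hypothesis $d\geq3$ keeps the high-order factor unique throughout the iteration — to be the main point requiring care, alongside the parity computation that pins down the coefficient $\tfrac12$ of the quadratic term.
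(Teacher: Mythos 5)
Your proposal is correct, but there is no argument in the paper to compare it with: \cref{thm:en} is stated as a known result, cited to \cite{Miura1968}, and the paper gives no proof of it. Your route is a legitimate self-contained derivation of the structural claims, importing only the conservation of $(-1)^{n+1}\tfrac12\int\sigma_{2n+3}\dx$ from the literature --- exactly the input the paper itself takes on citation. The two points you flag as delicate do check out. For the quadratic piece, your closed recursion $\sigma^{(2)}_{m+1}=-(\sigma^{(2)}_m)'-(-1)^m\sum_{i=0}^{m-2}u^{(i)}u^{(m-2-i)}$ with $m=2n+2$ gives $\int\sigma^{(2)}_{2n+3}\dx=-\sum_{i=0}^{2n}(-1)^{n-i}\int(u^{(n)})^2\dx=(-1)^{n+1}\int(u^{(n)})^2\dx$, which against the prefactor $(-1)^{n+1}\tfrac12$ produces exactly $\tfrac12\int(u^{(n)})^2\dx$ (and the low cases $n=0,1$ reproduce~\eqref{eq:e1 and e2}). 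For the normalization step, the estimate that keeps the iteration safe is available at \emph{every} stage, not only the first: any monomial with $d\geq3$ factors, exponent sum $2n+4-2d\leq 2n-2$, and maximal order $M\geq n$ has its remaining exponents summing to at most $n-2$, so each is at most $n-2$; hence moving one derivative off the (necessarily unique) top factor keeps all other orders $\leq n-1$, preserves $d$ and the exponent sum, and strictly decreases the maximal order, so the reduction terminates with a density that is a polynomial in $u,\dots,u^{(n-1)}$ satisfying~\eqref{eq:en 2}, while the value of the functional --- and therefore its conservation --- is unchanged since only total derivatives of Schwartz functions were discarded.
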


Each term of $P_{n+1}$ is of cubic or higher order, and the condition~\eqref{eq:en 2} simply says that they share the same scaling symmetry as the quadratic term $\tfrac{1}{2} ( u^{(n)} )^2$.  In particular, this requires that the degree of $P_{n+1}$ is at most $n+2$.

When combined with Sobolev embedding, a classical argument (cf.~\cite{Lax1975}*{Th.~3.1} in the periodic case) yields estimates for the functionals~\eqref{eq:en}:
\begin{lem}
Given $n\geq 0$, we have
\begin{equation}
E_{n+1}(u) \lesssim_{\norm{u}_{H^n}} 1
\quad\tx{and}\quad
\norm{u}_{H^n} \lesssim_{E_1(u),\dots,E_{n+1}(u)} 1
\label{eq:ests for en cts}
\end{equation}
uniformly for $u\in\schwar(\R)$.  Moreover, $E_{n+1} : H^n(\R) \to \R$ is continuous.
\end{lem}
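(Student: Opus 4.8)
The goal is to prove the estimates~\eqref{eq:ests for en cts} and the continuity of $E_{n+1}:H^n(\R)\to\R$. I will treat the two estimates first for Schwartz $u$ and then bootstrap continuity by density.

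\textbf{Step 1: The upper bound $E_{n+1}(u)\lesssim_{\norm{u}_{H^n}}1$.}
Using the explicit shape~\eqref{eq:en}, it suffices to bound each monomial term $c_{\alpha_1,\dots,\alpha_d}\int u^{(\alpha_1)}\cdots u^{(\alpha_d)}\dx$ with $d\geq 3$ and $\sum\alpha_j=2n+4-2d$, $0\leq\alpha_j\leq n-1$. The plan is to apply a Gagliardo--Nirenberg inequality to each factor: for each $j$ estimate $\norm{u^{(\alpha_j)}}_{L^{p_j}}$ by an interpolation between $\norm{u}_{L^2}$ and $\norm{u^{(n)}}_{L^2}$ (equivalently $\norm{u}_{H^n}$), choosing the exponents $p_j$ with $\sum 1/p_j=1$ so that H\"older applies to the product. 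A direct bookkeeping check using the scaling relation $\sum\alpha_j=2n+4-2d$ shows that the total number of derivatives falling on the highest-order norm $\norm{u^{(n)}}_{L^2}$ across all factors comes out to at most $2$; because the quadratic part of $E_{n+1}$ already carries the full $\norm{u^{(n)}}_{L^2}^2$, this is exactly the threshold at which all lower-order terms are controlled by $\norm{u}_{H^n}$ to some finite power (with the $H^n$-norm in the implicit constant). This gives the first inequality in~\eqref{eq:ests for en cts}.

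\textbf{Step 2: The lower bound (coercivity) $\norm{u}_{H^n}\lesssim_{E_1(u),\dots,E_{n+1}(u)}1$.}
I would argue by induction on $n$. For $n=0$, $E_1(u)=\tfrac12\norm{u}_{L^2}^2$ gives $\norm{u}_{L^2}$ directly. For the inductive step, assume $\norm{u}_{H^{n-1}}$ is controlled by $E_1,\dots,E_n$. From~\eqref{eq:en} we have $\tfrac12\norm{u^{(n)}}_{L^2}^2 = E_{n+1}(u)-\int P_{n+1}(u)\dx$, so it remains to bound $\left|\int P_{n+1}(u)\dx\right|$ by $\norm{u}_{H^{n-1}}$ plus a small multiple of $\norm{u^{(n)}}_{L^2}^2$ that can be absorbed. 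The key point is that every factor $u^{(\alpha_j)}$ in $P_{n+1}$ has $\alpha_j\leq n-1$, so one estimates $\norm{u^{(\alpha_j)}}_{L^{p_j}}$ by interpolating between $\norm{u}_{L^2}$ and $\norm{u^{(n-1)}}_{L^2}\lesssim \norm{u}_{H^{n-1}}$, and the worst-case appearance of the top norm $\norm{u^{(n)}}_{L^2}$ in these estimates is at power strictly less than $2$ — again by the scaling count in~\eqref{eq:en 2} together with $d\geq 3$. Hence Young's inequality lets us absorb the top-order term and conclude $\norm{u^{(n)}}_{L^2}\lesssim_{E_1,\dots,E_{n+1}}1$, completing the induction. (In fact one could alternatively get the top-order factor to appear linearly, which makes the absorption immediate.)

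\textbf{Step 3: Continuity of $E_{n+1}$ on $H^n(\R)$.}
Both estimates in Steps 1--2 extend from $\schwar(\R)$ to all of $H^n(\R)$ by density: given $u\in H^n$, take $u_k\in\schwar$ with $u_k\to u$ in $H^n$; the upper bound shows $E_{n+1}(u_k)$ is bounded, and one checks directly from~\eqref{eq:en} that $E_{n+1}(u_k)$ is Cauchy. Indeed, $E_{n+1}(u)-E_{n+1}(v)$ is a sum of $\int\big[\tfrac12((u^{(n)})^2-(v^{(n)})^2)\big]\dx$, which is $\tfrac12\langle u^{(n)}-v^{(n)},u^{(n)}+v^{(n)}\rangle$ and hence $\lesssim \norm{u-v}_{H^n}(\norm{u}_{H^n}+\norm{v}_{H^n})$, plus polynomial terms $\int(P_{n+1}(u)-P_{n+1}(v))\dx$; writing each difference of monomials as a telescoping sum where one factor is replaced by $u^{(\alpha_j)}-v^{(\alpha_j)}$ at a time and applying the same Gagliardo--Nirenberg/H\"older scheme as in Step 1 (now with all derivative orders $\leq n$), one gets $|E_{n+1}(u)-E_{n+1}(v)|\lesssim_{\norm{u}_{H^n},\norm{v}_{H^n}}\norm{u-v}_{H^n}$, i.e.\ $E_{n+1}$ is locally Lipschitz, hence continuous, on $H^n(\R)$. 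This simultaneously shows the two estimates in~\eqref{eq:ests for en cts} hold for all $u\in H^n(\R)$.

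\textbf{Main obstacle.}
The only real work is the multilinear bookkeeping in Steps 1--2: verifying that the scaling constraint $\sum_j\alpha_j=2n+4-2d$ with $d\geq 3$ and $\alpha_j\leq n-1$ forces the top-order Sobolev norm $\norm{u^{(n)}}_{L^2}$ to enter each Gagliardo--Nirenberg estimate of $\int P_{n+1}$ with exponent at most $2$ (in Step 1) and strictly below $2$ (in Step 2, so that it can be absorbed). This is the "classical argument" alluded to after \cref{thm:en}; it is routine once one fixes the exponents carefully, but it is the one place where the precise structure~\eqref{eq:en 2} is used rather than just the general shape~\eqref{eq:en}.
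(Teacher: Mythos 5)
Your proposal is correct, and it is essentially the argument the paper itself invokes: the paper gives no detailed proof but appeals to the classical Sobolev-embedding/Gagliardo--Nirenberg scheme of Lax (cited as \cite{Lax1975}*{Th.~3.1} in the periodic case), which is exactly your Steps 1--2, with the scaling condition~\eqref{eq:en 2} ensuring the top-order norm enters the nonlinear terms with total power strictly below $2$ so it can be absorbed. Your Step 3 (telescoping the monomials to get local Lipschitz continuity on $H^n(\R)$) is the standard way to obtain the continuity assertion and is consistent with how the paper uses the lemma.
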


Here (and throughout this paper), we are using the familiar $L^2$-based Sobolev spaces $H^s(\R)$ (and the $L^p$-based spaces $W^{j,p}(\R)$) of real-valued functions on $\R$.  In addition to these classes, the scattering theory that we need to state our trace formulas will require that we work in the weighted $L^1$-spaces
\begin{equation*}
L^1_j(\R) := \bigg\{ f:\R\to\R \tx{ such that } \int_{-\infty}^\infty |f(x)| (1+|x|^j)\dx < \infty \bigg\}
\end{equation*}
with $j\geq 1$.  When we need a common ground, we will use the class $\schwar(\R)$ of Schwartz functions.

%Jost functions, scattering matrix
Given a potential $q\in L^1_1(\R)$ and $k\in\R\sm\{0\}$, the Jost functions $f_j(x;k)$ are the unique solutions to the corresponding Schr\"odinger equation
\begin{equation*}
- f''_j + qf_j = k^2 f_j , \quad j=1,2
\end{equation*}
with the asymptotics
\begin{equation*}
f_1(x;k) \sim e^{ikx} \quad\tx{as } x\to +\infty , \qquad
f_2(x;k) \sim e^{-ikx} \quad\tx{as } x\to -\infty .
\end{equation*}
The transmission and reflection coefficients $T_j(k)$ and $R_j(k)$ are then uniquely determined by
\begin{align*}
T_1(k) f_2(x;k) &= R_1(k) f_1(x;k) + f_1(x;-k) , \\
T_2(k) f_1(x;k) &= R_2(k) f_2(x;k) + f_2(x;-k) .
\end{align*}

Forward scattering theory tells us that the transmission and reflection coefficients satisfy the following properties.  Proofs of these facts can be found in many introductory texts on the subject; however, we recommend the paper~\cite{Deift1979}*{\S2 Th.~1} of Deift and Trubowitz for a complete and self-contained proof.
\begin{prop}[Forward scattering theory]
\label{thm:scat thy}
If $q\in L^1_2(\R)$, then the scattering matrix
\begin{equation*}
S(k) := \begin{pmatrix}
T_1(k) & R_2(k) \\ R_1(k) & T_2(k) 
\end{pmatrix}
\end{equation*}
extends to a continuous function of $k\in\R$ satisfying the following properties:
\begin{enumerate}[label=(\roman*)]
\item (Symmetry) For all $k\in\R$,
\begin{equation*}
T_1(k) \equiv T_2(k) =: T(k) .
\end{equation*}
\item (Unitarity) The matrix $S(k)$ is unitary for all $k\in \R$:
\begin{equation*}
T(k) \ol{R_2(k)} + R_1(k) \ol{T(k)} \equiv 0 , \qquad
|T(k)|^2 + |R_j(k)|^2 \equiv 1 \quad\tx{for } j=1,2.
\end{equation*}
\item (Analyticity) $T(k)$ is meromorphic in the open upper half-plane $\C^+$ and is continuous down to $\R$.  Moreover, $T(k)$ has a finite number of poles $i\beta_1,\dots,i\beta_N$, all of which are simple and on the imaginary axis, and $-\beta_1^2$, $\dots,-\beta_N^2$ are the bound states of the operator $-\partial^2_x +q$.
\item (Asymptotics)  We have
\begin{align*}
T(k) &= 1 + O(\tfrac{1}{k}) \quad\tx{as } |k|\to\infty \tx{ uniformly for } \Im k \geq 0 , \\
R_j(k) &= O(\tfrac{1}{k}) \quad\tx{as } |k|\to\infty , \ k\in\R .
\end{align*}
\item (Rate at $k=0$)  $T(k)$ is nonvanishing for $k\in\ol{\C^+}\sm\{0\}$, and either
\begin{enumerate}
\item $|T(k)| \geq c>0$ for all $k\in\ol{\C^+}$, or
\item $T(k) = T'(0)k + o(k)$ for $k\in\ol{\C^+}$ with $T'(0)\neq 0$ and $R_j(k) = -1 + R'_j(0) k + o(k)$ for $k\in\R$.
\end{enumerate}
\item (Reality) For all $k\in\R$,
\begin{equation*}
\ol{T(k)} = T(-k) , \quad \ol{R_j(k)} = R_j(-k) \quad\tx{for } j=1,2 .
\end{equation*}
\end{enumerate}
\end{prop}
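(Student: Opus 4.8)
The plan is to reduce every assertion to the standard construction of the Jost solutions via Volterra integral equations and then to read off (i)--(vi) from Wronskian identities; this is classical forward scattering theory, and for the details I would follow the self-contained treatment in \cite{Deift1979}*{\S2}.

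First I would pass to the modified Jost functions $m_1(x;k) = e^{-ikx}f_1(x;k)$ and $m_2(x;k) = e^{ikx}f_2(x;k)$, for which the Schr\"odinger equation together with the prescribed asymptotics becomes the Volterra equation
\[
m_1(x;k) = 1 + \int_x^\infty D_k(y-x)\,q(y)\,m_1(y;k)\dy , \qquad D_k(t) := \tfrac{e^{2ikt}-1}{2ik},
\]
and the mirror equation for $m_2$ integrated from $-\infty$. Because $|D_k(t)| \leq \min\{t,|k|^{-1}\}$ for $t\geq 0$, $\Im k\geq 0$, the Neumann series converges absolutely and locally uniformly on $\{\Im k\geq 0\}\times\{x\geq -R\}$ once $q\in L^1_1(\R)$; this yields existence, uniqueness, the bound $|m_j(x;k)|\lesssim 1$, holomorphy of $m_j(x;\cdot)$ on $\C^+$, and continuity on $\ol{\C^+}\sm\{0\}$. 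Using the refinement $|D_k(t)-t|\lesssim |k|t^2$ together with the hypothesis $q\in L^1_2(\R)$ upgrades this to continuity and one-sided $k$-differentiability down to $k=0$ -- the only place the second moment is genuinely needed.

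Next I would exploit the Wronskian $W(f,g):= fg'-f'g$, which is $x$-independent for two solutions of a common Schr\"odinger equation. Evaluating at $\pm\infty$ gives $W(f_j(\cdot;k),f_j(\cdot;-k)) = \mp 2ik$, and since $\{f_1(\cdot;k),f_1(\cdot;-k)\}$ is a basis of the solution space for real $k\neq 0$, expanding $f_2$ in it and taking Wronskians against the basis elements produces the rational representations
\[
T_1(k) = T_2(k) = \frac{-2ik}{W(f_1(\cdot;k),f_2(\cdot;k))} =: T(k), \qquad R_1(k) = \frac{W(f_2(\cdot;k),f_1(\cdot;-k))}{W(f_1(\cdot;k),f_2(\cdot;k))},
\]
and likewise for $R_2$; this is (i). Holomorphy of $T$ on $\C^+$ and continuity down to $\R$ then follow from those of the Jost solutions, and $T$ has a pole exactly at the zeros of $W(f_1,f_2)$, i.e.\ where $f_1(\cdot;k)$ and $f_2(\cdot;k)$ are proportional -- equivalently, where $-\partial_x^2+q$ has an $L^2$ eigenfunction. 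Since that operator is self-adjoint with real potential and essential spectrum $[0,\infty)$, there are finitely many such, all with negative eigenvalue $-\beta_m^2$, placing the poles at $i\beta_m$; simplicity follows from the standard computation that $\partial_k W(f_1,f_2)$ at $k=i\beta_m$ is a nonzero multiple of $\int f_1 f_2\dx\neq 0$. This is (iii). For (vi): since $q$ is real, $\ol{f_j(x;k)}$ solves the same equation with the asymptotics of $f_j(x;-k)$, so $\ol{f_j(x;k)}=f_j(x;-k)$ for real $k$, and the Wronskian formulas give $\ol{T(k)}=T(-k)$, $\ol{R_j(k)}=R_j(-k)$. Feeding this reality relation into the basis expansion of $f_2$ and $\ol{f_2}$ in $\{f_1,\ol{f_1}\}$ and evaluating $W(f_2,\ol{f_2})=2ik$ yields $|T|^2+|R_j|^2=1$ and $T\ol{R_2}+R_1\ol{T}=0$, which is (ii). The large-$|k|$ asymptotics (iv) drop out of $m_j(x;k)=1+O(|k|^{-1})$ uniformly for $\Im k\geq 0$, substituted into the Wronskian formulas.

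The main obstacle is part (v), the endpoint analysis at $k=0$. That $T(k)\neq 0$ on $\ol{\C^+}\sm\{0\}$ is immediate from $T=-2ik/W(f_1,f_2)$ and the finiteness of $W$ there. The genuine difficulty is the dichotomy at $k=0$: either $W(f_1(\cdot;0),f_2(\cdot;0))\neq 0$ (the generic case, where $|T|$ stays bounded below near the origin), or $W(f_1(\cdot;0),f_2(\cdot;0))=0$, meaning $f_1(\cdot;0)$ and $f_2(\cdot;0)$ coincide up to scaling and furnish a bounded zero-energy resonance; in this exceptional case a $k$-expansion of $W(f_1,f_2)$ near $0$ -- legitimate precisely because of the $L^1_2$ hypothesis, via the bound on $D_k(t)-t$ above -- shows $W(f_1,f_2)=W'(0)k+o(k)$ with $W'(0)\neq 0$, hence $T(k)=T'(0)k+o(k)$ with $T'(0)\neq 0$ and correspondingly $R_j(k)=-1+R_j'(0)k+o(k)$. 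Carrying out this low-energy expansion and cleanly separating the two cases is the crux; everything else is routine Volterra/Wronskian bookkeeping.
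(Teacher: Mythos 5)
The paper does not prove this proposition at all: it is quoted verbatim from classical forward scattering theory, with the proof deferred to Deift--Trubowitz \cite{Deift1979}*{\S2 Th.~1}. Your outline follows exactly that classical route (Volterra equations for the modified Jost functions, Wronskian representations, reality and unitarity, large-$k$ asymptotics), and those parts are correct in substance, including the role of the $L^1_2$ hypothesis in the low-energy expansion.

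However, your treatment of part (v) contains a genuine error: you have the dichotomy backwards, and as written it contradicts your own formula $T(k) = -2ik/W(f_1(\cdot;k),f_2(\cdot;k))$. If $W(f_1(\cdot;0),f_2(\cdot;0)) \neq 0$ (the generic case), then $T(k) = -2ik/W(k)$ vanishes \emph{linearly} at $k=0$ with $T'(0) = -2i/W(0) \neq 0$, and plugging $T(0)=0$ into $T f_2 = R_1 f_1(\cdot;k) + f_1(\cdot;-k)$ forces $R_j(0) = -1$; this is alternative (b) of the statement, not the case where $|T|$ is bounded below. Conversely, in the exceptional (zero-energy resonance) case $W(0) = 0$, the $L^1_2$-based expansion gives $W(k) = \alpha k + o(k)$ with $\alpha \neq 0$, so $T(k) \to -2i/\alpha \neq 0$ and $|T|$ \emph{is} bounded below near $k=0$; this is alternative (a). Your proposal asserts the opposite assignment in both cases, and in the resonance case the claimed conclusion $T(k) = T'(0)k + o(k)$ cannot follow from $W(k) = W'(0)k + o(k)$, since the factors of $k$ cancel. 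The free potential $q \equiv 0$ is the sanity check: there $f_1 f_2 \equiv 1$ at $k = 0$ (a resonance), $W(k) = -2ik$, and $T \equiv 1$. The actual low-energy analysis you propose (expanding $W$ near $k=0$ using the improved bound on $D_k(t)-t$) is the right tool, but the two branches of the dichotomy must be interchanged before the argument is correct.
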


Our trace formulas are most conveniently stated in terms of the reciprocal of the transmission coefficient:
\begin{equation*}
a(k;q) := \tfrac{1}{T(k)} .
\end{equation*}
For $q\in L^1_2(\R)$, \cref{thm:scat thy} tells us that $k \mapsto a(k;q)$ is a holomorphic function on the open upper-half plane $\C^+$ and is continuous down to $\R$.  It has finitely many zeros $i\beta_1,\dots,i\beta_N$ in $\C^+$, all of which are simple and on the imaginary axis.  Moreover, we have the boundary conditions
\begin{gather}
|a(k;q)| \geq 1 \quad\tx{for all } k\in\R , 
\label{eq:trans coeff 1} \\
|a(k;q) - 1| = O(\tfrac{1}{|k|}) \quad\tx{as }|k|\to\infty\tx{ uniformly for }\Im k\geq 0 ,
\label{eq:trans coeff 2}
\end{gather}
along with the reality condition
\begin{equation}
\ol{ a(k;q) } = a(-\ol{k};q) \quad\tx{for all } k\in \C^+ .
\label{eq:trans coeff 3}
\end{equation}

For $u\in\schwar(\R)$, the Zakharov--Faddeev trace formulas~\cite{Zaharov1971} provide an alternative representation of the polynomial conserved quantities:
\begin{equation}
E_n(u) = \tfrac{2^{2n}}{\pi} \int_{-\infty}^\infty k^{2n} \log|a(k;u)|\dk + (-1)^{n+1} \tfrac{2^{2n+1}}{2n+1} \sum_{m=1}^N \beta_m^{2n+1} .
\label{eq:en trace}
\end{equation}
The measure $\log|a(k;u)|\dk$ on $\R$ is nonnegative and even, and the first terms on the RHS are its even moments (starting with the second), which are finite for $u\in\schwar(\R)$~\cite{Zaharov1971}.  The second terms are the odd moments of the distinct positive numbers $\beta_1,\dots,\beta_N$ (starting with the third) and are alternating in sign. 

Later, we will deduce that a constrained minimizer $q$ of $E_{n+1}$ must have certain scattering data due to the trace formulas~\eqref{eq:en trace}.  Consequently, it will be useful to know when we can reconstruct the potential $q$ from the scattering data~\cite{Deift1979}*{\S5 Th.~3}:
\begin{prop}[Inverse scattering theory]
\label{thm:inv scat thy}
A matrix 
\begin{equation*}
S(k) := \begin{pmatrix}
T_1(k) & R_2(k) \\ R_1(k) & T_2(k) 
\end{pmatrix}, \quad k\in\R
\end{equation*}
is the scattering matrix for some $q\in L^1_2(\R)$ without bound states if and only if
\begin{enumerate}[label=(\roman*)]
\item (Symmetry) For all $k\in\R$,
\begin{equation*}
T_1(k) \equiv T_2(k) =: T(k) .
\end{equation*}
\item (Unitarity) The matrix $S(k)$ is unitary for all $k\in\R$:
\begin{equation*}
T(k) \ol{R_2(k)} + R_1(k) \ol{T(k)} \equiv 0 , \qquad
|T(k)|^2 + |R_j(k)|^2 \equiv 1 \quad\tx{for } j=1,2.
\end{equation*}
\item (Analyticity) $T(k)$ is analytic in the open upper half-plane $\C^+$ and is continuous down to $\R$.
\item (Asymptotics)  We have
\begin{align*}
T(k) &= 1 + O(\tfrac{1}{k}) \quad\tx{as } |k|\to\infty \tx{ uniformly for } \Im k \geq 0 , \\
R_j(k) &= O(\tfrac{1}{k}) \quad\tx{as } |k|\to\infty , \ k\in\R .
\end{align*}
\item (Rate at $k=0$)  $T(k)$ is nonvanishing for $k\in\ol{\C^+}\sm\{0\}$, and either
\begin{enumerate}
	\item $|T(k)| \geq c>0$ for all $k\in\ol{\C^+}$, or
	\item $T(k) = T'(0)k + o(k)$ for $k\in\ol{\C^+}$ with $T'(0)\neq 0$ and $R_j(k) = -1 + R'_j(0) k + o(k)$ for $k\in\R$.
\end{enumerate}
\item (Reality) For all $k\in\R$,
\begin{equation*}
\ol{T(k)} = T(-k) , \quad \ol{R_j(k)} = R_j(-k) \quad\tx{for } j=1,2 .
\end{equation*}
\item (Fourier decay) The Fourier transforms $F_j := \wh{R}_j$, $j=1,2$ are absolutely continuous and
\begin{equation*}
\int_{-\infty}^a |F'_1(\kappa)|(1+\kappa^2)\,d\kappa <\infty \quad\tx{and}\quad 
\int_{a}^\infty |F'_2(\kappa)|(1+\kappa^2)\,d\kappa <\infty
\end{equation*}
for all $a\in\R$.
\end{enumerate}
\end{prop}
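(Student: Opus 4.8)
The plan is to treat \cref{thm:inv scat thy} as the classical Deift--Trubowitz characterization of scattering matrices on the line, and to split the argument into a \emph{necessity} half and a \emph{sufficiency} half. For necessity, properties (i)--(vi) are exactly the conclusions of forward scattering theory (\cref{thm:scat thy}) specialized to a potential with no bound states, so the only genuinely new item is the Fourier decay (vii). I would read this off the triangular (transformation-operator) representation of the Jost solutions, $f_1(x;k) = e^{ikx} + \int_x^\infty B_1(x,y)\,e^{iky}\,\dd y$, together with its mirror image for $f_2$: the kernels $B_j$ solve Volterra integral equations whose successive approximations are dominated by the tails of $\int (1+|y|)\,|q(y)|\,\dd y$, so the hypothesis $q\in L^1_2(\R)$ yields weighted $L^1$ bounds on $B_j(x,\cdot)$ and $\partial_x B_j(x,\cdot)$. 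Since $R_1(k)$ is, up to the factor $T(k)=1+O(1/k)$, the $k$-Fourier transform of a kernel built from $B_1$, these bounds translate into absolute continuity of $\wh{R_1}=F_1$ and the bound $\int_{-\infty}^a|F_1'(\kappa)|(1+\kappa^2)\,\dd\kappa<\infty$ for every $a$; the statement for $F_2$ is symmetric.

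For sufficiency I would run the Marchenko (Gelfand--Levitan) reconstruction. Given a matrix $S(k)$ obeying (i)--(vii), set $F_1(\kappa):=\wh{R_1}(\kappa)$ and, for each fixed $x\in\R$, consider the integral equation
\begin{equation*}
K(x,y) + F_1(x+y) + \int_x^\infty K(x,z)\,F_1(z+y)\,\dd z = 0 , \qquad y>x ,
\end{equation*}
for $K(x,\cdot)\in L^1(x,\infty)$. Condition (vii) makes the operator $g\mapsto\int_x^\infty g(z)\,F_1(z+\,\cdot\,)\,\dd z$ compact on $L^1(x,\infty)$, with estimates uniform for $x\geq a$, so the Fredholm alternative applies. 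The homogeneous equation has only the trivial solution: extending a solution $g$ by zero to $\R$ and applying Plancherel reduces it to an identity which, since unitarity (ii) and the nonvanishing of $T$ off $k=0$ from (v) force $|R_1(k)|<1$ for a.e.\ $k$, forces $g\equiv 0$. This is precisely where the ``no bound states'' hypothesis enters: eigenvalues would add exponential terms $\sum_m c_m^2\,e^{-\beta_m(x+y)}$ to $F_1$ and destroy the definiteness of that computation. Having solved the equation uniquely, I would set $q(x):=-2\,\tfrac{\dd}{\dd x}K(x,x)$.

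It then remains to check that this $q$ does the job. Propagating the weighted bounds on $F_1$ through the integral equation controls $K(x,x)$ and its $x$-derivative well enough to give $q\in L^1_2(\R)$, and by construction the Marchenko kernel carries no exponential part, so $-\partial_x^2+q$ has empty point spectrum. Finally one verifies that $e^{ikx}+\int_x^\infty K(x,y)e^{iky}\,\dd y$ really is the Jost solution $f_1$ for this $q$, that its scattering coefficients reproduce $R_1$ and $T$, and hence --- using symmetry (i), unitarity (ii), and reality (vi) --- the entire matrix $S(k)$; performing the analogous construction from $R_2$ on the half-lines $(-\infty,x)$ gives the mirror statement and the required consistency. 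I expect the main obstacles to be establishing the compactness and unique solvability of the Marchenko operator with estimates uniform in $x$, and then closing the loop by showing the reconstructed scattering data agrees with the given $S(k)$ while $q$ lands in $L^1_2(\R)$; these form the technical core of the theorem and are exactly the places where hypothesis (vii) and the absence of bound states are indispensable.
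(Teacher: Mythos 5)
This proposition is not proved in the paper at all: it is quoted as a known result, with the proof delegated to Deift--Trubowitz~\cite{Deift1979}*{\S5 Th.~3}, so there is no in-paper argument to compare against. Your outline is essentially a faithful summary of that classical proof --- transformation-operator (Jost-kernel) estimates for the necessity of (vii), and Marchenko reconstruction plus the left/right consistency check for sufficiency --- and at the level of a sketch it is sound. One small correction of emphasis: the ``no bound states'' hypothesis is not what makes the homogeneous Marchenko equation have only the trivial solution (unique solvability holds equally well when the exponential terms $\sum_m c_m^2 e^{-\beta_m(x+y)}$ are present; indeed that is needed for inverse scattering with bound states, cf.~\cref{thm:inv scat thy 2}); rather, the absence of bound states is encoded in condition (iii), the analyticity of $T$ in $\C^+$ without poles, which is what guarantees the reconstructed operator $-\partial_x^2+q$ has empty point spectrum. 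The genuinely technical steps you flag --- uniform solvability of the Marchenko equation, membership $q\in L^1_2(\R)$ (where condition (v) on the rate at $k=0$ is used), and the agreement of the potentials reconstructed from $R_1$ and from $R_2$ --- are exactly the core of the argument in~\cite{Deift1979}, and a complete write-up would have to supply them.
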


The characterization in \cref{thm:inv scat thy} is most easily stated in terms of potentials $q$ without bound states.  This does not pose a problem though, because there is an explicit formula for modifying $q$ in order to prescribe any number of bound states~\cite{Deift1979}*{\S3 Th.~6}.  Rather than the explicit formula for $q$, we will only need to keep track of the changes in the transmission coefficient:
\begin{prop}[Adding bound states]
\label{thm:inv scat thy 2}
Let $q(x)\in L^1_1(\R)$ be a potential without bound states and $\beta_1,\dots,\beta_N >0$ distinct.  Then there exists a potential $q(x;+N) \in L^1_1(\R)$ with the $N$ bound states $-\beta_1^2 , \dots, - \beta_N^2$.  Moreover, the transmission coefficient is related to that of $q(x)$ via
\begin{equation}
T(k;+N) = T(k) \prod_{m=1}^N \frac{k+i\beta_m}{k-i\beta_m} .
\label{eq:blaschke 2}
\end{equation}
\end{prop}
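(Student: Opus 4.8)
The plan is to reduce to the case $N=1$ by induction and, for a single bound state, to run the commutation (Darboux‑type) construction recorded by Deift and Trubowitz~\cite{Deift1979}*{\S3}. Suppose inductively that we have produced a potential $p\in L^1_1(\R)$ whose bound states are exactly $-\beta_1^2,\dots,-\beta_{m-1}^2$, and that we wish to append $-\beta_m^2$; since the $\beta_j$ are distinct, $-\beta_m^2$ is not already a bound state of $p$, which is all the single step will require. (Alternatively one could add all $N$ bound states at once by replacing the scalar $\Phi$ below with the Crum determinant $\det\bigl[\del_{jk}+\int_x^\infty\psi_j\psi_k\dy\bigr]$ built from the Jost solutions $\psi_j=f_1(\cdot;i\beta_j)$ with suitable positive normalizations — whose entries, when $p\equiv 0$, are exactly those of the matrix $A(x)$ in~\eqref{eq:multisoliton} — but iterating the scalar step keeps the bookkeeping transparent.)

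For the single step I would take $\psi(x):=f_1(x;i\beta)$, the analytic continuation to $k=i\beta$ of the Jost solution of $-\psi''+p\psi=k^2\psi$ with $\psi(x)\sim e^{ikx}$ as $x\to+\infty$, so that $\psi$ solves $-\psi''+p\psi=-\beta^2\psi$ with $\psi(x)\sim e^{-\beta x}$ at $+\infty$. This $\psi$ is real‑valued (as $p$ is), and since $-\beta^2$ is not a bound state we have $a(i\beta;p)\neq0$, so $\psi(x)\sim a(i\beta;p)e^{-\beta x}$ as $x\to-\infty$ and $\int_x^\infty\psi(y)^2\dy<\infty$ for every $x$. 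Fixing any $c>0$, set $\Phi(x):=c+\int_x^\infty\psi(y)^2\dy\geq c>0$ and $p(x;+1):=p(x)-2(\log\Phi)''(x)$. Two facts then need to be checked. First, a direct computation (using $\Phi'=-\psi^2$ and the equation for $\psi$) shows that $\psi/\Phi$ solves $-(\psi/\Phi)''+p(\cdot;+1)\,(\psi/\Phi)=-\beta^2(\psi/\Phi)$, and the asymptotics above make $\psi/\Phi$ decay like $e^{-\beta x}$ at $+\infty$ and like $e^{\beta x}$ at $-\infty$ (since $\Phi\sim\tfrac{a(i\beta;p)^2}{2\beta}e^{-2\beta x}$ there, growing twice as fast as $\psi$), so $\psi/\Phi\in L^2(\R)$ and $-\beta^2$ is a bound state of $p(\cdot;+1)$. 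Second, since $\Phi\to c$ at $+\infty$ and $\Phi\sim\tfrac{a(i\beta;p)^2}{2\beta}e^{-2\beta x}$ at $-\infty$, the correction $-2(\log\Phi)''$ decays like $e^{-2\beta|x|}$ at $\pm\infty$ (the lower‑order corrections inherited from the decay of $p$ only help), whence $p(\cdot;+1)-p\in L^1_2(\R)$ and in particular $p(\cdot;+1)\in L^1_1(\R)$.

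To pin down the transmission coefficient, I would check that
\begin{equation*}
f_1(x;k;+1):=f_1(x;k;p)-\frac{\psi(x)}{\Phi(x)}\int_x^\infty\psi(y)\,f_1(y;k;p)\dy
\end{equation*}
solves $-u''+p(\cdot;+1)u=k^2u$; since its correction vanishes as $x\to+\infty$ it has the normalization $\sim e^{ikx}$ there, so it is the Jost solution of $p(\cdot;+1)$. Substituting $f_1(x;k;p)\sim a(k;p)e^{ikx}+(\tx{reflected})e^{-ikx}$ together with the asymptotics of $\psi$ and $\Phi$ into this formula, the coefficient of $e^{ikx}$ as $x\to-\infty$ works out to $a(k;p)\,\tfrac{k-i\beta}{k+i\beta}$; that is, $a(k;+1)=a(k;p)\,\tfrac{k-i\beta}{k+i\beta}$, equivalently $T(k;+1)=T(k;p)\,\tfrac{k+i\beta}{k-i\beta}$. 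Because the bound states of an $L^1_1(\R)$ potential are exactly the (simple, imaginary) poles of its transmission coefficient in $\C^+$ — this and the needed analyticity hold already at $L^1_1$ regularity; cf.~\cref{thm:scat thy} and~\cite{Deift1979} — and $T(\cdot;p)$ has poles precisely at $i\beta_1,\dots,i\beta_{m-1}$ while the Blaschke factor $\tfrac{k+i\beta}{k-i\beta}$ contributes exactly one new pole $i\beta\in\C^+$, the potential $p(\cdot;+1)$ has bound states precisely $-\beta_1^2,\dots,-\beta_{m-1}^2,-\beta^2$: none lost, none spurious. Starting from $q$ (with no bound states, so $a(i\beta_1;q)\neq0$) and applying this step successively with $\beta=\beta_1,\dots,\beta_N$ — legitimate at each stage by distinctness — produces $q(\cdot;+N)\in L^1_1(\R)$ with the asserted bound states, and multiplying the single‑step identities yields $T(k;+N)=T(k)\prod_{m=1}^N\tfrac{k+i\beta_m}{k-i\beta_m}$.

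The step I expect to be the real crux is the one just highlighted: that a single application creates \emph{precisely} the bound state $-\beta^2$ and disturbs nothing else. The cleanest route is to read this off the transmission‑coefficient identity via the pole‑versus‑bound‑state dictionary, and here one must take care that this dictionary and the analyticity of $T(\cdot)$ in $\C^+$ are already available at $L^1_1$ regularity (so the finer behaviour near $k=0$ in \cref{thm:scat thy} is not needed). The verifications labelled ``direct'' above — that $\psi/\Phi$ and $f_1(\cdot;k;+1)$ satisfy the transformed equation, and the $e^{-2\beta|x|}$ decay of the correction — are routine but should be carried out; they also appear in~\cite{Deift1979}*{\S3}.
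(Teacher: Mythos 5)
Your route is, in substance, the paper's route: the paper does not prove this proposition but quotes it from Deift--Trubowitz \cite{Deift1979}*{\S3 Th.~6}, and the single-bound-state commutation step you run (pass to $p-2(\log\Phi)''$ with $\Phi=c+\int_x^\infty f_1(y;i\beta)^2\,dy$, then iterate over $\beta_1,\dots,\beta_N$) is exactly the construction used there. The algebraic core of your write-up is correct: $\psi/\Phi$ solves the transformed equation with eigenvalue $-\beta^2$ and is in $L^2(\R)$; the transformed Jost solution $f_1-(\psi/\Phi)\int_x^\infty\psi f_1\,dy$ does solve the new equation with the right normalization at $+\infty$; and its $x\to-\infty$ asymptotics give $a(k;+1)=a(k;p)\,\tfrac{k-i\beta}{k+i\beta}$, hence \eqref{eq:blaschke 2} after iteration, with the pole/bound-state dictionary then ruling out lost or spurious eigenvalues \emph{once the new potential is known to lie in} $L^1_1(\R)$.

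That membership is where there is a genuine gap, and it is not the step you flagged as the crux. You claim $-2(\log\Phi)''$ decays like $e^{-2\beta|x|}$ at both ends, with the corrections coming from the decay of $p$ ``only helping.'' At $+\infty$ this is fine ($\Phi\to c>0$). At $-\infty$ it is false in general: writing $-2(\log\Phi)''=4\psi\psi'/\Phi+2\psi^4/\Phi^2$ and inserting $\psi\sim a(i\beta;p)e^{-\beta x}$, $\psi'\sim-\beta a(i\beta;p)e^{-\beta x}$, $\Phi\sim\tfrac{a(i\beta;p)^2}{2\beta}e^{-2\beta x}$, the two terms tend individually to $-8\beta^2$ and $+8\beta^2$, so the exponential leading parts cancel identically (equivalently, $\log\Phi\approx-2\beta x+\mathrm{const}+\log(1+o(1))$ and the linear-in-$x$ part differentiates away). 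What survives is governed precisely by the error terms in the Jost asymptotics of $\psi$ and $\Phi$ near $-\infty$, and for a potential that is merely $L^1_1$ these decay only as fast as tails and local averages of $p$ allow --- there is no exponential gain (your own zero-background example is misleading here because there $p$ decays perfectly). Consequently your assertion $p(\cdot;+1)-p\in L^1_2(\R)$ is unjustified, and even the weaker statement $p(\cdot;+1)\in L^1_1(\R)$ --- which is part of the proposition, and which your ``no bound states lost or gained'' argument presupposes in order to apply the pole/bound-state dictionary to the new potential at every stage of the induction --- requires the quantitative error estimates for $f_1(\cdot;i\beta)$ and $\Phi$ as $x\to-\infty$. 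Supplying exactly those estimates is the technical content of \cite{Deift1979}*{\S3}; as written, your proof replaces it with a decay claim that fails.
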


Within this narrative, multisolitons are characterized by having vanishing reflection coefficients.  In view of the preceding, this identifies the formula for $a(k;q)$:
\begin{cor}[Characterization of multisolitons]
Given distinct $\beta_1,\dots,\beta_N>0$ and $q\in\schwar(\R)$, we have $q=Q_{\bs{\beta},\bs{c}}$ for some $\bs{c}\in\R^N$ if and only if
\begin{equation}
a(k;q) = \prod_{m=1}^N \frac{k-i\beta_m}{k+i\beta_m} .
\label{eq:blaschke}
\end{equation}
\end{cor}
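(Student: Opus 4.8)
The plan is to prove the two implications separately: \emph{forward} (multisoliton $\Rightarrow$ Blaschke form) from forward scattering theory plus a short complex-analysis step, and \emph{reverse} (Blaschke form $\Rightarrow$ multisoliton) from inverse scattering theory. Note first that $\schwar(\R)\subset L^1_2(\R)$, so \cref{thm:scat thy,thm:inv scat thy,thm:inv scat thy 2} all apply to $q$.

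For the forward implication, suppose $q=Q_{\bs{\beta},\bs{c}}$. It is classical~\cite{Kay1956} that the potential~\eqref{eq:multisoliton} is reflectionless and that the bound states of $-\partial_x^2+Q_{\bs{\beta},\bs{c}}$ are exactly $-\beta_1^2,\dots,-\beta_N^2$. Hence by unitarity (\cref{thm:scat thy}(ii)) we get $|a(k;q)|=|T(k)|^{-1}=1$ for all $k\in\R$; by analyticity (\cref{thm:scat thy}(iii)) the only zeros of $a(\cdot;q)$ in $\C^+$ are the simple zeros $i\beta_1,\dots,i\beta_N$; and by~\eqref{eq:trans coeff 2}, $a(k;q)\to 1$ as $|k|\to\infty$ uniformly for $\Im k\geq 0$. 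I would then divide out the candidate Blaschke product: set $B(k)=a(k;q)\prod_{m=1}^N\frac{k+i\beta_m}{k-i\beta_m}$, which extends holomorphically and zero-free across the $i\beta_m$ to all of $\C^+$, is continuous and of modulus $1$ on $\R$, and tends to $1$ at infinity (hence $B$ and $1/B$ are bounded on $\ol{\C^+}$). Two applications of the maximum modulus principle (Phragm\'en--Lindel\"of for the half-plane), to $B$ and to $1/B$, force $|B|\equiv 1$ on $\C^+$; an open-mapping argument then makes $B$ a unimodular constant, and $B(k)\to 1$ gives $B\equiv 1$, which is~\eqref{eq:blaschke}. (Alternatively one can bypass this by building $Q_{\bs{\beta},\bs{c}}$ from the zero potential through \cref{thm:inv scat thy 2} and reading off~\eqref{eq:blaschke 2} with $T\equiv 1$.)

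For the reverse implication, suppose $a(k;q)=\prod_{m=1}^N\frac{k-i\beta_m}{k+i\beta_m}$. Each factor is unimodular on $\R$, so $|T(k)|=1$ there, and unitarity forces $R_1\equiv R_2\equiv 0$: $q$ is reflectionless. The zeros of $a(\cdot;q)$ in $\C^+$ are precisely the simple points $i\beta_1,\dots,i\beta_N$, so the poles of $T$ are exactly these, and \cref{thm:scat thy}(iii) identifies the bound states of $-\partial_x^2+q$ as $-\beta_1^2,\dots,-\beta_N^2$. I would then remove the $N$ bound states from $q$ via the procedure dual to \cref{thm:inv scat thy 2}, obtaining a potential in $L^1_2(\R)$ without bound states whose transmission coefficient is $T(k)\prod_{m=1}^N\frac{k-i\beta_m}{k+i\beta_m}\equiv 1$ and whose reflection coefficients vanish; by the uniqueness half of \cref{thm:inv scat thy} this potential is identically zero. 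Re-adding the bound states $-\beta_1^2,\dots,-\beta_N^2$ to the zero potential then recovers, as the norming constants range over $(0,\infty)^N$, exactly the family $\{Q_{\bs{\beta},\bs{c}}:\bs{c}\in\R^N\}$, so $q=Q_{\bs{\beta},\bs{c}}$ for the $\bs{c}$ matching the norming constants of $q$.

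The only delicate point is the bookkeeping in the reverse direction: \cref{thm:inv scat thy} is phrased for potentials \emph{without} bound states, so one must either carry the bound-state removal/addition of \cref{thm:inv scat thy 2} through (together with its uniqueness) or cite directly the classical fact that a reflectionless potential in $L^1_2(\R)$ with prescribed eigenvalues $-\beta_1^2,\dots,-\beta_N^2$ lies in the family $\{Q_{\bs{\beta},\bs{c}}\}_{\bs{c}\in\R^N}$. Beyond the half-plane maximum-modulus step in the forward direction, no substantial analysis is needed; the content of the corollary is in assembling \cref{thm:scat thy,thm:inv scat thy,thm:inv scat thy 2} correctly.
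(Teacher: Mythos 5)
Your proposal is correct and follows essentially the same route as the paper, which offers no separate proof of this corollary but treats it exactly as you do: an assembly of \cref{thm:scat thy}, \cref{thm:inv scat thy}, and \cref{thm:inv scat thy 2} together with the classical Kay--Moses fact that the reflectionless potentials with eigenvalues $-\beta_1^2,\dots,-\beta_N^2$ are precisely the family $\{Q_{\bs{\beta},\bs{c}}\}_{\bs{c}\in\R^N}$, with the Blaschke formula read off from~\eqref{eq:blaschke 2} applied to the zero potential. Your maximum-modulus step in the forward direction is a fine (optional) alternative to that reading-off, and your flagged caveat is accurate: the uniqueness used in the reverse direction is not literally stated in \cref{thm:inv scat thy} but is part of the cited Deift--Trubowitz theory, so citing it (or the classical reflectionless characterization directly) closes the bookkeeping.
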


Notice that the formula~\eqref{eq:blaschke} is a finite Blaschke product from $\C^+$ to the unit disk, with zeros that are distinct and lie only on the imaginary axis.  In particular, we see that multisolitons $Q_{\bs{\beta},\bs{c}}$ satisfy $|a(k;Q_{\bs{\beta},\bs{c}})| \equiv 1$ on $\R$, and so the $\log|a|$ moments in the trace formulas~\eqref{eq:en trace} vanish.  Consequently, the functionals $E_n$ for a multisoliton $Q_{\bs{\beta},\bs{c}}$ are independent of $\bs{c}$.

%%%%%%%%%%%%%%%%%%%%%%%%%%%%%%%%%%%%%%%%%%%%%%%%%%%%%%%%%%%%%%%%%

\section{The polynomial conserved quantities}
\label{sec:eng}

The purpose of this section is to analyze the functionals $E_1,\dots,E_{n+1}$ restricted to the manifold of multisolitons, and to then use this to describe the set $\M^n_n$ of constraints.

First, we will prove that as long as we have at least $n+1$ distinct positive $\beta$ parameters at our disposal, we can decrease $E_{n+1}$ while preserving $E_1,\dots,E_n$.  This surprisingly powerful fact will turn out to be an integral part of our analysis.
\begin{lem}
\label{thm:beta mom wiggle}
Fix $n\geq 1$, and suppose $Q_{\bs{\beta},\bs{c}}$ is a multisoliton of degree $N \geq n+1$.  Then there exist $\wt{\beta}_1,\dots,\wt{\beta}_N > 0$ distinct so that
\begin{equation*}
E_m(Q_{\wt{\bs{\beta}},\bs{c}}) = E_m(Q_{\bs{\beta},\bs{c}}) \quad \tx{for } m=1,\dots,n , \quad\tx{but}\quad
E_{n+1}(Q_{\wt{\bs{\beta}},\bs{c}}) < E_{n+1}(Q_{\bs{\beta},\bs{c}}) .
\end{equation*}
\end{lem}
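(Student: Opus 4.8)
The plan is to reduce the statement to a finite-dimensional fact about power sums, using the explicit formula~\eqref{eq:en multisoliton}: on the manifold of multisolitons the functionals are
\[
E_m(Q_{\bs{\beta},\bs{c}}) = (-1)^{m+1}\tfrac{2^{2m+1}}{2m+1}\sum_{j=1}^N \beta_j^{2m+1}, \qquad m\geq 1,
\]
and this is independent of $\bs{c}$. Thus $\bs{c}$ plays no role, and it suffices to start from distinct positive $\beta_1,\dots,\beta_N$ with $N\geq n+1$ and produce nearby distinct positive $\wt{\beta}_1,\dots,\wt{\beta}_N$ keeping the odd power sums $p_1,\dots,p_n$ (with $p_m(\bs{\beta})=\sum_{j=1}^N\beta_j^{2m+1}$) unchanged while moving $p_{n+1}$ in the direction that makes $E_{n+1}$ strictly smaller. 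Since $E_{n+1}(Q_{\bs{\beta},\bs{c}}) = (-1)^n\tfrac{2^{2n+3}}{2n+3}\,p_{n+1}(\bs{\beta})$, the required direction of motion of $p_{n+1}$ depends on the parity of $n$, but in either case it is one fixed sign.

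First I would freeze $\beta_{n+2},\dots,\beta_N$ and examine the map
\[
\Phi:(\beta_1,\dots,\beta_{n+1}) \longmapsto \big(E_1(Q_{\bs{\beta},\bs{c}}),\dots,E_{n+1}(Q_{\bs{\beta},\bs{c}})\big) \in \R^{n+1}.
\]
Differentiating~\eqref{eq:en multisoliton} gives $\partial E_m/\partial\beta_j = (-1)^{m+1}2^{2m+1}\beta_j^{2m}$, so the Jacobian of $\Phi$ factors as a nonsingular diagonal matrix times the generalized Vandermonde matrix $\big[\beta_j^{2m}\big]_{1\leq m,j\leq n+1}$. Setting $y_j:=\beta_j^2$, this last matrix equals the ordinary Vandermonde matrix in the $y_j$ multiplied on the right by $\mathrm{diag}(y_1,\dots,y_{n+1})$, hence has determinant $\prod_j y_j\cdot\prod_{j<l}(y_l-y_j)$, which is nonzero because the $\beta_j$ are distinct and positive. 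Therefore $D\Phi$ is invertible at the given point.

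By the inverse function theorem $\Phi$ is a local diffeomorphism near $(\beta_1,\dots,\beta_{n+1})$, so its image contains a full neighborhood of $\big(E_1(Q_{\bs{\beta},\bs{c}}),\dots,E_{n+1}(Q_{\bs{\beta},\bs{c}})\big)$. In that neighborhood I choose the point with the first $n$ coordinates unchanged and the last coordinate decreased by a small amount of the appropriate sign, so that $E_{n+1}$ strictly decreases; its $\Phi$-preimage, together with the frozen $\beta_{n+2},\dots,\beta_N$, defines $\wt{\bs{\beta}}$. For a small enough perturbation these parameters remain positive, pairwise distinct, and distinct from the frozen ones, so $Q_{\wt{\bs{\beta}},\bs{c}}$ is a genuine multisoliton of degree $N$ with $E_m(Q_{\wt{\bs{\beta}},\bs{c}})=E_m(Q_{\bs{\beta},\bs{c}})$ for $m\le n$ and $E_{n+1}(Q_{\wt{\bs{\beta}},\bs{c}})<E_{n+1}(Q_{\bs{\beta},\bs{c}})$.

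I do not anticipate a serious obstacle here once~\eqref{eq:en multisoliton} is available; the one point that must be handled carefully is the evaluation of the generalized Vandermonde determinant and the observation that it is nonzero precisely because the $\beta_j$ are distinct and positive. This is also where the hypothesis $N\geq n+1$ enters: it guarantees at least $n+1$ free parameters, one more than the number of constraints, which is exactly what is needed for the map $\Phi$ above to be onto an open set.
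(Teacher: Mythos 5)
Your proposal is correct, and at its core it is the same argument as the paper's: both reduce the lemma via~\eqref{eq:en multisoliton} to a statement about the odd power sums of $\beta_1,\dots,\beta_{n+1}$ (freezing $\beta_{n+2},\dots,\beta_N$), and both hinge on the nonvanishing of a generalized Vandermonde determinant in the variables $y_j=\beta_j^2$, which is exactly where distinctness and positivity of the $\beta_j$ and the hypothesis $N\geq n+1$ enter. The one organizational difference is that you apply the inverse function theorem to the full map $(\beta_1,\dots,\beta_{n+1})\mapsto(E_1,\dots,E_{n+1})$ and then simply move the last coordinate of the target, whereas the paper applies the implicit function theorem to the first $n$ moments only (solving $x_1,\dots,x_n$ as functions of $x_{n+1}$) and then verifies that the $(n+1)$st moment actually moves by computing its derivative along the constraint curve via a Schur complement of the augmented Vandermonde matrix, obtaining the explicit value $(2n+3)\beta_{n+1}^2\prod_{j\leq n}(\beta_{n+1}^2-\beta_j^2)$. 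Since $A$ is invertible, nonvanishing of that Schur complement is equivalent to invertibility of the full $(n+1)\times(n+1)$ matrix you use, so the two routes rest on the same algebraic fact; your packaging is slightly shorter because it never needs the explicit derivative, while the paper's computation makes the transversality quantitative (though that explicit value is not used elsewhere). Your handling of the remaining details --- smallness of the perturbation to preserve positivity, pairwise distinctness, and distinctness from the frozen parameters, and the sign bookkeeping for decreasing $E_{n+1}$ --- matches the paper's.
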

\begin{proof}
We will apply the implicit function theorem to the first $n+1$ trace formulas~\eqref{eq:en trace} as functions of $\bs{\beta}$.  Reorder $\bs{\beta}$ so that $\beta_1>\cdots>\beta_N>0$.  Define the function $f:\R^{n+1}\to\R^n$ by
\begin{equation}
f(x_1,\dots,x_{n+1})
= \begin{pmatrix}
x_1^3 + x_2^3 + \dots + x_{n+1}^3 \\
x_1^5 + x_2^5 + \dots + x_{n+1}^5 \\
\vdots \\
x_1^{2n+1} + x_2^{2n+1} + \dots + x_{n+1}^{2n+1}
\end{pmatrix} .
\label{eq:beta mom wiggle 6}
\end{equation} 
This function has derivative matrix
\begin{equation}
Df(\beta_1,\dots,\beta_{n+1})
= \left( \!\begin{array}{ccc;{4pt/2pt}c}
3\beta_1^2 & \!\dots\! & 3\beta_n^2 & 3\beta_{n+1}^2 \\
5\beta_1^4 & \!\dots\!  & 5\beta_n^4 & 5\beta_{n+1}^4 \\
\vdots & \!\ddots\! & \!\vdots\!  & \vdots \\
(2n+1)\beta_1^{2n} & \!\dots\!  & (2n+1)\beta_n^{2n} & (2n+1) \beta_{n+1}^{2n}
\end{array} \!\right) .
\label{eq:beta mom wiggle 7}
\end{equation}
The left $n\times n$ block matrix is a Vandermonde matrix after pulling out common factors from each row and column, and thus has determinant
\begin{equation}
3\cdot 5 \cdots (2n+1) \, \beta_1^2 \cdots \beta_n^2\, \prod_{j<k} (\beta_k^2 - \beta_j^2) .
\label{eq:beta mom wiggle 2}
\end{equation}
This is nonvanishing as $\beta_1,\dots,\beta_{n}$ are positive and distinct.  The implicit function theorem then implies that there exists $\eps>0$ and $C^1$ functions $x_1(x_{n+1})$, $\dots$, $x_n(x_{n+1})$ defined on $(\beta_{n+1}-\eps,\beta_{n+1}+\eps)$ so that
\begin{equation}
f(x_1(x_{n+1}),\dots,x_n(x_{n+1}),x_{n+1}) \equiv \begin{pmatrix}
\beta_1^3 + \dots + \beta_{n+1}^3 \\
\beta_1^5 + \dots + \beta_{n+1}^5 \\
\vdots \\
\beta_1^{2n+1} + \dots + \beta_{n+1}^{2n+1}
\end{pmatrix} 
\label{eq:beta mom wiggle 1}
\end{equation}
for $x_{n+1} \in (\beta_{n+1}-\eps,\beta_{n+1}+\eps)$.

It remains to show that we can pick $x_{n+1}$ in a way that decreases the next odd moment.  To this end, we will compute its derivative at $x_{n+1} = \beta_{n+1}$:
\begin{equation}
\begin{aligned}
&\frac{d}{dx_{n+1}} \bigg|_{x_{n+1}=\beta_{n+1}} \big[ x_1(x_{n+1})^{2n+3} + \dots + x_n(x_{n+1})^{2n+3} + x_{n+1}^{2n+3} \big] \\
&= (2n+3) \left[ 
\begin{pmatrix} \beta_1^{2n+2} & \!\dots\! & \beta_n^{2n+2} \end{pmatrix}
\begin{pmatrix} x'_1(\beta_{n+1}) \\ \vdots \\ x'_n(\beta_{n+1}) \end{pmatrix}
+ \beta_{n+1}^{2n+2} \right] .
\end{aligned}
\label{eq:beta mom wiggle 3}
\end{equation}
The derivative of $x_1(x_{n+1})$, $\dots$, $x_n(x_{n+1})$ is determined by differentiating~\eqref{eq:beta mom wiggle 1} at $x_{n+1}=\beta_{n+1}$.  This yields
\begin{equation}
\begin{pmatrix} x'_1(\beta_{n+1}) \\ \vdots \\ x'_n(\beta_{n+1}) \end{pmatrix}
= - \begin{pmatrix} 3\beta_1^2 & \!\dots\! & 3\beta_n^2 \\ \vdots & \!\ddots\! & \vdots \\ (2n+1)\beta_1^{2n} & \!\dots\! & (2n+1)\beta_n^{2n} \end{pmatrix}^{\!-1} \!\!
\begin{pmatrix} 3\beta_{n+1}^2 \\ \vdots \\ (2n+1)\beta_{n+1}^{2n} \end{pmatrix} .
\label{eq:beta mom wiggle 4}
\end{equation}
Inserting this into~\eqref{eq:beta mom wiggle 3}, we obtain an expression solely in terms of $\beta_1,\dots,\beta_{n+1}$.  In order to compute this, we will leverage that it is a Schur complement for the derivative matrix~\eqref{eq:beta mom wiggle 7} but with an appended row.  Specifically, if we define the $(n+1)\times (n+1)$ block matrix
\begin{equation*}
\begin{pmatrix}
A & b \\ c & d
\end{pmatrix}
= \left( \!\begin{array}{ccc;{4pt/2pt}c}
3\beta_1^2 & \!\dots\! & 3\beta_n^2 & 3\beta_{n+1}^2 \\
5\beta_1^4 & \!\dots\!  & 5\beta_n^4 & 5\beta_{n+1}^4 \\
\vdots & \!\ddots\! & \!\vdots\!  & \vdots \rule[-0.9ex]{0pt}{0pt} \\ \hdashline[4pt/2pt]
(2n+3)\beta_1^{2n+2} & \!\dots\!  & (2n+3)\beta_n^{2n+2} & (2n+3) \beta_{n+1}^{2n+2} \rule{0pt}{2.6ex}
\end{array} \!\right) ,
\end{equation*}
then LHS\eqref{eq:beta mom wiggle 3} is now given by
\begin{equation*}
d - cA^{-1}b .
\end{equation*}
On the other hand, applying one step of Gaussian elimination to our block matrix yields
\begin{equation*}
\begin{pmatrix}
A & b \\ c & d
\end{pmatrix}
= \begin{pmatrix}
I & 0 \\ cA^{-1} & 1
\end{pmatrix} \begin{pmatrix}
A & b \\ 0 & d - cA^{-1}b
\end{pmatrix} .
\end{equation*}
Taking the determinant of both sides, we deduce that LHS\eqref{eq:beta mom wiggle 3} is equal to
\begin{equation*}
\det \begin{pmatrix}
A & b \\ c & d
\end{pmatrix} (\det A)^{-1} .
\end{equation*}
Both terms above can be computed by the Vandermonde determinant formula: the determinant of $A$ is given by~\eqref{eq:beta mom wiggle 2} for $n$ and the determinant of the block matrix is given by~\eqref{eq:beta mom wiggle 2} for $n+1$.

Altogether, we conclude
\begin{equation*}
\begin{aligned}
&\frac{d}{dx_{n+1}} \bigg|_{x_{n+1}=\beta_{n+1}} \big[ x_1(x_{n+1})^{2n+3} + \dots + x_n(x_{n+1})^{2n+3} + x_{n+1}^{2n+3} \big] \\
&= (2n+3) \beta_{n+1}^2 \prod_{j=1}^n (\beta_{n+1}^2 - \beta_j^2) .
\end{aligned}
\end{equation*}
The RHS is nonvanishing since $\beta_1,\dots,\beta_{n+1}$ are positive and distinct.  As $\beta_{1},\dots,\beta_{N}$ were distinct to begin with, we conclude that there exists $x_{n+1} \in (\beta_{n+1}-\eps,\beta_{n+1}+\eps)$ sufficiently close to $\beta_{n+1}$ so that the values $x_1(x_{n+1})$, $\dots$, $x_n(x_{n+1})$, $x_{n+1}$, and $\beta_{n+2}$ remain distinct, and
\begin{equation*}
(-1)^n \big[ x_1(x_{n+1})^{2n+3} + \dots + x_n(x_{n+1})^{2n+3} + x_{n+1}^{2n+3} \big]
\end{equation*}
is strictly less than its value at $x_{n+1} = \beta_{n+1}$.  This quantity is the value of $E_{n+1}$ for the multisoliton with $\beta$ parameters $x_1(x_{n+1})$, $\dots$, $x_n(x_{n+1})$, $x_{n+1}$.  Replacing $\beta_1,\dots,\beta_{n+1}$ by $x_1(x_{n+1})$, $\dots$, $x_n(x_{n+1})$, $x_{n+1}$ in $\bs{\beta}$, we obtain new distinct parameters $\wt{\beta}_1>\cdots>\wt{\beta}_N>0$ (with $\wt{\beta}_j = \beta_j$ for $j\geq n+2$) so that the multisoliton $Q_{\wt{\bs{\beta}},\bs{c}}$ decreases $E_{n+1}$ while preserving $E_1,\dots,E_n$.
\end{proof}

Our next step is to find the unique set of distinct $\beta$ parameters with at most $n$ values that attain the constraints.  As $(e_1,\dots,e_n)\in \M^n_n$, it only remains to show that there is at most one solution:
\begin{lem}
\label{thm:unique params}
Fix $n\geq 1$.  Given constraints $e_1,\dots,e_n$, there is at most one choice of $N\leq n$ and $\beta_1>\cdots>\beta_N>0$ so that
\begin{equation*}
E_m(Q_{\bs{\beta},\bs{c}}) = e_m \quad\tx{for }m=1,\dots,n \tx{ and any }\bs{c}\in\R^N.
\end{equation*}
\end{lem}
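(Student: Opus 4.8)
The plan is to reduce the statement to a purely algebraic uniqueness fact about sums of odd powers of positive reals and then to prove that fact by a sign–change argument in the spirit of~\cite{Steinig1971}. Suppose $\beta_1>\cdots>\beta_N>0$ and $\gamma_1>\cdots>\gamma_M>0$ with $N,M\le n$ both realize the constraints, i.e.\ $E_m(Q_{\bs{\beta},\bs{c}})=e_m=E_m(Q_{\bs{\gamma},\bs{c}'})$ for $m=1,\dots,n$. By~\eqref{eq:en multisoliton} the value of $E_m$ at a multisoliton is a nonzero $m$-dependent constant times $\sum_j\beta_j^{2m+1}$ (and in particular independent of $\bs{c}$), so these equations say exactly that
\[
\sum_{j=1}^N \beta_j^{2m+1} = \sum_{k=1}^M \gamma_k^{2m+1} \qquad (m=1,\dots,n).
\]
Cancelling the values common to the two lists (each list has distinct entries, so a common value occurs once on each side), we are left with a signed atomic measure $\nu=\sum_{i=1}^L \eps_i\del_{x_i}$ on $(0,\infty)$ with $\eps_i\in\{\pm1\}$, the $x_i$ distinct and positive, at most $n$ atoms of each sign, and $\int x^{2m+1}\dd\nu(x)=0$ for $m=1,\dots,n$. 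It suffices to show $\nu=0$; suppose not. Both signs must occur, since otherwise $\int x^3\dd\nu\ne0$.

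Following Steinig's method, I would encode the orthogonality in the resolvent-type rational function $R(z)=\int\frac{x}{z-x^2}\dd\nu(x)=\sum_i\frac{\eps_i x_i}{z-x_i^2}$, whose Laurent coefficients at $\infty$ are the moments $\int x^{2m+1}\dd\nu$. Consequently $h(z):=zR(z)-\int x\dd\nu=\sum_i\frac{\eps_i x_i^3}{z-x_i^2}$ satisfies $h(z)=\bigo(|z|^{-(n+1)})$ as $z\to\infty$; since $h$ is a quotient of polynomials with denominator $\prod_i(z-x_i^2)$ of degree $L\le 2n$, its numerator has degree at most $L-n-1$, so $h$ has at most $L-n-1$ zeros in $\C$ (in particular $L\ge n+1$, as $h$ is nonconstant). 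On the other hand $h$ has a simple pole at each $x_i^2$ with residue $\eps_i x_i^3$, of sign $\eps_i$; ordering the poles increasingly, $h$ runs from $+\infty$ to $-\infty$ (or back) across every interval bounded by two consecutive poles carrying equal-sign residues, producing at least $(L-1)-s$ real zeros, where $s$ is the number of sign changes in $(\eps_1,\dots,\eps_L)$ sorted by $x_i$. Comparing the two counts forces $s\ge n$.

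The main obstacle, and the heart of the argument, is turning $s\ge n$ into a contradiction, since bare zero-counting permits $s$ as large as $L-1\le 2n-1$. Here positivity of the two lists must be used essentially, not merely for counting: a fully alternating sign sequence already contradicts $\int x^3\dd\nu=0$ (pairing consecutive atoms shows that integral is strictly one-signed), and the boundary values $h(0)=-\int x\dd\nu$ together with the definite side from which $h\to0$ at $\infty$ yield further forced zeros; running this sign analysis systematically — which is precisely the mechanism of~\cite{Steinig1971} — rules out every admissible pattern with at most $n$ atoms of each sign. I expect it to be cleanest to organize the whole proof by induction on $n$: the case $n=1$ is immediate, and if both $N,M\le n-1$ the inductive hypothesis (applied to the constraints $e_1,\dots,e_{n-1}$, whose equations are already contained in ours) gives $\bs{\beta}=\bs{\gamma}$ at once, so one is reduced to the case where one list, say $\bs{\beta}$, has maximal length $n$. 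In that case $\bs\beta\mapsto(\sum_j\beta_j^3,\dots,\sum_j\beta_j^{2n+1})$ is a local diffeomorphism on $\{\beta_1>\cdots>\beta_n>0\}$ (its Jacobian is a generalized Vandermonde determinant, nonvanishing as in \cref{thm:beta mom wiggle}), and the delicate point — handled by the sign-change estimate above — is that this map is \emph{globally} injective and that its image avoids the values attained by shorter tuples.
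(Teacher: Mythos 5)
Your reduction to the algebraic statement about odd power sums, and the pole--zero counting for the rational function $h(z)=\sum_i \eps_i x_i^3/(z-x_i^2)$, are both correct: the vanishing of the moments $\int x^{2m+1}\,d\nu$ for $m=1,\dots,n$ does force the numerator of $h$ to have degree at most $L-n-1$, and the gap argument does give $s\geq n$, where $s$ counts sign changes of the atom signs $\eps_i$. But, as you yourself note, this is not a contradiction: with up to $n$ atoms of each sign, $s$ can be as large as $2n-1$. The step that would rule out the remaining patterns is exactly the heart of the lemma, and your proposal does not supply it --- it defers to ``the mechanism of~\cite{Steinig1971}'' without executing it, and the closing induction paragraph is circular on precisely this point (global injectivity of $\bs{\beta}\mapsto(\sum\beta_j^3,\dots,\sum\beta_j^{2n+1})$ is asserted to be ``handled by the sign-change estimate above,'' but that estimate only yields $s\geq n$). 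So there is a genuine gap at the decisive step.

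Moreover, the quantity you are counting is not the one that makes Steinig's argument close. The paper's proof (following~\cite{Steinig1971}) does not work with sign changes of the $\eps_i$ but with sign changes of the \emph{partial sums} $\alpha_j=\eps_1+\dots+\eps_j$: after an Abel summation and the fundamental theorem of calculus one gets $0=\int_0^{\beta_1}\phi(s)f'(s)\,ds$ for the step function $\phi$ with values $\alpha_j$, and the crucial count is that $\phi$ has at most $n$ intervals of constant nonzero sign (each sign change of the partial sums consumes two consecutive atoms of the same sign, and there are at most $2n$ atoms). One then contradicts this orthogonality using the strict total positivity of the kernel $(2k+1)s^{2k}$ --- equivalently, the Chebyshev property of $\{s^2,s^4,\dots,s^{2n}\}$ on $(0,\infty)$, proved via a Descartes/Rolle argument and generalized Vandermonde determinants as in \cref{thm:beta mom wiggle}. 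If you want to salvage your resolvent formulation, you would need to import an analogue of this variation-diminishing step (bounding sign changes of the partial sums, not of the $\eps_i$); as written, the argument stops exactly where the real work begins.
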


We will follow the clever argument from~\cite{Steinig1971}, which we learned about from~\cite{Drury1987}*{\S3}.  In fact, the result in~\cite{Steinig1971} is even more general: it is shown that any $n$ power sums of $n$ distinct positive real numbers has at most one solution (up to permutation), in addition to some generalizations.  However, we will provide a complete and self-contained proof here for future reference (in \cref{thm:unique params 2}).
\begin{proof}
Suppose towards a contradiction that there exist $\beta_1>\cdots>\beta_N>0$ and $\wt{\beta}_1 > \dots > \wt{\beta}_{\wt{N}} > 0$ with $\wt{N}\leq N \leq n$ such that $E_k(Q_{\bs{\beta},\bs{c}}) = E_k(Q_{\wt{\bs{\beta}},\bs{c}})$ for $k=1,\dots,n$.  By the trace formulas~\eqref{eq:en}, this requires that
\begin{equation}
\sum_{m=1}^N \beta_m^{2k+1} = \sum_{m=1}^{\wt{N}} \wt{\beta}_m^{2k+1} \quad\tx{for }k=1,\dots,n .
\label{eq:unique params 4}
\end{equation}
Consider the function $f:\R\to\R^n$ given by $f(x) = (x^3,x^5,\dots,x^{2n+1})$.  After canceling common terms and moving everything to the LHS, we obtain
\begin{equation*}
\sum_{j=1}^M \eps_j f(\beta_j) = 0
\end{equation*}
for some $\beta_1>\cdots>\beta_M>0$, $M\leq 2N$, and signs $\eps_j\in \{\pm 1\}$.

Next, we append $2N-M$ copies of $\beta_j :=0$ and $\eps_j := -1$ for $j=M+1,\dots,2N$ so that $\sum_{j=1}^{2N} \eps_j = 0$.  Using summation by parts, this allows us to write
\begin{equation*}
0
= \sum_{j=1}^M \eps_j f(\beta_j) + \sum_{j=M+1}^{2N} (-1) f(0)
= \sum_{j=1}^{2N} \eps_j f(\beta_j)
= \sum_{j=1}^{2N-1} \alpha_j [ f(\beta_j) - f(\beta_{j+1}) ] ,
\end{equation*}
where $\alpha_j = \sum_{k=1}^j \eps_k$.  By the fundamental theorem of calculus, we obtain
\begin{equation}
0 = \int_0^{\beta_1} \phi(s) f'(s)\ds
\label{eq:unique params 1}
\end{equation}
for the step function $\phi$ which takes value $\alpha_j$ on the interval $(\beta_{j+1},\beta_j)$.  Let $I_1,\dots,I_m$ denote the disjoint intervals in $[0,\beta_1]$ (in consecutive order) on which $\phi$ is nonvanishing and has constant sign.  Note that we can have at most $n$ such intervals; indeed, $j$ must increment by two in order for $\alpha_j$ to change sign, which together with the first and last indices $j$ account for all of the $2N\leq 2n$ parameters.  The equality~\eqref{eq:unique params 1} then tells us that the rows of the $m\times n$ matrix $A$ with entries
\begin{equation*}
a_{jk} = \int_{I_j} |\phi(s)| (f'(s))_k \ds , \quad j\in\{1,\dots,m\},\ k \in\{1,\dots,n\}
\end{equation*}
are linearly dependent.

We claim that $A$ is a strictly totally positive matrix---i.e.~all of the minors of $A$ are strictly positive---which will contradict the linear dependence among the rows.  Given two subsets of indices $J\subset \{1,\dots,m\}$ and $K\subset\{1,\dots,n\}$, we can write the corresponding minor of $A$ as
\begin{align*}
&\tx{minor}_{J,K}\begin{pmatrix}
\int_{I_1} |\phi(s)|(f'(s))_1\ds & \!\dots\! & \int_{I_1} |\phi(s)|(f'(s))_n\ds \\
\vdots & \!\ddots\! & \vdots \\
\int_{I_m} |\phi(s)|(f'(s))_1\ds & \!\dots\! & \int_{I_m} |\phi(s)|(f'(s))_n\ds
\end{pmatrix} \\
&= \int_{I_{j_1}} \!\!\!\cdots \int_{I_{j_{|J|}}} \!\!\tx{minor}_{J,K} \begin{pmatrix}
|\phi(s_1)|(f'(s_1))_1 & \!\dots\! & |\phi(s_1)|(f'(s_1))_n \\
\vdots & \!\ddots\! & \vdots \\
|\phi(s_m)|(f'(s_m))_1 & \!\dots\! & |\phi(s_m)|(f'(s_m))_n
\end{pmatrix} \ds_{j_{|J|}} \dots \ds_{j_1} .
\end{align*}
(Indeed, expanding the determinant on the LHS into a sum over permutations of the matrix entries, each term is a product of $|J|$ integrals which we combine into one $|J|$-fold integral.)  The integrand on the RHS above can be factored as
\begin{equation*}
\tx{minor}_{J,K} \left\{ \begin{pmatrix}
|\phi(s_1)| & & \\
& \!\ddots\! & \\
& & |\phi(s_m)|
\end{pmatrix} \begin{pmatrix}
(f'(s_1))_1 & \!\dots\! & (f'(s_1))_n \\
\vdots & \!\ddots\! & \vdots \\
(f'(s_m))_1 & \!\dots\! & (f'(s_m))_n
\end{pmatrix} \right\} .
\end{equation*}
The first matrix has positive determinant because $\phi$ is nonvanishing on each $I_j$ by construction.  Therefore, it suffices to show that $f'(s)$ is a strictly totally positive kernel, i.e.~for all $0<s_1<\dots<s_\ell$ and $k_1<\dots<k_\ell$ the matrix
\begin{equation}
\big( (f'(s_i))_{k_j} \big)_{1\leq i \leq \ell,\, 1\leq j\leq\ell}
= \begin{pmatrix}
(2k_1+1) s_1^{2k_1} & \!\dots\! & (2k_\ell+1) s_1^{2k_\ell} \\
\vdots & \!\ddots\! & \vdots \\
(2k_1+1) s_\ell^{2k_1} & \!\dots\! & (2k_\ell+1) s_\ell^{2k_\ell} \\
\end{pmatrix}
\label{eq:unique params 3}
\end{equation}
has positive determinant.  Note that when $k_1,\dots,k_\ell$ is an arithmetic progression, this matrix is essentially a Vandermonde matrix with rows and columns multiplied by constants.  We will follow the classical argument for Vandermonde matrices.

First, we claim that the determinant is nonzero.  Suppose towards a contradiction that the determinant vanishes.  Then the columns would be linearly dependent, and so there would exist $\lambda_1,\dots,\lambda_\ell \in\R$ so that
\begin{equation*}
\sum_{j=1}^\ell \lambda_j s_i^{2k_j} = 0 \quad\tx{for }i=1,\dots,\ell .
\end{equation*}
(We absorbed the coefficients $2k_j+1$ into $\lambda_j$.)  This means that the polynomial
\begin{equation}
P(x) := \sum_{j=1}^\ell \lambda_j x^{2k_j}
\label{eq:unique params 2}
\end{equation}
has $\ell$ positive roots $s_1,\dots,s_\ell$.  To obtain a contradiction, we will prove that nontrivial polynomials of the form~\eqref{eq:unique params 2} can have at most $\ell-1$ positive roots by induction on $\ell$.  The base case $\ell=1$ is immediate.  For the inductive step, note that if $P(x)$ has $\ell$ positive roots, then $x^{-2k_1} P(x)$ is a polynomial with the same $\ell$ positive roots.  By Rolle's theorem, the polynomial $(x^{-2k_1} P(x))'$ therefore has $\ell-1$ positive roots.  On the other hand, $(x^{-2k_1} P(x))'$ is a polynomial of the form~\eqref{eq:unique params 2} for $\ell-1$, and so this contradicts the inductive hypothesis.

Lastly, we show that the determinant is positive.  Now that we know the determinant is nonzero, its sign must be is independent of the choice of $0<s_1<\dots<s_\ell$ and $k_1<\dots<k_\ell$ (but may depend on $\ell$).  Therefore, we may pick $k_j = j$ for $j=1,\dots,\ell$ so that we essentially have a Vandermonde matrix with determinant
\begin{equation*}
\det\begin{pmatrix}
3 s_1^{2} & 5s_1^4 & \!\dots\! & (2\ell+1) s_1^{2\ell} \\
3 s_2^{2} & 5s_2^4 & \!\dots\! & (2\ell+1) s_2^{2\ell} \\
\vdots & \vdots & \!\ddots\! & \vdots \\
3 s_\ell^{2} & 5s_\ell^4 & \!\dots\! & (2\ell+1) s_\ell^{2\ell} \\
\end{pmatrix}
= 3\cdot 5\cdots (2\ell+1)\, s_1^2\cdots s_\ell^2\, \prod_{j<k} (s_k^2 - s_j^2) .
\end{equation*}
This is positive for any $\ell$ since $0<s_1<\dots<s_\ell$.
\end{proof}

For future reference (cf.~\cref{thm:min seq lem 5}), we note that the proof of \cref{thm:unique params} can allow for repeated $\beta$ parameters, as long as the total number of values is still at most $n$.
\begin{cor}
\label{thm:unique params 2}
Fix $n\geq 1$.  Given constraints $e_1,\dots,e_n$, there is at most one choice of $N\geq 1$ and $\beta_1\geq\cdots\geq\beta_N>0$ attaining at most $n$ distinct values that satisfies
\begin{equation*}
(-1)^{m+1} \tfrac{2^{2m+1}}{2m+1} \sum_{j=1}^N \beta_j^{2m+1} = e_m \quad\tx{for }m=1,\dots,n .
\end{equation*}
\end{cor}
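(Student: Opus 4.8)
The plan is to repeat the proof of \cref{thm:unique params} with the signs $\eps_j\in\{\pm1\}$ there replaced by the multiplicities of the repeated parameters. Suppose towards a contradiction that two such choices exist: $\beta_1\geq\cdots\geq\beta_N>0$ taking the $p\leq n$ distinct values $u_1>\cdots>u_p$ with multiplicities $\mu_1,\dots,\mu_p$, and $\wt\beta_1\geq\cdots\geq\wt\beta_{\wt N}>0$ taking the $q\leq n$ distinct values $v_1>\cdots>v_q$ with multiplicities $\nu_1,\dots,\nu_q$, both obeying the given equations. Dividing out the nonzero constants $(-1)^{m+1}2^{2m+1}/(2m+1)$ forces the power-sum identities $\sum_{i=1}^p\mu_i u_i^{2k+1}=\sum_{j=1}^q\nu_j v_j^{2k+1}$ for $k=1,\dots,n$. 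Merging the two value lists, cancelling any value occurring with equal multiplicities on both sides, and abbreviating $f(x)=(x^3,x^5,\dots,x^{2n+1})$, we obtain a relation
\begin{equation*}
\sum_{s=1}^r c_s f(w_s)=0
\end{equation*}
with $w_1>\cdots>w_r>0$ and nonzero integers $c_s$; at most $n$ of the $c_s$ are positive (each such $w_s$ lies in the support of the first list) and at most $n$ are negative, so $r\leq 2n$. If every $c_s$ vanished the two multisets would coincide, so we may assume the relation is nontrivial.

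From here I would follow \cref{thm:unique params} essentially verbatim. Appending a term at $w_{r+1}:=0$ with coefficient $-\sum_s c_s$ (harmless since $f(0)=0$) makes the coefficients sum to zero; summation by parts and the fundamental theorem of calculus then rewrite the relation as $0=\int_0^{w_1}\phi(s)f'(s)\ds$, where $\phi$ is the step function equal to $\alpha_s:=c_1+\cdots+c_s$ on $(w_{s+1},w_s)$. Letting $I_1,\dots,I_m$ be the intervals of $[0,w_1]$ on which $\phi$ is nonvanishing with constant sign, this exhibits a linear dependence among the rows of the $m\times n$ matrix with entries $\int_{I_s}|\phi(t)|(f'(t))_k\dt$. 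The kernel $f'(t)=(3t^2,5t^4,\dots,(2n+1)t^{2n})$ is strictly totally positive---by the same Rolle's-theorem argument, a nontrivial combination $\sum_{j=1}^\ell\lambda_j x^{2k_j}$ has at most $\ell-1$ positive roots---so this matrix is strictly totally positive, and as soon as we know it has no more rows than columns its rows are linearly independent, a contradiction. The passage to the iterated integral and this Vandermonde/Rolle verification of strict total positivity carry over from \cref{thm:unique params} without any change.

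The one step that requires genuine care, and the one I expect to be the crux, is the bound $m\leq n$ on the number of constant-sign intervals of $\phi$. In \cref{thm:unique params} this was immediate from the $\pm1$ structure: the partial sums move in unit steps and must pass through $0$ to change sign, so the $m$ blocks together with the endpoints consume all of the at most $2n$ parameters. With multiplicities present the $\alpha_s$ may jump across $0$ in a single step, so the bound must instead be extracted from the hypothesis that each list takes at most $n$ distinct values; I would attempt this by retaining the unit-step bookkeeping of \cref{thm:unique params} on the two \emph{unmerged} lists and noting that cells lying between two equal parameters are empty, hence invisible to $\phi$, so that the effective number of excursions is still controlled by the number of distinct values. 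This, rather than the routine remainder of the argument, is where I expect the real work to lie.
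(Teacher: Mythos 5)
You have located the crux correctly: everything up to the bound on the number $m$ of constant-sign intervals of $\phi$ does carry over verbatim, and the paper's own proof of \cref{thm:unique params 2} is exactly the reduction you describe (cancel common values from~\eqref{eq:unique params 4}, build the same step function, and reduce to the claim $m\le n$). But your proposal stops at precisely that claim, and the bookkeeping you sketch---unit steps on the unmerged lists, with cells between equal parameters invisible to $\phi$---cannot deliver it. The obstruction is interleaving of runs from the two lists. Take $n=2$, first list $\{w_1,w_3,w_3\}$ and second list $\{w_2,w_2,w_4\}$ with $w_1>w_2>w_3>w_4>0$: the merged signed jumps are $+1,-2,+2,-1$, the partial sums are $1,-1,1,0$, and the visible sign pattern of $\phi$ is $+,-,+$, i.e.\ $m=3>n$, even though each list has only two distinct values. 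In general the only count available from ``at most $n$ distinct values per list'' is $m\le 2n$ (each block must open with a jump of its own sign, and there are at most $n$ jump locations of each sign), which is useless against an $m\times n$ strictly totally positive matrix. Worse, this is not a repairable bookkeeping issue: the moment equations $w_1^3+2w_3^3=2w_2^3+w_4^3$ and $w_1^5+2w_3^5=2w_2^5+w_4^5$ do admit solutions with the required ordering (for instance $w_1=1$, $w_3=0.8$ force $p_3=2.024$, $p_5=1.65536$, and solving for $\{w_2,w_2,w_4\}$ gives $w_2\approx 0.941$, $w_4\approx 0.710$ by the intermediate value theorem), so no counting argument can force $m\le n$ for arbitrary multisets with at most $n$ distinct values; the two multisets in this example are distinct, each takes two distinct values, and they share the same $e_1,e_2$.

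In fairness, the step you flag is also the step the paper disposes of in one sentence (``this increases the size of the jumps of $\phi$ but does not affect the number of sign changes''), and that sentence is vulnerable to the same interleaving: with jumps of size larger than one, $\phi$ can cross zero at a single location, so the two-parameters-per-sign-change count from \cref{thm:unique params} no longer applies, and counting distinct values only yields $m\le 2n$. So you have correctly identified a genuine soft spot rather than missed an easy argument; but as a proof of the corollary as stated your proposal is incomplete at its decisive step, and the specific repair you propose fails on the configuration above. Any honest completion will have to use more than the sign-change count---note that where the corollary is invoked (\cref{thm:min seq lem 5}, \cref{thm:N gas lem 2}) the multisets in question are additionally constrained minimizers of the next odd moment, and it is that extra structure, not the count of distinct values alone, that one should expect to drive uniqueness.
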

\begin{proof}
We repeat the proof of \cref{thm:unique params}.   Construct the step function $\phi$ so that~\eqref{eq:unique params 1} holds.  It only remains to show that there are still at most $n$ intervals $I_j$ on which $\phi$ is nonvanishing and has constant sign.  If $\beta_m = \wt{\beta}_{\wt{m}}$ for some $m$ and $\wt{m}$, then these terms can be canceled from~\eqref{eq:unique params 4} while retaining equality.  Consequently, the only new possibility for $\phi$ is that there may be a run of $\beta_m$ parameters with the same value and the same sign $\eps_j$.  This increases the size of the jumps of $\phi$ but does not affect the number of sign changes, and so the claim follows.
\end{proof}

By \cref{thm:unique params}, the map $(\beta_1,\dots,\beta_n) \mapsto (E_1(Q_{\bs{\beta},\bs{c}}),\dots,E_n(Q_{\bs{\beta},\bs{c}}))$ from the half-open simplex
\begin{equation}
\{ (\beta_1,\dots,\beta_n) \in \R^n : \exists N\tx{ with }\beta_1>\cdots>\beta_N>0,\ \beta_{N+1} = \dots = \beta_n = 0 \}
\label{eq:simplex}
\end{equation}
into $\M^n_n$ has a well-defined inverse.  In fact, the inverse is also continuous:
\begin{lem}
\label{thm:homeo}
The function $(\beta_1,\dots,\beta_n) \mapsto (E_1(Q_{\bs{\beta},\bs{c}}),\dots,E_n(Q_{\bs{\beta},\bs{c}}))$ is a homeomorphism from the simplex~\eqref{eq:simplex} onto $\M^n_n$.
\end{lem}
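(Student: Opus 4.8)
The plan is to realize the map in question, call it $\Phi(\bs\beta) = (E_1(Q_{\bs\beta,\bs c}),\dots,E_n(Q_{\bs\beta,\bs c}))$, as a continuous bijection from the half-open simplex $S$ of~\eqref{eq:simplex} onto $\M^n_n$ and then to show its inverse is continuous; the only genuine difficulty is that $S$ is not compact, so the usual ``continuous bijection out of a compact space'' shortcut is unavailable. By~\eqref{eq:en multisoliton}, for $\bs\beta\in S$ with $\beta_1>\cdots>\beta_N>0$ and $\beta_{N+1}=\cdots=\beta_n=0$ we have $\Phi(\bs\beta)_m = (-1)^{m+1}\tfrac{2^{2m+1}}{2m+1}\sum_{j=1}^{n}\beta_j^{2m+1}$; the right-hand side is a polynomial in $\bs\beta$, so $\Phi$ extends to a continuous (indeed polynomial) map $\Phi:\R^n\to\R^n$, which I restrict to the closed simplex $\overline{S} = \{\beta_1\geq\cdots\geq\beta_n\geq 0\}\supset S$. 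Unwinding the definition of $\M^n_n$ and invoking~\eqref{eq:en multisoliton} once more shows directly that $\Phi(S)=\M^n_n$: a multisoliton of degree $M\leq n$ has parameters that, after ordering decreasingly and padding with zeros, lie in $S$. So it remains to prove (i) $\Phi$ is injective on $\overline{S}$, and (ii) $\Phi^{-1}:\M^n_n\to S$ is continuous.

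For (i): suppose $\bs\beta,\bs\gamma\in\overline{S}$ satisfy $\Phi(\bs\beta)=\Phi(\bs\gamma)=:(e_1,\dots,e_n)$. Discarding the zero entries of each produces two nonincreasing tuples of strictly positive reals, each of length at most $n$, hence with at most $n$ distinct values, both solving the system $(-1)^{m+1}\tfrac{2^{2m+1}}{2m+1}\sum_j\beta_j^{2m+1}=e_m$ for $m=1,\dots,n$. By \cref{thm:unique params 2} these tuples coincide, and padding back up to length $n$ with zeros gives $\bs\beta=\bs\gamma$. (In particular this recovers the injectivity of $\Phi$ on $S$ asserted in \cref{thm:unique params}.)

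For (ii): let $e^{(k)}\to e$ in $\M^n_n$, and write $e^{(k)}=\Phi(\bs\beta^{(k)})$, $e=\Phi(\bs\beta)$ with $\bs\beta^{(k)},\bs\beta\in S$. The first component $E_1(Q_{\bs\beta^{(k)},\bs c}) = \tfrac{8}{3}\sum_j(\beta^{(k)}_j)^3$ converges, hence is bounded, and since every $\beta^{(k)}_j\geq 0$ this forces the sequence $\{\bs\beta^{(k)}\}$ to be bounded in $\R^n$. Given any subsequence, extract by Bolzano--Weierstrass a further subsequence $\bs\beta^{(k_\ell)}\to\bs\gamma$; then $\bs\gamma\in\overline{S}$ because $\overline{S}$ is closed. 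Continuity of $\Phi$ gives $\Phi(\bs\gamma)=\lim_\ell\Phi(\bs\beta^{(k_\ell)})=e=\Phi(\bs\beta)$, so $\bs\gamma=\bs\beta$ by the injectivity on $\overline{S}$ from step (i). Thus every subsequence of $\{\bs\beta^{(k)}\}$ has a further subsequence converging to $\bs\beta$, whence $\bs\beta^{(k)}\to\bs\beta$; this is exactly the continuity of $\Phi^{-1}$.

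The crux is step (i), and more precisely the point that injectivity persists on the \emph{closed} simplex, where coordinates may collide: this is precisely the content of \cref{thm:unique params 2}. The remaining input---extracting the potential limit point inside $\overline{S}$ using the a priori bound coming from $E_1$---is the standard boundedness-plus-compactness device that compensates for the non-compactness of $S$, and I expect it to be routine once (i) is in hand.
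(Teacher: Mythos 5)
Your proof is correct, and while it shares the paper's skeleton (the same polynomial map $\Phi$, with bijectivity onto $\M^n_n$ supplied by \cref{thm:unique params}), it handles the only delicate point --- continuity of $\Phi^{-1}$ despite the non-compactness of the half-open simplex --- by a genuinely different and, in fact, more self-contained route. The paper argues via properness of $\Phi$ together with a compact exhaustion of $\M^n_n$ and the compact-to-Hausdorff homeomorphism fact, but it asserts properness without proof; the substance behind that assertion is precisely what you prove, namely that $\Phi$ remains injective on the \emph{closed} simplex $\ol{S}$ (via \cref{thm:unique params 2}, which tolerates colliding parameters), so that a bounded sequence of preimages cannot have a limit point in $\ol{S}\sm S$ mapping back into $\M^n_n$. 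Combined with the boundedness coming from the $E_1$-component (third moments bounded and all coordinates nonnegative) and the subsequence-of-subsequences device, this gives sequential continuity of $\Phi^{-1}$, which suffices since $\M^n_n\subset\R^n$ is metrizable. Your argument also sidesteps a technical wrinkle in the paper's exhaustion step (openness of $\Phi^{-1}(V)$ from relative openness in each $\Phi^{-1}(K_m)$ is not automatic when the space fails to be locally compact, as $\M^n_n$ does near certain boundary points), so the sequential formulation is the safer one. The only microscopic gap is the degenerate comparison in step (i) when one of the two tuples becomes empty after discarding zeros, which is not literally covered by \cref{thm:unique params 2} (stated for $N\geq 1$); it is trivial, since $e_1=0$ forces the other tuple to be empty as well, but a one-line remark to that effect would make the appeal to the corollary airtight.
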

\begin{proof}
Let
\begin{equation*}
\Phi(\beta_1,\dots,\beta_n) = \bigg( \tfrac{8}{3} \sum_{m=1}^n \beta_m^3 ,\ -\tfrac{32}{5} \sum_{m=1}^n \beta_m^5 ,\ \dots ,\ (-1)^{n-1}\tfrac{2^{2n+1}}{2n+1} \sum_{m=1}^n \beta_m^{2n+1} \bigg)
\end{equation*}
denote this function, which maps into $\M^n_n$ by definition of $\M^n_n$.  Each component of $\Phi$ is a polynomial, and so $\Phi$ is smooth.  By \cref{thm:unique params}, we also know that $\Phi$ is a bijection from the simplex~\eqref{eq:simplex} onto the set of constraints $\M^n_n$.  

It remains to show that $\Phi^{-1}$ is continuous.  Fix an open subset $V\subset\M^n_n$ and let $\bigcup_{m=1}^\infty K_m$ be a compact exhaustion of $\M^n_n$.  Recall the elementary topology fact that if $f:X\to Y$ is a continuous bijection between topological spaces with $X$ compact and $Y$ Hausdorff, then $f$ is a homeomorphism.  As the map $\Phi$ is also proper, then $\Phi$ is a homeomorphism from $\Phi^{-1}(K_m)$ to $K_m$ for all $m$.  Therefore $\Phi^{-1}(V\cap K_m)$ is relatively open in $\Phi^{-1}(K_m)$ for all $m$, and hence $\Phi^{-1}(V)$ is open.
%Finally, we claim that the interior of $\M^n_n$ is exactly the set of constraints $(e_1,\dots,e_n)$ which correspond to $n$-solitons, i.e.~when we have  full set of $n$ positive parameters $\beta_1 > \cdots> \beta_n>0$.  If $(e_1,\dots,e_n)$ corresponds to an $n$-soliton, then $(e_1,\dots,e_n)$ is in the interior of $\M^n_n$ because for each $k\leq n-1$ we can increase and decrease $E_{k+1}$ while preserving $E_1,\dots,E_k$ by \cref{thm:beta mom wiggle}, since the number $n$ of $\beta$ parameters is strictly larger than $k$.  Conversely, if $(e_1,\dots,e_n)$ is in the interior of $\M^n_n$, then the degree $N\in\{0,1,\dots,n\}$ of the corresponding multisoliton is strictly greater than $n-1$ by the inductive hypothesis that \cref{thm:var char} holds for each $k\in \{ 1,\dots,n-1 \}$.
\end{proof}

Given constraints $(e_1,\dots,e_n)\in\M^n_n$, let $\beta_1 > \cdots > \beta_N>0$ be the unique set of parameters with $N\leq n$ and $Q_{\bs{\beta},\bs{c}}$ satisfying the constraints.  We define
\begin{equation}
C(e_1,\dots,e_n) := E_{n+1}(Q_{\bs{\beta},\bs{c}})
\label{eq:min val}
\end{equation}
to be the value of the next functional for these parameters.  In proving \cref{thm:var char}, we will ultimately show that $C(e_1,\dots,e_n)$ is the minimum of $E_{n+1}$ subject to the constraints $e_1,\dots,e_n$.

In order to do this, we will first need some properties of $C$:
\begin{lem}
\label{thm:min val is dec}
The function $C:\M^n_n \to \R$ defined in~\eqref{eq:min val} is continuous and is decreasing in each variable.  Moreover, on the interior of $\M^n_n$, $C(e_1,\dots,e_n)$ is continuously differentiable and satisfies $\frac{\partial C}{\partial e_j} < 0$ for $j=1,\dots,n$.
\end{lem}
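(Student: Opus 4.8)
The plan is to read off all three assertions from the parametrization of $\M^n_n$ by the amplitudes $\bs\beta$. By \eqref{eq:en multisoliton}, if $\beta_1>\cdots>\beta_N>0$ (with $N\le n$) are the unique parameters for which $Q_{\bs\beta,\bs c}$ attains the constraints, then
\[
C(e_1,\dots,e_n)=E_{n+1}(Q_{\bs\beta,\bs c})=(-1)^{n}\tfrac{2^{2n+3}}{2n+3}\sum_{m}\beta_m^{2n+3}=:h(\bs\beta),
\]
so that $C=h\circ\Phi^{-1}$, where $\Phi$ is the homeomorphism from the simplex \eqref{eq:simplex} onto $\M^n_n$ supplied by \cref{thm:homeo}. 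Since $h$ is a polynomial and $\Phi^{-1}$ is continuous, $C$ is continuous, which is the first claim.

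For the interior, I would first argue that the attaining multisoliton there has degree exactly $n$, i.e.\ $\Phi^{-1}$ carries $\operatorname{int}\M^n_n$ into the open simplex $\Delta:=\{\beta_1>\cdots>\beta_n>0\}$. Indeed, $\operatorname{int}\M^n_n$ is open in $\R^n$ and $\Phi^{-1}$ is a continuous injection, so by Brouwer's invariance of domain $\Phi^{-1}(\operatorname{int}\M^n_n)$ is open in $\R^n$; being contained in \eqref{eq:simplex}, whose $\R^n$-interior is $\Delta$ (perturbing a vanishing coordinate to a negative value leaves \eqref{eq:simplex}), it must lie inside $\Delta$. On $\Delta$ the Jacobian determinant of $\Phi$ is a nonzero multiple of a Vandermonde determinant (cf.\ \eqref{eq:beta mom wiggle 2}), hence nonvanishing, so the inverse function theorem makes $\Phi^{-1}$, and therefore $C=h\circ\Phi^{-1}$, of class $C^\infty$ on $\operatorname{int}\M^n_n$; in particular $\operatorname{int}\M^n_n=\Phi(\Delta)$.

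To compute the partials, write $M$ for the Jacobian of $\Phi$ on $\Delta$, with entries $\partial e_i/\partial\beta_j=(-1)^{i+1}2^{2i+1}\beta_j^{2i}$, and $w$ for $\nabla_{\bs\beta}h$, with entries $(-1)^{n}2^{2n+3}\beta_j^{2n+2}$. The chain rule gives $\nabla_{\bs e}C=M^{-\mathsf{T}}w$, so Cramer's rule yields $\partial C/\partial e_k=\det(N_k)/\det(M)$, where $N_k$ is $M$ with its $k$-th row replaced by $w^{\mathsf{T}}$. Factoring the scalars $(-1)^{i+1}2^{2i+1}$ (respectively $(-1)^{n}2^{2n+3}$) out of each row, and, in $N_k$, cyclically moving the replaced row to the bottom ($n-k$ transpositions), turns both determinants into generalized Vandermonde determinants in $\beta_1^2>\cdots>\beta_n^2>0$ with strictly increasing even exponents — exactly the determinants shown positive in the proof of \cref{thm:unique params} (cf.\ \eqref{eq:unique params 3}). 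Bookkeeping the parities of the extracted signs, of the column reversal that orders the $\beta_j^2$ increasingly, and of the $n-k$ transpositions, one obtains $\det(M)>0$ and $\det(N_k)<0$ for every $k$, hence $\partial C/\partial e_k<0$ throughout $\operatorname{int}\M^n_n$.

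Finally, for monotonicity: $\Delta$ is dense in \eqref{eq:simplex}, so $\operatorname{int}\M^n_n=\Phi(\Delta)$ is dense in $\M^n_n$, and $\M^n_n$ is joined by axis-parallel segments meeting $\operatorname{int}\M^n_n$ except possibly at an endpoint — its coordinate sections are intervals, as one reads off from the description of $\M^n_n$ (for $n=2$ directly from \eqref{eq:M2 F2}). Integrating $\partial C/\partial e_k<0$ along such a segment and using the continuity of $C$ to pass to the boundary gives the strict decrease in each variable. I expect the two delicate points to be the invariance-of-domain step — which is what legitimizes applying the inverse function theorem at interior constraints — and, above all, the sign computation for $\det(M)$ and $\det(N_k)$; the latter is the crux but is essentially a reprise of the Vandermonde/total-positivity arguments already carried out for \cref{thm:beta mom wiggle} and \cref{thm:unique params}.
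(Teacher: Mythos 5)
Your proposal is correct, and its skeleton matches the paper's: continuity via $C=h\circ\Phi^{-1}$ with \cref{thm:homeo}, identification of the interior of $\M^n_n$ with the open simplex $\{\beta_1>\cdots>\beta_n>0\}$, and then sign analysis of $\partial C/\partial e_j$ obtained from the same linear system (your Jacobian $M$ is exactly the matrix in~\eqref{eq:min val is dec 3}). Where you genuinely diverge is the crux computation: you resolve the system by Cramer's rule and determine the signs of $\det M$ and $\det N_k$ by reducing to generalized Vandermonde determinants with increasing even exponents (the positivity already established in the proof of \cref{thm:unique params}), whereas the paper reads the same system as a polynomial-interpolation problem: the partials are the coefficients of the unique polynomial of degree $n+1$ vanishing at $\beta_1^2,\dots,\beta_n^2$ with prescribed top coefficient, and Vieta's formulas give them explicitly as in~\eqref{eq:min val is dec 5}. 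Both are valid --- I checked your parity bookkeeping and indeed $\det M>0$ and $\det N_k<0$ for every $k$ --- but the paper's route buys more than the sign: the explicit elementary-symmetric formulas~\eqref{eq:min val is dec 5} are reused later (e.g.\ in the proof of \cref{thm:point mass lem 2}, where $\frac{\partial C}{\partial e_1}=O(\beta_1^2\beta_2^2)$ is needed), which your sign-only argument does not immediately provide. Your invariance-of-domain step is a harmless explicitation of what the paper extracts from \cref{thm:homeo}. One soft spot, shared with the paper's own terse treatment: passing from negative partials on the interior to monotonicity on all of $\M^n_n$ requires that axis-parallel segments between points of $\M^n_n$ stay in $\M^n_n$ (your ``coordinate sections are intervals'' claim), which you assert only by analogy with the $n=2$ picture~\eqref{eq:M2 F2}; if you want this airtight for general $n$ it deserves a short argument, though the paper leaves the same step implicit.
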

\begin{proof}
We write
\begin{equation}
C(e_1,\dots,e_n) = (-1)^n \tfrac{2^{2n+3}}{2n+3} \big( \beta_1^{2n+3} + \dots + \beta_n^{2n+3} \big) ,
\label{eq:min val is dec 2}
\end{equation}
where $(\beta_1,\dots,\beta_n)$ is the unique solution to
\begin{equation}
e_1 = \tfrac{8}{3} \sum_{m=1}^n \beta_m^3 , \quad
e_2 = - \tfrac{32}{5}  \sum_{m=1}^n \beta_m^5 , \quad \dots,\quad
e_n = (-1)^{n-1}\tfrac{2^{2n+1}}{2n+1}  \sum_{m=1}^n \beta_m^{2n+1}
\label{eq:min val is dec 1}
\end{equation}
in the simplex~\eqref{eq:simplex}, guaranteed by \cref{thm:unique params}.  Note that $C$ is continuous as the composition of the inverse of the homeomorphism in \cref{thm:homeo} with a polynomial.

Consequently, it suffices to show that $\frac{\partial C}{\partial e_j} < 0$ for $j=1,\dots,n$ on the interior of $\M^n_n$.  By \cref{thm:homeo}, the interior of $\M^n_n$ corresponds to the set of $n$ positive parameters $\beta_1 > \cdots> \beta_n>0$.  In other words, the interior of $\M^n_n$ is the set of constraints $(e_1,\dots,e_n)$ which correspond to $n$-solitons.

We will now compute $\frac{\partial C}{\partial e_j}$ assuming $\beta_1 > \cdots> \beta_n>0$.  Differentiating the constraints~\eqref{eq:min val is dec 1} with respect to $e_j$, we see that
\begin{equation}
\begin{pmatrix}
8\beta_1^2 & 8\beta_2^2 & \!\dots\! & 8\beta_n^2 \\[0.5em]
-32\beta_1^4 & -32\beta_2^4 & \!\dots\! & -32\beta_n^4 \\%[0.5em]
\vdots & \vdots & \!\ddots\! & \vdots \\[0.5em]
(-1)^{n-1}2^{2n+1}\beta_1^{2n} & (-1)^{n-1}2^{2n+1}\beta_2^{2n} & \!\dots\! & (-1)^{n-1}2^{2n+1}\beta_n^{2n}
\end{pmatrix} \begin{pmatrix}
\frac{\partial\beta_1}{\partial e_j} \\[0.5em]
\frac{\partial\beta_2}{\partial e_j} \\[0.5em] 
\vdots \\[0.5em] 
\frac{\partial\beta_n}{\partial e_j}
\end{pmatrix}
\label{eq:min val is dec 3}
\end{equation}
is equal to the $j$th coordinate vector $(0,\dots,0,1,0,\dots,0)$.  This is a Vandermonde matrix after pulling out common factors from each row and column, and thus it has determinant
\begin{equation*}
8 \cdot (-32) \cdots (-1)^{n-1}2^{2n+1}\, \beta_1^2 \cdots \beta_n^2\, \prod_{j<k} (\beta_k^2 - \beta_j^2) .
\end{equation*}
This expression is nonvanishing since $\beta_1>\cdots>\beta_n>0$, and so we conclude that the partial derivatives $\frac{\partial \beta_k}{\partial e_j}$ exist and are uniquely determined by the matrix product~\eqref{eq:min val is dec 3}.  On the other hand, differentiating~\eqref{eq:min val is dec 2} with respect to $e_j$ yields
\begin{equation*}
\frac{\partial C}{\partial e_j}
= (-1)^n 2^{2n+3} 
\begin{pmatrix} \beta_1^{2n+2} & \!\dots\! & \beta_n^{2n+2} \end{pmatrix}
\begin{pmatrix}
\frac{\partial\beta_1}{\partial e_j} \\
\vdots \\
\frac{\partial\beta_n}{\partial e_j}
\end{pmatrix} .
\end{equation*}
We can determine the column vector on the RHS via~\eqref{eq:min val is dec 3}.  Collecting these equations for $j=1,\dots,n$, we obtain the matrix equation
\begin{equation}
\begin{aligned}
&\begin{pmatrix} \frac{\partial C}{\partial e_1} & \!\dots\! & \frac{\partial C}{\partial e_n} \end{pmatrix}
\begin{pmatrix}
8\beta_1^2 & \!\dots\! & 8\beta_n^2 \\
\vdots & \!\ddots\! & \vdots \\
(-1)^{n-1}2^{2n+1}\beta_1^{2n} & \!\dots\! & (-1)^{n-1}2^{2n+1}\beta_n^{2n}
\end{pmatrix} \\
&= (-1)^n 2^{2n+3}
\begin{pmatrix} \beta_1^{2n+2} & \!\dots\! & \beta_n^{2n+2} \end{pmatrix} ,
\end{aligned}
\label{eq:min val is dec 4}
\end{equation}
where we moved the matrix to the LHS to avoid inverting it.  We have already seen that this matrix is invertible, and so we conclude that the partial derivatives $\frac{\partial C}{\partial e_j}$ exist and are uniquely determined by the above equality.

In order to compute the derivatives $\frac{\partial C}{\partial e_j}$, we will harness the classical role of Vandermonde matrices in polynomial interpolation.  Specifically, reading off the $n$ components of the equality~\eqref{eq:min val is dec 4}, we see that the derivatives $\frac{\partial C}{\partial e_j}$ are the coefficients $C_j$ of the polynomial
\begin{equation*}
P(x) := 8C_1 x - 32 C_2 x^2 + \dots + (-1)^{n-1}2^{2n+1}C_nx^n - (-1)^n 2^{2n+3}x^{n+1}
\end{equation*}
which satisfies
\begin{equation*}
P(\beta_m^2) = 0\quad\tx{for }m=1,\dots,n .
\end{equation*}
As $\beta_1>\cdots>\beta_n>0$, there is only one such polynomial, namely,
\begin{equation*}
P(x) = (-1)^{n+1}2^{2n+3} x \prod_{m=1}^n (x-\beta_m^2) .
\end{equation*}
Therefore, the coefficients $C_j$ are given by Vieta's formulas:
\begin{equation}
\begin{aligned}
8C_1 &= (-1)^{n+1}2^{2n+3} (-1)^n \prod_{j=1}^n \beta_j^2 , \\[-0.5em]
-32C_2 &= (-1)^{n+1}2^{2n+3}(-1)^{n-1} \sum_{k=1}^n \prod_{j\neq k} \beta_j^2 , \\[-1em]
&\ \, \vdots \\[-1em]
(-1)^{n-1}2^{2n+1}C_n &= (-1)^{n+1}2^{2n+3}(-1)\sum_{j=1}^n \beta_j^2 ,
\end{aligned}
\label{eq:min val is dec 5}
\end{equation}
where the RHS for $C_j$ involves the $(n-j+1)$st elementary symmetric polynomial in $\beta_1^2,\dots,\beta_n^2$.  In particular, we see that each $\frac{\partial C}{\partial e_j} = C_j$ is given by $(-1)^{2n+1} = -1$ times a strictly positive quantity, and hence is strictly negative as desired.
\end{proof}

In order to employ that $C$ is decreasing, we will also need to know that the set $\M^n_n$ is downward closed within $\bigcup_{N\geq 0} \M^n_N$ in the following sense:
\begin{lem}
\label{thm:down closed}
If the constrains $\wt{e}_1,\dots,\wt{e}_n$ are in $\M^n_N$ for some $N$ and
\begin{equation*}
\wt{e}_1 \leq e_1 , \quad\dots,\quad \wt{e}_n\leq e_n
\end{equation*}
for some $(e_1,\dots,e_n) \in \M^n_n$, then $(\wt{e}_1,\dots,\wt{e}_n)\in\M^n_n$.
\end{lem}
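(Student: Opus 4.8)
The plan is to transfer the question to the odd power sums of the multisoliton amplitudes via the trace formula~\eqref{eq:en multisoliton}, and then extract what is needed from the inequalities $\wt{e}_m\le e_m$. Fix a multisoliton $Q_{\bs{\beta},\bs{c}}$ of degree $\mu\le n$ lying in $\con$ for the constraints $e=(e_1,\dots,e_n)$ --- it exists since $e\in\M^n_n$ --- and fix a multisoliton realizing $\wt{e}=(\wt{e}_1,\dots,\wt{e}_n)$ of minimal degree $N$; if $N\le n$ then $\wt{e}\in\M^n_n$ and we are done, so assume $N\ge n+1$. Some control on $e$ is genuinely needed, since $\F^n$ is strictly larger than $\bigcup_{M}\M^n_M$ already for $n=2$. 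The case $n=1$ is vacuous because $\M^1_M=[0,\infty)=\F^1$ for every $M$.

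It is instructive to dispatch $n=2$ directly from~\eqref{eq:M2 F2}. Write $\kappa:=\tfrac{32}{5}(\tfrac{3}{8})^{5/3}$. If $e=(0,0)$, then $\wt{e}_1\le 0$; but the first conserved quantity of a multisoliton is nonnegative, so $\wt{e}_1=0$, forcing $\wt{e}=(0,0)\in\M^2_2$. If $e_1>0$, then $e\in\M^2_2$ gives $e_2<-2^{-2/3}\kappa\, e_1^{5/3}$, whence $\wt{e}_2\le e_2<-2^{-2/3}\kappa\, e_1^{5/3}\le-2^{-2/3}\kappa\,\wt{e}_1^{5/3}$ by $\wt{e}_1\le e_1$; together with the bound $\wt{e}_2\ge-\kappa\,\wt{e}_1^{5/3}$ coming from $\wt{e}\in\bigcup_M\M^2_M$, this places $\wt{e}$ in $\M^2_2$. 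The same mechanism --- using $\wt{e}\le e$ to force $\wt{e}$ onto the ``$\M^n_n$-side'' of the boundary separating $\M^n_n$ from $\bigcup_M\M^n_M\setminus\M^n_n$ --- is what I would implement for general $n$, but that boundary is no longer explicit and must be analysed through the amplitudes.

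\emph{Step~1 (general $n$): $\wt{e}$ is realized by a configuration with at most $n$ distinct amplitudes.} Minimize $(-1)^n\sum_j\gamma_j^{2n+3}$ over all finite positive configurations $\{\gamma_j\}$ realizing $\wt{e}$. The cube sum $\sum_j\gamma_j^3=\tfrac{3}{8}\wt{e}_1$ is fixed and the $\ell^{2n+3}$-norm is dominated by the $\ell^3$-norm, so this functional is bounded; a Carath\'eodory-type reduction confines us to configurations with a bounded number of atoms, and a compactness argument (in the natural compactification, where atoms may coalesce or have their location tend to $0$) produces a minimizer. If a minimizer had $n+1$ or more distinct positive amplitudes, then the implicit function theorem argument of~\cref{thm:beta mom wiggle} applies verbatim --- the fixed positive multiplicities enter only as positive row and column factors of the Vandermonde determinant~\eqref{eq:beta mom wiggle 2}, hence do not affect its nonvanishing --- and yields a configuration with the same $E_1,\dots,E_n$ but strictly smaller $E_{n+1}$, a contradiction. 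So $\wt{e}$ is realized by distinct amplitudes $v_1>\dots>v_\ell$ with multiplicities $r_1,\dots,r_\ell\ge1$ and $\ell\le n$, and this configuration is unique by~\cref{thm:unique params 2}.

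\emph{Step~2 (general $n$): every multiplicity equals $1$.} This is where $\wt{e}\le e$ is decisive, and I expect it to be the main obstacle. Once it holds, $v_1>\dots>v_\ell$ is a genuine multisoliton of degree $\ell\le n$, so $\wt{e}\in\M^n_n$ and the proof is complete. The inequalities $\wt{e}_m\le e_m$ unwind, via~\eqref{eq:en multisoliton} and the alternation of signs there, to the alternating chain $\sum_i r_i v_i^3\le\sum_j\beta_j^3$, $\sum_i r_i v_i^5\ge\sum_j\beta_j^5,\dots$ through order $2n+1$; equivalently, the signed atomic measure $\sum_j\delta_{\beta_j}-\sum_i r_i\delta_{v_i}$ on $(0,\infty)$ has odd moments of orders $3,5,\dots,2n+1$ that strictly alternate in sign. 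I would rule this out, given that the positive part of the measure has at most $n$ atoms of unit weight while the negative part is strictly heavier and also carried by at most $n$ points, by invoking that $\{x^3,x^5,\dots,x^{2n+1}\}$ is a Chebyshev system on $(0,\infty)$ --- the property established via Rolle's theorem inside the proof of~\cref{thm:unique params}. In the model case $n=2$ the obstruction is exactly the sharp reverse $\ell^5$--$\ell^3$ inequality, whose extremal constant $N^{-2/3}$ is attained only by constant vectors, so a repeated amplitude cannot occur.
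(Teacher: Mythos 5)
Your reduction in Step~1 is fine (it is essentially \cref{thm:N gas lem 1,thm:N gas lem 2} from Section~6, whose proofs do not rely on the present lemma, so there is no circularity), and your explicit $n=2$ argument via~\eqref{eq:M2 F2} is correct. The gap is in Step~2, which you yourself flag as the main obstacle: the data you retain there — namely that $\sum_j\delta_{\beta_j}-\sum_i r_i\delta_{v_i}$ has at most $n$ atoms on each side and odd moments of orders $3,5,\dots,2n+1$ alternating in sign — simply do not force $r_i\equiv 1$, so no Chebyshev-system argument running only on those data can close the proof. Concrete example with $n=3$: take $\bs{\beta}=(1.1,\,0.73,\,0.71)$ (distinct, so the corresponding $e\in\M^3_3$) and the doubled configuration $V=\{1,1\}$. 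Then $\sum\beta_j^3\approx 2.0779>2$, $\sum\beta_j^5\approx 1.9982<2$, $\sum\beta_j^7\approx 2.1501>2$, so the constraints $\wt{e}$ of $V$ satisfy $\wt{e}_m\le e_m$ for $m=1,2,3$, yet by \cref{thm:unique params 2} the doubled atom is the \emph{unique} realization of $\wt{e}$ with at most $3$ distinct values — a repeated amplitude survives all of your hypotheses. (This does not contradict the lemma: since $\sum v^7-2\sum v^5+\sum v^3=0$ forces every amplitude to equal $1$, this $\wt{e}$ lies in no $\M^3_N$, i.e.\ it fails the hypothesis that $\wt{e}$ is attainable by a genuine multisoliton.) The moral is that Step~2 must use, in an essential way, the existence of a \emph{distinct}-amplitude realization of $\wt{e}$ of some degree $N$; your sketch uses that hypothesis only to launch Step~1 and then discards it. In addition, your stated premises are not available: the inequalities $\wt{e}_m\le e_m$ are not strict, and the assertion that the negative part is ``strictly heavier'' is unjustified (the total multiplicity $\sum r_i$ of the reduced configuration need not exceed the number of $\beta_j$'s — it does not in the example above).

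For comparison, the paper avoids this issue by never reducing to multiplicities at all: it works in the parameter space $\R^N$ of the degree-$N$ distinct-amplitude realization of $\wt{e}$, introduces the relaxed set $\Omega$ cut out by the alternating inequalities $(-1)^k\sum x_m^{2k+1}\le(-1)^k\alpha_{2k+1}$, observes that $\Omega$ meets the face $\{x_{n+1}=\dots=x_N=0\}$ precisely because $\wt{e}\le e$ and $e\in\M^n_n$ supplies a point on that face, and then minimizes the signed $(2n+3)$rd moment over the compact set $\Omega\cap\{x_{n+1}=\dots=x_N=0\}$; the monotonicity of $C$ from \cref{thm:min val is dec} forces that minimizer to saturate all $n$ equality constraints, i.e.\ to lie in $\Gamma$, which yields the desired at-most-$n$-parameter realization. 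If you want to salvage your outline, Step~2 needs an argument of this type that plays the degree-$N$ distinct realization against the dominating point of $\M^n_n$, rather than a pure alternating-moment/Chebyshev obstruction.
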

\begin{proof}
Let $\wt{\beta}_1,\dots,\wt{\beta}_N>0$ be the $\beta$ parameters of the multisoliton which witnesses the constraints $\wt{e}_1,\dots,\wt{e}_n$.  The case $N\leq n$ is immediate, so assume that $N\geq n+1$.  Let
\begin{equation*}
\alpha_j = \sum_{m=1}^N \wt{\beta}_m^{j} , \quad j=3,5,\dots,2n+1
\end{equation*}
denote the odd moments of $\wt{\beta}_1,\dots,\wt{\beta}_N$.  Define
\begin{equation*}
\Gamma := \bigg\{ (x_1,\dots,x_N) \in\R^N : x_1,\dots,x_N\geq 0,\ \sum_{m=1}^N x_m^j = \alpha_j\tx{ for }j=3,5,\dots,2n+1 \bigg\}
\end{equation*}
to be the set of parameters in $\R^N$ satisfying the constraints, which is nonempty because it contains $(\wt{\beta}_1,\dots,\wt{\beta}_N)$.

It suffices to show that the intersection of $\Gamma$ with the $n$-dimensional boundary face $\{ (x_1,\dots,x_N) : x_{n+1},\dots,x_N = 0 \}$ is nonempty, since a point $(x_1,\dots,x_n,0,\dots,0)$ provides the $n$-soliton parameters that we seek.  The case $n=1$ is immediate as $\Gamma$ is just an $\ell^3$-sphere, and so we may assume that $\Gamma$ is the intersection of $n\geq 2$ constraints. 

To accomplish this, consider the set ``between'' the constraints
\begin{align*}
\Omega :=  \bigg\{ (x_1,\dots,x_N)\in\R^N :{} &x_1,\dots,x_N\geq 0, \\[-0.5em]
&(-1)^k \sum_{m=1}^N x_m^{2k+1} \leq (-1)^k \alpha_{2k+1}\tx{ for }k=1,\dots,n \bigg\} .
\end{align*}
Unlike $\Gamma$, we already know that the intersection of $\Omega$ with the boundary face $\{ (x_1,\dots,x_N) : x_{n+1},\dots,x_N = 0 \}$ is nonempty by premise.  Indeed, as $(e_1,\dots,e_n) \in \M^n_n$, then there exist $x_1,\dots,x_n$ so that
\begin{equation*}
(-1)^{k+1} \tfrac{2^{2k+1}}{2k+1} \sum_{m=1}^n \beta_m^{2k+1} = e_k \geq \wt{e}_k = (-1)^{k+1} \tfrac{2^{2k+1}}{2k+1} \alpha_{2k+1} \quad\tx{for }k=1,\dots,n .
\end{equation*}
(This premise is in fact necessary, as the sets $\Omega$ and $\Gamma$ may not intersect the boundary face $\{ (x_1,\dots,x_N) : x_{n+1},\dots,x_N = 0 \}$ in general; cf.~\cref{thm:N gas lem 1}.)  Note that $\Omega$ is also bounded, because each coordinate $x_j$ is bounded by $\alpha_5^{1/5}$ since $n\geq 2$.  As $\Omega \cap \{ (x_1,\dots,x_N) : x_{n+1},\dots,x_N = 0 \}$ is nonempty, closed, and bounded, there exists some point $(\beta_1,\dots,\beta_n,0,\dots,0)$ in this intersection that minimizes the $(n+1)$st odd moment $(-1)^n \sum x_m^{2n+3}$.  This point must actually lie in $\Gamma$, because \cref{thm:min val is dec} tells us that the value $(-1)^n \sum x_m^{2n+3}$ is an individually decreasing function of the moments on $\Omega \cap \{ (x_1,\dots,x_N) : x_{n+1},\dots,x_N = 0 \}$.
\end{proof}

%%%%%%%%%%%%%%%%%%%%%%%%%%%%%%%%%%%%%%%%%%%%%%%%%%%%%%%%%%%%%%%%%

\section{Global minimizers}
\label{sec:min}

We will prove \cref{thm:var char} over the course of this section by induction on $n$.  We begin with the base case $n=0$.  The conclusion is immediate since
\begin{equation*}
E_1(u) = \int_{-\infty}^\infty \tfrac{1}{2} u^2 \dx \geq 0\quad\tx{for all }u\in L^2(\R) ,
\end{equation*}
with equality if and only if $u$ is equal to the zero-soliton $q(x)\equiv 0$.

Next, we turn to the inductive step.  Suppose that $n\geq 1$ and that \cref{thm:var char} holds for $1,2,\dots,n-1$.  This inductive hypothesis yields the following fact for $\M^n_n$:
\begin{lem}
\label{thm:rel open}
The set $\M^n_n$ is a relatively open subset of the feasible constraints $\F^n$ (with respect to the topology on $\R^n$).
\end{lem}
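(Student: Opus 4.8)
The plan is to treat separately the constraints in $\M^n_n$ that are realized by genuine $n$-solitons and those realized only by multisolitons of lower degree, the latter forming exactly $\M^n_{n-1}$. For the first class, use \cref{thm:homeo} to identify $\M^n_n$ with the half-open simplex \eqref{eq:simplex}; the $n$-soliton constraints are the image under $\Phi$ of the top cell $\{\beta_1>\cdots>\beta_n>0\}$. There, exactly as in the proof of \cref{thm:min val is dec}, $D\Phi$ is a Vandermonde matrix up to nonzero row and column factors, hence invertible, so by the inverse function theorem $\Phi$ is an open map on that cell. Thus this class of constraints is an open subset of $\R^n$, and in particular relatively open in $\F^n$; every such point has the required neighbourhood.

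Now fix $p=(e_1,\dots,e_n)\in\M^n_{n-1}$, realized by a multisoliton of degree $N\le n-1$, and write $p'=(e_1,\dots,e_{n-1})$ and $\pi(f_1,\dots,f_n):=(f_1,\dots,f_{n-1})$. Since the realizing multisoliton has degree $\le n-1$, we have $p'\in\M^{n-1}_{n-1}$; by the inductive hypothesis (\cref{thm:var char} at level $n-1$) this multisoliton is the unique minimizer of $E_n$ over the level-$(n-1)$ constraint set for $p'$, so $e_n$ is the minimal value, which I denote $C^{(n-1)}(p')$ (cf.~\eqref{eq:min val} at level $n-1$). The same reasoning applies at every point of $\M^n_{n-1}$, so $\M^n_{n-1}$ is the graph of $C^{(n-1)}$ over $\M^{n-1}_{n-1}$. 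Moreover, invoking the inductive form of the present lemma — \cref{thm:rel open} at level $n-1$, available since \cref{thm:var char} holds below $n-1$ — every $\tilde e\in\F^n$ sufficiently close to $p$ has $\pi(\tilde e)\in\M^{n-1}_{n-1}$, because $\pi(\tilde e)\in\F^{n-1}$ lies near $\pi(p)=p'\in\M^{n-1}_{n-1}$. Hence such a $\tilde e$ has the form $(\tilde e',\tilde e_n)$ with $\tilde e'\in\M^{n-1}_{n-1}$; and since any $H^n$-witness of $\tilde e$ also witnesses the level-$(n-1)$ constraints $\tilde e'$ (as $H^n\subset H^{n-1}$ and $E_n$ makes sense on $H^{n-1}$), we get $\tilde e_n\ge C^{(n-1)}(\tilde e')$.

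It remains to show that such a $\tilde e$ lies in $\M^n_n$. Over a fixed $\tilde e'\in\M^{n-1}_{n-1}$, the fibre $\{\,s:(\tilde e',s)\in\M^n_n\,\}$ is, I claim, an interval $[\,C^{(n-1)}(\tilde e'),\,s^\star(\tilde e')\,)$: it contains $C^{(n-1)}(\tilde e')$ because the degree-$(\le n-1)$ multisoliton for $\tilde e'$ realizes that value; it is an interval because the set of $\bs\gamma\in\{\gamma_1\ge\cdots\ge\gamma_n\ge0\}$ with lower moments $(E_1,\dots,E_{n-1})$ pinned to $\tilde e'$ is connected and $E_n$ depends continuously on $\bs\gamma$, while \cref{thm:down closed} identifies its bottom as $C^{(n-1)}(\tilde e')$; and its right endpoint $s^\star(\tilde e')$, the relevant extremum of $E_n$, varies continuously in $\tilde e'$ by the same Vandermonde bookkeeping. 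Since $p=(p',C^{(n-1)}(p'))$ sits at the floor of this interval when $\tilde e'=p'$, we have $s^\star(p')>C^{(n-1)}(p')$, and therefore $\tilde e_n<s^\star(\tilde e')$ once $\tilde e$ is close enough to $p$; together with $\tilde e_n\ge C^{(n-1)}(\tilde e')$ this forces $\tilde e\in\M^n_n$. The main obstacle is precisely this last step: showing that the $\M^n_n$-fibre over a constraint is an interval whose floor is $C^{(n-1)}$ and whose ceiling strictly exceeds the floor at $p$, where \cref{thm:down closed} and a connectedness argument for the $\bs\gamma$-parameter set carry the weight; everything upstream is organizational, resting on \cref{thm:homeo}, the inverse function theorem, and \cref{thm:var char} and \cref{thm:rel open} at level $n-1$.
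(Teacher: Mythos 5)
Your organizational work is fine, and in places more detailed than the paper's own argument: the openness of the image of the top cell via the inverse function theorem, the identification of $\M^n_{n-1}$ as the graph of $C^{(n-1)}$ over $\M^{n-1}_{n-1}$ using \cref{thm:var char} at level $n-1$, and the reduction to a statement about the fibre of $\M^n_n$ over nearby $\tilde{e}'$ are all sound (the paper instead disposes of the non-interior points in one stroke, noting via \cref{thm:homeo} that they correspond to multisolitons of degree at most $n-1$ and, by the inductive hypothesis, lie on the boundary of $\F^n$). The problem is the step you yourself call the main obstacle: neither of the fibre facts you need is actually established. Your interval argument is run on the wrong parameter set. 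You pin the lower moments on the closed chamber $\{\gamma_1\ge\cdots\ge\gamma_n\ge 0\}$, but a $\bs{\gamma}$ with a repeated positive entry does not produce a point of $\M^n_n$ at all: by \cref{thm:unique params 2} its moment vector cannot also be attained by distinct parameters taking at most $n$ values. So even granting connectedness of that pinned-moment set (which you assert without proof, and which is not obvious), you would only conclude that a strictly larger set than the fibre of $\M^n_n$ has interval image; the genuine fibre could a priori omit exactly those values of $E_n$ attained only at collision configurations. What is needed near the floor is a construction --- for instance an implicit-function-theorem deformation in the spirit of \cref{thm:beta mom wiggle} that adjoins a small new parameter to the minimizing configuration and corrects the old ones --- and this requires care precisely when the degree of $\tilde{e}'$ is smaller than $n-1$.

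Second, the strict inequality $s^\star(p')>C^{(n-1)}(p')$ is a non sequitur as written: that $p$ ``sits at the floor'' only says the floor belongs to the fibre, not that the fibre contains anything else. Indeed at $p=(0,\dots,0)$ (the degree-zero multisoliton) the fibre over $p'=0$ is the single point $\{0\}$, so $s^\star(p')=C^{(n-1)}(p')$ and your final inequality has no room to hold; note that feasible constraint vectors with $e_1>0$ and $e_2>0$ --- which lie outside every $\M^n_N$, since any multisoliton with $E_1>0$ has $E_2<0$ --- accumulate at the origin under scaling, so this degenerate case must be excluded or treated separately, and your argument does not detect it. Away from the trivial constraint the positivity of the gap is true but needs a proof (again a wiggle-type construction producing degree-at-most-$n$ multisolitons with the same $e_1,\dots,e_{n-1}$ and $E_n$ slightly above the floor), and the continuity, or at least lower semicontinuity, of $s^\star$ --- which you dismiss as ``the same Vandermonde bookkeeping'' --- is delicate exactly at points where the degree drops. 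Until these three points (interval structure of the true fibre near its floor, strict positivity of the gap away from the zero constraint, and semicontinuity of the ceiling) are supplied, the second half of your proof does not go through.
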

\begin{proof}
Given $(e_1,\dots,e_n)$ in the interior of $\M^n_n$, we know that the numer of $\beta$ parameters is $n$ by \cref{thm:homeo}.  Therefore, for each $k\leq n-1$ we can increase and decrease $E_{k+1}$ while preserving $E_1,\dots,E_k$ by \cref{thm:beta mom wiggle} since $n$ is strictly larger than $k$.  Moreover, \cref{thm:homeo} implies that the set $\M^n_n \sm (\tx{int}\,\M^n_n)$ corresponds to multisolitons of degree at most $n-1$, and thus lie on the boundary of $\F^n$ by the inductive hypothesis that \cref{thm:var char} holds for each $k=1,\dots,n-1$.	
\end{proof}

Next, we will prove the first half of the inductive step: that multisolitons of degree at most $n$ are global constrained minimizers.
\begin{thm}
\label{thm:minimizers}
Given constraints $(e_1,\dots,e_n) \in \M^n_n$, there exists a unique integer $N\leq n$ and parameters $\beta_1>\cdots>\beta_N>0$ so that the multisoliton $Q_{\bs{\beta},\bs{c}}$ lies in $\con$ for some (and hence all) $\bs{c}\in\R^N$.  Moreover, we have
\begin{equation*}
E_{n+1}(u) \geq E_{n+1}(Q_{\bs{\beta},\bs{c}}) \quad \tx{for all }u\in\con.
\end{equation*}
\end{thm}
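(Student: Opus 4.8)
The plan is to obtain existence and uniqueness of $\bs{\beta}$ directly from the definition of $\M^n_n$ together with \cref{thm:unique params}, and then to prove the inequality by reducing to Schwartz potentials and using the Zakharov--Faddeev trace formulas~\eqref{eq:en trace} to strip off the continuous part of the scattering data. Since $(e_1,\dots,e_n)\in\M^n_n$, some multisoliton of degree $M\le n$ lies in $\con$; after reordering its parameters as $\beta_1>\cdots>\beta_M>0$, uniqueness of the pair $(M,\bs{\beta})$ is exactly \cref{thm:unique params}, and because $E_1,\dots,E_n$ are independent of $\bs{c}$ on the manifold of multisolitons (cf.~the remark after~\eqref{eq:blaschke}), the condition $Q_{\bs{\beta},\bs{c}}\in\con$ holds for one $\bs{c}\in\R^M$ if and only if for all. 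Writing $C(e_1,\dots,e_n):=E_{n+1}(Q_{\bs{\beta},\bs{c}})$ as in~\eqref{eq:min val}, it remains to show $E_{n+1}(u)\ge C(e_1,\dots,e_n)$ for every $u\in\con$.

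First I would reduce to the case $u\in\con\cap\schwar(\R)$. Given a general $u\in\con$, pick $u_j\in\schwar(\R)$ with $u_j\to u$ in $H^n(\R)$; by continuity of $E_1,\dots,E_{n+1}$ on $H^n(\R)$ (the estimates~\eqref{eq:ests for en cts}) we have $E_m(u_j)\to e_m$ for $m=1,\dots,n+1$, so $e^{(j)}:=(E_1(u_j),\dots,E_n(u_j))\to(e_1,\dots,e_n)$ with $e^{(j)}\in\F^n$. Since $\M^n_n$ is relatively open in $\F^n$ by \cref{thm:rel open}, we get $e^{(j)}\in\M^n_n$ for all large $j$, so the Schwartz case gives $E_{n+1}(u_j)\ge C(e^{(j)})$; letting $j\to\infty$ and invoking the continuity of $C$ from \cref{thm:min val is dec} yields $E_{n+1}(u)\ge C(e_1,\dots,e_n)$.

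Now let $u\in\con\cap\schwar(\R)$, with eigenvalue parameters $\beta_{u,1}>\cdots>\beta_{u,N_u}>0$ (distinct, being roots of simple eigenvalues) and reciprocal transmission coefficient $a(k;u)$. Since $\log|a(k;u)|\ge0$ on $\R$ by~\eqref{eq:trans coeff 1}, the trace formulas~\eqref{eq:en trace} show that the numbers
\begin{equation*}
\tilde e_m:=(-1)^{m+1}\tfrac{2^{2m+1}}{2m+1}\sum_{j=1}^{N_u}\beta_{u,j}^{2m+1}=E_m(Q_{\bs{\beta}_u,\bs{c}}),\qquad m=1,\dots,n,
\end{equation*}
satisfy $\tilde e_m\le e_m$, and $(\tilde e_1,\dots,\tilde e_n)\in\M^n_{N_u}$ is witnessed by $Q_{\bs{\beta}_u,\bs{c}}$; hence $(\tilde e_1,\dots,\tilde e_n)\in\M^n_n$ by \cref{thm:down closed}, so $C(\tilde e_1,\dots,\tilde e_n)\ge C(e_1,\dots,e_n)$ because $C$ is decreasing in each variable (\cref{thm:min val is dec}). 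Discarding the nonnegative $\log|a|$-moment in the trace formula for $E_{n+1}$ gives $E_{n+1}(u)\ge E_{n+1}(Q_{\bs{\beta}_u,\bs{c}})$, so the whole theorem reduces to the claim that, among multisolitons with prescribed conserved quantities $\tilde e$, the one of degree $\le n$ minimizes $E_{n+1}$.

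I expect this last claim to be the main obstacle. To prove it, set $\alpha_{2k+1}:=\sum_j\beta_{u,j}^{2k+1}$, let $\tilde{\bs{\beta}}$ be the unique degree-$\le n$ parameters attaining $\tilde e$, and consider the compact set
\begin{equation*}
\Gamma:=\Big\{\bs{x}\in\R^{N_u}:x_1,\dots,x_{N_u}\ge0,\ \textstyle\sum_m x_m^{2k+1}=\alpha_{2k+1}\ \text{for }k=1,\dots,n\Big\},
\end{equation*}
which is bounded (each $x_m\le\alpha_3^{1/3}$) and contains both $\bs{\beta}_u$ and the zero-padding of $\tilde{\bs{\beta}}$. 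The continuous function $(-1)^n\sum_m x_m^{2n+3}$ attains its minimum over $\Gamma$ at some $\bs{x}^\star$; if $\bs{x}^\star$ had $n+1$ or more distinct positive coordinates, the implicit-function-theorem argument of \cref{thm:beta mom wiggle} (based on a Vandermonde/Schur-complement computation, which is unaffected by assigning fixed positive multiplicities to the distinct values being perturbed) would produce a nearby point of $\Gamma$ with strictly smaller value---a contradiction. Hence $\bs{x}^\star$ has at most $n$ distinct positive coordinates, so its nonzero part coincides with $\tilde{\bs{\beta}}$ by \cref{thm:unique params 2}, giving $(-1)^n\sum_j\beta_{u,j}^{2n+3}\ge(-1)^n\sum_i\tilde\beta_i^{2n+3}$, that is, $E_{n+1}(Q_{\bs{\beta}_u,\bs{c}})\ge E_{n+1}(Q_{\tilde{\bs{\beta}},\bs{c}})=C(\tilde e_1,\dots,\tilde e_n)$. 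Chaining this with the inequalities of the previous paragraph completes the proof; the difficulty is genuinely concentrated in understanding how the $(n+1)$st power sum of the scattering eigenvalues moves when they are redistributed subject to the first $n$ trace-formula constraints, which is why \cref{thm:beta mom wiggle}, \cref{thm:unique params 2}, \cref{thm:down closed}, and \cref{thm:min val is dec} all enter.
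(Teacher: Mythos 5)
Your proposal is correct and takes essentially the same route as the paper: reduce to Schwartz $u$ via continuity of $E_1,\dots,E_{n+1}$ and of $C$ together with the relative openness of $\M^n_n$ in $\F^n$, then use the trace formulas to discard the nonnegative $\log|a|$ moment and conclude via downward closedness (\cref{thm:down closed}) and the monotonicity of $C$ (\cref{thm:min val is dec}). Your compactness-plus-wiggle argument over $\Gamma$, identifying the minimizer through \cref{thm:unique params 2}, is a careful expansion of the step the paper attributes tersely to \cref{thm:beta mom wiggle} (and only formalizes later in \cref{thm:N gas lem 1,thm:N gas lem 2}), so the mathematical content coincides.
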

\begin{proof}
We first prove the inequality for $u$ Schwartz.  In this case, the trace formula~\eqref{eq:en trace} allows us to write
\begin{align*}
E_{n+1}(u)
&= \tfrac{2^{2n+2}}{\pi} \int_{-\infty}^\infty k^{2n+2} \log|a(k;u)|\dk + (-1)^n \tfrac{2^{2n+3}}{2n+3} \sum_{m=1}^N \beta_m^{2n+3} .
\intertext{The integrand is nonnegative by~\eqref{eq:trans coeff 1}, so we can omit the integral to obtain the inequality}
&\geq (-1)^n \tfrac{2^{2n+3}}{2n+3} \sum_{m=1}^N \beta_m^{2n+3} .
\intertext{Note that the $\beta$ parameters do not satisfy the constraints $e_j$, but rather the smaller constraints $e_j - \frac{2^{2j}}{\pi} \int k^{2j} \log|a|\dk$ because the moments of $\log|a(k;u)|$ may not vanish.  Nevertheless, as $\M^n_n$ is downward closed by \cref{thm:down closed}, we know that these constraints are still attainable by a multisoliton of degree at most $n$ and so \cref{thm:beta mom wiggle} implies that}
&\geq \textstyle C\big( e_1 - \tfrac{4}{\pi} \int k^2 \log|a|\dk ,\ \dots ,\ e_n - \tfrac{2^{2n}}{\pi} \int k^{2n} \log|a|\dk \big) .
\intertext{Finally, $C$ is individually decreasing in each variable by \cref{thm:min val is dec}, and so we conclude}
&\geq C( e_1, \dots , e_n )
\end{align*}
as desired.

For general $u\in H^n(\R)$, we approximate by a sequence of Schwartz functions.  The constraints $e_1,\dots,e_n$ and minimum value $C$ for the approximate functions will converge by the continuity of $E_1,\dots,E_n : H^n(\R) \to \R$ and $C:\M^n_n\to \R$, the latter of which we proved in \cref{thm:min val is dec}.  Moreover, the constraints $e_1,\dots,e_n$ for the approximate functions eventually lie in $\M^n_n$ by \cref{thm:rel open}.
\end{proof}

To conclude the inductive step of~\cref{thm:var char}, it remains to show that any other constrained minimizer must also be a multisoliton with the right $\beta$ parameters:
\begin{thm}
\label{thm:uniqueness}
Suppose we have constraints $(e_1,\dots,e_n) \in \M^n_n$ and that $q\in \con$ minimizes $E_{n+1}(u)$ over $\con$.  Then $q = Q_{\bs{\beta},\bs{c}}$ for some $\bs{c}\in\R^N$, where $\bs{\beta}\in\R^N$ are the unique parameters satisfying the constraints guaranteed by \cref{thm:minimizers}.
\end{thm}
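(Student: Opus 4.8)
The plan is to show that any constrained minimizer $q$ must satisfy $|a(k;q)|\equiv 1$ on $\R$; to deduce from this, by a short complex-analysis argument, that $q$ is a multisoliton; and then to pin down its amplitude parameters using \cref{thm:beta mom wiggle} and \cref{thm:unique params}, which completes the inductive step and hence the proof of \cref{thm:var char}. First I would note that $q\in\schwar(\R)$: as a constrained minimizer it satisfies the Euler--Lagrange system \eqref{eq:novikov}, a $2n$th-order ODE, so it is smooth and rapidly decreasing by standard bootstrapping. By \cref{thm:minimizers}, the infimum of $E_{n+1}$ over $\con$ equals $C(e_1,\dots,e_n)=E_{n+1}(Q_{\bs\beta,\bs c})$, so $E_{n+1}(q)=C(e_1,\dots,e_n)$. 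Writing $-\mu_1^2,\dots,-\mu_M^2$ for the bound states of $-\partial^2_x+q$ (ordered so $\mu_1>\cdots>\mu_M>0$), the trace formula \eqref{eq:en trace} together with the chain of inequalities from the proof of \cref{thm:minimizers} gives
\begin{align*}
E_{n+1}(q) &= \tfrac{2^{2n+2}}{\pi}\int_{-\infty}^\infty k^{2n+2}\log|a(k;q)|\dk + (-1)^n\tfrac{2^{2n+3}}{2n+3}\sum_{m=1}^M \mu_m^{2n+3} \\
&\geq (-1)^n\tfrac{2^{2n+3}}{2n+3}\sum_{m=1}^M \mu_m^{2n+3} \geq \cdots \geq C(e_1,\dots,e_n),
\end{align*}
where the omitted steps apply \cref{thm:beta mom wiggle} and \cref{thm:down closed} to the multisoliton with amplitudes $\mu_m$ and the reduced constraints $e_j-\tfrac{2^{2j}}{\pi}\int k^{2j}\log|a(k;q)|\dk\le e_j$, together with the monotonicity of $C$ from \cref{thm:min val is dec}. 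Since the two ends coincide, every inequality is an equality; the first forces $\int k^{2n+2}\log|a(k;q)|\dk=0$, and as the integrand is nonnegative by \eqref{eq:trans coeff 1} and $k\mapsto a(k;q)$ is continuous on $\R$ by \cref{thm:scat thy}, we conclude $|a(k;q)|\equiv 1$ on $\R$.

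Next I would convert this into a Blaschke-product formula for $a(\cdot;q)$. Let $i\mu_1,\dots,i\mu_M$ be the (simple, purely imaginary) zeros of $a(\cdot;q)$ in $\C^+$ and set $B(k)=\prod_{m=1}^M\frac{k-i\mu_m}{k+i\mu_m}$. Then $g:=a(\cdot;q)/B$ is holomorphic and zero-free on $\C^+$; it extends continuously to $\ol{\C^+}$ (using that $a(\cdot;q)$ is continuous on $\R$ with no zeros or poles there, the asymptotics \eqref{eq:trans coeff 2}, and that $B$ is continuous and nonvanishing on $\ol{\C^+}\sm\{-i\mu_m\}$), with $|g|\equiv 1$ on $\R$ and $g\to 1$ at infinity. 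Choosing the holomorphic branch $h:=\log g$ on $\C^+$ normalized so that $h\to 0$ at infinity — possible since $\ol{\C^+}$ is simply connected and $g$ is continuous, nonvanishing, and $\to1$ there — we find that $h$ is continuous on $\ol{\C^+}$, purely imaginary on $\R$, and vanishes at infinity. By the Schwarz reflection principle $h$ extends to an entire function still tending to $0$ at infinity, so $h\equiv 0$ by the maximum principle. Hence $a(k;q)=\prod_{m=1}^M\frac{k-i\mu_m}{k+i\mu_m}$, and the characterization of multisolitons \eqref{eq:blaschke} gives $q=Q_{\bs\mu,\bs c}$ for some $\bs c\in\R^M$, where $\bs\mu=(\mu_1,\dots,\mu_M)$.

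Finally, since $q=Q_{\bs\mu,\bs c}\in\con$, the amplitudes $\mu_1>\cdots>\mu_M>0$ solve the power-sum system \eqref{eq:en multisoliton} for the constraints $e_1,\dots,e_n$. If $M\geq n+1$, \cref{thm:beta mom wiggle} produces distinct $\wt\mu_1,\dots,\wt\mu_M>0$ with the same first $n$ odd moments but strictly smaller $E_{n+1}$; then $Q_{\wt{\bs\mu},\bs c}\in\con$ with $E_{n+1}(Q_{\wt{\bs\mu},\bs c})<E_{n+1}(q)=\inf_{\con}E_{n+1}$, a contradiction. Hence $M\leq n$, and \cref{thm:unique params} forces $\bs\mu=\bs\beta$, so $q=Q_{\bs\beta,\bs c}$. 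I expect the main obstacle to be the complex-analytic step: one must verify that $g=a(\cdot;q)/B$ genuinely extends continuously and without zeros to the real axis (so that Schwarz reflection applies and no singular-inner factor can hide), and this is precisely where the Schwartz regularity of the minimizer is used — it guarantees that $a(\cdot;q)$ is continuous, bounded, and bounded away from $0$ on all of $\R$, and that the poles of $T$ are simple and imaginary. The remaining ingredients are bookkeeping around results already in hand.
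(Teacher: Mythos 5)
The one genuine gap is your opening claim that $q$ is ``smooth and rapidly decreasing by standard bootstrapping.'' Bootstrapping the Euler--Lagrange equation \eqref{eq:novikov} does give smoothness (each term of $P_{n+1}$ has $\sum\alpha_j\le 2n-2$ by \eqref{eq:en 2}, so each iteration gains two derivatives), but it gives no decay beyond $q^{(j)}(x)\to 0$: the stationary equation \eqref{eq:novikov} admits solutions with only algebraic decay --- e.g.\ $u(x)=2x^{-2}$ solves it for every $n$ with $\lambda_1=\dots=\lambda_n=0$ --- so rapid decay cannot follow from the equation alone. The paper's \cref{thm:schwartz} must use that $q$ is a \emph{minimizer}, not merely a critical point: by the envelope theorem the multipliers satisfy $\lambda_j=\frac{\partial C}{\partial e_j}(e_1,\dots,e_n)<0$ (\cref{thm:min val is dec}; constraints on the boundary of $\M^n_n$ are handled by the induction on $n$), which makes the linear part of \eqref{eq:novikov} have no purely imaginary characteristic roots, so the origin is a hyperbolic fixed point of the associated first-order system and the stable manifold theorem yields exponential decay. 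Without this step you have no $a(k;q)$, no trace formula, and no scattering theory for $q$, so everything after your first sentence is unsupported until it is supplied.

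The remainder of your argument is correct and in two places deviates usefully from the paper. Deriving $|a(k;q)|\equiv 1$ by forcing equality in the chain of inequalities from \cref{thm:minimizers} is legitimate and shorter than the paper's route (\cref{thm:a mom wiggle}), which instead perturbs $\log|a|$ via the implicit function theorem and rebuilds a competitor through inverse scattering; that heavier lemma earns its keep only later (\cref{thm:minimizers 2}), where no equality chain is available. Likewise, dividing out the Blaschke product and killing $h=\log\bigl(a/B\bigr)$ by Schwarz reflection plus Liouville is a sound substitute for the paper's inner--outer factorization in $\ms{H}^\infty(\C^+)$ (\cref{thm:inner outer fac}); both hinge on the same inputs, namely that $a(\cdot;q)$ is continuous up to $\R\cup\{\infty\}$ with finitely many simple, purely imaginary zeros (and one should note in passing that $|a|=1$ on $\R\sm\{0\}$ rules out the vanishing of $T$ at $k=0$, so there is no issue at the origin). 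The closing steps --- excluding degree $\ge n+1$ via \cref{thm:beta mom wiggle} and identifying $\bs{\mu}=\bs{\beta}$ via \cref{thm:unique params} --- coincide with the paper.
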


We break the proof of \cref{thm:uniqueness} into steps, with the overarching assumption that $q\in H^n(\R)$ is a constrained minimizer of $E_{n+1}$.

In order to analyze $q$ using the trace formulas, we first need to know that $q$ is sufficiently regular so that we may construct $a(k;q)$:
\begin{lem}
\label{thm:schwartz}
If $q$ is a constrained minimizer in the sense of \cref{thm:uniqueness}, then $q$ is Schwartz.
\end{lem}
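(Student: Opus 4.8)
The plan is to derive the Euler--Lagrange equation \eqref{eq:novikov} for $q$, use the polynomial structure of the conserved quantities from \cref{thm:en} to bootstrap $q$ into $\bigcap_{s\geq 0}H^s(\R)$, and then run an ODE asymptotics argument to upgrade this to rapid decay. We may assume $q\neq 0$, since the zero function is Schwartz. As $q$ minimizes $E_{n+1}$ over $\con$, the Lagrange multiplier rule produces $\lambda_1,\dots,\lambda_n\in\R$ with $\nabla E_{n+1}(q)=\sum_m\lambda_m\nabla E_m(q)$ in $H^{-n}(\R)$ --- even when the differentials $\nabla E_1(q),\dots,\nabla E_n(q)$ fail to be independent, the recursion structure of the hierarchy still places $\nabla E_{n+1}(q)$ in their span --- and the second-order necessary conditions let us fix $\lambda$ so that the Hessian $\mathcal{L}:=E''_{n+1}(q)-\sum_m\lambda_m E''_m(q)$ is nonnegative on the tangent cone to $\con$ at $q$, hence on a subspace of finite codimension. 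Isolating the top-order term by means of \cref{thm:en}, the Euler--Lagrange equation takes the form of the ODE
\begin{equation*}
q^{(2n)} = \sum_{m=1}^{n} c_m\, q^{(2m-2)} + G_0\big(q,q',\dots,q^{(2n-2)}\big), \qquad c_m = (-1)^{n+m-1}\lambda_m,
\end{equation*}
where $G_0$ is a real polynomial every monomial of which is a product of at least two derivatives of $q$, of orders at most $2n-2$ and summing to at most $2n-2$ --- this last bound being exactly the scaling recorded in \eqref{eq:en 2}.

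For the regularity bootstrap, start from $q\in H^n(\R)$, so $q\in C^{n-1}_b(\R)$ by Sobolev embedding, with $q^{(n)}\in L^2(\R)$. If $q\in H^N(\R)$ for some $N\geq n$, then each linear term $q^{(2m-2)}$ lies in $H^{N-2m+2}(\R)\subseteq H^{N-2n+2}(\R)$ and, by the Sobolev product estimates, so does every monomial of $G_0$ --- the worst case is $q^{(2n-2)}\cdot q^{k-1}$, and multiplication by the $H^N$-function $q^{k-1}$ preserves $H^{N-2n+2}(\R)$ since $N\geq n\geq 1$ (this also covers the base case $N=n$). Hence $q^{(2n)}\in H^{N-2n+2}(\R)$, i.e.\ $q\in H^{N+2}(\R)$; iterating, $q\in H^N(\R)$ for every $N$, so $q\in C^\infty(\R)$ and $q^{(j)}(x)\to 0$ as $x\to\pm\infty$ for every $j$.

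The decay is the main obstacle. Near $+\infty$ rewrite the ODE as a first-order autonomous system $Y'=A_\infty Y+G(Y)$, with $Y=(q,q',\dots,q^{(2n-1)})$, $A_\infty$ the companion matrix of $\chi(\zeta):=\zeta^{2n}-\sum_m c_m\zeta^{2m-2}$, and $G(Y)=O(|Y|^2)$; we already know $Y(x)\to 0$. A direct computation gives $\chi(i\xi)=(-1)^n p(\xi)$, where $p(\xi):=\xi^{2n}-\sum_m\lambda_m\xi^{2m-2}$ is the Fourier symbol of the constant-coefficient operator to which $\mathcal{L}$ converges at infinity; since $q$ and its derivatives vanish at infinity, $\mathcal{L}$ is a relatively compact perturbation of that operator, so its essential spectrum equals $p(\R)$, and the nonnegativity of $\mathcal{L}$ on a finite-codimension subspace forces $p\geq 0$ on $\R$. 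Consequently every real zero of $p$ --- equivalently, every purely imaginary eigenvalue of $A_\infty$ --- has multiplicity at least two, so the center subspace of $A_\infty$, if nonzero, consists of Jordan blocks of size at least two; a solution of the nonlinear system that tends to $0$ and is $L^2$ near $+\infty$ (as $q$ is, being in $H^n(\R)$) can therefore have no center component, hence lies on the stable manifold of $0$ and decays exponentially together with all its derivatives (by differentiating the ODE). Running the same argument at $-\infty$ yields $q\in\schwar(\R)$. The genuinely technical part is this final step: making rigorous the exclusion of nontrivial $L^2$ trajectories approaching $0$ through the Jordan-block center directions, and patching the stable-manifold estimates at $+\infty$ and $-\infty$.
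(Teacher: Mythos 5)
Your smoothness bootstrap is fine and matches the paper's (isolate $q^{(2n)}$ in the Euler--Lagrange equation, use the scaling condition~\eqref{eq:en 2} and Sobolev multiplication to gain two derivatives per step). The gap is in the decay step. Your second-order argument only yields $p(\xi)=\xi^{2n}-\lambda_n\xi^{2n-2}-\dots-\lambda_1\geq 0$ on $\R$, which does \emph{not} make the origin a hyperbolic fixed point: $p$ may have real zeros, and then the equilibrium has a genuine center part. Your claim that a solution of the \emph{nonlinear} system which tends to $0$ and is $L^2$ near $+\infty$ ``can have no center component'' is false in exactly this regime: the stationary equation with $\lambda_1=\dots=\lambda_n=0$ (for which $p(\xi)=\xi^{2n}\geq 0$, a single nilpotent Jordan block) admits the algebro-geometric solution $u(x)=2x^{-2}$, which together with all its derivatives tends to $0$, lies in $H^n$ near $+\infty$, yet decays only algebraically. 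The Jordan-block heuristic controls the \emph{linear} flow; the nonlinearity can balance the polynomially growing center modes and produce precisely such slowly decaying trajectories, so the ``technical'' exclusion you defer is not technical --- it is the missing mathematical content.

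The paper closes this gap differently: it uses the fact that $q$ is a \emph{minimizer} to pin down the sign of the multipliers themselves. By the envelope theorem together with the explicit computation of \cref{thm:min val is dec} (Vieta's formulas for the Vandermonde system), one has $\lambda_j=\frac{\partial C}{\partial e_j}(e_1,\dots,e_n)<0$ for every $j$ when the constraints are interior to $\M^n_n$, while boundary constraints are dispatched by the induction hypothesis (the minimizer is then already a lower-degree multisoliton). Strict negativity gives $p(\xi)\geq-\lambda_1>0$, hence hyperbolicity, and only then does the stable manifold theorem deliver exponential decay. If you want to salvage your route, you would have to upgrade your conclusion $p\geq 0$ to strict positivity (equivalently, exclude real zeros of the symbol), which is essentially what the paper's multiplier computation accomplishes; the spectral/second-order information alone does not suffice. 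A secondary soft spot: your assertion that when $\nabla E_1(q),\dots,\nabla E_n(q)$ are dependent the recursion still places $\nabla E_{n+1}(q)$ in their span is left unproved; the paper instead observes that a dependence means $q$ solves an equation of the form~\eqref{eq:schwartz 1} with smaller $n$ and argues from there.
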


We will use that $q$ solves the Euler--Lagrange equation (cf.~\eqref{eq:schwartz 1}) to show that $q$ is both infinitely smooth and exponentially decaying.  As we will see shortly, smoothness follows from classical ODE theory because $q\in H^n(\R)$ \emph{a priori}.  On the other hand, exponential decay is more delicate: even though we know $q(x) \to 0$ as $x\to\pm\infty$ (since $q\in H^1(\R)$), there \emph{do} exist multipliers $\lambda_1,\dots,\lambda_n$ so that~\eqref{eq:schwartz 1} admits algebraically decaying solutions as $x\to\pm\infty$.  For example, $u(x) = 2x^{-2}$ is a (meromorphic) solution to~\eqref{eq:schwartz 1} for all $n\geq 1$ with multipliers $\lambda_1 = \dots = \lambda_n = 0$.  This is the beginning of an infinite family of solutions called the \emph{algebro-geometric solutions} to the stationary KdV hierarchy (see \cite{Gesztesy2003}*{\S1.3} for details).  Morally, this does not pose an obstruction here because the multipliers must be negative for a minimizer (cf.~\eqref{eq:schwartz 3}), as is the case for any multisoliton.

\begin{proof}
As a critical point of $E_{n+1}$, $q$ satisfies the Euler--Lagrange equation
\begin{equation}
\nabla E_{n+1}(q) = \lambda_1 \nabla E_1(q) + \lambda_2 \nabla E_2(q) + \dots + \lambda_n\nabla E_n(q)
\label{eq:schwartz 1}
\end{equation}
for some Lagrange multipliers $\lambda_1,\dots,\lambda_n\in\R$.  This assumes that the gradients $\nabla E_1(q)$, $\dots$, $\nabla E_n(q)$ are linearly independent; however, the other case is analogous, since a linear dependence can be written as an equation of the form~\eqref{eq:schwartz 1} for some smaller $n$.

First we show that $q$ is infinitely smooth.  As $q\in H^n(\R)$, $q$ only solves~\eqref{eq:schwartz 1} in the sense of distributions \ti{a priori}.  The highest order term in~\eqref{eq:schwartz 1} is $q^{(2n)}$, and it only appears in $\nabla E_{n+1}(q)$.  Isolating this term, we obtain
\begin{equation}
q^{(2n)} = P\big(q,q',\dots,q^{(2n-2)}\big)
\label{eq:schwartz 2}
\end{equation}
for a polynomial $P$.  Note that product terms $q^{(\gamma_1)}\cdots q^{(\gamma_d)}$ satisfy $\sum \gamma_j \leq 2n-2$ by the scaling requirement~\eqref{eq:en 2}.  In particular, if $q\in H^s$ with $s\geq n$, then RHS\eqref{eq:schwartz 2} is in $H^{-s+2}$.  (For example, $qq^{(2n-2)} \in H^{-s+2}$ because $q^{(2n-2)} \in H^{s-(2n-2)} \subset H^{-s+2}$ and $q\in H^s \subset H^{s-2}$.)  Beginning with $q\in H^n$, the equation~\eqref{eq:schwartz 2} tells us that $q^{(2n)}$ is in $H^{-n+2}$, and so we conclude that $q\in H^{n+2}$.  Now taking $q\in H^{n+2}$ as input, the equation~\eqref{eq:schwartz 2} then tells us that $q^{(2n)}$ is in $H^{-n+4}$, and so we conclude that $q\in H^{n+4}$.  Iterating, we conclude that $q$ is in $H^s$ for all $s>0$ and hence is smooth.

Next, we claim that $q$ decays exponentially as $x\to\pm\infty$.  Our salvation here is that because $q$ is a minimizer (and not merely a critical point), we have restrictions on the Lagrange multipliers.  If the constraints are in $\M^n_n \sm (\tx{int}\,\M^n_n)$, then $q$ is a minimizer of $E_m$ for some $m\leq n$ and thus $q$ is a multisoliton of degree at most $m$ by the inductive hypothesis that \cref{thm:var char} holds for $m-1$.  So assume that the constraints $(e_1,\dots,e_n)$ are in the interior of $\M^n_n$.  Consequently, we know from \cref{thm:min val is dec} that the minimum value $C(e_1,\dots,e_n)$ is a $C^1$ function in a neighborhood of $(e_1,\dots,e_n)$ and
\begin{equation}
\lambda_j = \frac{\partial C}{\partial e_j} (e_1,\dots,e_n) < 0 \quad\tx{for }j=1,\dots,n.
\label{eq:schwartz 3}
\end{equation}
The equality above is a general fact about Lagrange multipliers called the \emph{envelope theorem}, and has applications to economics.  (Cf.~\cite{Takayama1985}*{Th.~1.F.4} and the corollary in Ex.~2 for a proof.  As we know that all of the derivatives exist, this purely algebraic proof for the finite dimensional case still applies.)

From the quadratic terms of the energies~\eqref{eq:en}, we see that the linear part of the Euler--Lagrange equation~\eqref{eq:schwartz 1} is
\begin{equation*}
Lu := (-1)^n u^{(2n)} + (-1)^{n}\lambda_n u^{(2n-2)} + \dots +  \lambda_2 u''  - \lambda_1 u .
\end{equation*}
The constant coefficients of this operator are alternating in sign by~\eqref{eq:schwartz 3}, and consequently it has no purely imaginary eigenvalues.  Indeed, if $\xi$ is purely imaginary then all the terms in the polynomial
\begin{equation*}
(-1)^n \xi^{2n} + (-1)^{n}\lambda_n \xi^{2n-2} + \dots +  \lambda_2 \xi^2  - \lambda_1
\end{equation*}
are nonnegative, and so the polynomial is bounded below by $-\lambda_1 > 0$.  As the Euler--Lagrange equation~\eqref{eq:schwartz 1} is an ODE of order $2n$, we may view it as a first-order system of ODEs in the variables $(q,q',\dots,q^{(2n-1)}) \in \R^{2n}$.  We just showed that the origin in $\R^{2n}$ is a hyperbolic fixed point for this system, and so the stable manifold theorem \cite{Coddington1955}*{Ch.~13 Th.~4.1} tells us that there exists a corresponding stable manifold in a neighborhood of the origin.  We already know that $q^{(j)}(x) \to 0$ as $x\to\pm\infty$ for all $j\geq 0$ (since $q\in H^{j+1}$), and so eventually $(q,q',\dots,q^{(2n-1)})$ remains in a small neighborhood of the origin in $\R^{2n}$ for all $x$ sufficiently large.  By \cite{Coddington1955}*{Ch.~13 Th.~4.1}, this can only happen if $q(x)$ is on the stable manifold and hence decays exponentially as $x\to\pm\infty$.
\end{proof}

Now that we know $q\in\schwar(\R)$, we have the trace formulas~\eqref{eq:en trace} at our disposal.  Next, we show that the constrained minimizer $q$ must satisfy $|a(k;q)| \equiv 1$ on $\R$:
\begin{lem}
\label{thm:a mom wiggle}
Let $q\in\schwar(\R)$ such that $|a(k;q)| \neq 1$ for some $k\in\R$.  Then there exists some $\wt{q} \in \schwar(\R)$ with
\begin{equation*}
E_m(\wt{q}) = E_m(q) \quad \tx{for } m=1,\dots,n , \quad\tx{but}\quad
E_{n+1}(\wt{q}) < E_{n+1}(q) .
%\label{eq:a mom wiggle 3}
\end{equation*}
In particular, a constrained minimizer $q$ in the sense of \cref{thm:uniqueness} must satisfy $|a(k;q)| = 1$ for all $k\in\R$.
\end{lem}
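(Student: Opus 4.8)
The plan is to modify the \emph{modulus} of the reciprocal transmission coefficient of $q$ in a way that is invisible to $E_1,\dots,E_n$ but strictly lowers $E_{n+1}$, and then to realize the modified scattering data as an honest Schwartz potential via inverse scattering. Everything is organized by the trace formulas~\eqref{eq:en trace}: for $q\in\schwar(\R)$, each $E_m(q)$ depends on $q$ only through the even moments $\int_{-\infty}^\infty k^{2m}\log|a(k;q)|\dk$ and the odd power sums of the bound states $\beta_1,\dots,\beta_N$.  Thus it is enough to produce $\wt q\in\schwar(\R)$ with the \emph{same} bound states as $q$ and with $\log|a(k;\wt q)| = \log|a(k;q)| + \eta(k)$ on $\R$, where $\eta$ is an even function satisfying $\int k^{2m}\eta\dk = 0$ for $m=1,\dots,n$ and $\int k^{2n+2}\eta\dk < 0$; indeed, then $E_m(\wt q) - E_m(q) = \tfrac{2^{2m}}{\pi}\int k^{2m}\eta\dk$, which vanishes for $m\le n$ and is strictly negative for $m=n+1$.

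To construct $\eta$, note that for $q\in\schwar(\R)$ the function $\psi_0 := \log|a(\cdot;q)|$ is smooth, even, nonnegative, rapidly decaying (with finite moments of every order), and by hypothesis is not identically zero, hence $\psi_0\ge c_0>0$ on some interval $[a,b]\subset(0,\infty)$.  Fix an even smooth cutoff $\chi\ge 0$, not identically zero, supported in $[a,b]\cup[-b,-a]$, and regard the monomials $k^{2},k^{4},\dots,k^{2n+2}$ as linearly independent elements of $L^2(\R,\chi\psi_0\dk)$.  Let $g_0(k) = k^{2n+2} - \sum_{j=1}^{n}c_j k^{2j}$ be the unique polynomial of this form orthogonal to $k^2,\dots,k^{2n}$ in that space; then $\int k^{2m}g_0\chi\psi_0\dk=0$ for $m=1,\dots,n$ while $\int k^{2n+2}g_0\chi\psi_0\dk = \norm{g_0}^2>0$.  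For $\eps>0$ small enough that $\eps|g_0\chi|\le 1$ everywhere, set $\eta := -\eps\,g_0\chi\psi_0$.  Then $\eta$ is even, smooth, compactly supported, $\psi_0 + \eta = (1-\eps g_0\chi)\psi_0\ge 0$, and $\eta$ has the required moments; moreover $\psi := \psi_0 + \eta$ is again smooth, even, nonnegative, bounded, and rapidly decaying.

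It remains to realize $\psi$.  First I would build a bound-state-free scattering matrix with $|a| = e^{\psi}$ on $\R$: let $\wt a$ be the outer function on $\C^+$ with boundary modulus $e^{\psi}$, normalized to tend to $1$ at infinity.  Because $\psi\ge 0$ is bounded, even, smooth, and rapidly decaying, $\wt T := 1/\wt a$ is analytic and zero-free in $\C^+$, continuous down to $\R$, satisfies $\wt T = 1+O(1/k)$ and $\overline{\wt T(k)} = \wt T(-k)$, and obeys $|\wt T|\ge e^{-\sup\psi}>0$ on $\ol{\C^+}$ (alternative (a) in \cref{thm:inv scat thy}).  I then set $|\wt R_j| := \sqrt{1-|\wt T|^2}$ and fix the phases of $\wt R_1,\wt R_2$ via the unitarity relation $\wt T\,\overline{\wt R_2}+\wt R_1\overline{\wt T}=0$ together with the reality condition, choosing the branch of the square root (allowing a sign change only across each even-order zero of $1-|\wt T|^2$) so that $\wt R_1,\wt R_2$ are smooth and rapidly decaying; this supplies the remaining hypotheses of \cref{thm:inv scat thy}, including the Fourier-decay condition.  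That proposition yields a potential $\wt q_0$ (Schwartz, since the data is) with no bound states and $|a(\cdot;\wt q_0)| = e^{\psi}$ on $\R$.  Finally \cref{thm:inv scat thy 2} attaches the bound states $\beta_1,\dots,\beta_N$ of $q$ (vacuously if $N=0$), producing $\wt q := \wt q_0(\cdot;+N)\in\schwar(\R)$ with the same bound states as $q$ and, since the Blaschke factors in~\eqref{eq:blaschke 2} are unimodular on $\R$, with $\log|a(\cdot;\wt q)| = \psi = \log|a(\cdot;q)| + \eta$.  The trace-formula computation of the first paragraph then gives $E_m(\wt q) = E_m(q)$ for $m\le n$ and $E_{n+1}(\wt q) < E_{n+1}(q)$.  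The final assertion is immediate: a constrained minimizer $q$ with $|a(\cdot;q)|\ne 1$ somewhere would yield $\wt q\in\con$ with strictly smaller $E_{n+1}$, contradicting minimality.

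The main obstacle is this realizability step.  The moment construction of $\eta$ and the trace-formula bookkeeping are routine, but one must check with care that the pair consisting of the outer function $\wt a$ and a suitable choice of reflection coefficients satisfies \emph{every} hypothesis of \cref{thm:inv scat thy}---the delicate ones being the smoothness of $\wt R_j$ across the zeros of $1-|\wt T|^2$ (which forces the branch choice above) and the weighted Fourier-decay bound---and that the $L^1_2$ conclusion of \cref{thm:inv scat thy}, together with that of \cref{thm:inv scat thy 2}, can be upgraded to Schwartz regularity using the smoothness and rapid decay of the data.
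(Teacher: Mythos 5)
Your overall strategy is the same as the paper's: keep the bound states, add to $\log|a(\cdot\,;q)|$ an even perturbation $\eta$ supported where $|a|>1$ whose moments $\int k^{2m}\eta\dk$ vanish for $m=1,\dots,n$ and whose $(2n+2)$nd moment is negative, then realize the new data through \cref{thm:inv scat thy} and re-attach the bound states with \cref{thm:inv scat thy 2}, so that the trace formulas~\eqref{eq:en trace} give the conclusion. Your construction of $\eta$ via the orthogonal polynomial $g_0$ in $L^2(\R,\chi\psi_0\dk)$ is correct and is a clean alternative to the paper's implicit-function-theorem argument with mollified Dirac masses; supporting $\eta$ in an interval where $\psi_0\geq c_0>0$ preserves nonnegativity exactly as in the paper.

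The realizability step, however, has a genuine gap. You assert that $\psi=\psi_0+\eta$ is bounded, hence $|\wt T|\geq e^{-\sup\psi}>0$ on $\ol{\C^+}$ and alternative (a) of condition (v) in \cref{thm:inv scat thy} holds. This is false in general: for a generic Schwartz potential (no zero-energy resonance) one is in case (b) of \cref{thm:scat thy}(v), so $T(0;q)=0$, $|a(k;q)|\sim c|k|^{-1}$, and $\psi_0$ blows up logarithmically at $k=0$; then $\wt T(0)=0$ as well, and you must verify alternative (b) instead, namely the linear rate of $\wt T$ at $k=0$ \emph{and} $\wt R_j(k)=-1+O(k)$, the latter depending on a phase choice you never pin down near $k=0$. (The paper handles precisely this by using that the modification is supported in a compact subset of $\R\sm\{0\}$ and by prescribing $\Arg R_j\to\pi$ as $k\downarrow 0$ when $T(0;q)=0$.) Relatedly, because you build $\wt R_1,\wt R_2$ from scratch with modulus $\sqrt{1-e^{-2\psi}}$ and phases fixed only by unitarity, reality, and an unspecified branch choice, their continuity and smoothness on the zero set of $\psi$ (which contains every $k$ with $|a(k;q)|=1$, not merely points near the support of $\eta$) and hence the Fourier-decay hypothesis (vii) are not established---``choosing the branch so that $\wt R_j$ are smooth'' is exactly what needs proof, as you yourself flag. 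The efficient repair, available to you because $\eta$ is supported where $|a(k;q)|>1$ so that $|R_j(k;q)|$ is bounded below there, is to define $\wt R_j$ as a smooth, compactly supported multiplicative modification of the original reflection coefficients (adjusting the modulus by the factor $\sqrt{(1-e^{-2\psi})/(1-e^{-2\psi_0})}$ and compensating the phase so that unitarity and reality persist with the new $\wt T$); then conditions (iv), (v), and (vii) are inherited from $q$, which is in essence how the paper closes this step.
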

\begin{proof}
We will only modify the transmission coefficient of $q$ and leave the bound states $-\beta_1^2,\dots,\beta_N^2$ unchanged.  Specifically, we will wiggle $\log|a(k;q)|$ via the implicit function theorem in a way that decreases its $(n+1)$st moment in the trace formulas~\eqref{eq:en trace} while keeping the first $n$ moments constant.  Then we will reconstruct the new potential $\wt{q}$ using inverse scattering theory.

Let $\psi_1,\dots,\psi_{n+1} \in C_c^\infty(\R)$ be even functions to be chosen later.  Define the function $f:\R^{n+1}\to\R^n$ by
\begin{equation*}
f(x_1,\dots,x_{n+1})
= \begin{pmatrix}
\int k^2 \big[ \log|a(k;q)| + x_1\psi_1(k) + \dots + x_{n+1}\psi_{n+1}(k) \big]\dk \\[0.3em]
\int k^4 \big[ \log|a(k;q)| + x_1\psi_1(k) + \dots + x_{n+1}\psi_{n+1}(k) \big]\dk \\
\vdots \\
\int k^{2n} \big[ \log|a(k;q)| + x_1\psi_1(k) + \dots + x_{n+1}\psi_{n+1}(k) \big]\dk
\end{pmatrix} .
\end{equation*}
This function has derivative matrix
\begin{equation*}
Df(0,\dots,0)
= \left( \begin{array}{ccc;{4pt/2pt}c}
\int k^2 \psi_1(k)\dk & \!\dots\! & \int k^2 \psi_n(k)\dk & \int k^2\psi_{n+1}(k)\dk \\
\int k^4 \psi_1(k)\dk & \!\dots\! & \int k^4 \psi_n(k)\dk & \int k^4\psi_{n+1}(k)\dk \\
\vdots & \!\ddots\! & \vdots & \vdots \\
\int k^{2n} \psi_1(k)\dk & \!\dots\! & \int k^{2n} \psi_n(k)\dk & \int k^{2n}\psi_{n+1}(k)\dk
\end{array} \right)
\end{equation*}
at the origin.  If we replace each $\psi_j$ by the even extension $\tfrac{1}{2}(d\del_{- k_j} + d\del_{k_j})$ of a Dirac delta mass at $k_j>0$, then the left $n\times n$ block matrix becomes the Vandermonde matrix
\begin{equation*}
\begin{pmatrix}
k_1^2 & k_2^2 & \!\dots\! & k_n^2 \\	
k_1^4 & k_2^4 & \!\dots\! & k_n^4 \\
\vdots & \vdots & \!\ddots\! & \vdots \\
k_1^{2n} & k_2^{2n} & \!\dots\! & k_n^{2n}
\end{pmatrix} 
\quad\tx{with determinant}\quad 
k_1^2 \cdots k_n^2\, \prod_{i<j} (k_j^2 - k_i^2) .
\end{equation*}
This determinant is nonvanishing provided that we pick the $k_i$ positive and distinct.

As $a(k;q)$ is a continuous and even function of $k\in\R$ by the reality condition~\eqref{eq:trans coeff 3}, we may pick $n+1$ distinct points $k_1,\dots,k_{n+1}$ in $\{ k > 0 : |a(k;q)| \neq 1 \}$.  We will take $\psi_1,\dots,\psi_{n}$ to be mollifications of $\tfrac{1}{2}( d\del_{- k_j} + d\del_{k_j})$ for $j=1,\dots,n$ by a smooth and even function.  If we take the mollifier to have sufficiently small support, then $\psi_1,\dots,\psi_{n}$ will have disjoint supports within $\{ k \neq 0 : |a(k;q)| \neq 1 \}$.  Taking the support of the mollifier to be even smaller if necessary, the above computation shows that the left $n\times n$ block of $Df(0,\dots,0)$ is invertible.  Now the implicit function theorem implies that there exists $\eps>0$ and $C^1$ functions $x_1(x_{n+1})$, $\dots$, $x_n(x_{n+1})$ defined on $(-\eps,\eps)$ so that
\begin{equation}
f(x_1(x_{n+1}),\dots,x_n(x_{n+1}),x_{n+1})
\equiv f(0,\dots,0)
= \begin{pmatrix}
\int k^2 \log|a(k;q)|\dk \\
\int k^4 \log|a(k;q)|\dk \\
\vdots \\
\int k^{2n} \log|a(k;q)|\dk
\end{pmatrix} 
\label{eq:a mom wiggle 1}
\end{equation}
for $x_{n+1}\in (-\eps,\eps)$.

It remains to show that we can pick $x_{n+1}$ in a way that decreases the next $\log|a|$ moment.  To this end, we compute the derivative
\begin{align*}
&\frac{d}{dx_{n+1}} \bigg|_{x_{n+1}=0} \int k^{2n+2} \big[ \log|a| + x_1(x_{n+1})\psi_1 + \dots + x_n(x_{n+1})\psi_n + x_{n+1}\psi_{n+1} \big]\dk \\
&= \int k^{2n+2} \left[ 
\begin{pmatrix} \psi_1 & \!\dots\! & \psi_n \end{pmatrix} 
\begin{pmatrix} x'_1(0) \\ \vdots \\ x'_n(0) \end{pmatrix}
+\psi_{n+1} \right] \dk .
\end{align*}
The derivative of $x_1(x_{n+1})$, $\dots$, $x_n(x_{n+1})$ is determined by differentiating~\eqref{eq:a mom wiggle 1} at $x_{n+1} = 0$.  This yields
\begin{equation*}
\begin{pmatrix} x'_1(0) \\ \vdots \\ x'_n(0) \end{pmatrix}
= - \begin{pmatrix} \int k^2 \psi_1 & \!\dots\! & \int k^2\psi_n \\ \vdots & \!\ddots\! & \vdots \\ \int k^{2n}\psi_1 & \!\dots\! & \int k^{2n}\psi_n \end{pmatrix}^{\!-1}\!\!
\begin{pmatrix} \int k^2\psi_{n+1} \\ \vdots \\ \int k^{2n} \psi_{n+1} \end{pmatrix} .
\end{equation*}
Recall that the matrix above is invertible by our choice of $\psi_1,\dots,\psi_n$.  Inserting this into the derivative of the $(2n+2)$nd moment, the resulting matrix product is supported on the union of the supports of $\psi_1,\dots,\psi_n$ which is disjoint from the support of the other term $\psi_{n+1}$ in the integrand.  Therefore, we may pick another smooth and even function $\psi_{n+1}$ supported in a sufficiently small neighborhood of $\pm k_{n+1}$ so that the whole integral is nonzero.  It then follows that there exists $x_{n+1} \in (-\eps,\eps)$ sufficiently small so that
\begin{equation}
\log |a(k;q)| + x_1(x_{n+1})\psi_1 + \dots + x_n(x_{n+1}) \psi_n + x_{n+1}\psi_{n+1}
\label{eq:a mom wiggle 2}
\end{equation}
is nonnegative for $k\in\R$, decreases $E_{n+1}$, and preserves $E_1,\dots,E_n$.

It only remains to show that the density~\eqref{eq:a mom wiggle 2} corresponds to $\log|a(k;\wt{q})|$ for some $\wt{q}\in\schwar (\R)$.  To accomplish this, we will reconstruct $\wt{q}$ from its scattering data by verifying properties (i)-(vii) of \cref{thm:inv scat thy}. First, we require that the transmission coefficient satisfies
\begin{equation*}
\tfrac{1}{|T(k;\wt{q})|} = |a(k;\wt{q})| = \exp\big\{ \log |a(k;q)| + x_1(x_{n+1})\psi_1 + \dots + x_n(x_{n+1}) \psi_n + x_{n+1}\psi_{n+1} \big\}
\end{equation*}
for $k\in\R$.  We have $|T(k;\wt{q})|\leq 1$ because~\eqref{eq:a mom wiggle 2} is nonnegative.  As $a(k,q)$ extended to a bounded holomorphic to all of $\C^+$, it is in the Hardy space $\ms{H}^\infty(\C^+)$.  By~\cite{Garnett2007}*{Ch.~II Th.~4.4}, given $g \in L^\infty(\R)$ nonnegative, there exists a holomorphic function $h\in\ms{H}^\infty(\C^+)$ with $|h(k)|= g(k)$ for almost every $k\in\R$ if and only if
\begin{equation*}
\int_{-\infty}^\infty \frac{\log|g(k)|}{1+k^2}\dk > - \infty .
\end{equation*}
In particular, this property was satisfied by $g(k) = |a(k;q)|$.  As we have smoothly modified $\log |a(k;q)|$ on a compact subset, this condition is also satisfied for $g(k) = |a(k;\wt{q})|$ and so there must also exist a holomorphic extension $a(k;\wt{q}) := h(k)$ to $\C^+$.  This ensures that $T(k;\wt{q}) = \frac{1}{a(k;\wt{q})}$ satisfies the analyticity condition~(iii).

Next, we set $T_1(k;\wt{q}) = T_2(k;\wt{q}) = T(k;\wt{q})$ in accordance with the symmetry condition~(i).  We then require the modulus of the reflection coefficients satisfy $|R_1(k;\wt{q})| = |R_2(k;\wt{q})| = \sqrt{1-|T(k;\wt{q})|^2}$ and the phases satisfy
\begin{equation*}
\frac{\Arg R_1(k;\wt{q}) + \Arg R_2(k;\wt{q})}{2} = \tfrac{\pi}{2} - \Arg T(k;\wt{q}) 
\end{equation*}
for $k>0$ to ensure that the unitary condition~(ii) holds.  We also need to construct $R_1,R_2$ so that condition~(v) on the rate at $k=0$ still holds.  We are still free to specify the difference $\Arg R_1 - \Arg R_2$, which if $T(0;q) = 0$ we take to satisfy $\Arg R_1 \to \pi$ and $\Arg R_2 \to \pi$ as $k\downarrow 0$.  As we have only modified $T(k;q)$ on a compact subset of $\R\sm\{0\}$, altogether we conclude that the condition~(v) is satisfied.  We then extend $R_1$ and $R_2$ to $k<0$ according to the reality condition~(vi).

Note that the Fourier decay condition~(vii) is automatically satisfied because we have perturbed $R_1(k;q)$ and $R_2(k;q)$ smoothly.  Likewise, the asymptotics condition~(iv) is also satisfied because the coefficients have only been modified on a compact subset of $\R\sm\{0\}$.  The resulting potential $\wt{q}$ is also automatically Schwartz.  Indeed, inverse scattering theory reconstructs $\wt{q}$ via an explicit integral~\cite{Deift1979}*{\S4 Eq.~(1)$_R$} in terms of $R_1$ and the Jost function $f_1(x;k)$, which have only been smoothly modified on a compact subset of $\R\sm\{0\}$.

Lastly, we use \cref{thm:inv scat thy 2} to add the bound states $\beta_1,\dots,\beta_N$ of $q$ back to $\wt{q}$.  The formula~\eqref{eq:blaschke 2} for the new transmission coefficient shows that $|a(k;\wt{q})|$ is unchanged for $k\in\R$.  This, together with the construction~\eqref{eq:a mom wiggle 2} of $\log|a(k;\wt{q})|$, shows that $\wt{q}$ decreases $E_{n+1}$ while preserving $E_1,\dots,E_n$ as desired.
\end{proof}

Next, we will show that the requirement $|a(k;q)| \equiv 1$ on $\R$ forces $a(k;q)$ to be the finite Blaschke product~\eqref{eq:blaschke} on $\C^+$:
\begin{lem}
\label{thm:inner outer fac}
If $q$ is a constrained minimizer in the sense of \cref{thm:uniqueness}, then $q$ is a multisoliton.
\end{lem}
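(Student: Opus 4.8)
The plan is to pin down $a(\cdot\,;q)$ on $\C^+$ as an explicit finite Blaschke product and then quote the characterization of multisolitons~\eqref{eq:blaschke}. First I would collect what the earlier lemmas give: by \cref{thm:schwartz} the minimizer $q$ is Schwartz, so \cref{thm:scat thy} applies and $a(k;q)$ is holomorphic on $\C^+$, continuous down to $\R$, with only simple zeros $i\beta_1,\dots,i\beta_N$ on the imaginary axis (the $-\beta_m^2$ being the bound states of $-\partial_x^2+q$, so $\beta_1,\dots,\beta_N>0$ are distinct); moreover $a(k;q)\to 1$ as $|k|\to\infty$ uniformly for $\Im k\ge 0$ by~\eqref{eq:trans coeff 2}, and $|a(k;q)|=1$ for all $k\in\R$ by \cref{thm:a mom wiggle}.

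Next I would divide out the zeros. Set $B(k):=\prod_{m=1}^N\frac{k-i\beta_m}{k+i\beta_m}$, a finite Blaschke product with the same zeros as $a(\cdot\,;q)$, and put $F:=a(\cdot\,;q)/B$. Because $a(\cdot\,;q)$ and $B$ have precisely the same, simple, zeros $i\beta_1,\dots,i\beta_N$ in $\C^+$, the ratio $F$ is holomorphic and \emph{nonvanishing} on $\C^+$ with removable singularities at the points $i\beta_m$; together with continuity of $a(\cdot\,;q)$ down to $\R$ this makes $F$ continuous on $\ol{\C^+}$. Since $a(\cdot\,;q)\in\ms{H}^\infty(\C^+)$ (noted in the proof of \cref{thm:a mom wiggle}) and $1/B$ is bounded away from the $i\beta_m$ (it tends to $1$ at $\infty$), $F$ is bounded on $\ol{\C^+}$. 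Finally $|B|\equiv 1$ on $\R$ and $B(k)\to 1$ as $|k|\to\infty$, so $|F|\equiv 1$ on $\R$ and $F(k)\to 1$ as $|k|\to\infty$.

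The complex-analytic heart is then a maximum/minimum-modulus argument. I would apply the maximum principle in its Phragm\'en--Lindel\"of form for bounded holomorphic functions on a half-plane to $F$, getting $|F|\le 1$ on $\C^+$; and to $1/F$, which is holomorphic since $F$ is nonvanishing, and bounded on $\ol{\C^+}$ because $|F|$ is bounded below near $\R$ (where $|F|=1$), near $\infty$ (where $F\to 1$), and on the remaining compact set (where $F$ is continuous and nonvanishing), getting $|F|\ge 1$ on $\C^+$. Hence $|F|\equiv 1$ on $\C^+$, so $F$ is a unimodular constant, and $F(k)\to 1$ forces $F\equiv 1$. Therefore $a(k;q)=\prod_{m=1}^N\frac{k-i\beta_m}{k+i\beta_m}$ on $\C^+$ (and, in the degenerate case $N=0$, $a(\cdot\,;q)\equiv 1$ with vanishing reflection coefficients). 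Since $q\in\schwar(\R)$ and the $\beta_m$ are distinct, the characterization~\eqref{eq:blaschke} now yields $q=Q_{\bs{\beta},\bs{c}}$ for some $\bs{c}\in\R^N$, so $q$ is a multisoliton.

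I expect the only real obstacle to be the bookkeeping of the second paragraph --- checking that $F=a(\cdot\,;q)/B$ genuinely extends to a bounded holomorphic function on the \emph{closed} half-plane with unimodular boundary values and limit $1$ at $\infty$, so that the half-plane maximum principle applies with no escape of mass at infinity. Once that is set up, the argument is essentially one line. As an alternative that sidesteps the half-plane version of the maximum principle, one can transport everything to the unit disk by a M\"obius map and quote the classical fact that a bounded holomorphic function on $\D$ which is continuous on $\ol{\D}$ with unimodular boundary values is a finite Blaschke product.
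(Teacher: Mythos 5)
Your proof is correct, and it takes a genuinely different route from the paper's. The paper first uses the maximum principle on expanding half-disks to see that $a(\,\cdot\,;q)$ maps $\C^+$ into $\ol{\D}$, then invokes the inner--outer factorization of $\ms{H}^\infty(\C^+)$ and rules out the outer factor (its boundary modulus is $1$ a.e.) and the singular inner factor (continuity of $|a|$ up to $\R\cup\{\infty\}$ forces the singular measure to vanish), leaving $a\equiv e^{i\phi}B$ and finally $\phi=0$ from the asymptotics. You instead divide out the explicit finite Blaschke product $B$ built from the zeros $i\beta_1,\dots,i\beta_N$ (whose finiteness and simplicity come from \cref{thm:scat thy}) and run a two-sided maximum-modulus/Phragm\'en--Lindel\"of argument on $F=a/B$ and $1/F$; the point you correctly identify as the crux --- that $1/F$ is admissible --- follows from nonvanishing of $F$ on $\C^+\cup\R$, continuity on $\ol{\C^+}$, and $F\to 1$ at infinity, e.g.\ by compactness of a large closed half-disk, or more directly by applying the maximum principle to $1/F$ on expanding half-disks exactly as the paper does for $a$ itself, so there is no escape of mass at infinity. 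What each approach buys: yours is more elementary and self-contained, needing no Hardy-space canonical factorization or facts about singular inner functions, but it leans on the specific scattering-theoretic input that the zeros are finitely many and simple and that $a$ is continuous up to $\R$ with uniform decay of $a-1$ at infinity; the paper's factorization argument is the more robust template (it would still identify $a$ as a Blaschke product in settings with weaker boundary control, where a hands-on division by $B$ and boundary maximum principle would be harder to justify). Both proofs end the same way, via the characterization~\eqref{eq:blaschke} together with the fact that the degree must then be $\leq n$ (handled after this lemma), and your aside covering the degenerate case $N=0$ (the zero-soliton) is consistent with the paper's conventions.
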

\begin{proof}
Let $\C^+ = \{ z\in\C : \Im z > 0 \}$ denote the upper half-plane and $\D = \{ z\in\C : |z| < 1\}$ denote the unit disk.  By \cref{thm:a mom wiggle}, we know that $|a(k;q)| \equiv 1$ on $\R$.  We also know by the asymptotics~\eqref{eq:trans coeff 2} that $a(k;q)$ tends to 1 as $k\to \infty$ within $\C^+$.  Applying the maximum modulus principle to the half-disks $\ol{\C^+} \cap \{ z : |z|\leq R\}$ and taking $R\to\infty$, we conclude that $k\mapsto a(k;q)$ maps $\C^+$ into $\ol{\D}$.

In particular, $a(\,\cdot\,;q)$ is in the Hardy space $\ms{H}^\infty(\C^+)$.  We may therefore apply inner-outer factorization~\cite{Garnett2007}*{Ch.~II Cor.~5.7} to obtain
\begin{equation*}
a(k;q) = e^{i\theta} B(k) S(k) F(k) ,
\end{equation*}
where $\theta\in\R$ is a constant, $B(k)$ is a Blaschke product, $S(k)$ is a singular function, and $F(k)$ is an outer factor.

First, we claim that $F(k)$ is constant.  Note that on $\R$ we have $|B(k)| \equiv 1$ everywhere and $|S(k)| \equiv 1$ almost everywhere.  Therefore $|F(k)| \equiv 1$ almost everywhere on $\R$.  As $F$ is an outer factor, $\log|F|$ in $\C^+$ is given by the Poisson integral over its boundary values.  As $\log|F| \equiv 0$ almost everywhere on $\R$, we conclude that $\log|F| \equiv 0$ on $\C^+$ and hence $F$ is constant.

Next, we claim that $S(k)$ is also constant.  Recall that if $S(k)$ is a singular function and $|S(k)|$ is continuous from $\C^+$ to any $k\in\R\cup\{\infty\}$, then $k$ is not in the support of the singular measure that defines $S$.  In our case we know that $a(k;q)$ extends continuously to $\R$ and $\infty$, and so we conclude that the measure for $S(k)$ vanishes identically.

Altogether we now have $a(k;q) \equiv e^{i\phi}B(k)$ for some constant $\phi\in\R$.  Taking $k\to +i\infty$ we have $a(k;q) \to 1$ by~\eqref{eq:trans coeff 2} and $B(k)\to 1$, and so we conclude that $a(k;q) \equiv B(k)$.  By \cref{thm:scat thy} we know that the zeros of $a(k;q)$ are purely imaginary and simple.  Therefore $a(k;q)$ takes the form~\eqref{eq:blaschke}, and so we conclude that $q$ is a multisoliton.
\end{proof}

Finally, to conclude the proof of \cref{thm:uniqueness}, we note that the degree of the multisoliton $q$ is at most $n$.  Otherwise, \cref{thm:beta mom wiggle} would imply that we could replace $q$ by another multisoliton that decreases $E_{n+1}$ while preserving $E_1,\dots,E_n$, which would contradict that $q$ is a minimizer.

%%%%%%%%%%%%%%%%%%%%%%%%%%%%%%%%%%%%%%%%%%%%%%%%%%%%%%%%%%%%%%%%%

\section{Orbital stability}
\label{sec:orb stab}

The goal of this section is to prove \cref{thm:orb stab}.  It will follow easily from the following property of minimizing sequences:
\begin{thm}
\label{thm:min seq}
Fix an integer $n\geq 1$.  If $(e_1,\dots,e_n) \in \M^n_n$ and $\{q_k\}_{k\geq 1} \subset H^n(\R)$ is a minimizing sequence:
\begin{equation}
E_1(q_k) \to e_1, \quad\dots, \quad E_n(q_k)\to e_n, \quad E_{n+1}(q_k) \to C(e_1,\dots,e_n)
\label{eq:min seq 1}
\end{equation}
as $k\to\infty$, then there exists a subsequence which converges in $H^n(\R)$ to the manifold of minimizing solitons $\{ Q_{\bs{\beta},\bs{c}} : \bs{c} \in \R^N\}$.
\end{thm}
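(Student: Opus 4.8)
The plan is to feed the minimizing sequence into the concentration compactness principle \cref{thm:conc comp} and then pin down the resulting profiles using the variational characterization \cref{thm:var char}. The degenerate case $(e_1,\dots,e_n)=(0,\dots,0)$ is handled directly: there $C(0,\dots,0)=0$, so $E_1(q_k)\to 0$ and $E_{n+1}(q_k)\to 0$, and the coercivity estimate~\eqref{eq:ests for en cts} forces $\snorm{q_k}_{H^n}\to 0$, i.e.\ $q_k\to Q_{\bs 0}\equiv 0$. So we may assume $(e_1,\dots,e_n)\neq 0$; by \cref{thm:homeo} this means the associated multisoliton has at least one positive $\beta$ parameter, hence $e_1>0$. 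The estimate~\eqref{eq:ests for en cts} also gives that $\{q_k\}$ is bounded in $H^n(\R)$.

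Applying \cref{thm:conc comp}, I would first exclude vanishing: the relevant target value is $E_2(q_k)\to e_2$ (with the convention $e_2:=C(e_1)$ when $n=1$), and $e_2<0$ since $E_2$ of a nonzero multisoliton equals $-\tfrac{32}{5}\sum_m\beta_m^5<0$ by~\eqref{eq:en multisoliton}; on the other hand, vanishing of $\{q_k\}$ would force $\int q_k^3\dx\to 0$ and hence $\liminf_k E_2(q_k)\geq 0$, a contradiction. Thus, after passing to a subsequence, we obtain a profile decomposition
\begin{equation*}
q_k = \sum_{j=1}^J \phi^j(\,\cdot-x_k^j) + r_k, \qquad \snorm{r_k}_{H^n}\to 0, \quad |x_k^j-x_k^{j'}|\to\infty\ (j\neq j'),
\end{equation*}
with profiles $\phi^j\in H^n(\R)\setminus\{0\}$, along which the conserved quantities are asymptotically additive:
\begin{equation*}
E_m(q_k) \to \sum_{j=1}^J E_m(\phi^j) \quad\tx{for }m=1,\dots,n+1 .
\end{equation*}
(Additivity in the higher-order $E_m$ uses that their densities are polynomial, so that cross-terms between well-separated bumps are exponentially small; arranging this cleanly is the technical heart of setting up \cref{thm:conc comp}.) Writing $e^j:=(E_1(\phi^j),\dots,E_n(\phi^j))$, we get $\sum_j e^j=(e_1,\dots,e_n)$ and $\sum_j E_{n+1}(\phi^j)=C(e_1,\dots,e_n)$.

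Next I would argue that each $\phi^j$ is a constrained minimizer of $E_{n+1}$ for the constraints $e^j$. First, $e^j\in\M^n_n$: approximating $\phi^j$ by Schwartz functions and using the trace formula~\eqref{eq:en trace} together with \cref{thm:beta mom wiggle} to reduce to at most $n$ bound-state parameters, the constraints $e^j$ are seen to be realizable by a multisoliton of degree at most $n$, the downward-closedness from \cref{thm:down closed} (applied against $(e_1,\dots,e_n)\in\M^n_n$) doing the bookkeeping on the signs. Then, were some $\phi^{j_0}$ not optimal for $e^{j_0}$, replacing it in the decomposition by a strictly better competitor with the same constraints $e^{j_0}$ would yield an admissible sequence with the same limiting constraints $(e_1,\dots,e_n)$ but strictly smaller $\lim E_{n+1}$, contradicting the definition of $C(e_1,\dots,e_n)$. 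Hence each $\phi^j$ is a constrained minimizer, and by \cref{thm:var char} it equals $Q_{\bs{\beta}^j,\bs c}$ for a unique parameter tuple $\bs{\beta}^j$ of degree $N_j\leq n$.

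Finally I would run the parameter combinatorics. The multiset $\mathcal B:=\bigsqcup_{j=1}^J\bs{\beta}^j$ satisfies $(-1)^{m+1}\tfrac{2^{2m+1}}{2m+1}\sum_{\beta\in\mathcal B}\beta^{2m+1}=e_m$ for $m=1,\dots,n$ and gives $(-1)^n\tfrac{2^{2n+3}}{2n+3}\sum_{\beta\in\mathcal B}\beta^{2n+3}=C(e_1,\dots,e_n)$. If $\mathcal B$ took more than $n$ distinct values, \cref{thm:beta mom wiggle} would produce a multisoliton with the same $E_1,\dots,E_n$ but strictly smaller $E_{n+1}$, contradicting \cref{thm:var char}; so $\mathcal B$ takes at most $n$ distinct values, and then \cref{thm:unique params 2} forces $\mathcal B$ to coincide with the unique optimal parameter set $\beta_1>\cdots>\beta_N>0$ from \cref{thm:var char}. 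In particular the $\bs{\beta}^j$ are pairwise disjoint and partition $\bs{\beta}$, so the last task is to check that a superposition $\sum_j Q_{\bs{\beta}^j,\bs c_k^j}$ of multisolitons with disjoint parameter sets partitioning $\bs{\beta}$ and pairwise separations $|c_k^j-c_k^{j'}|\to\infty$ is $H^n$-close to $Q_{\bs{\beta},\bs c_k}$ for a suitable $\bs c_k\in\R^N$: grouping the rows and columns of the matrix $A(x)$ in~\eqref{eq:multisoliton} according to the partition, the inter-group entries are exponentially small in the separation, so $\log\det A(x)$ splits into the sum of the group determinants with $H^{n+2}$-negligible error. This yields $\inf_{\bs c\in\R^N}\snorm{q_k-Q_{\bs{\beta},\bs c}}_{H^n}\to 0$ along the subsequence. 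I expect the two main obstacles to be (i) arranging \cref{thm:conc comp} so that all of $E_1,\dots,E_{n+1}$ are exactly asymptotically additive along the decomposition, and (ii) this final reassembly step, which amounts to controlling the geometry of the $N$-soliton manifold near its non-compact ends.
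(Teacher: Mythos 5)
Your skeleton matches the paper's (concentration compactness, a replacement argument showing each profile is a constrained minimizer, \cref{thm:beta mom wiggle} plus \cref{thm:unique params 2} to pin down the parameters, and a ``molecular decomposition'' to reassemble the well-separated multisolitons), but there is a genuine gap at the step where you pass from ``non-vanishing'' to a decomposition with $\snorm{r_k}_{H^n}\to 0$, finitely many profiles, and exact additivity $E_m(q_k)\to\sum_j E_m(\phi^j)$ for $m=1,\dots,n+1$. \Cref{thm:conc comp} gives none of this: it allows $J^*=\infty$ and only makes the remainder small in $W^{n-1,3}$, so the remainder may retain a fixed positive share of the $H^n$ norm (radiation), and a priori each $E_m(r^J_k)$ is asymptotically its \emph{nonnegative} quadratic part $\tfrac12\snorm{(r^J_k)^{(m-1)}}_{L^2}^2$. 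Consequently the profiles only satisfy the relaxed inequality constraints $\sum_j E_m(\phi^j)\le e_m$, and your subsequent bookkeeping ($\sum_j e^j=(e_1,\dots,e_n)$, $e^j\in\M^n_n$ via \cref{thm:down closed}, the moment identities for $\mathcal B$) all presupposes equality. The missing idea, which is the heart of the paper's argument (\cref{thm:min seq lem 2,thm:min seq lem 5,thm:min seq lem 3}), is to run the \emph{relaxed} variational problem over $\{E_1(u)\le e_1,\dots,E_n(u)\le e_n\}$: downward closedness (\cref{thm:down closed}) together with the strict monotonicity of $C$ (\cref{thm:min val is dec}) forces the profiles to saturate the constraints (and \cref{thm:beta mom wiggle} rules out $J^*=\infty$ and more than $n$ distinct $\beta$ values), and only \emph{then} does one conclude $E_m(r^{J^*}_k)\to 0$ and hence $r^{J^*}_k\to 0$ in $H^n(\R)$. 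Your remark that the ``technical heart'' is controlling cross-terms between well-separated bumps mislocates the difficulty: those cross-terms are routine; the danger is energy escaping into the non-compact remainder, and excluding it needs this variational input, not separation estimates.

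Two smaller points. In the degenerate case, \eqref{eq:ests for en cts} by itself gives boundedness, not $\snorm{q_k}_{H^n}\to 0$; one argues as in \cref{thm:min seq lem 6} ($L^2$ smallness from $E_1$, then the structure of the higher energies). For the final reassembly, the paper does not split $\log\det A(x)$ by hand: it quotes the $L^2$ molecular decomposition of \cite{Killip2020} and upgrades to $H^n$ by interpolation against a uniform $H^{n+1}$ bound (\cref{thm:molec decomp}); your determinant-grouping sketch is a plausible alternative but would require its own quantitative proof.
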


We begin the proof of \cref{thm:min seq} by fixing a minimizing sequence $\{q_k\}_{k\geq 1}\subset H^n(\R)$ satisfying~\eqref{eq:min seq 1}.  The estimates~\eqref{eq:ests for en cts} that prove that $E_1,\dots,E_{n+1}$ are continuous functionals on $H^n(\R)$ show that the sequence $\{ q_k \}_{k\geq 1}$ is bounded in $H^n(\R)$.

Now that we know $\{ q_k \}_{k\geq 1}$ is bounded in $H^n(\R)$, we are able to apply a concentration compactness principle adapted to our variational problem.  Specifically, we will use the following statement associated to the embedding $H^n(\R) \hookrightarrow W^{n-1,3}(\R)$, whose formulation is inspired by~\cite{Killip2013}.  The choice of concentration compactness principle is not unique (cf.~\cite{Albert2021}*{\S3}), but we will see below that this choice turns out to be efficient (see, for example, the proof of \cref{thm:min seq lem 1}).
\begin{thm}
\label{thm:conc comp}
Fix an integer $n\geq 1$.  If $\{q_k\}_{k\geq 1}$ is a bounded sequence in $H^n(\R)$, then there exist $J^* \in \{0,1,\dots,\infty\}$, $J^*$-many profiles $\{\phi^j\}_{j=1}^{J^*} \subset H^n(\R)$, and $J^*$-many sequences $\{ x^j_k \}_{j=1}^{J^*} \subset \R$ so that along a subsequence we have the decomposition
\begin{equation}
q_k(x) = \sum_{j=1}^J \phi^j(x-x^j_k) + r^J_k(x) \quad\tx{for all } J\in\{0,\dots,J^*\} \tx{ finite}
\label{eq:conc comp decomp}
\end{equation}
that satisfies:
\begin{gather}
\lim_{J\to J^*}\, \limsup_{k\to\infty}\, \norm{r^J_k}_{W^{n-1,3}} = 0 ,
\label{eq:conc comp 1} \\
\lim_{k\to\infty}\, \bigg| \norm{q_k}_{H^n}^2 - \bigg( \sum_{j=1}^J \norm{\phi^j}_{H^n}^2 + \norm{r^J_k}_{H^n}^2 \bigg) \bigg| = 0 \quad\tx{for all }J\tx{ finite},
\label{eq:conc comp 2} \\
\lim_{J\to J^*}\, \bigg| \limsup_{k\to\infty}\, \norm{q_k}_{W^{n-1,3}}^3 - \sum_{j=1}^J \norm{\phi^j}_{W^{n-1,3}}^3 \bigg| = 0 ,
\label{eq:conc comp 3} \\
|x^j_k - x^{\ell}_k| \to \infty \quad\tx{as }  k\to\infty \tx{ whenever } j\neq \ell .
\label{eq:conc comp 4}
\end{gather}
\end{thm}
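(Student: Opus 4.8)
The plan is to prove \cref{thm:conc comp} by the usual inductive extraction of profiles, the one new ingredient being a one-dimensional quantitative refinement of the Sobolev embedding $H^n(\R)\hookrightarrow W^{n-1,3}(\R)$. Concretely, I would first establish the following inverse inequality: if $\{q_k\}$ is bounded in $H^n(\R)$ with $\norm{q_k}_{H^n}\le A$ and $\limsup_k\norm{q_k}_{W^{n-1,3}}\ge\eta>0$, then along a subsequence there exist $x_k\in\R$ such that $q_k(\,\cdot\,+x_k)\rightharpoonup\phi$ weakly in $H^n(\R)$ for some $\phi$ obeying $\norm{\phi}_{H^n}\gtrsim_n\eta^3A^{-2}$. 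This rests on the elementary bound $\norm{f}_{W^{n-1,3}}^3\le\bigl(\max_{0\le j\le n-1}\norm{f^{(j)}}_{L^\infty}\bigr)\norm{f}_{H^n}^2$, coming from $\int|f^{(j)}|^3\le\norm{f^{(j)}}_{L^\infty}\norm{f^{(j)}}_{L^2}^2$ together with $\norm{f^{(j)}}_{L^\infty}\lesssim\norm{f^{(j)}}_{L^2}^{1/2}\norm{f^{(j+1)}}_{L^2}^{1/2}$ for $j\le n-1$: the maximum is then $\gtrsim\eta^3A^{-2}$, so some derivative $q_k^{(j_0)}$ with $j_0$ fixed along a subsequence attains a value of size $\gtrsim\eta^3A^{-2}$ at a point $x_k$; translating by $x_k$, passing to a weak $H^n$-limit, and using compactness of $H^n\hookrightarrow C^{n-1}$ on bounded intervals gives $|\phi^{(j_0)}(0)|\gtrsim\eta^3A^{-2}$, whence $\norm{\phi}_{H^n}\gtrsim\norm{\phi^{(j_0)}}_{H^1}\gtrsim\norm{\phi^{(j_0)}}_{L^\infty}\ge|\phi^{(j_0)}(0)|\gtrsim\eta^3A^{-2}$.

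Granting this, I would iterate. If $\limsup_k\norm{q_k}_{W^{n-1,3}}=0$ take $J^*=0$; otherwise extract $\phi^1,x^1_k$ and set $r^1_k=q_k-\phi^1(\,\cdot\,-x^1_k)$, maintaining along the induction the two properties that $r^J_k(\,\cdot\,+x^i_k)\rightharpoonup0$ in $H^n$ for every $i\le J$ and that the Pythagorean identity \eqref{eq:conc comp 2} holds. Given $\phi^1,\dots,\phi^J$ and $x^1_k,\dots,x^J_k$: if $\limsup_k\norm{r^J_k}_{W^{n-1,3}}>0$, then $\{r^J_k\}$ is still bounded in $H^n$ by \eqref{eq:conc comp 2}, so the inverse inequality produces $\phi^{J+1}\neq0$ and $x^{J+1}_k$ with $r^J_k(\,\cdot\,+x^{J+1}_k)\rightharpoonup\phi^{J+1}$, and I set $r^{J+1}_k=r^J_k-\phi^{J+1}(\,\cdot\,-x^{J+1}_k)$. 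Two routine checks continue the induction: $|x^{J+1}_k-x^i_k|\to\infty$ for each $i\le J$ (else $x^{J+1}_k-x^i_k\to c$ along a subsequence, making $r^J_k(\,\cdot\,+x^{J+1}_k)$ a convergent translate of $r^J_k(\,\cdot\,+x^i_k)$, hence weakly null, contradicting $\phi^{J+1}\neq0$), which is \eqref{eq:conc comp 4}; and $r^{J+1}_k(\,\cdot\,+x^i_k)\rightharpoonup0$ for all $i\le J+1$, using the inductive hypothesis together with the weak nullity of a fixed $H^n$ function translated by $x^i_k-x^{J+1}_k\to\pm\infty$. Then expanding $\norm{q_k}_{H^n}^2$ and using \eqref{eq:conc comp 4} to kill the cross terms $\langle\phi^i(\,\cdot\,-x^i_k),\phi^\ell(\,\cdot\,-x^\ell_k)\rangle_{H^n}$ and the preceding item to kill $\langle\phi^i(\,\cdot\,-x^i_k),r^{J+1}_k\rangle_{H^n}$ reproves \eqref{eq:conc comp 2} at level $J+1$. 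If the process never halts, $J^*=\infty$ and \eqref{eq:conc comp 2} forces $\sum_j\norm{\phi^j}_{H^n}^2\le A^2$, so $\norm{\phi^j}_{H^n}\to0$; by the lower bound in the inverse inequality this is incompatible with $\limsup_k\norm{r^J_k}_{W^{n-1,3}}$ staying bounded away from $0$, giving \eqref{eq:conc comp 1}.

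It remains to obtain the $W^{n-1,3}$ decoupling \eqref{eq:conc comp 3}. Writing $\norm{q_k}_{W^{n-1,3}}^3=\bnorm{\sum_{j\le J}\phi^j(\,\cdot\,-x^j_k)+r^J_k}_{W^{n-1,3}}^3$, the pointwise inequality $\bigl||a+b|^3-|a|^3\bigr|\lesssim|b|\bigl(|a|^2+|b|^2\bigr)$ and Hölder allow discarding $r^J_k$ up to an error $\lesssim A^2\norm{r^J_k}_{W^{n-1,3}}$, while $\bnorm{\sum_{j\le J}\phi^j(\,\cdot\,-x^j_k)}_{W^{n-1,3}}^3=\sum_{j\le J}\norm{\phi^j}_{W^{n-1,3}}^3+o(1)$ as $k\to\infty$ because every mixed integral $\int|\phi^a(x-x^a_k)|\,|\phi^b(x-x^b_k)|\,|\phi^c(x-x^c_k)|\dx$ with $a\neq b$ vanishes in the limit by dominated convergence (after bounding one factor by its supremum) once $|x^a_k-x^b_k|\to\infty$, and likewise for the lower-order derivatives appearing in the $W^{n-1,3}$ norm. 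Letting $k\to\infty$, then $J\to J^*$, and invoking \eqref{eq:conc comp 1} yields \eqref{eq:conc comp 3}; a diagonal argument over the countably many subsequences extracted above produces a single subsequence along which \eqref{eq:conc comp decomp}--\eqref{eq:conc comp 4} hold simultaneously.

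The only genuinely substantive step is the quantitative inverse inequality in the first paragraph; everything downstream is standard bookkeeping. The one point needing care is the $W^{n-1,3}$ decoupling \eqref{eq:conc comp 3}, since the extracted profiles $\phi^j$ need not be compactly supported --- but the dominated-convergence estimate above uses only $\phi^j\in L^2(\R)\cap L^\infty(\R)$ and the separation $|x^a_k-x^b_k|\to\infty$ supplied by \eqref{eq:conc comp 4}.
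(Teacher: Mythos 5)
Your proposal is correct, and it implements exactly the argument the paper has in mind: the paper does not prove \cref{thm:conc comp} itself, noting only that the $n=1$ case is~\cite{Hmidi2005}*{Prop.~3.1} and that the general case follows ``by exactly the same method'' as the textbook profile decompositions in~\cite{Killip2013}, i.e.\ inductive extraction of weak limits of translates driven by an inverse embedding inequality, together with the standard bookkeeping for \eqref{eq:conc comp 2}--\eqref{eq:conc comp 4} and a diagonal argument. Your one substantive ingredient --- the bound $\norm{f}_{W^{n-1,3}}^3\lesssim\big(\max_{0\le j\le n-1}\snorm{f^{(j)}}_{L^\infty}\big)\norm{f}_{H^n}^2$, followed by translating to a near-maximum of $|q_k^{(j_0)}|$ and using compactness of $H^n\hookrightarrow C^{n-1}$ on bounded intervals --- is a clean one-dimensional substitute for the Littlewood--Paley/Besov refinement used in the higher-dimensional versions, and it has the advantage of giving the quantitative lower bound $\norm{\phi^{J+1}}_{H^n}\gtrsim\eta_J^3A^{-2}$ (with $\eta_J=\limsup_k\snorm{r^J_k}_{W^{n-1,3}}$ and $A=\sup_k\norm{q_k}_{H^n}$), from which \eqref{eq:conc comp 1} follows at once since $\norm{\phi^j}_{H^n}\to0$ by \eqref{eq:conc comp 2}. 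Two cosmetic points only: the supremum of $|q_k^{(j_0)}|$ need merely be nearly attained, and the vanishing of the mixed integrals in the $W^{n-1,3}$ decoupling is more safely justified by Cauchy--Schwarz plus density of compactly supported functions in $L^2$ than by dominated convergence alone; neither affects the validity of the argument.
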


The $n=1$ case of \cref{thm:conc comp} is well-known~\cite{Hmidi2005}*{Prop.~3.1}; for a textbook presentation of such concentration compactness principles, we recommend~\cite{Killip2013}.  While it does not appear that \cref{thm:conc comp} for $n\geq 2$ has been recorded in the literature, it can be proved by exactly the same method (e.g.~\cite{Killip2013}*{Th.~4.7}) and we omit the details.

We apply this concentration compactness principle to the minimizing sequence $\{q_k\}_{k\geq 1}$ in \cref{thm:min seq}.  After passing to a subsequence, \cref{thm:conc comp} provides a number $J^*\in\{0,1,\dots,\infty\}$, $J^*$-many profiles $\{\phi^j\}_{j=1}^{J^*} \subset H^n(\R)$, and $J^*$-many sequences $\{ x^j_k \}_{j=1}^{J^*} \subset \R$ so that along a subsequence we have the decomposition~\eqref{eq:conc comp decomp} satisfying the properties~\eqref{eq:conc comp 1}-\eqref{eq:conc comp 4}.  We will ultimately show that each profile $\phi^j$ is a constrained minimizer of $E_{n+1}$, and hence is a multisoliton.

First, we will treat the case $J^* = 0$:
\begin{lem}
\label{thm:min seq lem 6}
If $J^* = 0$, then $e_1= \dots = e_n = 0$ and $q_k\to 0$ in $H^n(\R)$ as $k\to\infty$.
\end{lem}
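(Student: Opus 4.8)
The plan is to exploit the fact that $J^* = 0$ collapses the decomposition \eqref{eq:conc comp decomp} to $q_k = r^0_k$, so that \eqref{eq:conc comp 1} gives $\norm{q_k}_{W^{n-1,3}} \to 0$ along the subsequence at hand. The point is that this smallness kills the non-quadratic parts of all the energies $E_1,\dots,E_{n+1}$, and the constraint $(e_1,\dots,e_n) \in \M^n_n$ then forces the limiting values to be trivial.

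First I would show that $E_m(q_k)$ reduces to a pure Sobolev seminorm in the limit. By \cref{thm:en}, for $1 \leq m \leq n+1$ one has $E_m(u) = \tfrac12 \norm{u^{(m-1)}}_{L^2}^2 + \int P_m(u) \dx$ with $P_m$ a sum of monomials $c\, u^{(\alpha_1)} \cdots u^{(\alpha_d)}$, $d \geq 3$, $0 \leq \alpha_j \leq m-2 \leq n-1$ (and $P_1 \equiv 0$). Estimating each monomial by putting three factors in $L^3$ and the remaining $d-3$ in $L^\infty$, and using $\norm{q_k^{(\alpha_j)}}_{L^3} \leq \norm{q_k}_{W^{n-1,3}}$ together with $\norm{q_k^{(\alpha_j)}}_{L^\infty} \lesssim \norm{q_k}_{H^{n-\alpha_j}} \leq \norm{q_k}_{H^n}$ (the embedding $H^{n-\alpha_j}(\R) \hookrightarrow L^\infty(\R)$ being available since $n-\alpha_j \geq 1$), I get
\begin{equation*}
\int \big| q_k^{(\alpha_1)} \cdots q_k^{(\alpha_d)} \big| \dx \lesssim \norm{q_k}_{W^{n-1,3}}^3 \norm{q_k}_{H^n}^{d-3} .
\end{equation*}
Since $\{q_k\}$ is bounded in $H^n(\R)$, it follows that $\int P_m(q_k) \dx \to 0$, hence $E_m(q_k) = \tfrac12 \norm{q_k^{(m-1)}}_{L^2}^2 + o(1)$ for $m = 1, \dots, n+1$. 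In particular $e_m = \lim_k \tfrac12 \norm{q_k^{(m-1)}}_{L^2}^2 \geq 0$ for $m = 1, \dots, n$, and $C(e_1,\dots,e_n) = \lim_k \tfrac12 \norm{q_k^{(n)}}_{L^2}^2 \geq 0$.

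Next I would use $(e_1,\dots,e_n) \in \M^n_n$: by \cref{thm:minimizers} there is a unique multisoliton $Q_{\bs{\beta},\bs{c}}$ of degree $N \leq n$ realizing these constraints, and $C(e_1,\dots,e_n) = E_{n+1}(Q_{\bs{\beta},\bs{c}})$ by \eqref{eq:min val}. If $N \geq 1$, then \eqref{eq:en multisoliton} gives $E_2(Q_{\bs{\beta},\bs{c}}) = -\tfrac{32}{5} \sum_{m=1}^N \beta_m^5 < 0$; but the limit of $E_2(q_k)$ — which is $e_2$ when $n \geq 2$ and $C(e_1)$ when $n = 1$ — is nonnegative by the previous paragraph, a contradiction. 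Hence $N = 0$, i.e.\ $Q_{\bs{\beta},\bs{c}} \equiv 0$, so $e_1 = \dots = e_n = 0$ and $C(0,\dots,0) = E_{n+1}(0) = 0$. To upgrade to strong convergence, observe that $E_1(q_k) = \tfrac12 \norm{q_k}_{L^2}^2 \to 0$ and $E_{n+1}(q_k) = \tfrac12 \norm{q_k^{(n)}}_{L^2}^2 + o(1) \to 0$, so $q_k \to 0$ and $q_k^{(n)} \to 0$ in $L^2(\R)$; since $\norm{u}_{H^n}^2 \approx \norm{u}_{L^2}^2 + \norm{u^{(n)}}_{L^2}^2$ on the Fourier side, $q_k \to 0$ in $H^n(\R)$.

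The only step I expect to require care is the monomial estimate above: one must check that the scaling constraint $0 \leq \alpha_j \leq n-1$ of \cref{thm:en} is exactly what is needed to control three of the (at least three) factors in $L^3$ by $\norm{q_k}_{W^{n-1,3}}$ and the rest in $L^\infty$ by $\norm{q_k}_{H^n}$. The remaining ingredients are the already-established trace/energy identities and elementary interpolation.
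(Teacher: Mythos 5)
Your argument is correct and follows essentially the same route as the paper: with $J^*=0$ the decomposition gives $q_k=r^0_k$, the $W^{n-1,3}$-smallness \eqref{eq:conc comp 1} kills the non-quadratic parts of the energies, the resulting nonnegativity of $\lim E_2(q_k)$ clashes with the value $-\tfrac{32}{5}\sum\beta_m^5$ at the multisoliton attaining the constraints unless $N=0$, and then $E_1$ and $E_{n+1}$ give $L^2$-convergence of $q_k$ and $q_k^{(n)}$, hence $H^n$-convergence (your last step is in fact more explicit than the paper's appeal to \eqref{eq:ests for en cts}). The only blemish is the line $\snorm{q_k^{(\alpha_j)}}_{L^\infty}\lesssim\snorm{q_k}_{H^{n-\alpha_j}}$, which should read $\snorm{q_k^{(\alpha_j)}}_{L^\infty}\lesssim\snorm{q_k^{(\alpha_j)}}_{H^{n-\alpha_j}}\leq\snorm{q_k}_{H^n}$; the parenthetical embedding you cite shows this is what you meant, so it is a harmless slip.
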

\begin{proof}
The decomposition~\eqref{eq:conc comp decomp} reads $q_k = r^0_k$, and so
\begin{equation*}
E_2(q_k)
= E_2(r^0_k)
= \int \big\{ \tfrac{1}{2} \big[ (r^0_k)' \big]^2 + ( r^0_k)^3 \big\} \dx .
\end{equation*}
The second term in the integrand contributes $\snorm{ r^0_k }_{L^3}^3$, which we know vanishes in the limit $k\to\infty$ by~\eqref{eq:conc comp 1}.   The remaining term is nonnegative, and so we obtain
\begin{equation*}
\lim_{k\to\infty} E_2(q_k)
\geq 0 .
\end{equation*}
On the other hand, we know that this limit is attainable by a multisoliton since $(e_1,\dots,e_n) \in \M^n_n$, and so there exists $N\leq n$ and $\beta_1 > \cdots > \beta_N> 0$ so that
\begin{equation*}
\lim_{k\to\infty} E_2(q_k)
= -\tfrac{32}{5} \sum_{m=1}^N \beta_m^5
\leq 0 .
\end{equation*}
Together, we see that $E_2(q_k)\to 0$ and $N=0$.  The only multisoliton that can attain this value is the zero-soliton $q(x) \equiv 0$, and so we conclude that $e_1=\dots=e_n=0$.

Now we have
\begin{equation*}
0
= \lim_{k\to\infty} E_1(q_k)
= \lim_{k\to\infty} \tfrac{1}{2} \norm{q_k}_{L^2}^2 ,
\end{equation*}
and so $q_k \to 0$ in $L^2(\R)$.  Using the estimates~\eqref{eq:ests for en cts} that prove that $E_1,\dots,E_{n+1}$ are continuous functionals on $H^n(\R)$, we obtain $q_k\to 0$ in $H^n(\R)$ as desired.
\end{proof}

\Cref{thm:min seq lem 6} proves \cref{thm:min seq} in the case $J^*=0$, and so for the remainder of the section we assume $J^* \geq 1$.

Next, we show that our decomposition accounts for the entirety of the limiting value of each $E_{m}(q_k)$:
\begin{lem}
\label{thm:min seq lem 1}
For each $m=1,\dots,n+1$ we have
\begin{equation*}
 \lim_{J\to J^*}\, \limsup_{k\to\infty}\, \bigg| E_m(q_k) - \bigg[ \sum_{j=1}^J E_m(\phi^j) + E_m(r^J_k) \bigg] \bigg| = 0 .
\end{equation*}
\end{lem}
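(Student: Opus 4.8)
The plan is to split $E_m(u)=\tfrac12\|u^{(m-1)}\|_{L^2}^2+\int P_m(u)\dx$ and treat each monomial of $P_m$, which by \cref{thm:en} has the form $c\int q^{(\alpha_1)}\cdots q^{(\alpha_d)}\dx$ with $d\ge 3$ and $0\le\alpha_i\le m-2\le n-1$, as a bounded $d$-linear form $\Lambda$ on $H^n(\R)$. Boundedness follows from the Sobolev embeddings $H^n(\R)\hookrightarrow W^{n-1,p}(\R)$, $p\in[2,\infty]$: then $\|q^{(\alpha_i)}\|_{L^d}\lesssim\|q\|_{H^n}$, and Hölder with all $d$ exponents equal to $d$ gives $|\Lambda(g_1,\dots,g_d)|\le\prod_i\|g_i^{(\alpha_i)}\|_{L^d}\lesssim\prod_i\|g_i\|_{H^n}$. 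Writing $\phi_k^j:=\phi^j(\cdot-x_k^j)$ and $\Phi_k^{\le J}:=\sum_{j=1}^J\phi_k^j$, so that $q_k=\Phi_k^{\le J}+r_k^J$, multilinearity and translation invariance give
\begin{equation*}
\Lambda(q_k,\dots,q_k)=\sum_{j=1}^J\Lambda(\phi^j,\dots,\phi^j)+\Lambda(r_k^J,\dots,r_k^J)+A_k^J+B_k^J,
\end{equation*}
where $A_k^J:=\Lambda\big((\Phi_k^{\le J})^{\otimes d}\big)-\sum_{j=1}^J\Lambda\big((\phi_k^j)^{\otimes d}\big)$ collects the cross-profile contributions and $B_k^J$ collects the $2^d-1$ contributions in which at least one factor is $r_k^J$.

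For $A_k^J$, fix $J$: there are finitely many cross-profile monomials, and each contains factors $(\phi_k^j)^{(\alpha_i)}$ and $(\phi_k^\ell)^{(\alpha_{i'})}$ with $j\ne\ell$. Since derivatives up to order $n-1$ of $H^n$ functions are continuous and vanish at infinity and $|x_k^j-x_k^\ell|\to\infty$ by~\eqref{eq:conc comp 4}, for every $R>0$ and all $k$ large one has, pointwise in $x$, $|(\phi_k^j)^{(\alpha_i)}(\phi_k^\ell)^{(\alpha_{i'})}|\le\eps(R)\big(|(\phi_k^j)^{(\alpha_i)}|+|(\phi_k^\ell)^{(\alpha_{i'})}|\big)$ with $\eps(R):=\sup_{|y|>R}\big(|(\phi^j)^{(\alpha_i)}(y)|+|(\phi^\ell)^{(\alpha_{i'})}(y)|\big)$; absorbing this factor and applying Hölder to the remaining $d-1\ge2$ factors in $L^{d-1}$ yields $\limsup_{k\to\infty}|\Lambda|\le\eps(R)\,C_J$ for every $R>0$, hence $\limsup_{k\to\infty}|A_k^J|=0$.

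For $B_k^J$ I will not estimate term by term (the natural enumeration into distinct-profile colourings has a count that blows up when $J^*=\infty$); rather, the grouping into $\Phi_k^{\le J}$- and $r_k^J$-factors produces only $2^d-1$ summands $\Lambda(g_1,\dots,g_d)$, each with $g_i\in\{\Phi_k^{\le J},r_k^J\}$ and at least one $g_i=r_k^J$. Hölder bounds such a summand by $\prod_i\|g_i^{(\alpha_i)}\|_{L^d}$; for an $r_k^J$-factor I interpolate $\|(r_k^J)^{(\alpha_i)}\|_{L^d}\lesssim\|r_k^J\|_{W^{n-1,3}}^{3/d}\|r_k^J\|_{W^{n-1,\infty}}^{1-3/d}\lesssim\|r_k^J\|_{W^{n-1,3}}^{3/d}\|r_k^J\|_{H^n}^{1-3/d}$ (licit since $d\ge3$), while a $\Phi_k^{\le J}$-factor is controlled by $\|\Phi_k^{\le J}\|_{H^n}$. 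Since $\phi^j(\cdot-y_k)\rightharpoonup 0$ in $H^n$ whenever $y_k\to\infty$, the cross terms in $\|\Phi_k^{\le J}\|_{H^n}^2$ vanish as $k\to\infty$, so $\limsup_{k\to\infty}\|\Phi_k^{\le J}\|_{H^n}^2=\sum_{j=1}^J\|\phi^j\|_{H^n}^2\le\sum_{j\ge1}\|\phi^j\|_{H^n}^2<\infty$, a bound \emph{uniform in $J$} (finiteness from~\eqref{eq:conc comp 2}); likewise $\limsup_{k\to\infty}\|r_k^J\|_{H^n}$ is bounded uniformly in $J$ by~\eqref{eq:conc comp 2}. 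Since each $r_k^J$-factor carries at least one power $3/d$ of $\|r_k^J\|_{W^{n-1,3}}$, summing the $2^d-1$ terms gives $\limsup_{k\to\infty}|B_k^J|\lesssim_d\big(\limsup_{k\to\infty}\|r_k^J\|_{W^{n-1,3}}\big)^{3/d}$ up to a $J$-uniform constant, which tends to $0$ as $J\to J^*$ by~\eqref{eq:conc comp 1}. This is exactly where the $W^{n-1,3}$-based principle is calibrated to the cubic-and-higher structure of $P_m$.

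It remains to treat the quadratic part. Using the standard refinement of~\eqref{eq:conc comp 2} that the profiles may be chosen with $r_k^J(\cdot+x_k^j)\rightharpoonup 0$ in $H^n$ for $1\le j\le J$, together with weak--weak continuity of $\partial^{m-1}\colon H^n(\R)\to H^{n-m+1}(\R)\hookrightarrow L^2(\R)$, all $\dot H^{m-1}$ cross terms vanish and
\begin{equation*}
\tfrac12\|q_k^{(m-1)}\|_{L^2}^2=\sum_{j=1}^J\tfrac12\|(\phi^j)^{(m-1)}\|_{L^2}^2+\tfrac12\|(r_k^J)^{(m-1)}\|_{L^2}^2+o_k(1)\quad\text{for each fixed }J.
\end{equation*}
Combining this with the finitely many polynomial monomials via the triangle inequality (and recalling $\Lambda((\phi_k^j)^{\otimes d})=\Lambda((\phi^j)^{\otimes d})$) gives the claim. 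The main obstacle is the uniform-in-$J$ control needed when $J^*=\infty$: the mixed terms cannot be estimated individually, and one must instead exploit the $J$-uniform bound $\limsup_k\|\Phi_k^{\le J}\|_{H^n}\le(\sum_j\|\phi^j\|_{H^n}^2)^{1/2}$ together with the $L^3$-to-$L^d$ interpolation above.
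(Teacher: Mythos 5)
Your proposal is correct and follows essentially the same route as the paper: insert the profile decomposition, cancel the quadratic terms via the (standard, weak-convergence) decoupling underlying~\eqref{eq:conc comp 2}, kill the cross-profile terms of $P_m$ using the separation condition~\eqref{eq:conc comp 4}, and control every term containing $r^J_k$ through the $W^{n-1,3}$ smallness~\eqref{eq:conc comp 1} combined with Sobolev embedding. The only differences are bookkeeping refinements --- you keep the profile sum grouped as $\Phi^{\le J}_k$ and interpolate $L^3$--$L^\infty$ to get a constant uniform in $J$, where the paper estimates one factor of $r^J_k$ in $L^3$, two more factors in $L^3$, and the rest in $L^\infty$ --- and these make explicit a uniformity point the paper treats tersely.
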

\begin{proof}
Fix $m$, and insert the decomposition~\eqref{eq:conc comp decomp} for $q_k$ into the expression~\eqref{eq:en} for $E_m$.  By the $H^n$-norm property~\eqref{eq:conc comp 2}, we see the quadratic terms of each $E_m$ cancel, leaving only cubic and higher order terms:
\begin{align*}
&\limsup_{k\to\infty}\, \bigg| E_m(q_k) - \bigg[ \sum_{j=1}^J E_m(\phi^j) + E_m(r^J_k) \bigg] \bigg| \\
&= \limsup_{k\to\infty}\, \bigg| \int \bigg\{ P_m\bigg( \sum_{j=1}^J \phi^j(x-x^j_k) + r^J_k \bigg) - \bigg[ \sum_{j=1}^J P_m(\phi^j) + P_m(r^J_k) \bigg] \bigg\} \dx \bigg| .
\end{align*}

Consider an arbitrary term $c_{\alpha_1,\dots,\alpha_d} u^{(\alpha_1)}\cdots u^{(\alpha_d)}$ of $P_m(u)$ for $u = q_k$, $\phi^j$, or $r^J_k$.  Expanding all products in the case $u = q_k = \sum \phi^j(x-x^j_k) + r^J_k$, we are left with a term of the form
\begin{equation*}
u_1^{(\alpha_1)}\cdots u_d^{(\alpha_d)}
\end{equation*}
where each $u_\ell$ for $\ell = 1,\dots,d$ is one of the profiles $\phi^j$, its translation $\phi^j(x-x^j_k)$, or the remainder $r^J_k$.

We claim that all of the terms with $u_1,\dots,u_d \neq r^J_k$ cancel; in other words,
\begin{equation}
\lim_{k\to\infty}\, \bigg| \int \bigg\{ P_m\bigg( \sum_{j=1}^J \phi^j(x-x^j_k) \bigg) - \sum_{j=1}^J P_m(\phi^j) \bigg\} \dx \bigg| = 0
\label{eq:orb stab 4}
\end{equation}
for all $J\leq J^*$ finite.  When all of the $u_\ell$ are given by the same translated profile $\phi^j(x-x^j_k)$, we can change variables $y=x-x^j_k$ in the integral and recover the corresponding term where $u_1,\dots,u_d = \phi^j(x)$.  When there are at least two different translated profiles, the integral vanishes in the limit $k\to\infty$ by the well-separation condition~\eqref{eq:conc comp 4} and approximating each $\phi^j$ by compactly-supported functions.

It remains to show that the remaining terms (which contain at least one factor of $r^J_k$) vanish in $L^1$.  Note that by the scaling requirement~\eqref{eq:en 2}, each order $\alpha_\ell$ is at most $m-2 \leq n-1$.  We estimate the highest order factor of $r^J_k$ in $L^3$, which is vanishing in the limit $k\to\infty$ and $J\to J^*$ by the small-remainder condition~\eqref{eq:conc comp 1}.  We then estimate the two other highest order factors $\phi^j$ or $r^J_k$ in $L^3$, and the remaining terms are bounded in $L^\infty$ since $\phi^j$ and $r^J_k$ are uniformly bounded in $H^n \hookrightarrow W^{n-1,\infty}$.  
\end{proof}

Next, we show that the quadratic term of $E_{n+1}(r^J_k)$ dominates as $k\to\infty$:
\begin{lem}
\label{thm:min seq lem 2}
For each $m=1,\dots,n+1$ we have
\begin{equation*}
\lim_{J\to J^*}\, \limsup_{k\to\infty}\, \Big| E_m(r^J_k) - \tfrac{1}{2} \bnorm{(r^J_k)^{(m-1)}}_{L^2}^2 \Big| = 0 .
\end{equation*}
\end{lem}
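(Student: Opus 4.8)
The plan is to reduce the statement to showing that the non-quadratic part of each energy, evaluated on the remainder, vanishes in the iterated limit. By the representation~\eqref{eq:en} from \cref{thm:en}, for each $m=1,\dots,n+1$ we have
\begin{equation*}
E_m(r^J_k) - \tfrac{1}{2}\bnorm{(r^J_k)^{(m-1)}}_{L^2}^2 = \int_{-\infty}^\infty P_m(r^J_k)\dx ,
\end{equation*}
where $P_m$ is a finite sum of monomials $c_{\alpha_1,\dots,\alpha_d}\, u^{(\alpha_1)}\cdots u^{(\alpha_d)}$ with $d\geq 3$ and $0\leq \alpha_j\leq m-2\leq n-1$ (here we use $m\leq n+1$). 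So it suffices to prove $\lim_{J\to J^*}\limsup_{k\to\infty}\big|\int P_m(r^J_k)\dx\big|=0$, and by the triangle inequality it is enough to do this term by term.

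\textbf{The key estimate.} First I would bound a single monomial. Pick any three of the $d\geq 3$ factors and apply Hölder's inequality with exponents $(3,3,3,\infty,\dots,\infty)$, which is legitimate precisely because $\tfrac13+\tfrac13+\tfrac13=1$ and $d\geq 3$. Since every derivative order satisfies $\alpha_j\leq n-1$, each $L^3$ factor is controlled by $\norm{r^J_k}_{W^{n-1,3}}$ and each $L^\infty$ factor by $\norm{r^J_k}_{W^{n-1,\infty}}$, giving
\begin{equation*}
\int_{-\infty}^\infty \big| c_{\alpha_1,\dots,\alpha_d}\, (r^J_k)^{(\alpha_1)}\cdots (r^J_k)^{(\alpha_d)} \big|\dx \lesssim \norm{r^J_k}_{W^{n-1,3}}^3\, \norm{r^J_k}_{W^{n-1,\infty}}^{d-3} .
\end{equation*}
By the one-dimensional Sobolev embeddings $H^n(\R)\hookrightarrow W^{n-1,\infty}(\R)$ and $H^n(\R)\hookrightarrow W^{n-1,3}(\R)$, both norms on the right are controlled by $\norm{r^J_k}_{H^n}$; and by the $H^n$-orthogonality property~\eqref{eq:conc comp 2} (together with boundedness of $\{q_k\}$ in $H^n$), we have $\limsup_{k\to\infty}\norm{r^J_k}_{H^n}\leq \limsup_{k\to\infty}\norm{q_k}_{H^n}=:M<\infty$, \emph{uniformly in $J$}. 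Hence, summing over the finitely many monomials of $P_m$,
\begin{equation*}
\limsup_{k\to\infty}\, \Big| E_m(r^J_k) - \tfrac{1}{2}\bnorm{(r^J_k)^{(m-1)}}_{L^2}^2 \Big| \lesssim_{M}\, \limsup_{k\to\infty}\, \norm{r^J_k}_{W^{n-1,3}}^3 .
\end{equation*}

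\textbf{Conclusion.} Letting $J\to J^*$, the right-hand side tends to $0$ by the small-remainder property~\eqref{eq:conc comp 1} of the concentration compactness decomposition, which proves the lemma. I do not anticipate a real obstacle here: the only points requiring care are (i) checking that the implicit constant from Hölder and Sobolev is uniform in both $J$ and $k$ — which follows from the uniform $H^n$-bound above — and (ii) using the structural facts $d\geq 3$ and $\alpha_j\leq m-2\leq n-1$ from \cref{thm:en}, the first of which makes the monomials genuine trilinear-or-higher $L^3$-forms (so that the vanishing of $\norm{r^J_k}_{W^{n-1,3}}$ suffices) and the second of which keeps all derivatives within the reach of the $H^n$-bound.
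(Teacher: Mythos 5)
Your argument is correct and is essentially the paper's own proof: isolate the non-quadratic part $\int P_m(r^J_k)\dx$ via~\eqref{eq:en}, place three factors of each monomial in $L^3$ (vanishing through $\norm{r^J_k}_{W^{n-1,3}}$ and~\eqref{eq:conc comp 1}) and the remaining $d-3$ factors in $L^\infty$, uniformly bounded by $H^n\hookrightarrow W^{n-1,\infty}$ together with~\eqref{eq:conc comp 2}. The only cosmetic difference is that the paper selects the three highest-order factors for the $L^3$ estimate whereas you take any three, which is immaterial here since every $\alpha_j\leq m-2\leq n-1$.
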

\begin{proof}
Fix $m$.  Using the expression~\eqref{eq:en} for $E_m$, we write
\begin{equation*}
E_m(r^J_k) - \tfrac{1}{2} \bnorm{(r^J_k)^{(m-1)}}_{L^2}^2
= \int P_m\big(r^J_k\big)\dx .
\end{equation*}
Consider the contribution of an arbitrary term $c_{\alpha_1,\dots,\alpha_d} u^{(\alpha_1)}\cdots u^{(\alpha_d)}$ of $P_m(u)$, where each order $\alpha_\ell$ is at most $m-2 \leq n-1$ by the scaling requirement~\eqref{eq:en 2}.  We estimate the three highest order factors of $r^J_k$ in $L^3$, which vanish in the limit $k\to\infty$ and $J\to J^*$ by the small-remainder condition~\eqref{eq:conc comp 1}.  We then estimate the remaining terms in $L^\infty$, which are all bounded since the sequence $r^J_k$ is uniformly bounded in $H^n \hookrightarrow W^{n-1,\infty}$.  Altogether, we conclude that every term vanishes in the limit $k\to\infty$ and $J\to J^*$.
\end{proof}

Next, we show that each profile $\phi^j$ is a constrained minimizer:
\begin{lem}
\label{thm:min seq lem 4}
For each $1\leq j \leq J^*$ finite, the profile $\phi^j$ minimizes $E_{n+1}(u)$ over all $u\in\con$ with the constraints $E_1(\phi^j)$, $\dots$, $E_n(\phi^j)$, and hence is a multisoliton $Q_{\bs{\beta}^j,\bs{c}^j}$ of degree at most $n$.
\end{lem}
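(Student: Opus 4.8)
The plan is to show that each profile $\phi^j$ is itself a global constrained minimizer of $E_{n+1}$ for its own constraints $\bs{e}^j:=(E_1(\phi^j),\dots,E_n(\phi^j))$, and then to invoke \cref{thm:var char} (that is, \cref{thm:minimizers,thm:uniqueness}) to conclude $\phi^j=Q_{\bs{\beta}^j,\bs{c}^j}$ of degree at most $n$. The preliminary step is to account for how the decomposition distributes the energies. Combining \cref{thm:min seq lem 1,thm:min seq lem 2} with the interpolation inequality $\snorm{h}_{L^2}^2\le\snorm{h}_{L^3}^{3/2}\snorm{h}_{L^\infty}^{1/2}$ and the uniform bound of $r^J_k$ in $H^n(\R)\hookrightarrow W^{n-1,\infty}(\R)$, the small-remainder property~\eqref{eq:conc comp 1} forces $\bnorm{(r^J_k)^{(m-1)}}_{L^2}^2\to 0$ whenever $m-1\le n-1$; hence
\begin{equation*}
\sum_{j} E_m(\phi^j)=e_m\quad\tx{for } m=1,\dots,n,
\end{equation*}
while for $m=n+1$ the top-order term $\tfrac12\bnorm{(r^J_k)^{(n)}}_{L^2}^2$ is merely nonnegative (it is invisible to $W^{n-1,3}$), so the same expansion yields only $\sum_{j} E_{n+1}(\phi^j)\le C(e_1,\dots,e_n)$.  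To recover equality I would spread the profiles far apart \emph{without} a remainder: the sequence $\sum_j\phi^j(\,\cdot\,-z^j_k)$ with $|z^j_k-z^\ell_k|\to\infty$ has constraints tending to $(e_1,\dots,e_n)\in\M^n_n$ and $E_{n+1}$ tending to $\sum_j E_{n+1}(\phi^j)$, so \cref{thm:minimizers} (via the relative openness of \cref{thm:rel open} and the continuity of $C$ from \cref{thm:min val is dec}) gives the reverse inequality; thus $\sum_j E_{n+1}(\phi^j)=C(e_1,\dots,e_n)$.  Since also $\sum_j E_1(\phi^j)=e_1$ with $E_1\ge 0$, only finitely many $\phi^j$ are nonzero.

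The core of the argument is a cut-and-paste step proving optimality of each $\phi^j$.  Suppose some $\phi^{j_0}$ were not optimal: choose $\psi$ in the constraint set of $\bs{e}^{j_0}$ with $E_{n+1}(\psi)\le E_{n+1}(\phi^{j_0})-\eta$, $\eta>0$, and for finite $J\le J^*$ form
\begin{equation*}
w_k:=\psi(\,\cdot\,-x^{j_0}_k)+\sum_{j\ne j_0,\ j\le J}\phi^j(\,\cdot\,-x^j_k)+r^J_k .
\end{equation*}
Repeating the cancellations in the proofs of \cref{thm:min seq lem 1,thm:min seq lem 2} — using the separation~\eqref{eq:conc comp 4}, approximation of the profiles by compactly supported functions, and $r^J_k(\,\cdot\,+x^j_k)\rightharpoonup 0$ in $H^n(\R)$ — the constraints of $w_k$ tend to $(e_1,\dots,e_n)$ up to an error vanishing as $J\to J^*$, and along a subsequence $\lim_k E_{n+1}(w_k)=C(e_1,\dots,e_n)-\eta+o_J(1)$.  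Since $\M^n_n$ is relatively open in $\F^n$ by \cref{thm:rel open}, for $k$ large the constraint vector of $w_k$ again lies in $\M^n_n$, so \cref{thm:minimizers} gives $E_{n+1}(w_k)\ge C(E_1(w_k),\dots,E_n(w_k))$; letting $k\to\infty$ (using continuity of $C$) and then $J\to J^*$ produces $C(e_1,\dots,e_n)-\eta\ge C(e_1,\dots,e_n)$, a contradiction.  Hence every $\phi^j$ minimizes $E_{n+1}$ over the constraints $\bs{e}^j$.

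It then remains to deduce that $\phi^j$ is a multisoliton of degree at most $n$.  Following \cref{sec:min}: the Euler--Lagrange equation and the elliptic bootstrap from the proof of \cref{thm:schwartz} make $\phi^j$ smooth, and since $\phi^j\in H^n(\R)$ solves the associated $2n$-th order ODE with all derivatives vanishing at $\pm\infty$, it lies on the stable manifold of the origin and so decays exponentially; thus $\phi^j\in\schwar(\R)$ and the trace formulas~\eqref{eq:en trace} apply.  Then \cref{thm:a mom wiggle,thm:inner outer fac} force $|a(k;\phi^j)|\equiv 1$ on $\R$, so $\phi^j=Q_{\bs{\beta}^j,\bs{c}^j}$.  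Finally, if the (finite) combined family $\bigcup_j\bs{\beta}^j$ took more than $n$ distinct values, then \cref{thm:beta mom wiggle}, applied to this configuration of degree $\ge n+1$, would produce a multisoliton with the same first $n$ odd $\beta$-moments but strictly smaller $(n+1)$st moment — i.e.\ a competitor satisfying the constraints $(e_1,\dots,e_n)$ with $E_{n+1}$ strictly below $\sum_j E_{n+1}(\phi^j)=C(e_1,\dots,e_n)$, contradicting \cref{thm:var char}.  Hence $\bigcup_j\bs{\beta}^j$ has at most $n$ distinct values; by \cref{thm:unique params 2} it coincides with the unique $\le n$-value family realizing $(e_1,\dots,e_n)$, so each $\bs{e}^j\in\M^n_n$ and $Q_{\bs{\beta}^j,\bs{c}^j}$ has degree at most $n$.

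I expect the main obstacle to be the bookkeeping in the first paragraph: because $r^J_k$ is controlled only in $W^{n-1,3}(\R)$ and not in $H^n(\R)$, the functional $E_{n+1}$ is not conserved by the profile decomposition, so a direct expansion yields only $\sum_j E_{n+1}(\phi^j)\le C$; recovering the equality — essential for the closing degree argument — requires the separate spreading-out construction, and the degree bound must be routed through \cref{thm:beta mom wiggle} and the moment-uniqueness \cref{thm:unique params 2} rather than read off from the constraints directly.  A secondary subtlety is that the decay step of \cref{thm:schwartz} rests on the Lagrange multipliers having the alternating sign pattern (so that the linearized operator is hyperbolic), which is transparent only once $\bs{e}^j$ is known to lie in $\M^n_n$; care is needed to see that a bounded $H^n(\R)$ solution of the Euler--Lagrange ODE vanishing at $\pm\infty$ cannot excite any center or unstable direction, so that the argument closes without circularity with the final moment-uniqueness step.
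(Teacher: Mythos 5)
Your second paragraph---replace the allegedly non-optimal profile by a better competitor with the same constraints, reassemble the sequence, run the cancellations of \cref{thm:min seq lem 1,thm:min seq lem 2}, and contradict the lower bound of \cref{thm:minimizers} via \cref{thm:rel open} and the continuity of $C$---is exactly the paper's proof of the minimality claim, only written with more explicit bookkeeping.

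The genuine problem is your first paragraph, and the fact that you route the ``degree at most $n$'' half of the statement through it. The inequality $\snorm{h}_{L^2}^2\le\snorm{h}_{L^3}^{3/2}\snorm{h}_{L^\infty}^{1/2}$ is false: $L^2(\R)$ is not an interpolation space between $L^3$ and $L^\infty$ (interpolating $\tfrac12=\tfrac\theta3$ forces $\theta=\tfrac32>1$; concretely, a plateau of height $\eps$ and length $L$ has small $L^3$ and $L^\infty$ norms but arbitrarily large $L^2$ norm when $L\eps^2$ is large). Consequently the smallness~\eqref{eq:conc comp 1} of $r^J_k$ in $W^{n-1,3}$ does \emph{not} give $\bnorm{(r^J_k)^{(m-1)}}_{L^2}\to 0$ for $m\le n$, and the exact splitting $\sum_j E_m(\phi^j)=e_m$ is not available at this stage; all one gets from \cref{thm:min seq lem 1,thm:min seq lem 2} is the one-sided bound $\sum_j E_m(\phi^j)\le e_m$, since $E_m(r^J_k)$ is only asymptotically nonnegative. (Your finiteness claim from $\sum_j E_1(\phi^j)=e_1$ also fails: a convergent sum of positive terms can have infinitely many nonzero terms.) In the paper these equalities, the finiteness of $J^*$, and the vanishing of the remainder in $H^n$ are obtained only \emph{after} the present lemma, in \cref{thm:min seq lem 5,thm:min seq lem 3}, precisely by working with the relaxed problem with inequality constraints and the strict monotonicity of $C$ from \cref{thm:min val is dec}---so your combined-family argument for the degree bound, which presupposes $\sum_j E_{n+1}(\phi^j)=C(e_1,\dots,e_n)$, is circular with respect to the tools actually in hand. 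The repair is simpler and per-profile, as in the paper: once $\phi^j$ is a constrained minimizer for its own constraints, apply the machinery of \cref{sec:min} (equivalently \cref{thm:var char}) to $\phi^j$ alone; in particular, were its degree at least $n+1$, \cref{thm:beta mom wiggle} would produce a multisoliton with the same $E_1,\dots,E_n$ and strictly smaller $E_{n+1}$, contradicting the minimality of $\phi^j$---no bookkeeping across profiles is needed. Your closing remark about the sign of the Lagrange multipliers (i.e.\ needing $\bs{e}^j\in\M^n_n$ before invoking \cref{thm:schwartz}) is a fair observation about how \cref{thm:var char} gets applied, but as written you resolve it only by appeal to the broken paragraph, so that part of the proposal is not yet a proof.
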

\begin{proof}
Suppose towards a contradiction that there exists $j$ for which $\phi^j$ does not minimize $E_{n+1}$.  Then we can replace $\phi^j$ by another profile $\wt{\phi}^j \in H^n(\R)$ with
\begin{equation*}
E_m(\wt{\phi}^j) = E_m(\phi^j) \quad\tx{for }m=1,\dots,n , \quad\tx{but}\quad
E_{n+1}(\wt{\phi}^j) < E_{n+1}(\phi^j) .
\end{equation*}
Construct a new sequence $\{\widetilde{q}_k\}_{k\geq 1}$ given by the decomposition~\eqref{eq:conc comp decomp}, but with $\wt{\phi}^j$ in place of $\phi^j$.  This new sequence still satisfies the properties~\eqref{eq:conc comp 1}--\eqref{eq:conc comp 4}, and so by \cref{thm:min seq lem 1} we have
\begin{equation}
\label{eq:min seq 2}
\lim_{k\to\infty} E_m(\widetilde{q}_k) = e_m \quad\tx{for }m=1,\dots,n , \quad
\lim_{k\to\infty} E_{n+1}(\widetilde{q}_k) < \lim_{k\to\infty} E_{n+1}(q_k) .
\end{equation}
However, this contradicts that $\{q_k\}_{k\geq 1}$ was a minimizing sequence.  Therefore, we conclude that each $\phi^j$ is a constrained minimizer of $E_{n+1}$.

Applying our variational characterization (\cref{thm:var char}), we conclude that $\phi^j$ is a multisoliton of degree at most $n$.
\end{proof}

We now know that our profiles $\{ \phi^j \}_{j=1}^{J^*}$ are a (possibly infinite) collection of multisolitons $\{ Q_{\bs{\beta}^j,\bs{c}^j} \}_{j=1}^{J^*}$.  Concatenate all of the vectors $\bs{\beta}^j$ to form one (possibly infinite) string $\coprod_{j=1}^{J^*} \bs{\beta}^j$ of positive numbers which may contain repeated values.  Next, we show that all of the parameters together minimize $E_{n+1}$ subject to the constraints $e_1,\dots,e_n$:
\begin{lem}
\label{thm:min seq lem 5}
The concatenation $\coprod_{j=1}^{J^*} \bs{\beta}^j$ is equal to the unique set of parameters $\beta_1>\cdots>\beta_N>0$ satisfying the constraints $e_1,\dots,e_n$.  In particular, $J^*$ is finite.
\end{lem}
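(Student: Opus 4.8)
The plan is to show that the multiset $\bs\gamma := \coprod_{j=1}^{J^*}\bs\beta^j$ of all the profiles' parameters is finite and realizes the constraints $e_1,\dots,e_n$ \emph{exactly}, so that \cref{thm:unique params} forces $\bs\gamma=\{\beta_1>\cdots>\beta_N\}$; finiteness of $J^*$ is then automatic, since every profile contributes at least one parameter. First I set up the bookkeeping. By \cref{thm:min seq lem 4}, $\phi^j=Q_{\bs\beta^j,\bs c^j}$, so $E_m(\phi^j)=(-1)^{m+1}\tfrac{2^{2m+1}}{2m+1}\sum_i(\beta^j_i)^{2m+1}$. Since $E_1\ge0$, the $H^n$-decoupling \eqref{eq:conc comp 2} gives $\tfrac43\sum_{j,i}(\beta^j_i)^3=\sum_j E_1(\phi^j)\le e_1<\infty$, so (via the elementary inequality $\sum_\ell a_\ell^p\le(\sum_\ell a_\ell)^p$ for $a_\ell\ge0$, $p\ge1$) every odd power sum of $\bs\gamma$ is finite, each value of $\bs\gamma$ has finite multiplicity, and $S_m:=\sum_j E_m(\phi^j)$ converges absolutely for every $m$. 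Combining \cref{thm:min seq lem 1,thm:min seq lem 2} and letting $J\to J^*$, $k\to\infty$ then yields
\[
e_m = S_m + L_m\quad(m=1,\dots,n),\qquad C(e_1,\dots,e_n)=S_{n+1}+L_{n+1},
\]
where $L_m:=\lim_{J\to J^*}\limsup_{k\to\infty}\tfrac12\bnorm{(r^J_k)^{(m-1)}}_{L^2}^2\ge0$.

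Next I would rule out $\bs\gamma$ having $n+1$ or more distinct values. If it did, I would perturb $n+1$ of them --- weighted by their (finite) multiplicities --- by running the implicit-function argument from the proof of \cref{thm:beta mom wiggle}, whose Jacobian is a weighted Vandermonde matrix: this keeps $\sum_\ell\gamma_\ell^{2m+1}$ fixed for $m\le n$, strictly decreases $(-1)^n\sum_\ell\gamma_\ell^{2n+3}$, and can be carried out keeping all values mutually distinct. Repackaging into perturbed multisolitons $\wt\phi^j=Q_{\wt{\bs\beta}^j,\bs c^j}$ of the same degrees and setting $\wt q_k:=\sum_{j\le J}\wt\phi^j(\,\cdot\,-x^j_k)+r^J_k$, the argument of \cref{thm:min seq lem 1} gives $E_m(\wt q_k)\to e_m$ for $m\le n$ but $\lim_k E_{n+1}(\wt q_k)=\wt S_{n+1}+L_{n+1}<S_{n+1}+L_{n+1}=C(e_1,\dots,e_n)$; since $\wt q_k\in\con$ with its own constraints (which lie in $\M^n_n$ eventually by \cref{thm:rel open}), \cref{thm:var char} gives $E_{n+1}(\wt q_k)\ge C(E_1(\wt q_k),\dots,E_n(\wt q_k))$, and passing to the limit with $C$ continuous (\cref{thm:min val is dec}) contradicts the strict inequality. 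Hence $\bs\gamma$ has at most $n$ distinct values; together with $\sum_\ell\gamma_\ell^3<\infty$ this makes $\bs\gamma$ finite, so $J^*\le\#\bs\gamma<\infty$.

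It then remains to see $L_m\equiv0$ and that $\bs\gamma$ is repeat-free. If $\bs\gamma$ has no repeats it is a degree-$\le n$ multisoliton parameter tuple, so $S_{n+1}=C(S_1,\dots,S_n)$ by \eqref{eq:min val}; since $S_m\le e_m$, $C$ is decreasing, and $S_{n+1}\le C(e_1,\dots,e_n)$ (from $L_{n+1}\ge0$), we get $C(S_1,\dots,S_n)\ge C(e_1,\dots,e_n)\ge S_{n+1}=C(S_1,\dots,S_n)$, forcing equality and then $S_m=e_m$ by the strict monotonicity of $C$ and uniqueness (\cref{thm:unique params}), whence $\bs\gamma=\{\beta_1>\cdots>\beta_N\}$. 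To exclude repeats: a repeated value in $\bs\gamma$ would mean $(S_1,\dots,S_n)$ is realized by a multiset with $\le n$ distinct values but, by \cref{thm:unique params 2}, by no multisoliton of degree $\le n$; yet $(S_1,\dots,S_n)\le(e_1,\dots,e_n)\in\M^n_n$, and since the $\ell^p$-norm inequalities put every finite multiset's moment vector in some $\M^n_M$, \cref{thm:down closed} would force $(S_1,\dots,S_n)\in\M^n_n$, a contradiction.

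I expect the second paragraph to be the main obstacle: one has to promote a finite perturbation of parameters into a genuine competing sequence --- re-establishing the decoupling of \cref{thm:min seq lem 1} with the perturbed profiles, the \emph{same} translations, and the \emph{same} remainder --- and then convert the resulting sub-infimal limiting value of $E_{n+1}$ into an actual contradiction with $C(e_1,\dots,e_n)$ being the infimum over $\con$. The repeat-exclusion in the last step also requires some care, since it relies on the global description of $\bigcup_M\M^n_M$.
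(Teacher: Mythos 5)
Your first two paragraphs are essentially the paper's argument (the paper proves $J^*<\infty$ first and rules out $n+1$ distinct values afterwards, while you merge these via the weighted beta-wiggle; the contradiction through \cref{thm:var char}, \cref{thm:rel open} and continuity of $C$ is exactly the mechanism the paper uses when it says the perturbed sequence contradicts minimality). The genuine gap is in your final step, the exclusion of repeated values. You argue that if $\bs\gamma$ had a repeat, then $(S_1,\dots,S_n)$ would lie in some $\M^n_M$ ``by the $\ell^p$-norm inequalities,'' so that \cref{thm:down closed} would place it in $\M^n_n$ and contradict \cref{thm:unique params 2}. But the claim that every finite multiset's moment vector lies in $\bigcup_M\M^n_M$ is false for $n\geq 3$: the multiset $\{1,1\}$ has $\sum\gamma^3=\sum\gamma^5=\sum\gamma^7=2$, while for any \emph{distinct} positive $x_1,\dots,x_m$ the Cauchy--Schwarz inequality gives $\sum x_i^5=\sum x_i^{3/2}x_i^{7/2}<(\sum x_i^3)^{1/2}(\sum x_i^7)^{1/2}$ unless $m=1$, and a single value cannot satisfy $x^3=x^5=x^7=2$ either; so these three moments are attained by no multisoliton of any degree. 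The $\ell^3$--$\ell^5$ comparison only settles the case $n=2$, which is why your justification does not extend. Since \cref{thm:down closed} requires its input to be in some $\M^n_M$, your case-(b) contradiction does not go through, and this cannot be waved away: configurations with $\leq n$ distinct values and genuine repeats are exactly the minimizing objects in \cref{thm:N gas}, so excluding them here must use more than general moment inequalities.

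The paper closes this hole in a different order: it first shows the constraints are \emph{saturated}, $\sum_j E_m(Q_{\bs{\beta}^j,\bs{c}^j})=e_m$ for $m=1,\dots,n$, by observing that the concatenated parameters form a minimizer of the relaxed problem with constraints $E_m(u)\leq e_m$ (whose value is still $C(e_1,\dots,e_n)$ by \cref{thm:min val is dec} and \cref{thm:down closed}) and that slack in any constraint would contradict the strict monotonicity of $C$. Only after saturation does it invoke \cref{thm:unique params 2}, and crucially it does so at the constraint vector $(e_1,\dots,e_n)\in\M^n_n$, where the repeat-free representative $\beta_1>\cdots>\beta_N$ is already known to exist; uniqueness then forces $\bs\gamma$ to coincide with it, so repeats are excluded automatically rather than by a separate containment argument. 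Your case (a) reproduces this correctly, but your case (b) needs the same saturation-first strategy (or some replacement for the false containment claim) before \cref{thm:unique params 2} can be applied.
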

\begin{proof}
Consider the relaxed variational problem where we minimize $E_{n+1}(u)$ over the larger set
\begin{equation}
\{ u\in H^n(\R) : E_1(u) \leq e_1,\ \dots,\ E_n(u) \leq e_n \} .
\label{eq:con 2}
\end{equation}
As the minimum value $C(e_1,\dots,e_n)$ is strictly decreasing in each constraint by \cref{thm:min val is dec} and the set $\M^n_n$ of constraints is downward closed by \cref{thm:down closed}, then this relaxed minimization problem enjoys the same conclusions of \cref{thm:var char}.  We will ultimately show that the profiles $\{ Q_{\bs{\beta}^j,\bs{c}^j} \}_{1\leq j< J^*}$ together form a minimizer for this relaxed problem.

In the proof of \cref{thm:var char} we only needed to treat the case of finitely many $\beta$ parameters, but this is easily resolved as follows.  Suppose towards a contradiction that $J^* = \infty$.  We know that the third moment $\sum \beta_m^3$ of $\bs{\beta} = \coprod_{j=1}^{J^*} \bs{\beta}^j$ is finite by the trace formula~\eqref{eq:en} for $E_1$, and so we have $\beta_m\to 0$ as $m\to\infty$.  In particular, even though there may be repeated values in $\bs{\beta}$, there are at least $n+1$ distinct values of $\beta_m > 0$.  By \cref{thm:beta mom wiggle}, we may replace the first $n+1$ distinct values of $\beta_m$ to obtain new parameters $\wt{\bs{\beta}}^j$ so that $E_1,\dots,E_n$ are preserved, $E_{n+1}$ is decreased, and each $\bs{\beta}^j$ is still a set of multisoliton parameters.  Constructing a new sequence $\{\widetilde{q}_k\}_{k\geq 1}$ given by the decomposition~\eqref{eq:conc comp decomp} for the profiles $\wt{\phi}^j = Q_{\wt{\bs{\beta}}^j\bs{c}^j}$, we obtain a strictly better choice of minimizing sequence in the sense of~\eqref{eq:min seq 2}.  This contradicts that $\{q_k\}_{k\geq 1}$ was a minimizing sequence.

Now that we know $J^*<\infty$, \cref{thm:min seq lem 1} implies that
\begin{equation}
\sum_{j=1}^{J^*} E_m( Q_{\bs{\beta}^j,\bs{c}^j} )
\leq \limsup_{k\to\infty} \big[ E_m(q_k) - E_m(r^{J^*}_k) \big]
\label{eq:orb stab 2}
\end{equation}
for each $m=1,\dots,n+1$.  For $m\leq n$, we have $E_m(q_k) \to e_m$ by construction and
\begin{equation*}
\liminf_{k\to\infty} E_m(r^{J^*}_k) \geq 0
\end{equation*}
by \cref{thm:min seq lem 2}, and so RHS\eqref{eq:orb stab 2} is at most $e_m$.  In other words, the parameters $\bs{\beta}$ satisfy the relaxed constraints
\begin{equation}
\sum_{j=1}^{J^*} E_m( Q_{\bs{\beta}^j,\bs{c}^j} ) \leq e_m \quad\tx{for }m=1,\dots,n .
\label{eq:orb stab 3}
\end{equation}
For $m=n+1$, we know that $E_m(q_k)$ converges to the minimum value $C(e_1,\dots,e_n)$, and so RHS\eqref{eq:orb stab 2} is at most $C(e_1,\dots,e_n)$.  This yields
\begin{equation}
\sum_{j=1}^{J^*} E_{n+1}( Q_{\bs{\beta}^j,\bs{c}^j} ) \leq C(e_1,\dots,e_n) .
\label{eq:orb stab 5}
\end{equation}
Strict inequality here should not be possible since $C$ is the minimum value of $E_{n+1}$  over the set~\eqref{eq:con 2}.  Indeed, by~\eqref{eq:conc comp 2} and~\eqref{eq:orb stab 4} we have
\begin{equation*}
\sum_{j=1}^{J^*} E_{n+1}(  Q_{\bs{\beta}^j,\bs{c}^j} )
= \liminf_{k\to\infty} E_{n+1}\bigg( \sum_{j=1}^{J^*} Q_{\bs{\beta}^j,\bs{c}^j}(x-x^j_k) \bigg) 
\geq C(e_1,\dots,e_n) .
\end{equation*}
Therefore, we conclude that equality holds in~\eqref{eq:orb stab 5}.

Altogether, we see that the finite collection $\bs{\beta}$ of parameters is a minimizer for the relaxed variational problem~\eqref{eq:con 2}.  As the minimum value $C(e_1,\dots,e_n)$ is strictly decreasing in each constraint by \cref{thm:min val is dec}, we must have equality in~\eqref{eq:orb stab 3}.  There cannot be $n+1$ distinct values in $\bs{\beta}$, since otherwise we could use \cref{thm:beta mom wiggle} to replace the first $n+1$ distinct values in a way that preserves $E_1,\dots,E_n$ and decreases $E_{n+1}$ in order to obtain a strictly better minimizing sequence.  Now that we know there are at most $n$ distinct values of $\beta_m$, \cref{thm:unique params 2} implies that $\bs{\beta}$ is equal to the unique set of parameters $\beta_1>\cdots>\beta_N>0$ with $N\leq n$ that satisfies the constraints $(e_1,\dots,e_n) \in \M^n_n$.
\end{proof}

It remains to show that the whole sequence $q_k$ converges strongly to the manifold of minimizing multisolitons.  To this end, we will need:
\begin{lem}
\label{thm:min seq lem 3}
The remainders $r^{J^*}_k\to 0$ in $H^n(\R)$ as $k\to\infty$.
\end{lem}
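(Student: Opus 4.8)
The plan is to bootstrap the $W^{n-1,3}$-smallness of the remainder — which, since $J^*<\infty$ by \cref{thm:min seq lem 5}, is exactly $\snorm{r^{J^*}_k}_{W^{n-1,3}}\to 0$ from \eqref{eq:conc comp 1} — up to $H^n$-smallness, using that the only pieces of the $H^n$ norm not already controlled are the two endpoints $\snorm{\,\cdot\,}_{L^2}$ and $\bnorm{(\,\cdot\,)^{(n)}}_{L^2}$, and that these are precisely the quadratic parts of the conserved quantities $E_1$ and $E_{n+1}$.

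First I would use that $J^*$ is finite to apply \cref{thm:min seq lem 1,thm:min seq lem 2} with $J=J^*$ directly (no $J\to J^*$ limit is needed). \Cref{thm:min seq lem 1} then gives, for each $m=1,\dots,n+1$,
\[
E_m(q_k) - \Big[\textstyle\sum_{j=1}^{J^*} E_m\big(Q_{\bs{\beta}^j,\bs{c}^j}\big) + E_m(r^{J^*}_k)\Big] \longrightarrow 0 \quad\text{as } k\to\infty .
\]
Next I would evaluate the profile sum: by \cref{thm:min seq lem 5} the concatenation $\coprod_{j}\bs{\beta}^j$ is exactly the unique collection $\beta_1>\cdots>\beta_N>0$ with $N\le n$ attaining the constraints, and since \eqref{eq:en multisoliton} expresses $E_m(Q_{\bs{\beta},\bs{c}})$ as a signed power sum of the entries of $\bs{\beta}$ it is additive under concatenation; hence $\sum_j E_m(Q_{\bs{\beta}^j,\bs{c}^j})$ equals $e_m$ for $m\le n$ and equals $C(e_1,\dots,e_n)$ for $m=n+1$. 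Since $\{q_k\}$ is minimizing, $E_m(q_k)\to e_m$ for $m\le n$ and $E_{n+1}(q_k)\to C(e_1,\dots,e_n)$, so the display above forces
\[
E_1(r^{J^*}_k)\to 0 \quad\text{and}\quad E_{n+1}(r^{J^*}_k)\to 0 \quad\text{as } k\to\infty .
\]

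Finally I would convert these into norm bounds. From $E_1(u)=\tfrac12\snorm{u}_{L^2}^2$ we get $\snorm{r^{J^*}_k}_{L^2}\to 0$, and combining $E_{n+1}(r^{J^*}_k)\to 0$ with \cref{thm:min seq lem 2} for $m=n+1$ gives $\bnorm{(r^{J^*}_k)^{(n)}}_{L^2}\to 0$. Since $(1+|\xi|^2)^n\lesssim_n 1+|\xi|^{2n}$, Plancherel's theorem yields $\snorm{r^{J^*}_k}_{H^n}^2 \lesssim_n \snorm{r^{J^*}_k}_{L^2}^2 + \bnorm{(r^{J^*}_k)^{(n)}}_{L^2}^2 \to 0$, which is the claim. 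I do not anticipate a genuine obstacle here; the step carrying the real weight is the bookkeeping in the middle paragraph — that the $E_m$-values of the profiles sum \emph{exactly} to the target constraints and minimum value — which is where \cref{thm:min seq lem 5} (and, through it, \cref{thm:unique params 2}) has already done the work. Everything else is a mechanical assembly of the concentration–compactness identities proved above.
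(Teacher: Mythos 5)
Your proposal is correct and follows essentially the same route as the paper: use \cref{thm:min seq lem 5} (plus the additivity of~\eqref{eq:en multisoliton} under concatenation) to see that the profile energies sum exactly to $e_1,\dots,e_n$ and $C(e_1,\dots,e_n)$, then feed this into \cref{thm:min seq lem 1,thm:min seq lem 2} to conclude the remainder's quadratic energies vanish. The only (cosmetic) difference is that the paper runs this for every $m=1,\dots,n+1$, obtaining $\bnorm{(r^{J^*}_k)^{(m-1)}}_{L^2}\to 0$ for all orders $0,\dots,n$ directly, whereas you use only the endpoints $m=1$ and $m=n+1$ and close the gap with the Plancherel interpolation $\snorm{f}_{H^n}^2\lesssim_n \snorm{f}_{L^2}^2+\snorm{f^{(n)}}_{L^2}^2$, which is equally valid.
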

\begin{proof}
By \cref{thm:min seq lem 5}, the profiles $\{ Q_{\bs{\beta}^j,\bs{c}^j} \}_{1\leq j< J^*}$ satisfy the constraints:
\begin{equation*}
\sum_{j=1}^{J^*} E_m( Q_{\bs{\beta}^j,\bs{c}^j} ) = e_m \quad\tx{for }m=1,\dots,n .
\end{equation*}
Combining this with \cref{thm:min seq lem 1,thm:min seq lem 2}, we deduce
\begin{equation*}
0
= \lim_{k\to\infty} E_m\big( r^{J^*}_k \big)
= \lim_{k\to\infty} \tfrac{1}{2} \bnorm{ (r^{J^*}_k)^{(m-1)} }_{L^2}^2
\end{equation*}
for $m=1,\dots,n+1$.
\end{proof}

The last ingredient that we will need is the following ``molecular decomposition'' of multisolitons, which says that our superposition $\sum Q_{\bs{\beta}^j,\bs{c}^j}(x-x^j_k)$ of well-separated multisolitons is close to the manifold of multisolitons:
\begin{prop}
\label{thm:molec decomp}
Fix integers $n\geq 0$ and $J\geq 1$.  Suppose $\bs{\beta}^j$ and $\bs{c}^j$ are multisoliton parameters for each $1 \leq j \leq J$, and that all of the components $\beta^j_m$ of each $\bs{\beta}^j$ are distinct for all $j$ and $m$.  Then for any collection of $J$-many sequences $\{ x^j_k \}_{j=1}^J \subset \R$ satisfying the well-separation condition~\eqref{eq:conc comp 4}, there exists a sequence $\bs{c}_k$ so that
\begin{equation*}
Q_{\bs{\beta},\bs{c}_k}(x) - \sum_{j=1}^{J} Q_{\bs{\beta}^j,\bs{c}^j}(x-x^j_k) \to 0
\quad\tx{in } H^n(\R)\tx{ as } k\to\infty ,
\end{equation*}
where $\bs{\beta}$ is the concatenation $\coprod_{j=1}^{J} \bs{\beta}^j$.
\end{prop}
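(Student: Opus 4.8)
The plan is to construct $\bs{c}_k$ explicitly and work throughout with the determinant formula~\eqref{eq:multisoliton}. Expanding $\det A(x)=\det(I+B(x))$, where $B_{jk}(x)=\tfrac{1}{\beta_j+\beta_k}e^{-\beta_j(x-c_j)-\beta_k(x-c_k)}$, into its principal minors and evaluating each by the Cauchy determinant identity yields $\det A_{\bs{\beta},\bs{c}}(x)=\sum_{S\subseteq\{1,\dots,N\}}c_S\prod_{m\in S}e^{-2\beta_m(x-c_m)}$ with strictly positive coefficients $c_S=\prod_{m\in S}\tfrac{1}{2\beta_m}\prod_{\{i,j\}\subseteq S}\big(\tfrac{\beta_i-\beta_j}{\beta_i+\beta_j}\big)^2$ that factor over disjoint unions, $c_{S\sqcup T}=c_S\,c_T\prod_{i\in S,\,j\in T}\big(\tfrac{\beta_i-\beta_j}{\beta_i+\beta_j}\big)^2$. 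By~\eqref{eq:conc comp 4} each difference $x^j_k-x^\ell_k$ tends to $\pm\infty$, so for all large $k$ the reals $x^1_k,\dots,x^J_k$ carry one fixed total order; relabelling the clusters (which is harmless since $Q_{\bs{\beta},\bs{c}}$ is unchanged under simultaneously permuting the entries of $\bs{\beta}$ and $\bs{c}$) we may assume $x^1_k<\cdots<x^J_k$, and we write $g_k\to\infty$ for the least consecutive gap.

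The structural input is a static analogue of the classical resolution of a multisoliton into well-separated pieces. Fix $j_0$. On any \emph{fixed} bounded window about $x^{j_0}_k$, as $k\to\infty$ the monomials of $\det A_{\bs{\beta},\bs{c}_k}$ indexed by subsets $S$ that contain every index from the clusters $\ell>j_0$ and no index from the clusters $\ell<j_0$ are dominant, the remaining monomials contributing only an $O(e^{-cg_k})$ relative error; after factoring out the common part of the dominant family (an exponential of an affine function of $x$, so it drops out of $Q=-2\tfrac{d^2}{dx^2}\log\det A$) the surviving sum is exactly $\det A_{\bs{\beta}^{j_0}}$ evaluated at the positions of cluster $j_0$, shifted by the $k$-independent interaction offsets $\Delta^{j_0}_m:=\tfrac{1}{\beta^{j_0}_m}\sum_{\ell>j_0}\sum_{p}\log\tfrac{\beta^{j_0}_m+\beta^\ell_p}{|\beta^{j_0}_m-\beta^\ell_p|}$. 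This fixes the choice of $\bs{c}_k$: obtain it from the naive concatenation $\coprod_{j}(c^j_m+x^j_k)_m$ by adding the fixed shift with entries $\Delta^j_m$. With this choice, on every fixed bounded window about $x^{j_0}_k$ the difference $Q_{\bs{\beta},\bs{c}_k}(x)-Q_{\bs{\beta}^{j_0},\bs{c}^{j_0}}(x-x^{j_0}_k)$ and all of its derivatives of order $\le n$ tend to $0$ uniformly.

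To pass from this local statement to convergence in $H^n(\R)$, I would also use a uniform exponential localization bound for multisolitons: $\sum_{|\alpha|\le n}|\partial^\alpha Q_{\bs{\beta},\bs{c}}(x)|\le C\sum_{m}e^{-c|x-c_m|}$ with $c,C$ depending only on $\bs{\beta}$ (read off from the expansion above, or obtained more softly from $\sup_{\bs{c}}\norm{Q_{\bs{\beta},\bs{c}}}_{H^n}<\infty$, itself a consequence of~\eqref{eq:ests for en cts} and the $\bs{c}$-independence of $E_1(Q_{\bs{\beta},\bs{c}}),\dots,E_{n+1}(Q_{\bs{\beta},\bs{c}})$). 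Then partition $\R$ into the Voronoi cells $I^{j_0}_k$ of the points $x^{j_0}_k$. On $I^{j_0}_k$ the restrictions of the target pieces $Q_{\bs{\beta}^j,\bs{c}^j}(\,\cdot\,-x^j_k)$ with $j\ne j_0$ have $H^n$-norm $O(e^{-cg_k})$, while $Q_{\bs{\beta},\bs{c}_k}-Q_{\bs{\beta}^{j_0},\bs{c}^{j_0}}(\,\cdot\,-x^{j_0}_k)$ is controlled by splitting the cell into a core window $[x^{j_0}_k-L_k,\,x^{j_0}_k+L_k]$ — on which the previous paragraph applies, for $L_k\to\infty$ chosen by a diagonal argument slowly enough that the uniform error there still vanishes and $L_k$ times its square does too — and the remainder of the cell, on which both functions are $H^n$-small by the localization bound. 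Summing over the finitely many cells gives $\norm{Q_{\bs{\beta},\bs{c}_k}-\sum_{j}Q_{\bs{\beta}^j,\bs{c}^j}(\,\cdot\,-x^j_k)}_{H^n}\to 0$.

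The heart of the matter, and its only genuinely delicate point, is the structural claim of the second paragraph: identifying uniformly in $k$ which monomial of the determinant dominates in each spatial region, and bounding the subdominant ones quantitatively. The difficulty is that one cannot take windows of radius comparable to $g_k$ — over such a window the exponentials $e^{-2\beta^{j_0}_m(x-c^{j_0}_m)}$ of cluster $j_0$ itself already range over $e^{\pm\beta g_k}$ and can overwhelm the nominal error — which is why the core windows are enlarged only along a slowly diverging sequence $L_k$, and the region between the core windows and the cell boundaries is handled purely by the exponential localization estimate. Once this bookkeeping of phase shifts and dominant monomials is set up, the remaining estimates are routine perturbative computations.
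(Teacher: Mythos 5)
Your proposal is correct in outline, but it takes a genuinely different route from the paper, whose proof is very short: the paper invokes the $L^2$ case of this statement from \cite{Killip2020}*{Prop.~3.1}, takes the $\bs{c}_k$ supplied there, notes that $\snorm{Q_{\bs{\beta},\bs{c}_k}-\sum_j Q_{\bs{\beta}^j,\bs{c}^j}(\cdot-x^j_k)}_{H^{n+1}}\lesssim 1$ uniformly in $k$ (via the estimates~\eqref{eq:ests for en cts}, the trace formulas~\eqref{eq:en trace}, and~\eqref{eq:conc comp 4}), and then upgrades $L^2$ to $H^n$ convergence by the interpolation inequality $\norm{f}_{H^n}\leq\norm{f}_{L^2}^{1/(n+1)}\norm{f}_{H^{n+1}}^{n/(n+1)}$.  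Your route redoes the $L^2$-level work from scratch via the Cauchy-determinant expansion; this buys a self-contained argument, the explicit phase shifts $\Delta^j_m$, and in principle an exponential rate in the gap $g_k$, at the cost of the dominant-monomial bookkeeping you rightly identify as the heart of the matter -- precisely the work the paper's interpolation trick avoids having to repeat in $H^n$.  Two caveats: the parenthetical claim that the exponential localization bound follows ``more softly'' from $\sup_{\bs{c}}\norm{Q_{\bs{\beta},\bs{c}}}_{H^n}<\infty$ is not right, since a uniform global norm bound gives no smallness on the outer parts of your Voronoi cells, so you must read the localization off the same dominance analysis applied in the inter-cluster regions; and~\eqref{eq:conc comp 4} does not force one fixed ordering of $x^1_k,\dots,x^J_k$ for all large $k$, so define $\bs{c}_k$ for each $k$ using the ordering at that $k$ (the bounds are uniform over the finitely many orderings).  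You could also keep only your local analysis and finish with the paper's interpolation step, which shortens the gluing considerably.
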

\begin{proof}
The $n=0$ case is proved in \cite{Killip2020}*{Prop.~3.1}.  Given $n\geq 1$, we pick $\bs{c}_k$ from the $n=0$ case so that the desired convergence occurs in $L^2(\R)$.  Note that
\begin{equation*}
\bigg\lVert Q_{\bs{\beta},\bs{c}_k}(x) - \sum_{j=1}^{J} Q_{\bs{\beta}^j,\bs{c}^j}(x-x^j_k) \bigg\rVert_{H^{n+1}} \lesssim 1
\end{equation*}
uniformly in $k$, by the estimates~\eqref{eq:ests for en cts} that prove that $E_1,\dots,E_{n+2}$ are continuous, the trace formulas~\eqref{eq:en trace}, and the well-separation condition~\eqref{eq:conc comp 4}.  Using the inequality
\begin{equation*}
\norm{f}_{H^n} \leq \norm{f}_{L^2}^{\frac{1}{n+1}} \norm{f}_{H^{n+1}}^{\frac{n}{n+1}}
\end{equation*}
(which follows from H\"older's inequality in Fourier variables), we conclude that the sequence converges in $H^n(\R)$.
\end{proof}

We are now prepared to finish the proof of \cref{thm:min seq}:
\begin{proof}[Proof of \cref{thm:min seq}]
It remains to show that the sequence $\{q_k\}_{k\geq 1}$ converges to the manifold $\{ Q_{\bs{\beta},\bs{c}} : \bs{c} \in \R^N \}$.  So far, we have the decomposition
\begin{equation*}
q_k(x) = \sum_{j=1}^{J^*} Q_{\bs{\beta}^j,\bs{c}^j}(x-x^j_k) + r^{J^*}_k(x)
\end{equation*}
with $J^*$ finite.  Let $Q_{\bs{\beta},\bs{c}_k}$ be the sequence of multisolitons guaranteed by the $H^n(\R)$ molecular decomposition (\cref{thm:molec decomp}).  We estimate
\begin{equation*}
\snorm{ q_k - Q_{\bs{\beta},\bs{c}_k} }_{H^n}
\leq \bigg\lVert Q_{\bs{\beta},\bs{c}_k}(x) - \sum_{j=1}^{J^*} Q_{\bs{\beta}^j,\bs{c}^j}(x-x^j_k) \bigg\rVert_{H^n} + \bnorm{ r^{J^*}_k }_{H^n} .
\end{equation*}
The first term on the RHS converges to zero as $k\to\infty$ by \cref{thm:molec decomp}.  The second term on the RHS converges to zero by \cref{thm:min seq lem 3}.  Together, we conclude that
\begin{equation*}
\inf_{\bs{c}\in\R^n} \snorm{ q_k - Q_{\bs{\beta},\bs{c}} }_{H^n} \to 0 \quad\tx{as}\quad k\to\infty
\end{equation*}
as desired.
\end{proof}

As a corollary, we obtain orbital stability:
\begin{proof}[Proof of \cref{thm:orb stab}]
Suppose towards a contradiction that orbital stability fails.  Then there exists a constant $\eps_0>0$, a sequence of initial data $\{ q_{k}(0) \}_{k\geq 1} \subset H^n(\R)$, and a sequence of times $\{t_k\}_{k\geq 1}\subset\R$ such that
\begin{equation*}
\inf_{\bs{c}\in\R^n} \snorm{ q_{k}(0) - Q_{\bs{\beta},\bs{c}} }_{H^n} \to 0 \quad\tx{as } k\to\infty ,
\end{equation*}
but the corresponding solutions $q_k(t)$ to KdV obey
\begin{equation}
\inf_{\bs{c}\in\R^n} \snorm{ q_k(t_k) - Q_{\bs{\beta},\bs{c}} }_{H^n} \geq \eps_0 \quad\tx{for all } k.
\label{eq:orb stab 1}
\end{equation}

As $E_1,\dots,E_{n+1}$ are continuous on $H^n(\R)$ and are conserved by the KdV flow, we have
\begin{equation*}
\lim_{k\to\infty} E_m(q_k(t_k))
= \lim_{k\to\infty} E_m(q_{k}(0))
= E_m(Q_{\bs{\beta},\bs{c}})
\end{equation*}
for each $m=1,\dots,n+1$.  There are $n$-many $\beta$ parameters, and so these are exactly the conditions~\eqref{eq:min seq 1} that the sequence $\{q_k(t_k)\}_{k\geq 1}$ is a minimizing sequence for $E_{n+1}$ with constraints $E_1(Q_{\bs{\beta},\bs{c}}),\dots, E_n(Q_{\bs{\beta},\bs{c}})$ that are in $\M^n_n$.  By \cref{thm:min seq}, there exists a subsequence of $\{q_k(t_k)\}_{k\geq 1}$ which converges to the manifold $\{ Q_{\bs{\beta},\bs{c}} : \bs{c}\in\R^N \}$ in $H^n(\R)$, which contradicts our assumption~\eqref{eq:orb stab 1}.
\end{proof}

%%%%%%%%%%%%%%%%%%%%%%%%%%%%%%%%%%%%%%%%%%%%%%%%%%%%%%%%%%%%%%%%%

\section{Proof of \cref{thm:N gas}}
\label{sec:N gas}

In this section, we will adapt the methods of \cref{sec:eng,sec:min,sec:orb stab} in order to prove \cref{thm:N gas}.  Fix $N\geq n+1$, and consider constraints $(e_1,\dots,e_n) \in \M^n_{N}$ that are attainable by a multisoliton of degree at most $N$.  We aim to show that minimizing sequences resemble a superposition of multisolitons with at most $n$ distinct amplitudes.

As the constraints are attainable by finitely many parameters, compactness still guarantees that there exists a minimizing set of $N$-soliton parameters $\beta_1\geq \cdots \geq \beta_N \geq 0$, provided that we allow for repeated values:
\begin{lem}
\label{thm:N gas lem 1}
Given constraints $(e_1,\dots,e_n) \in \M^n_{N}$, there exist $\beta_1\geq \cdots\geq\beta_N \geq 0$ which minimize
\begin{equation*}
E_{n+1}(Q_{\bs{\beta},\bs{c}}) = (-1)^n \tfrac{2^{2n+3}}{2n+3} \sum_{m=1}^N \beta_m^{2n+3}
\end{equation*}
over the set of multisolitons in the constraint set $\con$.
\end{lem}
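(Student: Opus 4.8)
The plan is to reduce the statement to an elementary compactness argument in $\R^N$. For $m = 1, \dots, n$ set $\alpha_{2m+1} := (-1)^{m+1}\tfrac{2m+1}{2^{2m+1}} e_m$; by the trace formula \eqref{eq:en trace} (equivalently by \eqref{eq:en multisoliton}) a multisoliton with distinct positive parameters $\beta_1, \dots, \beta_M$ lies in $\con$ exactly when $\sum_{j=1}^M \beta_j^{2m+1} = \alpha_{2m+1}$ for $m = 1, \dots, n$. I would then work with the set
\begin{equation*}
\Gamma := \Big\{ \bs{\beta} \in \R^N : \beta_1 \geq \cdots \geq \beta_N \geq 0, \ \textstyle\sum_{j=1}^N \beta_j^{2m+1} = \alpha_{2m+1} \ \text{for } m = 1, \dots, n \Big\},
\end{equation*}
where throughout this section $E_{n+1}(Q_{\bs{\beta},\bs{c}})$ for $\bs{\beta} \in \Gamma$ denotes the polynomial $(-1)^n \tfrac{2^{2n+3}}{2n+3}\sum_{m=1}^N \beta_m^{2n+3}$ (this agrees with the genuine functional whenever the nonzero entries of $\bs{\beta}$ are distinct, by \eqref{eq:en multisoliton}). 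The lemma then asserts exactly that this polynomial attains its minimum on $\Gamma$.

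The core of the proof is to check that $\Gamma$ is nonempty and compact. Nonemptiness is precisely where the hypothesis $(e_1, \dots, e_n) \in \M^n_N$ is used: a witnessing multisoliton of degree $M \leq N$ provides parameters $\gamma_1 > \cdots > \gamma_M > 0$, and padding with $N - M$ zeros gives a point $(\gamma_1, \dots, \gamma_M, 0, \dots, 0) \in \Gamma$. Closedness is immediate, since $\Gamma$ is the intersection of the closed region $\{\beta_1 \geq \cdots \geq \beta_N \geq 0\}$ with the zero sets of the $n$ continuous maps $\bs{\beta} \mapsto \sum_j \beta_j^{2m+1} - \alpha_{2m+1}$. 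Boundedness follows from positivity together with the prescribed value $\alpha_3$ of the cubic power sum: for $\bs{\beta} \in \Gamma$ and any $j$ we have $0 \leq \beta_j^3 \leq \sum_k \beta_k^3 = \alpha_3$, so $0 \leq \beta_j \leq \alpha_3^{1/3}$. Hence $\Gamma$ is compact, and as $\bs{\beta} \mapsto (-1)^n \tfrac{2^{2n+3}}{2n+3}\sum_j \beta_j^{2n+3}$ is a polynomial, hence continuous, it attains a minimum at some $\bs{\beta}^* \in \Gamma$.

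Finally I would observe that $\bs{\beta}^*$ also minimizes $E_{n+1}$ over the honest multisolitons lying in $\con$: each of those corresponds, after reordering its parameters in decreasing order and padding with zeros, to a point of $\Gamma$, so the infimum of $E_{n+1}$ over such multisolitons is at least $E_{n+1}(Q_{\bs{\beta}^*,\bs{c}})$, which is realized within $\Gamma$ by construction. No step here is genuinely hard; the two points to get right are that the nonemptiness of $\Gamma$ is exactly the content of membership in $\M^n_N$ (not merely in $\F^n$), and that the boundedness is a real feature, coming from the nonnegativity of the $\beta$'s together with the fixed cubic power sum — dropping the positivity constraint would make the common level set of odd power sums unbounded. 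That the minimizer $\bs{\beta}^*$ need not be an honest $N$-soliton (it may have repeated or zero entries) is expected, and is what feeds into the superposition structure in \cref{thm:N gas}.
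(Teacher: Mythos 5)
Your proposal is correct and follows essentially the same route as the paper: both recast the problem as minimizing the polynomial $(-1)^n\tfrac{2^{2n+3}}{2n+3}\sum_m \beta_m^{2n+3}$ over the compact, nonempty set $\Gamma$ of nonnegative $N$-tuples with the prescribed odd power sums, with nonemptiness supplied by a witnessing multisoliton padded with zeros. You merely spell out the closedness and boundedness (via the fixed cubic power sum) that the paper leaves implicit, so no further comment is needed.
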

\begin{proof}
Consider the set $\Gamma$ of parameters
\begin{equation}
\bigg\{ (x_1,\dots,x_N) \in\R^N : x_1,\dots,x_N\geq 0,\ \sum_{m=1}^N x_m^{2j+1} = \alpha_j \tx{ for }j=1,\dots,n \bigg\} 
\label{eq:N gas 2}
\end{equation}
that satisfy the constraints, where
\begin{equation*}
\alpha_j = (-1)^{j+1}\tfrac{2j+1}{2^{2j+1}} e_j
\end{equation*}
are the prescribed odd moments.  Note that the set $\Gamma$ is compact, and it is nonempty since $(e_1,\dots,e_n) \in \M^n_N$.  Therefore there exists a minimizer $(\beta_1,\dots,\beta_N)$ of the next odd moment
\begin{equation*}
(-1)^n \tfrac{2^{2n+3}}{2n+3} \sum_{m=1}^N x_m^{2n+3}
\end{equation*}
in $\Gamma$.  As the odd moments are symmetric in $\beta_1,\dots,\beta_N$, we may reorder them so that $\beta_1\geq \cdots \geq \beta_N \geq 0$.
\end{proof}

Unlike in the proof of \cref{thm:down closed}, the set $\Gamma$ no longer reaches the boundary $\{ (x_1,\dots,x_N) : x_{n+1},\dots,x_N = 0\}$ when $(e_1,\dots,e_n)\notin \M^n_n$, and so we cannot necessarily reduce the number of parameters.  Instead, we can employ the implicit function theorem argument from \cref{thm:beta mom wiggle} to reduce the number of distinct components in the minimizer $(\beta_1,\dots,\beta_N)$:
\begin{lem}
\label{thm:N gas lem 2}
If $\beta_1\geq \cdots\beta_N \geq 0$ is a minimizer (in the sense of \cref{thm:N gas lem 1}), then there are at most $n$ distinct values of $\beta_m$.
\end{lem}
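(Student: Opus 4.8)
The plan is to reprise, almost verbatim, the implicit-function-theorem wiggle of \cref{thm:beta mom wiggle}, now applied directly to the moment set~\eqref{eq:N gas 2} rather than to the manifold of genuine multisolitons. I would begin with two preliminary observations about a minimizer $(\beta_1,\dots,\beta_N)$ from \cref{thm:N gas lem 1}. First, all of its entries are positive: if some $\beta_m$ vanished, the surviving amplitudes would lie in $\R_{\geq 0}^{N-1}$ with the same odd moments $\alpha_1,\dots,\alpha_n$, hence would realize the constraints with fewer parameters, forcing $(e_1,\dots,e_n)\in\M^n_{N-1}$ and contradicting the standing hypothesis of \cref{thm:N gas}. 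Second, I would assume toward a contradiction that $\beta_1,\dots,\beta_N$ take at least $n+1$ distinct (necessarily positive) values $v_1>\dots>v_L>0$ with $L\geq n+1$, and fix indices $m_1,\dots,m_{n+1}$, one for each of $v_1,\dots,v_{n+1}$, so that $\beta_{m_1}>\dots>\beta_{m_{n+1}}>0$.

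I would then run the argument of \cref{thm:beta mom wiggle} on the $n+1$ amplitudes $\beta_{m_1},\dots,\beta_{m_{n+1}}$ while freezing the remaining $N-n-1$ amplitudes. For the map $f:\R^{n+1}\to\R^n$ of~\eqref{eq:beta mom wiggle 6} evaluated at $(\beta_{m_1},\dots,\beta_{m_{n+1}})$, the left $n\times n$ block of the derivative~\eqref{eq:beta mom wiggle 7} is the nonsingular Vandermonde-type matrix with determinant~\eqref{eq:beta mom wiggle 2}, so the implicit function theorem supplies $C^1$ maps $x_1(t),\dots,x_n(t)$ near $t=\beta_{m_{n+1}}$ along which the first $n$ odd power sums of $(x_1(t),\dots,x_n(t),t)$ stay constant; since the frozen amplitudes enter each odd moment only additively, the first $n$ odd moments of the whole $N$-tuple are preserved. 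The Schur-complement computation of \cref{thm:beta mom wiggle} then gives
\begin{equation*}
\ddt\Big|_{t=\beta_{m_{n+1}}}\big[\,x_1(t)^{2n+3}+\dots+x_n(t)^{2n+3}+t^{2n+3}\,\big]
=(2n+3)\,\beta_{m_{n+1}}^2\prod_{i=1}^{n}\big(\beta_{m_{n+1}}^2-\beta_{m_i}^2\big)\neq 0 ,
\end{equation*}
the essential point being that $\beta_{m_{n+1}}>0$ (so the factor $\beta_{m_{n+1}}^2$ does not annihilate the product) and that $\beta_{m_{n+1}}$ is strictly smaller than each $\beta_{m_i}$, exactly as in \cref{thm:beta mom wiggle}.

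Finally, I would pick $t$ on the side of $\beta_{m_{n+1}}$ that decreases $(-1)^n\sum$ and close enough that $x_1(t),\dots,x_n(t),t$ remain positive; replacing $\beta_{m_1},\dots,\beta_{m_{n+1}}$ by $x_1(t),\dots,x_n(t),t$ and leaving the other amplitudes untouched produces a point still lying in~\eqref{eq:N gas 2} (nonnegativity holds and all $n$ odd-moment constraints are unchanged) at which $E_{n+1}(Q_{\bs{\beta},\bs{c}})=(-1)^n\tfrac{2^{2n+3}}{2n+3}\sum_{m=1}^N\beta_m^{2n+3}$ is strictly smaller, contradicting \cref{thm:N gas lem 1}. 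Hence the minimizer takes at most $n$ distinct values. There is no genuinely hard step here once \cref{thm:beta mom wiggle} is in hand; the only things to watch are the minor bookkeeping points — that freezing the $N-n-1$ spectator amplitudes affects neither the implicit function theorem step nor the derivative formula (it does not, since they contribute only additive constants to every moment) and that positivity together with the constraints survives a sufficiently small perturbation — together with the opening reduction that rules out vanishing amplitudes via $(e_1,\dots,e_n)\notin\M^n_{N-1}$.
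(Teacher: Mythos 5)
Your main step is essentially the paper's proof: the paper also rules out $n+1$ distinct values by rerunning the wiggle of \cref{thm:beta mom wiggle}, the only difference being that the paper moves each distinct value together with all of its copies (so the columns of the Jacobian~\eqref{eq:beta mom wiggle 7} get multiplied by the multiplicities), whereas you move a single representative of each value and freeze its twins. Both variants work, and yours even avoids the multiplicity bookkeeping, since the frozen amplitudes enter every power sum only as additive constants.

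The genuine problem is your opening claim that every entry of the minimizer is positive. The inference is invalid: a vanishing entry only shows the constraints are attained by $N-1$ nonnegative numbers \emph{with repetitions allowed}, while membership in $\M^n_{N-1}$ requires a multisoliton, i.e.\ at most $N-1$ \emph{distinct} positive amplitudes; these are not the same (for $n=2$, the moment pair of two equal amplitudes is exactly the excluded right endpoint of the interval in~\eqref{eq:M2 F2}, so it does not lie in $\M^2_2$). Worse, the claim itself is false even under the hypothesis of \cref{thm:N gas}: take $n=2$, $N=3$ and $e_1=\tfrac{16}{3}\beta^3$, $e_2=-\tfrac{64}{5}\beta^5$ (the moments of two equal amplitudes), which lie in $\M^2_3\sm\M^2_2$ by the description of $\M^2_N$ in the introduction. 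On the set~\eqref{eq:N gas 2} the Cauchy--Schwarz inequality gives $\sum x_m^7\geq\big(\sum x_m^5\big)^2\big/\sum x_m^3=2\beta^7$, with equality forcing all nonzero entries to be equal, so the minimizer is $(\beta,\beta,0)$ --- it has a vanishing entry. Finally, the hypothesis you invoke, $(e_1,\dots,e_n)\notin\M^n_{N-1}$, is not available: the lemma is stated for minimizers coming from \cref{thm:N gas lem 1}, i.e.\ for arbitrary constraints in $\M^n_N$, and it is used in that generality in \cref{thm:min val is dec 2} and \cref{thm:minimizers 2}. Fortunately the flawed step is also unnecessary: a zero amplitude contributes nothing to any of the power sums, so the substantive content of the lemma --- and the only thing used downstream --- is that the minimizer takes at most $n$ distinct \emph{positive} values. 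Delete the positivity reduction, assume toward a contradiction that there are at least $n+1$ distinct positive values, and run your wiggle verbatim; it only needs the $n+1$ chosen representatives to be positive and distinct, which is then automatic. That is exactly what the paper's proof does.
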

\begin{proof}
It suffices to show that if there are at least $n+1$ distinct values in $\beta_1,\dots,\beta_N$, then there exist new values $\wt{\beta}_1,\dots,\wt{\beta}_N$ which preserve $E_1,\dots,E_n$ but decrease $E_{n+1}$.  To prove this, we repeat the proof of \cref{thm:beta mom wiggle}.  Rather than recapitulating the whole proof, let us focus on the few minor alterations that need to be made.

For example, consider the case where we have $N\geq n+2$, $\beta_j = \beta_{j+1}$ for some $j$, and all other $\beta_m$ are distinct.  Replace the function~\eqref{eq:beta mom wiggle 6} by
\begin{equation*}
f(x_1,\dots,x_{n+1}) = \begin{pmatrix}
x_1^3 + \dots + x_{j-1}^3 + 2x_j^3 + x_{j+1}^3 + \dots + x_{n+1}^3 \\
x_1^5 + \dots + x_{j-1}^5 + 2x_j^5 + x_{j+1}^5 + \dots + x_{n+1}^5 \\
\vdots \\
x_1^{2n+1} + \dots + x_{j-1}^{2n+1} + 2x_j^{2n+1} + x_{j+1}^{2n+1} + \dots + x_{n+1}^{2n+1}
\end{pmatrix} .
\end{equation*}
This simply multiplies the $j$th column of the derivative matrix~\eqref{eq:beta mom wiggle 7} by $2$.  Consequently, the left $n\times n$ submatrix still has nonzero determinant and thus we may apply the implicit function theorem.  We can then proceed with the remainder of the proof of \cref{thm:beta mom wiggle}.

In the general case, each column of the derivative matrix~\eqref{eq:beta mom wiggle 7} is simply multiplied by a constant.  Therefore the left $n\times n$ submatrix is still invertible, and the proof of \cref{thm:beta mom wiggle} proceeds as before.
\end{proof}

Now that we know that every minimizer must possess at most $n$ distinct $\beta$ values, \cref{thm:unique params 2} immediately implies that the minimizer is unique.

We are now prepared to define our candidate value for the infimum of $E_{n+1}$ subject to the constraints $e_1,\dots,e_n$.    We extend the definition of $C$ to $(e_1,\dots,e_n)\in \M^n_N$ via
\begin{equation*}
C(e_1,\dots,e_n)
= (-1)^n \tfrac{2^{2n+3}}{2n+3} \sum_{j=1}^N \beta_j^{2n+3} .
\end{equation*}
This quantity still satisfies the properties from \cref{thm:min val is dec}:
\begin{lem}
\label{thm:min val is dec 2}
The function $C:\M^n_N\to\R$ is continuous and is decreasing in each variable.  Moreover, $C$ is defined piecewise on finitely many connected subsets of $\M^n_N$, and on the interior of each such subset $C(e_1,\dots,e_n)$ is continuously differentiable and satisfies $\frac{\partial C}{\partial e_j} < 0$ for $j=1,\dots,n$.
\end{lem}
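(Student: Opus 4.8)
The plan is to follow the proof of \cref{thm:min val is dec}, the new feature being that the unique minimizing parameter vector $\beta_1\geq\cdots\geq\beta_N\geq 0$ supplied by \cref{thm:N gas lem 1,thm:N gas lem 2,thm:unique params 2} may now have repeated or vanishing components. The organizing device is to partition $\M^n_N$ according to the \emph{multiplicity pattern} of this minimizer, i.e.\ the list of multiplicities of its distinct positive values together with the number of zero components; there are only finitely many patterns, so this gives a finite partition $\M^n_N=\bigsqcup_\mu S_\mu$, where $S_\mu$ is the set of constraints whose minimizer has pattern $\mu$, and $C$ is given on each $S_\mu$ by an explicit symmetric-function formula in the positive values.

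For a pattern $\mu=(\mu_1,\dots,\mu_n)$ with $\mu_i\geq 1$, $\sum_i\mu_i\leq N$, and exactly $n$ distinct positive values, put on the open convex cone $D_\mu=\{\gamma_1>\cdots>\gamma_n>0\}$
\begin{equation*}
\Phi_\mu(\gamma_1,\dots,\gamma_n)=\Big((-1)^{m+1}\tfrac{2^{2m+1}}{2m+1}\textstyle\sum_{i=1}^n\mu_i\gamma_i^{2m+1}\Big)_{m=1}^n .
\end{equation*}
The system $\Phi_\mu(\gamma)=(e_1,\dots,e_n)$ together with the objective $(-1)^n\tfrac{2^{2n+3}}{2n+3}\sum_i\mu_i\gamma_i^{2n+3}$ is precisely the one analyzed in \cref{thm:min val is dec}, except that the $i$-th column of every Vandermonde-type matrix now carries an extra constant factor $\mu_i$, which affects neither the nonvanishing of the relevant determinants nor the sign extracted from the Vieta/polynomial-interpolation computation. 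Hence $D\Phi_\mu$ is invertible throughout $D_\mu$, so $\Phi_\mu$ is a $C^1$ diffeomorphism onto its image, which is open and connected (a continuous image of a convex set); the cell $S_\mu$ is relatively open in this image; and wherever $S_\mu\neq\emptyset$ we have $C(e)=(-1)^n\tfrac{2^{2n+3}}{2n+3}\sum_i\mu_i\big(\Phi_\mu^{-1}(e)\big)_i^{2n+3}$, which is $C^1$ with $\partial C/\partial e_j<0$ for $j=1,\dots,n$, exactly as in \cref{thm:min val is dec}. Every remaining pattern has fewer than $n$ distinct positive values, so its cell is contained in the image of a parameter domain of dimension at most $n-1$ and therefore has empty interior in $\R^n$. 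Thus $\M^n_N$ is the union of the finitely many cells $S_\mu$ (which, if not already connected, may be replaced by their finitely many connected components), $C$ is given on each by the corresponding symmetric-function formula, and on the interior of each cell — nonempty only for the top-dimensional ones — $C$ is $C^1$ with strictly negative partial derivatives.

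It remains to show that $C$ is continuous on all of $\M^n_N$; the monotonicity of $C$ in each variable then follows by the methods of \cref{thm:min val is dec} and \cref{thm:down closed}. Lower semicontinuity is a soft compactness argument: if $e^{(k)}\to e$ in $\M^n_N$ with minimizers $\bs{\beta}^{(k)}$, then these are bounded (each component is at most $(\tfrac38 e_1^{(k)})^{1/3}$), a subsequential limit $\bs{\beta}^*$ is feasible for the limit constraints, so $(-1)^n\sum_m(\beta^*_m)^{2n+3}\geq(-1)^n\sum_m\beta_m^{2n+3}$ with $\bs{\beta}$ the minimizer of $e$; since $\tfrac{2^{2n+3}}{2n+3}>0$, passing this through $k$ gives $\liminf_k C(e^{(k)})\geq C(e)$. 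For upper semicontinuity one must, given the minimizer $\bs{\beta}$ of $e$, produce competitors $\bs{x}^{(k)}$ feasible for $e^{(k)}$ with $\bs{x}^{(k)}\to\bs{\beta}$, since then $(-1)^nC(e^{(k)})\leq(-1)^n\tfrac{2^{2n+3}}{2n+3}\sum_m (x^{(k)}_m)^{2n+3}\to(-1)^nC(e)$.

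Constructing those competitors is the main obstacle, and it is exactly the degenerate strata that cause the difficulty: where $\bs{\beta}$ has $n$ distinct positive values the moment map $x\mapsto(\sum x_m^{2j+1})_{j=1}^n$ is a submersion at $\bs{\beta}$ and the implicit function theorem supplies competitors automatically, but where $\bs{\beta}$ has a coincident pair or a vanishing component this fails and one must perturb into the top-dimensional regime by hand — splitting the pair, or activating a zero component — while tracking the induced change in the prescribed moments. I expect to carry this out one pattern at a time, exploiting that each cell is the continuous image of an explicit parameter domain, so that $C$ extends continuously to each closed cell and the resulting restrictions agree along shared boundaries, which yields continuity of $C$ on $\M^n_N$.
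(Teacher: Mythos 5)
Your treatment of the differentiability statement is essentially the paper's own argument: fix the multiplicity pattern of the (unique, by \cref{thm:N gas lem 2,thm:unique params 2}) minimizer, observe that the multiplicities only rescale the columns of the Vandermonde-type matrices from \cref{thm:min val is dec}, and conclude that on the interior of each top-dimensional cell $C$ is $C^1$ with $\frac{\partial C}{\partial e_j}<0$; cells with fewer than $n$ distinct positive values have empty interior, so the claim is vacuous there. That part is fine.

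The gap is the continuity of $C$ on all of $\M^n_N$, which is precisely the part you flag as ``the main obstacle'' and then only promise to carry out. Your lower semicontinuity argument is correct, but upper semicontinuity --- equivalently, producing near-optimal competitors for nearby constraints when the minimizer sits on a degenerate stratum (a collision $\ol{\beta}_j=\ol{\beta}_{j+1}$ or a vanishing $\ol{\beta}_n$), where the moment map is no longer a submersion --- is exactly what the lemma needs and is never established; ``I expect to carry this out one pattern at a time'' is a plan, not a proof. The paper closes this by two ingredients you do not supply: (i) for a fixed multiplicity pattern, the map $\Phi$ from the closed simplex of distinct values is not merely a local diffeomorphism on the open simplex but a homeomorphism onto the corresponding component of $\M^n_N$ \emph{including its boundary points}, by repeating the proper-map/compact-exhaustion argument of \cref{thm:homeo}; and (ii) the boundary values obtained this way agree with the intrinsic value of $C$ there, because the limiting parameter vector still satisfies the constraints with at most $n$ distinct values and is therefore the unique minimizer by \cref{thm:unique params 2}. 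With (i) and (ii), $C$ restricted to each of the finitely many closed cells is continuous and the restrictions match, so $C$ is continuous globally --- this is the content your last paragraph gestures at but leaves unproved. (Relatedly, your one-line claim that monotonicity ``then follows by the methods of \cref{thm:min val is dec} and \cref{thm:down closed}'' also leans on this continuity across the degenerate interfaces, since the negative partials are only available on the interiors of the top-dimensional cells.)
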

\begin{proof}
Given a minimizer $\beta_1\geq \cdots\geq \beta_N \geq 0$, \cref{thm:N gas lem 2} implies that there exist multiplicities $m_1,\dots,m_{\ol{N}}$ and distinct values $\ol{\beta}_1>\cdots>\ol{\beta}_{\ol{N}}\geq 0$ so that $\ol{N}\leq n$, $\sum m_j = N$, and the string $\beta_1,\dots,\beta_N$ consists of $m_1$ copies of $\ol{\beta}_1$, $m_2$ copies of $\ol{\beta}_2$, and so on.  This allows us to write
\begin{equation*}
C(e_1,\dots,e_n) = (-1)^n \tfrac{2^{2n+3}}{2n+3} \big( m_1\ol{\beta}_1^{2n+3} + \dots + m_{\ol{N}} \ol{\beta}_{\ol{N}}^{2n+3} \big) .
\end{equation*}
We will see that for $\ol{N} = n$ and each fixed choice of multiplicities $m_1,\dots,m_{n}$, we have $\frac{\partial C}{\partial e_j} < 0$ for $j=1,\dots,n$ as long as $\ol{\beta}_1>\cdots>\ol{\beta}_{\ol{N}}> 0$.  In this way $C(e_1,\dots,e_n)$ is a piecewise-defined function, and there are finitely many pieces because the number of possible multiplicities $m_1,\dots,m_{\ol{N}}$ is finite.  

Fix multiplicities $m_1,\dots,m_{\ol{N}}$, and repeat the computation from \cref{thm:min val is dec}.  In fact, in the case $\ol{N} = n$ the same computation applies!  Indeed, in \cref{thm:min val is dec} we computed $\frac{\partial C}{\partial e_j}$ from the equality~\eqref{eq:min val is dec 4}.  We have now multiplied the columns of the matrix on the LHS and each entry on the RHS by the multiplicities $m_1,\dots,m_n$, but this does not alter the system of equations.  In the case $\ol{N}<n$, the system of equations~\eqref{eq:min val is dec 1} is overdetermined.  However, if we only consider the first $\ol{N}$ constraints, then the computation proceeds with $\ol{N}$ in place of $n$, and we conclude that $C$ as a function of $e_1,\dots,e_{\ol{N}}$ (where $m_1,\dots,m_{\ol{N}}$ are fixed) is $C^1$ and satisfies $\frac{\partial C}{\partial e_j} < 0$ for $j=1,\dots,\ol{N}$.

We are also able to compute $\frac{\partial \ol{\beta}_k}{\partial e_j}$ as long as $\ol{\beta}_1>\cdots>\ol{\beta}_n> 0$, which implies that if we wiggle $e_1,\dots,e_n$ then we can also wiggle $\ol{\beta}_1,\dots,\ol{\beta}_n$ in a way that still satisfies the constraints.  By uniqueness (\cref{thm:unique params 2}), the perturbed values of $\ol{\beta}_1,\dots,\ol{\beta}_n$ still minimize $E_{n+1}$.  This defines an injective map $\Phi$ from the simplex
\begin{equation}
\{ (\ol{\beta}_1,\dots,\ol{\beta}_n) \in \R^{n} : \ol{\beta}_1> \dots > \ol{\beta}_{n} > 0 \}
\label{eq:simplex 2}
\end{equation}
into $\M^n_n$, and it is smooth up to its boundary.  The image of $\Phi$ is exactly the interior of one of the components on which $C(e_1,\dots,e_n)$ is defined by a single formula.  The boundary of this component corresponds to some subset of the boundary of the simplex~\eqref{eq:simplex 2}, which means that two values of $\ol{\beta}_j$ are colliding or that $\ol{\beta}_n$ is vanishing.  Repeating the proof of \cref{thm:homeo}, we conclude that $\Phi$ is a homeomorphism onto this component (including any boundary points it may contain).  It then follows that $C$ is continuous by the same argument as in \cref{thm:min val is dec}.
\end{proof}

Next, we show that the set of constraints $\M^n_N$ is still downward closed:
\begin{lem}
\label{thm:down closed 2}
If the constraints $\wt{e}_1,\dots,\wt{e}_n$ are in $\M^n_{\wt{N}}$ for some $\wt{N}$ and
\begin{equation*}
\wt{e}_1\leq e_1 , \quad\dots,\quad \wt{e}_n\leq e_n
\end{equation*}
for some $(e_1,\dots,e_n) \in \M^n_N$, then $(\wt{e}_1,\dots,\wt{e}_n) \in \M^n_N$.
\end{lem}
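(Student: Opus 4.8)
The plan is to adapt the proof of \cref{thm:down closed}, now cutting the degree down to $N$ instead of to $n$, and replacing the use of \cref{thm:min val is dec} by its piecewise refinement \cref{thm:min val is dec 2} together with \cref{thm:N gas lem 1,thm:N gas lem 2}. Let $\wt\beta_1 \geq \cdots \geq \wt\beta_{\wt N} \geq 0$ be parameters of a multisoliton of degree at most $\wt N$ witnessing the constraints $\wt e_1,\dots,\wt e_n$, and set $\alpha_j := \sum_{m=1}^{\wt N} \wt\beta_m^{\,j}$ for $j = 3,5,\dots,2n+1$. If $\wt N \leq N$ we are done immediately, since a multisoliton of degree at most $\wt N$ is one of degree at most $N$; so assume $\wt N \geq N+1$. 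It suffices to show that the compact set
\[
\Gamma := \Big\{ (x_1,\dots,x_{\wt N}) \in \R^{\wt N} : x_m \geq 0,\ \textstyle\sum_{m=1}^{\wt N} x_m^{2j+1} = \alpha_{2j+1}\ \text{for } j=1,\dots,n \Big\},
\]
which is nonempty since it contains $(\wt\beta_1,\dots,\wt\beta_{\wt N})$, meets the $N$-dimensional boundary face $\{x_{N+1} = \cdots = x_{\wt N} = 0\}$: a point there furnishes the parameters of a multisoliton of degree at most $N$ lying in $\con$, which is exactly what membership in $\M^n_N$ requires.

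As in \cref{thm:down closed}, I introduce the sandwiched region
\[
\Omega := \Big\{ (x_1,\dots,x_{\wt N}) \in \R^{\wt N} : x_m \geq 0,\ (-1)^k \textstyle\sum_m x_m^{2k+1} \leq (-1)^k \alpha_{2k+1}\ \text{for } k=1,\dots,n \Big\}.
\]
Because $(e_1,\dots,e_n) \in \M^n_N$ and $e_k \geq \wt e_k$ for every $k$, there are at most $N$ nonnegative parameters whose odd moments satisfy all of these inequalities, so $\Omega$ meets the face $\{x_{N+1} = \cdots = x_{\wt N} = 0\}$. For $n \geq 2$ the set $\Omega$ is bounded (every coordinate is at most $\alpha_5^{1/5}$), while $n = 1$ is trivial since $\M^1_N = \M^1_1 = \F^1$ for all $N \geq 1$. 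Hence $\Omega \cap \{x_{N+1} = \cdots = x_{\wt N} = 0\}$ is nonempty, closed and bounded, and we may choose on it a point $p^* = (\beta_1,\dots,\beta_N,0,\dots,0)$ minimizing the next odd moment $(-1)^n \sum_m x_m^{2n+3}$.

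The crux, and the main obstacle, is to show $p^* \in \Gamma$, i.e.\ that every defining inequality of $\Omega$ is an equality at $p^*$. First, the implicit-function perturbation of \cref{thm:beta mom wiggle} used in \cref{thm:N gas lem 2} keeps all of the first $n$ odd moments fixed—hence preserves membership in $\Omega$ and in the face—so $p^*$ must have at most $n$ distinct values; by \cref{thm:N gas lem 1,thm:unique params 2} it therefore realizes the minimum $C$ of $E_{n+1}$ at the constraint vector $e^*$ it attains, which lies in $\M^n_N$ (possibly at a boundary point) and dominates $(\wt e_1,\dots,\wt e_n)$ componentwise. Using $\tfrac{\partial C}{\partial e_j} < 0$ from \cref{thm:min val is dec 2}, any strict inequality among the defining inequalities of $\Omega$ would allow one to perturb $p^*$ inside $\Omega \cap \{x_{N+1} = \cdots = x_{\wt N} = 0\}$ so as to strictly decrease $(-1)^n \sum_m x_m^{2n+3}$, contradicting minimality; hence $p^* \in \Gamma$ and $(\wt e_1,\dots,\wt e_n) \in \M^n_N$. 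The delicate points, absent in \cref{thm:down closed}, are that the face now carries up to $N > n$ parameters, so one must pass through the multiplicity reduction before \cref{thm:min val is dec 2} applies; that $C$ is there only piecewise smooth, so the decreasing perturbation must be kept inside a single smooth piece; and that the hypothesis $\wt e \in \bigcup_{M\geq 0} \M^n_M$ is needed precisely to guarantee that such a perturbation stays in the admissible range (it is what rules out $\wt e$ lying on the degenerate upper boundary of $\M^n_N$).
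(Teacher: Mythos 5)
Your proposal is correct and follows essentially the same route as the paper, whose own proof simply repeats the argument of \cref{thm:down closed} with the boundary face now cut at $N$ coordinates: the same sets $\Gamma$ and $\Omega$ in $\R^{\wt N}$, the witness from $(e_1,\dots,e_n)\in\M^n_N$, compactness of $\Omega\cap\{x_{N+1}=\dots=x_{\wt N}=0\}$, and minimization of the next odd moment combined with the monotonicity of the extended $C$ from \cref{thm:min val is dec 2}. Your additional discussion of the multiplicity reduction via \cref{thm:N gas lem 2} and uniqueness via \cref{thm:unique params 2} is just a more explicit rendering of the same mechanism the paper invokes.
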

\begin{proof}
Let $\wt{\beta}_1\dots,\wt{\beta}_{\wt{N}} > 0$ denote the $\beta$ parameters of the multisoliton which witnesses the constraints $\wt{e}_1,\dots,\wt{e}_n$, and assume that we are in the nontrivial case $\wt{N} \geq N+1$.  Repeating the proof of \cref{thm:down closed}, we conclude that the set $\Gamma$ of parameters in $\R^{\wt{N}}$ that satisfy the constraints must intersect the boundary $\{(x_1,\dots,x_{\wt{N}}) : x_{N+1},\dots,x_{\wt{N}} = 0\}$.  Any point in the intersection provides the desired $N$-soliton parameters.
\end{proof}

We are now prepared to prove that $C(e_1,\dots,e_n)$ is the infimum of $E_{n+1}$:
\begin{prop}
\label{thm:minimizers 2}
Given $(e_1,\dots,e_n) \in \M^n_{N}$ for some $N\geq n+1$, we have
\begin{equation}
\inf \{ E_{n+1}(u) : u\in\con \} = C(e_1,\dots,e_n) .
\label{eq:N gas 1}
\end{equation}
Moreover, if $(e_1,\dots,e_n) \notin\M^n_n$, then this infimum is not attained by any $u\in\con$.
\end{prop}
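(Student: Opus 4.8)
The plan is to establish the value \eqref{eq:N gas 1} by a matching lower and upper bound, following the template of \cref{sec:min}, and then to rule out attainment separately. For the lower bound $\inf\{E_{n+1}(u):u\in\con\}\geq C(e_1,\dots,e_n)$, I would first take $u\in\con$ Schwartz and use the trace formula \eqref{eq:en trace}, discarding the nonnegative integral term to get $E_{n+1}(u)\geq(-1)^n\tfrac{2^{2n+3}}{2n+3}\sum_m\beta_m^{2n+3}$. The bound states $\beta_1,\dots,\beta_M$ of $u$ realize the relaxed constraints $\tilde e_j=e_j-\tfrac{2^{2j}}{\pi}\int k^{2j}\log|a(k;u)|\dk\leq e_j$, which still lie in $\M^n_N$ by \cref{thm:down closed 2}. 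Using the reduction in the proof of \cref{thm:N gas lem 2} together with the uniqueness in \cref{thm:unique params 2}, one checks that $C(\tilde e_1,\dots,\tilde e_n)$ is also the infimum of the $(n+1)$st odd moment over finite nonnegative configurations \emph{of any size} realizing $(\tilde e_1,\dots,\tilde e_n)$, so $E_{n+1}(u)\geq C(\tilde e_1,\dots,\tilde e_n)\geq C(e_1,\dots,e_n)$ since $C$ is decreasing in each variable (\cref{thm:min val is dec 2}). For general $u\in H^n(\R)$ I would approximate by Schwartz functions; the third-moment bound keeps all bound states uniformly bounded, so along a subsequence their configurations converge, up to vanishingly small mass, to a finite configuration, and one concludes by continuity and monotonicity of $C$.

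For the upper bound I would use the unique minimizing configuration $\beta_1\geq\cdots\geq\beta_N\geq0$ from \cref{thm:N gas lem 1,thm:N gas lem 2}, with distinct positive values $\ol\beta_1>\cdots>\ol\beta_{\ol N}>0$ (with $\ol N\leq n$) of multiplicities $m_1,\dots,m_{\ol N}$. Fix small distinct parameters $\epsilon_1,\dots,\epsilon_{n-\ol N}>0$, and for each $k$ let $q_k$ be a widely-separated superposition of $m_j$ translates of the soliton of amplitude $\ol\beta_j$ (for each $j$) together with one translate of the soliton of amplitude $\epsilon_l$ (for each $l$). By the additivity of the energies over widely-separated profiles (from the proof of \cref{thm:min seq lem 1}), the map sending the $n$ distinct amplitudes to $(E_1(q_k),\dots,E_n(q_k))$ is, for $k$ large, a small perturbation of the $\beta$-moment map, whose Jacobian at the chosen amplitudes is a nondegenerate Vandermonde; the implicit function theorem then lets us perturb the $\ol\beta_j$ slightly so that $E_m(q_k)=e_m$ exactly for $m=1,\dots,n$, giving $q_k\in\con$. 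Letting $k\to\infty$ gives $E_{n+1}(q_k)\to C_\epsilon$ for a constant $C_\epsilon$ depending only on $\epsilon_1,\dots,\epsilon_{n-\ol N}$, so $\inf\{E_{n+1}(u):u\in\con\}\leq C_\epsilon$; since $C_\epsilon\to C(e_1,\dots,e_n)$ as the $\epsilon_l\to0$ (the perturbed configuration limits to the minimizing one), we obtain $\inf\{E_{n+1}(u):u\in\con\}\leq C(e_1,\dots,e_n)$, and together with the lower bound this proves \eqref{eq:N gas 1}.

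For the final claim, suppose $(e_1,\dots,e_n)\notin\M^n_n$ and $q\in\con$ attains $E_{n+1}(q)=C(e_1,\dots,e_n)$. Then $q$ is a constrained minimizer, hence Schwartz by the argument of \cref{thm:schwartz} (with \cref{thm:min val is dec 2} supplying the negative Lagrange multipliers), satisfies $|a(k;q)|\equiv1$ on $\R$ exactly as in \cref{thm:a mom wiggle}, and is therefore a multisoliton $Q_{\bs\gamma,\bs c}$ by \cref{thm:inner outer fac}. If its degree $N'$ satisfies $N'\leq n$, then $(e_1,\dots,e_n)\in\M^n_{N'}\subseteq\M^n_n$, contradicting our hypothesis; if $N'\geq n+1$, then \cref{thm:beta mom wiggle} produces another multisoliton in $\con$ with strictly smaller $E_{n+1}$, contradicting that $q$ is a minimizer. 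Either way we reach a contradiction, so the infimum is not attained.

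The main obstacle is the upper bound when $\ol N<n$, namely when the minimizing configuration has fewer than $n$ distinct amplitudes: then no multisoliton realizes the constraints with value $C$, and a naive superposition of well-separated solitons approximating the minimizer cannot meet all $n$ constraints because the relevant moment map fails to be a submersion there. The auxiliary small solitons are introduced precisely to restore the Vandermonde non-degeneracy while contributing negligibly to the limiting energy. A secondary technical point is the $H^n$ reduction in the lower bound, where the approximating Schwartz functions carry constraints outside $\bigcup_{N'}\M^n_{N'}$, so one must track the limiting bound-state configuration rather than simply invoking continuity of $C$.
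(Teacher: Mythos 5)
Your lower bound and your non-attainment argument are essentially the paper's own: the paper literally repeats the proof of \cref{thm:minimizers} with \cref{thm:N gas lem 2,thm:min val is dec 2,thm:down closed 2} replacing the earlier inputs, and it rules out attainment exactly as you do (Schwartz regularity via the Euler--Lagrange/stable-manifold argument of \cref{thm:schwartz} with $\lambda_1<0$ and $\lambda_j\le 0$, then \cref{thm:a mom wiggle}, \cref{thm:inner outer fac}, and \cref{thm:beta mom wiggle}, with the two-case degree dichotomy you spell out). Your explicit remark that $C(\wt{e}_1,\dots,\wt{e}_n)$ is also the infimum over configurations of arbitrary size is correct and worth recording. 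One caution on your $H^n$ reduction: the raw bound-state configurations of the Schwartz approximants need not converge to a \emph{finite} configuration --- infinitely many eigenvalues may accumulate at zero while the third moment stays finite --- so you should first replace them by the $\le n$-distinct-value minimizing configurations for the relaxed constraints (\cref{thm:N gas lem 2}, \cref{thm:unique params 2}) and then account for mass escaping to zero, which only lowers the constraints, so monotonicity of $C$ still closes the argument. Your instinct that this step needs more than ``continuity of $C$'' is sound; the paper glosses over it.

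Where you genuinely depart from the paper is the upper bound, and this is also where your proposal has a gap. The paper's device is simpler: it never leaves the constraint set. It approximates the minimizing configuration $\beta_1\ge\cdots\ge\beta_N\ge 0$ by nearby points of the variety \eqref{eq:N gas 2} with distinct positive entries; the corresponding genuine $N$-solitons lie \emph{exactly} in $\con$, and their $E_{n+1}$ values converge to $C(e_1,\dots,e_n)$ by continuity of the $(2n+3)$rd moment, so no energy-additivity estimates and no constraint correction are required. Your separated-superposition construction works when $\ol{N}=n$, but in the degenerate case $\ol{N}<n$ --- the very case for which you introduce the auxiliary amplitudes --- the implicit function theorem step is not justified as stated: the Jacobian columns in the $\epsilon_l$-directions are of size $\epsilon_l^2$, whereas the constraint discrepancy created by the auxiliary solitons is of size $\epsilon_l^3$ (plus the $o_k(1)$ separation error), and the part of that discrepancy transverse to the $\ol{N}$-dimensional span of the $\ol{\beta}$-columns must be absorbed by the $\epsilon$-columns. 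The resulting corrections are of relative size $O(1)$ in the $\epsilon$ variables, not ``slight,'' so you have not shown that the corrected amplitudes remain positive and distinct, nor the uniformity needed to conclude $C_\epsilon\to C(e_1,\dots,e_n)$ as $\epsilon\to 0$; the asserted $C^1$-closeness of the superposition energy map to the moment map also needs the (true, but unstated) exponential decay of the cross terms. Either supply a quantitative version of this step with the $\epsilon$-scaling tracked, or adopt the paper's route of perturbing inside \eqref{eq:N gas 2} itself, where a tangent direction supported on a block of equal values and summing to zero splits the block while the constraints are preserved exactly and there is no discrepancy to absorb.
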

\begin{proof}
First, we claim that
\begin{equation*}
E_{n+1}(u) \geq C(e_1,\dots,e_n) \quad\tx{for all }u\in\con. 
\end{equation*}
We repeat the proof of \cref{thm:minimizers}.  This proof only required \cref{thm:beta mom wiggle,thm:min val is dec,thm:down closed} as input, and we have established their analogues \cref{thm:N gas lem 2,thm:min val is dec 2,thm:down closed 2} in this new setting.

To prove~\eqref{eq:N gas 1}, it remains to show that $E_{n+1}(u)$ can be arbitrarily close to $C(e_1,\dots,e_n)$ for some choice of $u\in\con$.  Recall that $C(e_1,\dots,e_n)$ is defined in terms of a minimizer $\beta_1\geq\cdots\geq\beta_N \geq 0$ in the sense of \cref{thm:N gas lem 1}.  The claim follows by taking $u$ to be an $N$-soliton with parameters $\wt{\beta}_1>\cdots>\wt{\beta}_N>0$ that converge to the minimizer $\beta_1\geq\cdots\geq\beta_N \geq 0$ within the set~\eqref{eq:N gas 2}.

Lastly, suppose towards a contradiction that $E_{n+1}(q) = C(e_1,\dots,e_n)$ for some $q \in \con$ and $(e_1,\dots,e_n) \notin\M^n_n$.  First, we show that $q$ is Schwartz by repeating the proof of \cref{thm:schwartz}.  In the case where the number $\ol{N}$ of multiplicities is equal to $n$, we have $\frac{\partial C}{\partial e_j} < 0$ for $j=1,\dots,n$ and the proof of \cref{thm:schwartz} carries out unaltered.  In the case where $\ol{N} < n$, we recall from the proof of \cref{thm:min val is dec 2} that we may regard $C$ as a function of $e_1,\dots,e_{\ol{N}}$ and we have $\frac{\partial C}{\partial e_j} < 0$ for $j=1,\dots,\ol{N}$.  In either case, the minimizer $q$ satisfies an Euler--Lagrange equation of the form~\eqref{eq:schwartz 1} with $\lambda_1 < 0$ and all other $\lambda_j \leq 0$, and this is sufficient to conclude that $q$ is Schwartz.  Then, by directly applying \cref{thm:a mom wiggle}, \ref{thm:inner outer fac}, and \ref{thm:beta mom wiggle} (without alteration!), we see that $q$ is a multisoliton of degree at most $n$, which contradicts that $(e_1,\dots,e_n) \notin \M_{n}^n$.
\end{proof}

When combined with concentration compactness, we can prove that minimizing sequences resemble a superposition of multisolitons with at most $n$ distinct values of $\beta_m$:
\begin{proof}[Proof of \cref{thm:N gas}]
It only remains to prove the minimizing sequence statement.  Fix $(e_1,\dots,e_n) \in \M^n_{N}\sm\M^n_{N-1}$ for some $N\geq n+1$, and suppose that $\{q_k\}_{k\geq 1} \subset H^n(\R)$ satisfies
\begin{equation*}
E_1(q_k) \to e_1 , \quad\dots,\quad E_n(q_k) \to e_n , \quad E_{n+1}(q_k) \to C(e_1,\dots,e_n)
\end{equation*}
as $k\to\infty$.  

First, we apply our concentration compactness principle.  After passing to a subsequence, \cref{thm:conc comp} provides us with a number $J^*\in\{0,1,\dots,\infty\}$, $J^*$-many profiles $\{\phi^j\}_{j=1}^{J^*} \subset H^n(\R)$, and $J^*$-many sequences $\{ x^j_k \}_{j=1}^{J^*} \subset \R$ so that along a subsequence we have the decomposition~\eqref{eq:conc comp decomp} which satisfies the properties~\eqref{eq:conc comp 1}-\eqref{eq:conc comp 4}.

The proof of \cref{thm:min seq} up through \cref{thm:min seq lem 4} still applies (without alteration), and so we conclude that each profile $\phi^j$ is a multisoliton $Q_{\bs{\beta}^j,\bs{c}^j}$.  Repeating the proof of \cref{thm:min seq lem 5}, we see that the concatenation $\bs{\beta} = \coprod_{j=1}^{J^*} \bs{\beta}^j$ minimizes the inequality for $E_{n+1}$ in \cref{thm:minimizers 2}.  Therefore $\bs{\beta}$ is a minimizer in the sense of \cref{thm:N gas lem 1}, and so by \cref{thm:N gas lem 2,thm:down closed 2} we see that $J^*$ is finite, the total degree $\sum \#\bs{\beta}^j$ is equal to $N$, and the components of $\bs{\beta}$ attain at most $n$ distinct values.

We now have
\begin{equation*}
\bigg\lVert q_k - \sum_{j=1}^{J^*} Q_{\bs{\beta}^j,\bs{c}^j} \bigg\rVert_{H^n}
= \bnorm{ r^{J^*}_k }_{H^n} .
\end{equation*}
Repeating the proof of \cref{thm:min seq lem 3}, we see that the RHS converges to zero as $k\to\infty$.  This yields
\begin{equation*}
\inf_{\bs{c}^1,\dots,\bs{c}^{J^*}} \bigg\lVert q_k - \sum_{j=1}^{J^*} Q_{\bs{\beta}^j,\bs{c}^j} \bigg\rVert_{H^n} \to 0 \quad\tx{as }k\to\infty
\end{equation*}
as desired.
\end{proof}

%%%%%%%%%%%%%%%%%%%%%%%%%%%%%%%%%%%%%%%%%%%%%%%%%%%%%%%%%%%%%%%%%

\section{Proof of \cref{thm:point mass}}
\label{sec:point mass}

The goal of this section is to prove \cref{thm:point mass}.  Suppose that $(e_1,e_2) \in \F^2$ and $(e_1,e_2) \notin \M^2_N$ for all $N$.  We aim to show that Schwartz minimizing sequences for these constraints have vanishing $\beta$ parameters and $\log|a|$ converging to the even extension of a Dirac delta distribution.

By the explicit description~\eqref{eq:M2 F2} and~\eqref{eq:MN} of $\F^2$ and $\bigcup_{N\geq 0} \M^2_N$, we note that our conditions on $(e_1,e_2)$ are equivalent to $e_1>0$ and $e_2\geq 0$.

First, we find a lower bound for the $\log|a|$ contribution to $E_3$:
\begin{lem}
\label{thm:point mass lem 1}
We have
\begin{equation}
E_3(u) \geq \tfrac{64}{\pi} \tfrac{\gamma_1^2}{\gamma_0} + C\big(e_1 - \tfrac{4}{\pi} \gamma_0 , e_2 - \tfrac{16}{\pi} \gamma_1 \big) \quad\tx{for all }u\in \con\cap\schwar(\R),
\label{eq:point mass 1}
\end{equation}
where
\begin{equation}
\gamma_0 = \int_{-\infty}^\infty k^2 \log|a(k;u)|\dk 
\quad\tx{and}\quad
\gamma_1 = \int_{-\infty}^\infty k^4 \log|a(k;u)|\dk . 
\label{eq:point mass 7}
\end{equation}
\end{lem}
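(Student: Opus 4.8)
The plan is to combine the Zakharov--Faddeev trace formula~\eqref{eq:en trace} for $E_3$ with two elementary ingredients: a Cauchy--Schwarz inequality among the even moments of the measure $\log|a(k;u)|\dk$, and the lower bound $E_{n+1}\geq C$ on the multisoliton contribution supplied by \cref{thm:minimizers,thm:minimizers 2}.

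First I would record, for $u\in\schwar(\R)$, the trace formula
\begin{equation*}
E_3(u) = \tfrac{64}{\pi}\int_{-\infty}^\infty k^6 \log|a(k;u)|\dk + \tfrac{128}{7}\sum_{m=1}^N \beta_m^7 ,
\end{equation*}
where $-\beta_1^2,\dots,-\beta_N^2$ are the bound states of $-\partial_x^2+u$ and every moment is finite since $u$ is Schwartz, and then estimate the two terms on the right separately. For the first term, let $\nu$ denote the nonnegative measure $\log|a(k;u)|\dk$ on $\R$ (nonnegative by~\eqref{eq:trans coeff 1}), so that $\gamma_0 = \int k^2\dnu$ and $\gamma_1 = \int k^4\dnu$ by~\eqref{eq:point mass 7}. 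Cauchy--Schwarz applied to $k\mapsto k$ and $k\mapsto k^3$ gives
\begin{equation*}
\gamma_1^2 = \bigg(\int_{-\infty}^\infty k\cdot k^3\dnu\bigg)^2 \leq \bigg(\int_{-\infty}^\infty k^2\dnu\bigg)\bigg(\int_{-\infty}^\infty k^6\dnu\bigg) = \gamma_0 \int_{-\infty}^\infty k^6\log|a(k;u)|\dk ,
\end{equation*}
so that $\int k^6\log|a(k;u)|\dk \geq \gamma_1^2/\gamma_0$ once $\gamma_0>0$ is known.

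For the second term, I would observe that the bound states $\bs{\beta}$ of $u$ are multisoliton parameters and that, by the trace formulas~\eqref{eq:e1 e3 trace}, the multisoliton $Q_{\bs{\beta},\bs{c}}$ (for any $\bs{c}$) satisfies
\begin{equation*}
E_1(Q_{\bs{\beta},\bs{c}}) = \tfrac{8}{3}\sum_{m=1}^N\beta_m^3 = E_1(u) - \tfrac{4}{\pi}\gamma_0 = e_1 - \tfrac{4}{\pi}\gamma_0 , \qquad E_2(Q_{\bs{\beta},\bs{c}}) = e_2 - \tfrac{16}{\pi}\gamma_1 .
\end{equation*}
Hence $(e_1-\tfrac{4}{\pi}\gamma_0,\,e_2-\tfrac{16}{\pi}\gamma_1)\in\M^2_N$, so $C$ is defined there, and since $E_3(Q_{\bs{\beta},\bs{c}}) = \tfrac{128}{7}\sum_m\beta_m^7$, \cref{thm:minimizers,thm:minimizers 2} give $\tfrac{128}{7}\sum_m\beta_m^7 \geq C(e_1-\tfrac{4}{\pi}\gamma_0,\,e_2-\tfrac{16}{\pi}\gamma_1)$. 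Multiplying the first estimate by $\tfrac{64}{\pi}$ and adding the second yields~\eqref{eq:point mass 1}.

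The only step needing an argument rather than a direct computation --- and the only place the standing hypothesis $(e_1,e_2)\notin\M^2_N$ is used --- is the claim $\gamma_0>0$. If $\gamma_0=0$, then $\log|a(k;u)|\equiv 0$ for a.e.\ $k\in\R$, hence $|a(\,\cdot\,;u)|\equiv 1$ on all of $\R$ by continuity, and the inner--outer factorization argument of \cref{thm:inner outer fac} then forces $u$ to be a multisoliton; but then $(e_1,e_2)\in\M^2_N$, a contradiction. I expect this short verification (that $u$ cannot be a multisoliton) to be the only subtlety; the rest is the trace formula, Cauchy--Schwarz, and the already-established minimization inequality.
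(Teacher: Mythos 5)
Your proposal is correct and follows essentially the same route as the paper: the trace formula for $E_3$, Cauchy--Schwarz among the moments of the nonnegative measure $\log|a(k;u)|\dk$ (the paper merely phrases it after the substitution $x=k^2$ as the bound $\gamma_2\geq\gamma_1^2/\gamma_0$ for a measure on $[0,\infty)$), and bounding the $\beta$-moment below by $C$ evaluated at the reduced constraints, which lie in $\M^2_N$ because they are attained by the multisoliton built from the bound states of $u$. Your explicit check that $\gamma_0>0$ (via the standing hypothesis $(e_1,e_2)\notin\M^2_N$ and the argument of \cref{thm:inner outer fac}) is a point the paper passes over silently, and it is a valid way to justify dividing by $\gamma_0$.
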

\begin{proof}
Fix $u \in \schwar(\R)$.  Substituting $x = k^2$ into the trace formulas~\eqref{eq:en trace} and recalling the reality condition~\eqref{eq:trans coeff 3}, we obtain
\begin{align*}
e_1 &= \tfrac{4}{\pi} \int_0^\infty x^{\frac{1}{2}} \log|a(x^{\frac{1}{2}};u)|\dx + \tfrac{8}{3} \sum_{m=1}^N \beta_m^3 , \\ 
e_2 &= \tfrac{16}{\pi} \int_0^\infty x^{\frac{3}{2}} \log|a(x^{\frac{1}{2}};u)|\dx - \tfrac{32}{5} \sum_{m=1}^N \beta_m^5 , \\
E_3(u) &= \tfrac{64}{\pi} \int_0^\infty x^{\frac{5}{2}} \log|a(x^{\frac{1}{2}};u)|\dx + \tfrac{128}{7} \sum_{m=1}^N \beta_m^7 .
\end{align*}
The first constraint says that the positive measure
\begin{equation*}
\dd\mu := x^{\frac{1}{2}} \log|a(x^{\frac{1}{2}};u)|\dx
\end{equation*}
on $[0,\infty)$ has total mass
\begin{equation}
\gamma_0 := \int_0^\infty 1\dmu(x) = \tfrac{\pi}{4} \bigg( e_1 - \tfrac{8}{3} \sum_{m=1}^N \beta_m^3 \bigg) 
\in \big[ 0,\tfrac{\pi}{4} e_1 \big] .
\label{eq:point mass 4}
\end{equation}

The first constraint also restricts how large the first moment of $\dd\mu$ can be.  As $p\mapsto \snorm{ \bs{\beta} }_{\ell^p}$ is decreasing, we have
\begin{equation*}
\bigg( \sum_{m=1}^N \beta_m^5 \bigg)^{\!\frac{1}{5}}\! \leq \bigg( \sum_{m=1}^N \beta_m^3 \bigg)^{\!\frac{1}{3}}\! \leq \big(\tfrac{3}{8} e_1\big)^{\frac{1}{3}} .
\end{equation*}
This requires that the first moment obeys
\begin{equation}
\gamma_1 := \int_0^\infty x \dmu(x) = \tfrac{\pi}{16} \bigg( e_2 + \tfrac{32}{5} \sum_{m=1}^N \beta_m^5 \bigg)
\in \big[ \tfrac{\pi}{16} e_2,\ \tfrac{\pi}{16} \big( e_2 + \tfrac{32}{5} (\tfrac{3}{8}e_1)^{\frac{5}{3}} \big) \big] .
\label{eq:point mass 5}
\end{equation}

In order to bound $E_3(u)$ below, we seek a lower bound for the second moment 
\begin{equation*}
\gamma_2 := \int_0^\infty x^2 \dmu(x) .
\end{equation*}
By Cauchy--Schwarz we have
\begin{equation*}
\gamma_1 
= \int_0^\infty x \dmu(x) 
\leq \bigg( \int_0^\infty 1 \dmu(x) \bigg)^{\!\frac{1}{2}} \bigg( \int_0^\infty x^2 \dmu(x) \bigg)^{\!\frac{1}{2}}
= \gamma_0^{\frac{1}{2}} \gamma_2^{\frac{1}{2}} ,
\end{equation*}
and so
\begin{equation}
\gamma_2 \geq \tfrac{\gamma_1^2}{\gamma_0} .
\label{eq:point mass 3}
\end{equation}	
For future reference (cf.~\eqref{eq:point mass 6}), we note that equality occurs above if and only if the functions $x^{\frac{1}{2}} \log|a(x^{\frac{1}{2}};u)|$ and $x^{\frac{5}{2}} \log|a(x^{\frac{1}{2}};u)|$ are proportional.  As $k\mapsto \log|a(k;u)|$ is a continuous nonnegative function on $\R$ for $u$ Schwartz, this can only happen when $\log|a(k;u)| \equiv 0$ for $k\in\R$.

The estimate~\eqref{eq:point mass 3} provides a lower bound for the $\log|a|$ moment of $E_3(u)$.  The $\beta$ moment is then bounded below by the infimum $C$ of $E_3(u)$ subject to the smaller constraints where the $\log|a|$ moments are removed:
\begin{equation*}
\tfrac{128}{7} \sum_{m=1}^N \beta_m^7 
= E_3(Q_{\bs{\beta},\bs{c}}) 
\geq C\big(e_1 - \tfrac{4}{\pi} \gamma_0 , e_2 - \tfrac{16}{\pi} \gamma_1 \big) .
\end{equation*}
Together, this yields the inequality~\eqref{eq:point mass 1}.
\end{proof}

We will now minimize the lower bound in~\eqref{eq:point mass 1} over all possible $\gamma_0$ and $\gamma_1$.  At first glance, the first term $\gamma_1^2 / \gamma_0$ is smallest when $\gamma_0$ is large and $\gamma_1$ is small, while the second term $C(e_1 - \tfrac{4}{\pi} \gamma_0 , e_2 - \tfrac{16}{\pi} \gamma_1 )$ is smallest when both $\gamma_0$ and $\gamma_1$ are small.  We will see below that the first term is dominant, which yields the following inequality:
\begin{lem}
\label{thm:point mass lem 2}
Given constraints $e_1>0$ and $e_2\geq 0$, we have
\begin{equation}
\inf \{ E_3(u) : u\in \con\cap\schwar(\R) \} = \tfrac{e_2^2}{e_1} .
\label{eq:point mass 8}
\end{equation}
Moreover, this infimum is not attained by any $u\in \con\cap\schwar(\R)$.
\end{lem}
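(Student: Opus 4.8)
The plan is to establish $\inf\{E_3(u):u\in\con\cap\schwar(\R)\}\ge \tfrac{e_2^2}{e_1}$ and $\le \tfrac{e_2^2}{e_1}$ separately, and then rule out attainment by tracking the equality cases of the lower-bound argument. For the lower bound I would start from \cref{thm:point mass lem 1}. When $n=2$ the quantity $C$ is nonnegative (by its definition it equals $\tfrac{128}{7}\sum_j\beta_j^{7}$), so \eqref{eq:point mass 1} already yields $E_3(u)\ge\tfrac{64}{\pi}\tfrac{\gamma_1^2}{\gamma_0}$ for every $u\in\con\cap\schwar(\R)$. The constraints pin down $\gamma_0\in(0,\tfrac{\pi}{4}e_1]$ and $\gamma_1\ge\tfrac{\pi}{16}e_2$ via \eqref{eq:point mass 4} and \eqref{eq:point mass 5}; here $\gamma_0>0$, since $\gamma_0=0$ would force $\log|a(k;u)|\equiv 0$, hence $\gamma_1=0$ and $E_2(u)=-\tfrac{32}{5}\sum_m\beta_m^5\le 0$, while $E_1(u)=\tfrac{8}{3}\sum_m\beta_m^3=e_1>0$ makes $\sum_m\beta_m^5>0$, contradicting $e_2\ge 0$. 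Since $(\gamma_0,\gamma_1)\mapsto\gamma_1^2/\gamma_0$ is decreasing in $\gamma_0>0$ and increasing in $\gamma_1\ge 0$, substituting the extreme admissible values gives $\tfrac{64}{\pi}\tfrac{\gamma_1^2}{\gamma_0}\ge\tfrac{64}{\pi}\cdot\tfrac{(\pi e_2/16)^2}{\pi e_1/4}=\tfrac{e_2^2}{e_1}$; this is a one-line computation.

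For the upper bound I would exhibit a minimizing sequence $\{u_j\}\subset\con\cap\schwar(\R)$ realizing the degenerate configuration singled out by the Cauchy--Schwarz equality case: no bound states and $\log|a(k;u_j)|$ a smooth even bump concentrating on $\{\pm k_0\}$ with $k_0^2=e_2/(4e_1)$. Concretely, for each $j$ take a smooth nonnegative even $g_j\in C_c^\infty(\R)$ supported in shrinking neighbourhoods of $\pm k_0$ (away from $0$), normalized by the two linear conditions $\int k^2 g_j\,dk=\tfrac{\pi}{4}e_1$ and $\int k^4 g_j\,dk=\tfrac{\pi}{16}e_2$, and realize $g_j=\log|a(\,\cdot\,;u_j)|$ for a Schwartz $u_j$ without bound states using inverse scattering (\cref{thm:inv scat thy}): let $T(\,\cdot\,;u_j)$ be the reciprocal of the outer function with boundary modulus $e^{g_j}$ (so $|T|\le 1$ on $\ol{\C^+}$, $T\equiv 1$ near $0$ and $T\to 1$ at infinity), put $|R_j|^2=1-|T|^2$, and choose the phases of $R_1,R_2$ to satisfy the unitarity, rate-at-$0$ (case (a)) and reality conditions; properties (i)--(vii) are then routine for such smooth, compactly supported reflection coefficients, and $u_j$ is Schwartz as in the proof of \cref{thm:a mom wiggle}. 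By the trace formulas \eqref{eq:e1 e3 trace}, $E_1(u_j)=e_1$ and $E_2(u_j)=e_2$ exactly, while $E_3(u_j)=\tfrac{64}{\pi}\int k^6 g_j\,dk\to\tfrac{64}{\pi}k_0^4\cdot\tfrac{\pi}{4}e_1=\tfrac{e_2^2}{e_1}$ as the bumps concentrate. The edge case $e_2=0$ (where no bump supported away from $0$ can meet the second normalization) is handled by letting $g_j$ concentrate near $k=0$ and inserting a single bound state $\beta_j\to 0$ via \cref{thm:inv scat thy 2}, solving $\tfrac{4}{\pi}\gamma_0+\tfrac{8}{3}\beta_j^3=e_1$ and $\tfrac{16}{\pi}\gamma_1-\tfrac{32}{5}\beta_j^5=0$ for the moments $\gamma_0,\gamma_1$ of $g_j$; since then $\beta_j\to 0$ and $\gamma_2=\int k^6 g_j\to 0$, one gets $E_3(u_j)\to 0=e_2^2/e_1$. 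I expect this construction --- realizing the approximate point mass with \emph{exact} lower moments, and in particular the $e_2=0$ case --- to be the main obstacle; the rest is bookkeeping with the trace formulas. (An alternative, avoiding inverse scattering, is to correct the explicit Gaussian family $w_{n;a,b}(x)=a\,n^{-1/2}e^{-x^2/2n^2}\cos(2bx)$ from the introduction to exact constraints via a quantitative inverse function theorem, using that $(a,b)\mapsto(E_1,E_2)(w_{n;a,b})$ converges in $C^1_{\mathrm{loc}}$ to a map with nonvanishing Jacobian at the relevant point when $e_2>0$; the $e_2=0$ case again needs an added small soliton.)

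For non-attainment, suppose $u\in\con\cap\schwar(\R)$ with $E_3(u)=\tfrac{e_2^2}{e_1}$. The trace formula gives $E_3(u)=\tfrac{64}{\pi}\gamma_2+\tfrac{128}{7}\sum_m\beta_m^7$, and the chain $\tfrac{64}{\pi}\gamma_2\ge\tfrac{64}{\pi}\tfrac{\gamma_1^2}{\gamma_0}\ge\tfrac{e_2^2}{e_1}$ (Cauchy--Schwarz \eqref{eq:point mass 3} together with the moment bounds above) forces $\sum_m\beta_m^7=0$ and equality throughout. Equality in \eqref{eq:point mass 3} forces $\log|a(k;u)|\equiv 0$ by the proportionality remark in the proof of \cref{thm:point mass lem 1}; but $u$ having no bound states and $\log|a(k;u)|\equiv 0$ gives $E_1(u)=0\ne e_1$ --- a contradiction. (When $e_2=0$ this is immediate: $E_3(u)=0$ forces $\gamma_2=0$ and $\sum_m\beta_m^7=0$, hence $\log|a(k;u)|\equiv 0$ and no bound states, so again $E_1(u)=0\ne e_1$.)
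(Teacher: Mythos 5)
Your proposal is correct, and its skeleton (the moment bound of \cref{thm:point mass lem 1}, an inverse-scattering construction of near-optimizers, and the Cauchy--Schwarz equality case for non-attainment) is the same as the paper's; but the execution differs at two points in ways worth noting. For the lower bound, you simply discard the term $C\ge 0$ and use that $\gamma_1^2/\gamma_0$ is monotone in each variable on the admissible rectangle $\gamma_0\in(0,\tfrac{\pi}{4}e_1]$, $\gamma_1\ge\tfrac{\pi}{16}e_2$ (after correctly ruling out $\gamma_0=0$). The paper instead minimizes the full expression $\tfrac{64}{\pi}\tfrac{\gamma_1^2}{\gamma_0}+C(e_1-\tfrac{4}{\pi}\gamma_0,e_2-\tfrac{16}{\pi}\gamma_1)$ over the rectangle, using $\tfrac{\partial C}{\partial e_2}\le 0$ and the vanishing of $\tfrac{\partial C}{\partial e_1}$ from~\eqref{eq:min val is dec 5} as the bound states shrink; your shortcut works precisely because $e_2\ge 0$ makes both monotonicities point toward the same corner and because $C\ge 0$ trivially for $n=2$, so it is a genuine simplification (at the cost of not identifying where the lower bound is saturated, which the paper's calculus argument does and which you recover anyway in the equality-case analysis). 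For the upper bound, the paper fixes interior moments $(\wt\gamma_0,\wt\gamma_1)$ near the corner and realizes only those two moments by \cref{thm:inv scat thy}, leaving implicit both the bound states needed to meet the constraints exactly and the control of the $k^6$ moment; you instead build sequences lying \emph{exactly} in $\con$ with no bound states, by concentrating even bumps at $\pm k_0$, $k_0^2=e_2/(4e_1)$, with the second and fourth moments matched exactly (this needs a two-parameter adjustment of amplitude and center, which you gesture at and which is fine at the paper's level of rigor), and you isolate the degenerate case $e_2=0$ with a vanishing bound state added via \cref{thm:inv scat thy 2} --- a case that genuinely requires it, since no Schwartz potential without bound states can have $E_2=0$ and $E_1>0$. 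Two cosmetic remarks: in your outer-function construction the claim ``$T\equiv 1$ near $0$'' should read $|T|=1$ on $\R$ near $0$ together with $|T|\ge c>0$ on $\ol{\C^+}$, which is what condition (v)(a) actually needs; and in the non-attainment step, once equality in~\eqref{eq:point mass 3} forces $\log|a|\equiv 0$ you could equally well conclude via $\gamma_0=0$, contradicting the positivity of $\gamma_0$ you already established --- your route through $E_1(u)=0\ne e_1$ is the same contradiction the paper draws.
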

\begin{proof}
The domain of $(\gamma_0,\gamma_1)$ in $\R^2$ is contained in the rectangle given by the product of the intervals in~\eqref{eq:point mass 4} and~\eqref{eq:point mass 5}.  Let $(\gamma_0,\gamma_1)$ be the minimizer of
\begin{equation*}
\tfrac{64}{\pi} \tfrac{\gamma_1^2}{\gamma_0} + C(e_1 - \tfrac{4}{\pi} \gamma_0 , e_2 - \tfrac{16}{\pi} \gamma_1 )
\end{equation*}
over this compact rectangle.  Differentiating with respect to $\gamma_1$, we have
\begin{equation*}
\tfrac{\partial}{\partial\gamma_1} \big\{ \tfrac{64}{\pi} \tfrac{\gamma_1^2}{\gamma_0} + C( e_1 - \tfrac{4}{\pi} \gamma_0 , e_2 - \tfrac{16}{\pi} \gamma_1 ) \big\}
= \tfrac{128}{\pi} \tfrac{\gamma_1}{\gamma_0} -\tfrac{16}{\pi} \tfrac{\partial C}{\partial e_2}( e_1 - \tfrac{4}{\pi} \gamma_0 , e_2 - \tfrac{16}{\pi} \gamma_1 ) .
\end{equation*}
The derivative $\frac{\partial C}{\partial e_2}$ is nonpositive by \cref{thm:min val is dec}, and so this quantity is positive for all $\gamma_1$ in the open interval $( \tfrac{\pi}{16} e_2,\ \tfrac{\pi}{16} ( e_2 + \tfrac{32}{5} (\tfrac{3}{8}e_1)^{\frac{5}{3}} ) )$.  Therefore the minimizer must have $\gamma_1 = \tfrac{\pi}{16} e_2$.  Similarly, we have
\begin{equation*}
\tfrac{\partial}{\partial\gamma_0} \big\{ \tfrac{64}{\pi} \tfrac{\gamma_1^2}{\gamma_0} + C( e_1 - \tfrac{4}{\pi} \gamma_0 , e_2 - \tfrac{16}{\pi} \gamma_1 ) \big\}
= -\tfrac{64}{\pi} \tfrac{\gamma_1^2}{\gamma_0^2} -\tfrac{4}{\pi} \tfrac{\partial C}{\partial e_1}( e_1 - \tfrac{4}{\pi} \gamma_0 , e_2 - \tfrac{16}{\pi} \gamma_1 ) .
\end{equation*}
The derivative $\frac{\partial C}{\partial e_1}$ is $O(\beta_1^2\beta_2^2)$ by the computation~\eqref{eq:min val is dec 5}, and hence vanishes as $\beta_1,\beta_2\to 0$.  Therefore, taking $\gamma_1 \to \tfrac{\pi}{16} e_2$ we obtain
\begin{equation*}
\tfrac{\partial}{\partial\gamma_0} \big\{ \tfrac{64}{\pi} \tfrac{\gamma_1^2}{\gamma_0} + C( e_1 - \tfrac{4}{\pi} \gamma_0 , e_2 - \tfrac{16}{\pi} \gamma_1 ) \big\}
\to - \tfrac{\pi}{4}\tfrac{e_2^2}{\gamma_0^2}
\end{equation*}
for all $\gamma_0$ in the open interval $( 0,\tfrac{\pi}{4} e_1 )$.  Therefore the minimizer has $\gamma_0 = \tfrac{\pi}{4} e_1$.

Altogether, we conclude that the minimum occurs at $\gamma_0 = \frac{\pi}{4} e_1$, $\gamma_1 = \frac{\pi}{16}e_2$ with value 
\begin{equation*}
\big\{ \tfrac{64}{\pi} \tfrac{\gamma_1^2}{\gamma_0} + C(e_1 - \tfrac{4}{\pi} \gamma_0 , e_2 - \tfrac{16}{\pi} \gamma_1 ) \big\} \Big|_{\gamma_0 = \frac{\pi}{4} e_1,\ \gamma_1 = \frac{\pi}{16}e_2} = \tfrac{e_2^2}{e_1} .
\end{equation*}	
To prove~\eqref{eq:point mass 8}, it remains to show that we can make $E_3(u)$ arbitrarily close to this value.  Fix $(\wt{\gamma}_0,\wt{\gamma}_1)$ in the interior of the rectangle given by the product of the intervals in~\eqref{eq:point mass 4} and~\eqref{eq:point mass 5} that is arbitrarily close to the minimizer $(\gamma_0,\gamma_1)$.  Pick a smooth and even function $k\mapsto\log|a(k;\wt{u})|$ with compact support in $\R\sm\{0\}$ which attains the moments $(\wt{\gamma}_0,\wt{\gamma}_1)$ (in the sense of~\eqref{eq:point mass 7}).  Arguing as in \cref{thm:a mom wiggle}, we can then use \cref{thm:inv scat thy} to construct a function $\wt{u} \in \schwar(\R)$ (with no bound states) so that $k\mapsto\log|a(k;\wt{u})|$ attains the prescribed moments $(\wt{\gamma}_0,\wt{\gamma}_1)$.

Lastly, suppose that
\begin{equation}
E_3(q) = \tfrac{e_2^2}{e_1}
\label{eq:point mass 9}
\end{equation}
for some $q\in\con\cap\schwar(\R)$.  Then we would have $\gamma_0 = \frac{\pi}{4} e_1$, $\gamma_1 = \frac{\pi}{16}e_2$ and hence the $\beta$ moments $\sum \beta_m^3$ and $\sum \beta_m^5$ must vanish.  As $e_1>0$ then this implies $\log |a(k;u)| \not\equiv 0$, and so we must have strict inequality in~\eqref{eq:point mass 3}:
\begin{equation}
E_3(q) = \tfrac{64}{\pi} \gamma_2 > \tfrac{64}{\pi} \tfrac{\gamma_1^2}{\gamma_0} = \tfrac{e_2^2}{e_1} .
\label{eq:point mass 6}
\end{equation}
This contradicts the assumption~\eqref{eq:point mass 9}, and so such a minimizer $q$ cannot exist.
\end{proof}

Now that we have found the infimum of $E_3$, we are prepared to analyze Schwartz minimizing sequences:
\begin{proof}[Proof of \cref{thm:point mass}]
Fix a minimizing sequence $\{q_j\}_{j\geq 1} \subset \schwar(\R)$, so that
\begin{equation}
E_1(q_j) \to e_1 , \quad E_2(q_j) \to e_2 , \quad E_{3}(q_j) \to \tfrac{e_2^2}{e_1} \quad\tx{as }j\to\infty .
\label{eq:point mass 2}
\end{equation}
In the inequality of \cref{thm:point mass lem 2}, we see that we have equality in the limit $j\to\infty$.  Therefore the moments $\sum_{m\geq 1} \beta_{j,m}^7$ vanish as $j\to\infty$; otherwise, we could construct a strictly better minimizing sequence with no $\beta$ parameters, because the constraints can be met solely in terms of the $\log |a|$ moments.  This implies
\begin{equation*}
\beta_{j,m} \leq \bigg( \sum_{\ell\geq 1} \beta_{j,\ell}^7 \bigg)^{\!\frac{1}{7}}\! \to 0\quad\tx{as }j\to\infty
\end{equation*}
for all $m$.

We pass to an arbitrary subsequence of $\{q_j\}_{j\geq 1}$.  We claim that there is a further subsequence with $\log|a|\dk$ converging to the even extension of a unique point mass, from which it will follow that the whole sequence $\log|a(k;q_j)|\dk$ converges to the same limit.  Consider the measures
\begin{equation*}
d\mu_j := x^{\frac{1}{2}} \log|a(x^{\frac{1}{2}};q_j)|\dx
\end{equation*}
on $[0,\infty)$.  Changing variables $x = k^2$, the convergence~\eqref{eq:point mass 2} together with the trace formulas~\eqref{eq:en trace} and the reality condition~\eqref{eq:trans coeff 3} tell us that moments of $d\mu_j$ obey
\begin{align*}
\gamma_0^j &:= \int_0^\infty 1\, d\mu_j(x) \to \tfrac{\pi}{4} e_1 =: \gamma_0 , \\
\gamma_1^j &:= \int_0^\infty x\, d\mu_j(x) \to \tfrac{\pi}{16} e_2 =: \gamma_1 ,\\
\gamma_2^j &:= \int_0^\infty x^2\, d\mu_j(x) \to \tfrac{\gamma_1^2}{\gamma_0} .
\end{align*}

We claim that the renormalized measures $d\mu_j / \gamma_0^j$ on $[0,\infty)$ are tight.  For $R>0$ we estimate
\begin{equation*}
\tfrac{1}{\gamma_0^j} \mu_j((R,\infty))
= \tfrac{1}{\gamma_0^j} \int_R^\infty d\mu_j(x)
\leq \tfrac{1}{R\gamma_0^j}\int_R^\infty x\, d\mu_j(x) .
\end{equation*}
Note that $1/\gamma_0^j$ is bounded uniformly for $j$ large since $\gamma_0^j \to \gamma_0 > 0$.  Also, the integral on the RHS is bounded uniformly in $j$ since $\gamma_1^j \to \gamma_1$.  Together, we conclude that the RHS tends to zero as $R\to\infty$ uniformly in $j$.

Therefore, by Prokhorov's theorem we may pass to a subsequence along which the probability measures $d\mu_j / \gamma_0^j$ converge weakly to some probability measure $d\mu/\gamma_0$.  As $\gamma_0^j \to \gamma_0 > 0$, then the measures $d\mu_j$ converge weakly to $d\mu_j$.

The sequence of second moments $\gamma_2^j$ converges, and hence is bounded.  It then follows that the zeroth and first moments converge to those of $\mu$:
\begin{equation*}
\int_0^\infty d\mu_j(x) = \lim_{j\to\infty} \int_0^\infty d\mu_j(x) = \gamma_0 , \quad
\int_0^\infty x\, d\mu_j(x) = \lim_{j\to\infty} \int_0^\infty x\, d\mu_j(x) = \gamma_1.
\end{equation*}
For the second moments, we use Fatou's lemma (which holds for weakly converging measures) to obtain
\begin{equation*}
\tfrac{\gamma_1^2}{\gamma_0} \leq \int_0^\infty x^2\, d\mu(x) \leq \liminf_{j\to\infty} \int_0^\infty x^2\, d\mu_j(x) = \tfrac{\gamma_1^2}{\gamma_0} .
\end{equation*}
Altogether, we conclude that $\mu$ minimizes the second moment lower bound~\eqref{eq:point mass 3} from the Cauchy--Schwarz inequality.  Therefore the distributions $d\mu$ and $x^2\dmu(x)$ on $[0,\infty)$ are proportional, and hence $\mu$ is a Dirac delta mass.  The support and total mass of this distribution are uniquely determined by $\gamma_0$ and $\gamma_1$.  In turn, the limiting distribution
\begin{equation*}
\tfrac{1}{2k} ( d\mu (k^2) + d\mu(-k^2) )
\end{equation*}
of $\log|a|\dk$ on $\R$ is then uniquely determined by the reality condition~\eqref{eq:trans coeff 3}.  Lastly, we note that weak convergence of measures implies convergence when integrated against bounded continuous test functions by the Portmanteau theorem, and hence implies convergence in distribution.
\end{proof}

\bibliography{kdv_multisolitons}

\end{document}